\documentclass[10pt,reqno]{amsart}
\usepackage[utf8]{inputenc}
\usepackage{graphicx, cite, mathtools}
\usepackage{amsmath,amsthm,amssymb,tocvsec2,pdflscape,cite}
\usepackage{latexsym}
\usepackage{esint}
\usepackage{mathtools}
\usepackage{color,soul}
\usepackage{enumerate}
\usepackage[T1]{fontenc}
\usepackage[mathscr]{euscript}

\usepackage[colorlinks=true, allcolors=blue, breaklinks=true]{hyperref}

\allowdisplaybreaks
\numberwithin{equation}{section}
\usepackage{cleveref}
\newtheorem{theorem}{Theorem}[section]

\newtheorem{lemma}[theorem]{Lemma}
\newtheorem{proposition}[theorem]{Proposition}

\theoremstyle{definition}

\title[On Hardy-type inequalities]{On improved Hardy-type inequalities in Log-Weighted Sobolev Spaces }

\author{Sourav Ghosh}
\address{Department of Mathematics, Indian Institute of Science, Bangalore, India}
\email{souravghosh1@iisc.ac.in}

\subjclass[2020]{Primary 46E35; Secondary 46E30, 26D10, 26D15}
\keywords{Hardy inequalities, series expansion, refined Hardy inequalities, Leray-Trudinger inequalities, weighted Sobolev spaces, logarithmic weights, weighted Leray energy.}
\begin{document}
\begin{abstract}
     We prove sharp Hardy-type inequalities in Log-weighted Sobolev spaces. Due to the non-attainability of the sharp constant, we improve these inequalities by adding suitable remainder terms consisting of optimal weights and sharp constants. Consequently, this gives an infinite series improvement in Log-weighted Sobolev spaces. Next, by using these improved Hardy-type inequalities, we also prove an optimal Leray-Trudinger-type improvement. Furthermore, we derive interpolation between Hardy-type inequalities and Leray-Trudinger-type inequalities.
\end{abstract}
\maketitle
\tableofcontents
\section{Introduction and Statement of Main Results}
Hardy-type inequalities play a crucial role in the study of Harmonic Analysis, Partial Differential Equations, Geometric Analysis, Spectral Theory, and Mathematical Physics. Let $\Omega$ be a bounded domain in $\mathbb{R}^N$, and  set $R_{\Omega} = \sup_{x \in \Omega} \lvert x \rvert$. Then, for all $u\in C_c^{\infty}(\Omega)$ the classical Hardy inequality for $1<p<N$ states as,
\begin{align} \label{Hardy1}
   I_{p,N,\Omega}[u] \coloneqq \int_{\Omega} \lvert \nabla u \rvert^p \, dx -\left(\frac{N-p}{p}\right)^p \int_{\Omega} \frac{\lvert u \rvert^p}{\lvert x \rvert^p} \, dx \ge 0,
\end{align}
whereas for $p=N$ the Hardy inequality takes the form:
\begin{align} \label{Hardy2}
   I_{N,\Omega}[u] \coloneqq \int_{\Omega} \lvert \nabla u \rvert^N \, dx - \left(\frac{N-1}{N}\right)^N \int_{\Omega} \frac{\lvert u \rvert^N}{\lvert x \rvert^N} X_1^N\left(\frac{\lvert x \rvert}{R_{\Omega}}\right)\, dx \ge 0,
\end{align}
where we denote $X_1(r): = \left(\ln (e/r)\right)^{-1},$ for $0\leq r\leq1$ and we also set iterated logarithmic functions as $X_j(r) = X_1\left(X_{j-1}(r)\right)$ for $j=2,3, \cdots.$
The constants appearing in \eqref{Hardy1} and \eqref{Hardy2} are optimal and never attained. For the proof of the inequality \eqref{Hardy1} \emph{i.e.} in case of  $1 < p <N$, one can refer to \cite{HardyLP, Kufner-Hardy, GPerel}. For $p=N=2,$ \eqref{Hardy2} was proved by Leray  \cite[Inequality (5), Chapter III]{leray1933etude} and for general $p=N,$ see \cite{AdiCR, ASan, Barbatis}. We call $I_{N,\Omega}$ the \emph{Leray energy} functional. The lack of attainability of the best constants has motivated the search for refined inequalities involving remainder terms consisting of singular weights and logarithmic corrections.
\par For $p=2,$ an improved version of \eqref{Hardy1} was established by Brezis and Vazquez in \cite{BrezisVazquez}. They proved the following improved Hardy inequality: for all $u \in C_c^{\infty}(\Omega),$
\begin{align} \label{BV1}
    I_{2,N,\Omega}[u] \ge \lambda_N \int_{\Omega} u^2 \, dx, \quad \text{for some positive constant} \hspace{3pt} \lambda_N.
\end{align}
In that same paper, they proposed a question (see Problem $2$ of Section $8$ in \cite{BrezisVazquez} ) that if there are only two terms of a series in \eqref{BV1}.
This problem has been settled. In fact, for general $p-$Hardy an infinite series of remainder terms was established by Barbatis, Filippas, and Tertikas
 (see \cite[Theorem A, Theorem B]{BarbatisSeries})
 : for $1<p<N,$ there exists a positive constant $R_0 = R_0(p,N) \ge R_{\Omega}$ such that, for any $R \ge R_0$ and for all $u \in C^{\infty}_c(\Omega \setminus \{0\})$ we have,
\begin{align} \label{unweighted series expansion1}
    I_{p,N,\Omega}[u] \ge  \frac{p-1}{2p} \left(\frac{N-p}{p}\right)^{p-2} \sum_{j=1}^{\infty} \int_{\Omega} \frac{\lvert u \rvert^p}{\lvert x \rvert^p}  X_1^2\left(\frac{\lvert x \rvert}{R}\right) \cdots X_j^2 \left(\frac{\lvert x \rvert}{R}\right),
\end{align}
and for $p=N$, we have,
\begin{align} \label{unweighted series expansion2}
 I_{N,\Omega}[u]  \ge \frac{1}{2} \left(\frac{N-1}{N}\right)^{N-1} \sum_{j=2}^{\infty} \int_{\Omega} \frac{\lvert u \rvert^N}{\lvert x \rvert^N} X_1^N\left(\frac{\lvert x \rvert}{R_{\Omega}}\right)  X_2^2\left(\frac{\lvert x \rvert}{R_{\Omega}}\right) \cdots X_j^2\left(\frac{\lvert x \rvert}{R_{\Omega}}\right)
\end{align}
where $I_{p,N,\Omega}[u]$ and $ I_{N,\Omega}[u]$ are defined as in \eqref{Hardy1} and \eqref{Hardy2} respectively.
For various improvements in this direction, one can refer to  \cite{ChaudhuriNirmalendu,AdiCR,ASan, GazzolaGrunauMitidieri,FT}.
\par For $p=N=2,$ by the ground state representation (see \cite[Proposition 2.6]{PsaradakisJFA}), for all $u \in C_c^1\left(\Omega\setminus \lbrace 0\rbrace\right)$ with $v= u X_1^{1/2}$ we have, $I_{2,\Omega}[u] = \int_{\Omega} \lvert \nabla v \rvert^2 X_1^{-1} \left(\lvert x \rvert / R_{\Omega}\right) \, dx $. By setting $ J_{N,\Omega} [v] \coloneqq \int_{\Omega} \lvert \nabla v \rvert^N X_1^{-N+1}\left(\lvert x \rvert / R_{\Omega}\right) \,dx, $  from \eqref{unweighted series expansion2} we can obtain,
\begin{align} \label{weighted hardy X_1^-}
     J_{2,\Omega} [v] \ge \frac{1}{4} \sum_{j=2}^{\infty} \int_{\Omega} \frac{v^2}{\lvert x \rvert^2} X_1\left(\frac{\lvert x \rvert}{R_{\Omega}}\right)  X_2^2\left(\frac{\lvert x \rvert}{R_{\Omega}}\right) \cdots X_j^2\left(\frac{\lvert x \rvert}{R_{\Omega}}\right).
\end{align}
 \par We are interested in the higher dimensional analogue \emph{i.e.} when the left-hand side of \eqref{weighted hardy X_1^-} is replaced by $J_{N,\Omega} [v]$, what will be the optimal expansion in the right-hand side. The non-equivalence of $I_{N,\Omega}[u]$ and $J_{N,\Omega} [v]$ on $C_c^{\infty}(\Omega \setminus \{0\})$ (see \cite[Lemma A$.1$]{adimurthi2026leraytrudingertypeexponentialintegrability}) prevents a direct extension of \eqref{weighted hardy X_1^-} to the higher dimensions, so the higher dimensional analogue does not follow immediately from \eqref{unweighted series expansion2}.
\par We write $X_j$ to mean $X_j\left(\lvert x \rvert/R_{\Omega}\right)$ for $j=1,2,\cdots$. First, let us set,
\begin{align} \label{finite series}
  \notag  I_m[u] \coloneqq \int_{\Omega} \lvert \nabla u \rvert^N X_1^{-N+1} \, dx &- \left(\frac{N-1}{N}\right)^N \int_{\Omega} \frac{\lvert u \rvert ^N}{\lvert x \rvert^N} X_2^N X_1 \, dx\\
    &\hspace{-30pt}- \frac{1}{2}\left(\frac{N-1}{N}\right)^{N-1} \sum_{j=3}^m \int_{\Omega} \frac{\lvert u \rvert ^N}{\lvert x \rvert^N} X_2^N X_1 X_3^2 \cdots X_j^2 \, dx,
\end{align}
where $m=3,4,\cdots.$  Let us state our first result as follows.
\begin{theorem} \label{series expansion}
  Let $\Omega$ be a bounded domain containing the origin. Then, for all $u \in C^{\infty}_c(\Omega)$, we have the following inequality,
    \begin{align} \label{series expansion eqn}
     \notag  \int_{\Omega} \lvert \nabla u \rvert^N X_1^{-N+1} \, dx &- \left(\frac{N-1}{N}\right)^N \int_{\Omega} \frac{\lvert u \rvert ^N}{\lvert x \rvert^N} X_2^N X_1 \, dx  \\
   &\ge \frac{1}{2}\left(\frac{N-1}{N}\right)^{N-1} \sum_{j=3}^{\infty} \int_{\Omega} \frac{\lvert u \rvert ^N}{\lvert x \rvert^N} X_2^N X_1 X_3^2 \cdots X_j^2 \, dx .
    \end{align}
Moreover, the constants corresponding to the Hardy-type inequality and the $m-$ improved Hardy-type inequalities are optimal \emph{i.e.}
    \begin{align} \label{sharp const1}
         \left(\frac{N-1}{N}\right)^N = \inf_{u \in C^{\infty}_c(\Omega)} \frac{ \int_{\Omega} \lvert \nabla u \rvert^N X_1^{-N+1} \, dx}{\int_{\Omega} \frac{\lvert u \rvert ^N}{\lvert x \rvert^N} X_2^{N}X_1 \, dx},
    \end{align}
    and for each $m=3,4 \cdots$, also we have,
    \begin{align} \label{sharp const2}
        \frac{1}{2}\left(\frac{N-1}{N}\right)^{N-1}  = \inf_{u \in C^{\infty}_c(\Omega)} \frac{I_{m-1}[u]}{\int_{\Omega} \frac{\lvert u \rvert ^N}{\lvert x \rvert^N} X_2^N X_1 X_3^2 \cdots X_{m-1}^2 \, dx}.
    \end{align}
\end{theorem}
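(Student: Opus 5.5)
A natural route is to reduce the statement to a one-dimensional (radial) weighted Hardy inequality via a change of variables, and then use a ground-state substitution with iterated logarithms as in the Barbatis–Filippas–Tertikas argument. The first step is the radial reduction. By Schwarz symmetrization we cannot simply assume $u$ radial, because the weight $X_1^{-N+1}$ is increasing in $|x|$. Instead we work on each ray. For fixed $\omega\in S^{N-1}$ we use $\lvert\nabla u\rvert\ge \lvert\partial_r u\rvert$, which leaves a one-dimensional inequality in $r\in(0,R_\Omega)$ with measure $r^{N-1}dr$, and we then integrate in $\omega$. Setting $t=\ln(eR_\Omega/r)$, i.e. $X_1=1/t$, and noting $r\,\partial_r = -\partial_t$, we get
\[
\int_0^{R_\Omega}\lvert u_r\rvert^N X_1^{-N+1}r^{N-1}dr=\int_1^\infty \lvert w'\rvert^N t^{N-1}\,dt,
\qquad
\int \frac{\lvert u\rvert^N}{r^N}X_2^NX_1\,r^{N-1}dr=\int_1^\infty \lvert w\rvert^N \frac{\tilde X_1(t)^N}{t}\,dt,
\]
where $w(t)=u(r)$. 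Here $X_2=X_1(1/t)=(1+\ln t)^{-1}$. Then with $s=\ln t$ and $\phi=1+s$ we get $t^{N-1}\lvert w'\rvert^N dt=\lvert w_s\rvert^N ds$ and $\frac{dt}{t}=ds$. The problem therefore becomes: for $w$ vanishing near $s=\infty$ and unrestricted at $s=0$ (because $u$ has compact support, $r\to R_\Omega$ corresponds to $t=1$, $s=0$; $u$ vanishes near $\partial\Omega$ but not necessarily at $r=R_\Omega$ on every ray, whereas $u$ is bounded at $r=0$, i.e. $s=\infty$), prove
\[
\int_0^\infty \lvert w_s\rvert^N ds \ge \left(\tfrac{N-1}{N}\right)^N\int_0^\infty \frac{\lvert w\rvert^N}{(1+s)^N}ds+\tfrac12\left(\tfrac{N-1}{N}\right)^{N-1}\sum_{j\ge3}\int_0^\infty\frac{\lvert w\rvert^N}{(1+s)^N}X_3^2\cdots X_j^2\,ds .
\]
Here $X_3=(1+\ln(1+s))^{-1}$ and so on. This is exactly the one-dimensional $p=N$ Hardy inequality with logarithmic series, in the variable $\rho=1+s\in[1,\infty)$, with the boundary at $\rho=1$. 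The boundary at $\rho=1$ is the subtle point: $w$ need not vanish there. However, we have the freedom to put the singular behaviour at $\rho=\infty$, where $w\to u(0)$. The Hardy inequality at infinity, $\int_1^\infty\lvert w'\rvert^N\ge c\int_1^\infty \lvert w\rvert^N\rho^{-N}$, requires $w$ to vanish at the end $\rho=1$ (not at infinity) when $N>1$. Since $u$ is supported inside $\Omega\subset B_{R_\Omega}$, on each ray $u$ vanishes for $r$ near the boundary point of $\Omega$, which lies at radius at most $R_\Omega$. I would therefore integrate only up to that exit radius (for nonconvex/non-starshaped $\Omega$, use the first exit point, or extend $u$ by zero to $B_{R_\Omega}$, which is cleaner). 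After extension by zero, $w(1)=0$, and the needed inequality is the classical one.

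The core one-dimensional inequality is proved by the vector-field/ground state method. Take $\psi(\rho)=\rho^{(N-1)/N}X_1(\cdot)^{-1/N}\cdots$, i.e. the approximate ground state $\rho^{(N-1)/N}\prod_{k} X_k^{-1/N}$ in the appropriate variable. Use the elementary convexity inequality $\lvert a+b\rvert^N\ge\lvert a\rvert^N+N\lvert a\rvert^{N-2}a\,b$ together with the identity obtained from integrating $(\lvert w\rvert^N F)'$ with $F=-(\psi'/\psi)^{N-1}$, as in \cite{BarbatisSeries}. This gives
\[
\int\lvert w'\rvert^N\ge\int \lvert w\rvert^N\big(-(N-1)\lvert F\rvert^{N/(N-1)}+F'\big).
\]
Expanding $F'$ with the iterated logs yields the leading constant $((N-1)/N)^N$ and the terms $\frac12((N-1)/N)^{N-1}X_3^2\cdots X_j^2$. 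The main obstacle is making this expansion produce \emph{nonnegative} remainders for all $j$ simultaneously, with exactly the constant $\frac12$. I would handle it by the finite-$m$ version first, using a Taylor bound of $(1+y)^{N/(N-1)}$ with $y=\sum_k X_k\cdots$ small, which holds since $X_k\le1$ and the terms decay, and then letting $m\to\infty$ by monotone convergence. If the boundary terms or the sign of the cubic error fail near $\rho=1$, I would rescale by replacing $R_\Omega$ by a larger $R$ and then argue monotonicity in $R$; but the bound $X_k\le1$ on $[0,1]$ should suffice.

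For optimality, \eqref{sharp const1} and \eqref{sharp const2}, I would use radial test functions built from the ground state. Set $u_\varepsilon = \eta\,\rho^{(N-1)/N}X_3^{-1/N}\cdots X_{m-1}^{-1/N}X_m^{-(1-\varepsilon)\cdot 2/N}$ in the original variables. Equivalently, take $\rho^{(N-1)/N-\varepsilon}$ for \eqref{sharp const1}, truncated near $r=0$ and cut off near $\partial\Omega$ inside a small ball $B_\delta\subset\Omega$. With $|x|$ small, this avoids the boundary issue. We then compute the numerator minus the known terms, and the denominator. The denominator diverges like $1/\varepsilon$, while the deficit stays comparable to $\frac12((N-1)/N)^{N-1}$ times the denominator plus $O(1)$. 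Letting $\varepsilon\to0$ gives the infimum. The careful asymptotics of $\lvert\psi_\varepsilon'\rvert^N$ via the expansion $(1+y)^N=1+Ny+\binom N2y^2+\dots$ is routine but lengthy.
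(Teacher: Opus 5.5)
Your ray-wise reduction and your proof of \eqref{series expansion eqn} are correct, and they are essentially the paper's argument. The paper's field $V=\bigl(\tfrac{N-1}{N}\bigr)^{N-1}X_2^{N-1}\bigl(1+\sum_{j=3}^m X_3\cdots X_j\bigr)x/\lvert x\rvert^N$ is radial. In your variable $\rho=X_2^{-1}$ it becomes the linear field $F=-c_0\rho^{1-N}(1+Y)$, with $c_0=\bigl(\tfrac{N-1}{N}\bigr)^{N-1}$ and $Y=\sum_{j=3}^m X_3\cdots X_j$. This field, not $-(\psi'/\psi)^{N-1}$, is the one that matches your Taylor bound on $(1+Y)^{N'}$. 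Working on rays, where $F\to0$ at the origin, even spares you Theorem~\ref{approximation}.

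One correction: $Y$ is not small, since every $X_k=1$ at $\rho=1$. However, $(1+Y)^{N'}\le 1+N'Y+\tfrac{N'(N'-1)}{2}Y^2$ holds for all $Y\ge0$ because $N'\le 2$. So there is no cubic error and no need to enlarge $R$. Your family $\rho^{(N-1)/N-\varepsilon}$ also proves \eqref{sharp const1}; the paper uses the mirror family with $\tau\downarrow(N-1)/N$.

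The gap is in \eqref{sharp const2}. Its denominator should end with $X_m^2$, the first term not already subtracted in $I_{m-1}$; read literally, the infimum is $0$.

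First, with the exponent $-2(1-\varepsilon)/N$ on $X_m$, your denominator is $\int\rho^{-1}X_3\cdots X_{m-1}X_m^{2\varepsilon}\,d\rho=\int L_m^{-2\varepsilon}\,dL_m=\infty$, where $L_k:=X_k^{-1}$. That exponent must be $-(1-\varepsilon)/N$.

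More seriously, keeping $\rho^{(N-1)/N}$ exactly critical gives infinite energy, and truncating near $r=0$ cannot repair this. The \emph{linear} term of the expansion leaves a divergent boundary contribution. Already for the first remainder, take $a=\tfrac{N-1}{N}$, $b=\tfrac{1-\varepsilon}{N}$, $w=\rho^aL_3^b$ frozen for $\rho\ge\rho_1$, and $\Lambda=L_3(\rho_1)$. Up to $O(1)$ coming from $\eta$,
\[
\int^{\rho_1}\bigl(\lvert w'\rvert^N-a^N\rho^{-N}\lvert w\rvert^N\bigr)\,d\rho\;\ge\;Na^{N-1}b\int^{\rho_1}\frac{L_3^{-\varepsilon}}{\rho}\,d\rho\;=\;a^{N-1}\Lambda^{1-\varepsilon}+O(1).
\]
The frozen tail of the Hardy term returns only $a^N\int_{\rho_1}^\infty\lvert w(\rho_1)\rvert^N\rho^{-N}\,d\rho=\tfrac{a^{N-1}}{N}\Lambda^{1-\varepsilon}$. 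So the numerator is at least $a^N\Lambda^{1-\varepsilon}-C$. The denominator is $\int^{\Lambda}L^{-1-\varepsilon}\,dL+O(1)\le\ln\Lambda+C$. It becomes large only if $\Lambda\to\infty$, and then the ratio blows up. Your assertion that the deficit is $\tfrac12c_0$ times the denominator plus $O(1)$ is exactly what fails; this is not routine asymptotics.

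The missing idea is the one in Proposition~\ref{Improved sharp without beta}, following \cite{BarbatisSeries}: perturb \emph{every} exponent,
\[
u=\eta\,X_2^{(-N+1+\alpha_2)/N}X_3^{(-1+\alpha_3)/N}\cdots X_m^{(-1+\alpha_m)/N}.
\]
Because $\alpha_2>0$, the boundary terms at the origin vanish in the integrations by parts, which gives \eqref{identity log log} and \eqref{identity log back}.

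\begin{enumerate}
\item The first identity shows that the linear term $-\alpha_2A_2+\sum_l(1-\alpha_l)B_{2l}$ is $O(1)$ uniformly in the $\alpha_j$.
\item Together, the two identities let you send $\alpha_2\to0$, then $\alpha_3\to0$, \dots, $\alpha_{m-1}\to0$ one at a time. This collapses the quadratic expression to \eqref{error term second J}, $J=(1-\alpha_m)A_m+O(1)$.
\item The denominator is exactly $A_m$, which tends to $\infty$ as $\alpha_m\to0$.
\end{enumerate}

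These $u$ are unbounded at the origin, so you also need two further facts to keep the infimum over $C_c^\infty(\Omega)$ unchanged. First, the $u$ must lie in $W^{1,N}_0(\Omega,X_1^{-N+1})$; the paper shows this by approximating with $X_1^{\varepsilon}u$. Second, every term of $I_{m-1}$ must be continuous in that norm, which follows from \eqref{beta=1 ineq} and $X_j\le1$.
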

The above Theorem generalizes \eqref{weighted hardy X_1^-} in any arbitrary dimension $N.$ Note that, up to finitely many terms in the unweighted case \eqref{unweighted series expansion2}, the exponent $2$ of $X_2$ is optimal  (see \cite[Theorem B]{BarbatisSeries}) while in the weighted case \eqref{finite series}, the exponent $N$ of $X_2$ is optimal \emph{i.e.} the exponent can not be decreased further, see \eqref{sharp const1} and \eqref{sharp const2}.
\par A similar phenomenon was observed by Adimurthi, Mallick and the author \cite{adimurthi2026leraytrudingertypeexponentialintegrability} when deriving the exponential integrability in Log-weighted Sobolev spaces, particularly regarding the exponent of $X_2$ when considering the Leray energy $I_{N,\Omega}[u]$ and the weighted energy $J_{N,\Omega}[uX_1^{1-1/N}]$ respectively, see \cite[Theorem $1.1$]{adimurthi2026leraytrudingertypeexponentialintegrability} and the discussion following it. The exponential integrability, up to a logarithmic correction, was  first appeared in \cite{PsaradakisJFA} due to Psaradakis and Spector. Their result states as follows: Let $\Omega$ is a bounded domain containing the origin, then for any $\varepsilon>0$, there exists a positive constant $\alpha \equiv \alpha\left(N, \varepsilon\right)$ such that
\begin{align}\label{leray-trudinger}
 \sup_{\left \lbrace u \in W^{1,N}_{0}\left(\Omega\right) : I_{N,\Omega}\left[u\right] \le 1\right\rbrace} \int_{\Omega} e^{\alpha\left(\lvert u \rvert X_1^{\varepsilon}\right)^{N'}} \, dx < \infty.
\end{align}
 Mallick and Tintarev \cite{TM} proved an improved version of \eqref{leray-trudinger}, in which $X_1^{\varepsilon}$ is replaced by $X_2^{2/N}$. They also proved that \eqref{leray-trudinger} fails if $X_1^{\varepsilon}$ is replaced by $X_2^{\kappa}$ for any $\kappa<1/N$.
Recently Di Blasio, Pisante and Psaradakis in \cite{di2024optimal} established the following optimal inequality: for some positive constant $c(N),$ we have,
\begin{align}\label{op-leray-trudinger}
  \sup_{\left \lbrace u \in W^{1,N}_{0}\left(\Omega\right) : I_{N,\Omega}\left[u\right] \le 1\right\rbrace} \fint_{\Omega} e^{\alpha\left(\lvert u \rvert X_2^{\frac{1}{N}}\right)^{N'}} \, dx < \infty, \quad \text{for all} \quad \alpha < c(N).
\end{align}
 \par Next, we will generalize the Leray-Trudinger inequality \cite{di2024optimal, TM, PsaradakisJFA} in the weighted setting which concerns the exponential integrability results in \cite{adimurthi2026leraytrudingertypeexponentialintegrability}.
Let us first define the weighted Leray energy as follows:
\begin{align} \label{weighted Leray energy1}
     \Tilde{I}_N[u] \coloneqq  \int_{\Omega} \lvert \nabla u \rvert^N X_1^{-N+1} \, dx - \left(\frac{N-1}{N}\right)^N \int_{\Omega} \frac{\lvert u \rvert ^N}{\lvert x \rvert^N} X_2^N X_1 \, dx.
\end{align}
We prove the following weighted Leray-Trudinger inequality.
 \begin{theorem} \label{Leray-weighted2}
  Let $\Omega$ be a bounded domain containing the origin. Then, there exists a positive constant $ C_{N}$ such that we have,
    \begin{align} \label{weighted leray-moser}
        \sup_{ \left \lbrace u \in C_c^{\infty}\left(\Omega\right) : \Tilde{I}_N[u] \le 1 \right \rbrace } \int_{\Omega} e^{\alpha \lvert u \rvert^{N'} X_1^{-1} X_2 X_3^{\frac{1}{N-1}}} \, dx < \infty, \mbox{ for all } \alpha < C_N.
    \end{align}
    Moreover, the exponent $1/(N-1)$ of $X_3$ is optimal in the sense that if we take any smaller exponent of $X_3$ then the result is false.
\end{theorem}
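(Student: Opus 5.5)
We would prove Theorem \ref{Leray-weighted2} in two parts: the exponential integrability \eqref{weighted leray-moser}, and the optimality of the exponent $1/(N-1)$ of $X_3$.

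\textbf{Step 1: reduction to a one-dimensional problem.} By a symmetric decreasing rearrangement adapted to the weight, or more simply by reducing to radial functions, we may assume $\Omega=B_{R_\Omega}$ and $u$ radial. Symmetrization does not increase $\int|\nabla u|^N X_1^{-N+1}$, since $X_1^{-N+1}$ is radially decreasing and this is a Pólya–Szegő-type statement. The Hardy term $\int |u|^N|x|^{-N}X_2^NX_1$ increases under rearrangement, because its weight is radially decreasing near $0$; there $X_2^NX_1/|x|^N$ is decreasing once $R$ is scaled suitably. Hence $\Tilde I_N$ does not increase. The exponential functional is handled by the usual Hardy–Littlewood argument, since the weight $X_1^{-1}X_2X_3^{1/(N-1)}$ is radially monotone near the origin. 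Away from the origin the weights are bounded, so the estimate reduces to the classical Trudinger–Moser inequality.

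\textbf{Step 2: change of variables to remove the Hardy term.} Write $r=|x|/R_\Omega$ and $t=\ln(e/r)=X_1^{-1}$. The weighted energy then becomes $\omega_{N-1}\int_1^\infty |u_t|^N t^{N-1}\,dt$ minus $(\frac{N-1}{N})^N\omega_{N-1}\int |u|^N t^{-1}(\ln(et))^{-N}\,dt$. Setting $s=\ln(et)=X_2^{-1}$ gives $t\,dt\to$ and the energy $\int |u_s|^N\,ds$ minus $(\frac{N-1}{N})^N\int |u|^N s^{-N}\,ds$. This is exactly the one-dimensional $p=N$ Hardy functional in the variable $s$, i.e.\ the Leray energy of \eqref{Hardy2} transplanted. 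Next use the ground state substitution $u=s^{(N-1)/N}w$. Following \cite{PsaradakisJFA,di2024optimal}, the energy is comparable from below to
\[
\int |w_s|^N s^{N-1}\,ds+\int |w_s|^2|w|^{N-2}s\,ds,
\]
the standard $p$-ground-state estimate, which gives $\Tilde I_N[u]\gtrsim \int |w_s|^N s^{N-1}\,ds$ only up to the second-order correction. Then apply the second change of variables $\tau=\ln(es)=X_3^{-1}$. The one-dimensional estimate of Di Blasio–Pisante–Psaradakis yields $|u(s)|\lesssim \Tilde I_N^{1/N}\, s^{(N-1)/N}(\ln s)^{1/N}$ to leading order, together with the sharp Moser-type distribution bound.

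\textbf{Step 3: the Moser-type estimate.} Translating $|u|^{N'}s^{-1}(\ln (es))^{-1/(N-1)}$ back gives precisely $|u|^{N'}X_2X_3^{1/(N-1)}$. The measure is $dx\sim e^{-Nt}t\,dt=e^{-N(e^{s-1})}\ldots$, so the extra factor $X_1^{-1}=t$ must be absorbed. Concretely, we verify that the exponent $\alpha|u|^{N'}tX_2X_3^{1/(N-1)}$ stays below $(N-\delta)t$. The integral is then bounded by a Moser lemma in the variable $t$ with the weight $e^{-Nt}$. Doing this via the one-dimensional Moser functional lemma, i.e.\ the Adams–Garsia lemma in the form used in \cite{di2024optimal}, produces the constant $C_N$. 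This step is the main obstacle: the pointwise bound alone loses a constant. We need the sharp integral form, namely an estimate of $\int e^{\alpha F(\tau)^{N'}-N\,\cdot}$ through $\int |F'|^N\le 1$ after the iterated substitution. We would carry over the Di Blasio–Pisante–Psaradakis argument one logarithm deeper.

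\textbf{Step 4: optimality.} For $\kappa<1/(N-1)$, we would build Moser-type test functions in the $\tau$ variable. Take $u_k=s^{(N-1)/N}w_k$, where $w_k$ is piecewise linear in $\tau$ (a Moser sequence) with $w_k=(\ln k)^{1/N}$-type plateau near the origin. We check that $\Tilde I_N[u_k]$ stays bounded, using a truncated ground state as in \cite{TM}. On the plateau the exponent then grows like $t$ times a positive power of $\ln\ln t$, because of the gap $1/(N-1)-\kappa$. This overwhelms the $e^{-Nt}$ measure and makes the integral diverge.
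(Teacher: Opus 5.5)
Your Step 1 fails, and the whole upper bound rests on it. The weighted P\'olya--Szeg\H{o} inequality is false for the weight $X_1^{-N+1}=(\ln(eR_\Omega/|x|))^{N-1}$. It fails precisely \emph{because} this weight is radially decreasing: a small ball centered at the origin has much larger $X_1^{-N+1}$-weighted perimeter than a translated copy of itself.

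Concretely, take a radially decreasing $0\le\phi\in C_c^\infty(\mathbb{B}_N)$, a point $x_0\in\Omega\setminus\{0\}$, and set $u_\rho(x)=\phi((x-x_0)/\rho)$. Since $X_1^{-N+1}$ is bounded near $x_0$, we get $\Tilde I_N[u_\rho]\le C\|\nabla\phi\|_N^N$ uniformly in $\rho$. On the other hand, $u_\rho^*=\phi(\cdot/\rho)$ satisfies
\[
\int_\Omega|\nabla u_\rho^*|^NX_1^{-N+1}\,dx\ge\Bigl(\ln\frac{eR_\Omega}{\rho}\Bigr)^{N-1}\|\nabla\phi\|_N^N,\qquad\int_\Omega\frac{|u_\rho^*|^N}{|x|^N}X_2^NX_1\,dx\le C\,X_2^{N-1}\Bigl(\frac{\rho}{R_\Omega}\Bigr)\to0,
\]
so $\Tilde I_N[u_\rho^*]\to\infty$ as $\rho\to0$. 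Hence no estimate of the form $\Tilde I_N[u^*]\le C\,\Tilde I_N[u]$ holds. No rearrangement centered at the origin can lower the gradient term while raising the Hardy and exponential terms.

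This is not a side issue. For radial $u$ the theorem is nearly immediate: your transplanted ground-state bound $|u|^{N'}\lesssim\Tilde I_N[u]^{1/(N-1)}X_2^{-1}X_3^{-1/(N-1)}$ already bounds the exponent by $C\alpha\ln(eR_\Omega/|x|)$, which is integrable for small $\alpha$. Only \emph{some} $C_N$ is claimed, so the sharp Adams--Garsia analysis you flag in Step 3 is unnecessary. All the difficulty sits in the non-radial case, which your proposal never treats.

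What is missing is the decomposition the paper uses instead of symmetrization. With $v=uX_2^{1-1/N}$, write $v=v_0+(v-v_0)$ via \Cref{decomposition lemma}.
\begin{enumerate}
\item[(a)] Because $v-v_0$ has zero spherical mean, the Poincar\'e inequality on $\mathbb{S}^{N-1}$ absorbs the logarithmic weight. This is \Cref{estimates of angular part}: $\int|\nabla[(v-v_0)X_1^{-1+1/N}]|^N\le c_N\int|\nabla v|^NX_1^{-N+1}$. The right side is controlled by $\Tilde I_N[u]$ through Lemma \ref{first lemma for proof weighted leray}(ii), and Trudinger's argument then gives $L^q$ bounds growing like $q^{1-1/N}$.
\item[(b)] For the radial part, pass to $w=|v|^{N/2}$ and use $|v_0|^{N/2}\le w_0$. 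Apply Proposition \ref{prop for radial version leray}, which supplies the factor $X_3^{1/N}$.
\item[(c)] Check via a ground-state identity that taking spherical means does not increase $\int|x|^{2-N}|\nabla w|^2X_1^{-1}X_2^{-1}$, and close with Lemma \ref{first lemma for proof weighted leray}(i).
\end{enumerate}

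Your Step 4 is unaffected by the gap, since radial test functions suffice for a counterexample. Transplanting a Mallick--Tintarev-type sequence is a legitimate and more explicit alternative to the paper's argument, provided you add two things:
\begin{enumerate}
\item[(i)] use the two-sided ground-state comparability to bound $\Tilde I_N[u_k]$ from above, not just from below;
\item[(ii)] cut the plateau off slowly near the origin, so the test functions are limits of $C_c^\infty(\Omega)$ functions.
\end{enumerate}
The paper instead argues by contradiction. It feeds the hypothetical exponential bound into the Young inequality \eqref{original young's ineq} to obtain an improved Hardy inequality with $X_3$-exponent $N\gamma+\rho<2$, which contradicts Proposition \ref{Improved sharp without beta}.
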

 Let us now deal with general Log-weights. Log-weighted Sobolev spaces were initially introduced by Calanchi and Ruf \cite{RufN, RufClanchi2015}  to study the exponential integrability for radial functions, and for non-radial functions it was extended by Adimurthi, Mallick,  and the author \cite{adimurthi2026leraytrudingertypeexponentialintegrability}. Let $w \in L^1_{loc}(\Omega)$ be a non negative function, we define the weighted Sobolev spaces  $W^{1,p}_{0}\left(\Omega, w\right)$ as the closure of  $C^1_c(\Omega)$ under the norm 
 \begin{align}\label{weighted norms}
 \left \lVert \nabla u \right\rVert_{p, w, \Omega} := \left(\int_{\Omega} \left\lvert \nabla u\right\rvert^p w\ dx \right)^{\frac{1}{p}}.
 \end{align}
For a general weighted Sobolev space theory one can refer to \cite{Heinonen, Kufner1, Kufner-Hardy-weight, Kufner-Hardy}. We consider 
the following weight:
for $\beta \in \mathbb{R},$ we set
\begin{align} \label{weights w}
    w_{\beta}(x) &= X_{1}^{-\beta(N-1)}\left(\frac{\lvert x \rvert}{R_{\Omega}}\right),
\end{align}
for $x\in B\left(0, R_{\Omega}\right)$. We will write $\left \lVert \nabla u \right\rVert_{N, w_{\beta}} $ to mean $\left \lVert \nabla u \right\rVert_{N, w_{\beta}, \Omega}$.
\par We investigate the optimal Hardy inequalities in $W^{1,N}_0(\Omega, w_{\beta})$ for $\beta \neq 1.$ The following Hardy type inequality is known  (see Lemma \ref{psara}): Let $\Omega$ be a bounded domain in $\mathbb{R}^N$ then for $\gamma \in \mathbb{R}$ with $\gamma \neq 1$ and for all $u \in C^{\infty}_c\left(\Omega \setminus \{0\} \right)$ we have,
\begin{align} \label{weighted Hardy}
     \int_{\Omega} \lvert \nabla u \rvert^N & X_1^{-\gamma(N-1)} \, dx \ge   \left \lvert \frac{1-\gamma}{N'}\right \rvert^N \int_{\Omega} \frac{\lvert u \rvert ^N}{\lvert x \rvert^N} X_1^{N - (N-1) \gamma} \, dx,
\end{align}
where $N'=N/(N-1)$ denotes the H\"older conjugate of $N.$
The inequality above \eqref{weighted Hardy} is very useful for deriving exponential integrability results, for instance, see \cite{PsaradakisJFA, TM, adimurthi2026leraytrudingertypeexponentialintegrability}. Note that, the case $\gamma =1$ has been covered earlier in \Cref{series expansion}. In this article, we have proved that the constant that appears in \eqref{weighted Hardy} is optimal (see Proposition \ref{sharp hardy type1}). For $\beta <1,$ we have also proved the optimal series expansion of remainder terms analogous to \Cref{series expansion}. The case $\beta < 0,$ can be recovered from \Cref{p=k series expansion}. Let $\beta \in (0,1)$ and for $m=2,3, \cdots$ we set,
\begin{align} \label{Improved beta hardy}
       \notag  I_{m,\beta}[u] &\coloneqq \int_{\Omega} \lvert \nabla u \rvert^N X_1^{-\beta(N-1)} \, dx - \left(\frac{1-\beta}{N'}\right)^N \int_{\Omega} \frac{\lvert u \rvert ^N}{\lvert x \rvert^N} X_1^{N - (N-1) \beta} \, dx\\
      & \qquad  - \frac{1}{2(1-\beta)}\left(\frac{1-\beta}{N'}\right)^{N-1} \sum_{j=2}^{m} \int_{\Omega} \frac{\lvert u \rvert ^N}{\lvert x \rvert^N} X_1^{N - (N-1) \beta}X_2^2 \cdots X_j^2 \, dx.
     \end{align}
We prove the following optimal series expansion result.
\begin{theorem} \label{beta series expansion}
    Let $\Omega$ be a bounded domain in $\mathbb{R}^N$ containing the origin and $\beta \in (0,1)$. Then for all $u \in W^{1,N}_{0}\left(\Omega, w_{\beta}\right)$ there holds:
    \begin{align}
       \notag \int_{\Omega} \lvert \nabla u \rvert^N & X_1^{-\beta(N-1)} \, dx -  \left(\frac{1-\beta}{N'}\right)^N \int_{\Omega} \frac{\lvert u \rvert ^N}{\lvert x \rvert^N} X_1^{N - (N-1) \beta} \, dx\\
        &\ge \frac{1}{2(1-\beta)}\left(\frac{1-\beta}{N'}\right)^{N-1} \sum_{j=2}^{\infty} \int_{\Omega} \frac{\lvert u \rvert ^N}{\lvert x \rvert^N} X_1^{N - (N-1) \beta}X_2^2 \cdots X_j^2\, dx
        \label{beta series}.
    \end{align}
  Moreover, the constants corresponding to the Hardy-type inequality and the $m-$ improved Hardy-type inequalities are optimal \emph{i.e.} 
    \begin{align}
        \left(\frac{1-\beta}{N'}\right)^N = \inf_{u \in W^{1,N}_{0}\left(\Omega , w_{\beta}\right)} \frac{ \int_{\Omega} \lvert \nabla u \rvert^N X_1^{-\beta(N-1)} \, dx}{\int_{\Omega} \frac{\lvert u \rvert ^N}{\lvert x \rvert^N} X_1^{N - (N-1) \beta} \, dx}
    \end{align}
    and for each $m=2,3, \cdots$, also we have,
    \begin{align}
       \frac{1}{2(1-\beta)}\left(\frac{1-\beta}{N'}\right)^{N-1} = \inf_{u \in W^{1,N}_{0}\left(\Omega , w_{\beta}\right)} \frac{I_{m-1, \beta}[u]}{\int_{\Omega} \frac{\lvert u \rvert ^N}{\lvert x \rvert^N} X_1^{N - (N-1) \beta}X_2^2 \cdots X_m^2\, dx}.
    \end{align}
\end{theorem}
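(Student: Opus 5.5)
The plan is to reduce \Cref{beta series expansion} to a one-dimensional problem through a change of variables that turns the log-weight into a power weight. The sharp one-dimensional series expansion in the style of Barbatis--Filippas--Tertikas is then applied to the new variable.

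Write $r=\lvert x\rvert$ and $t=t(r)=\ln(eR_\Omega/r)=X_1^{-1}$, so that $dt=-dr/r$. For radial $u$ the left-hand side becomes a constant times
\begin{align*}
\int_1^\infty \lvert u'(t)\rvert^N t^{\beta(N-1)}\,dt-\left(\frac{1-\beta}{N'}\right)^N\int_1^\infty \lvert u\rvert^N t^{\beta(N-1)-N}\,dt .
\end{align*}
This is a one-dimensional weighted Hardy functional on $(1,\infty)$ with vanishing at $t=1$. Since $\beta<1$, the relevant exponent $\beta(N-1)-(N-1)$ is nonzero.

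Rather than symmetrize, which is unavailable with $\lvert\nabla u\rvert$ replaced by $\lvert\partial_r u\rvert$ only in one direction, I will use the vector-field method in $\mathbb{R}^N$. This handles the non-radial case directly, since $\lvert\nabla u\rvert\ge\lvert\partial_r u\rvert$.

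\textbf{Step 1.} Choose the vector field
\begin{align*}
V(x)=\phi(r)\,\frac{x}{r^{N}}, \qquad \phi=-\left(\frac{1-\beta}{N'}\right)^{N-1}X_1^{(1-\beta)(N-1)}\Bigl(1+\frac{N-1}{N(1-\beta)}\,\sum_{j=2}^m X_2\cdots X_j\Bigr),
\end{align*}
with signs arranged so that the expansion below is consistent. Using $\operatorname{div}(x/r^N)=0$ away from the origin together with the derivative rules $r\frac{d}{dr}X_1=X_1^2$ and $r\frac{d}{dr}X_j=X_1X_2\cdots X_j\,X_j$, I will combine the identity $\int \operatorname{div}(V)\lvert u\rvert^N=-N\int \lvert u\rvert^{N-2}u\,V\cdot\nabla u$ with Young's inequality
\begin{align*}
N\lvert u\rvert^{N-1}\lvert V\rvert\lvert\nabla u\rvert\le w_\beta\lvert\nabla u\rvert^N+(N-1)\lvert V\rvert^{N'}w_\beta^{-1/(N-1)}\lvert u\rvert^N .
\end{align*}
This yields
\begin{align*}
\int \lvert\nabla u\rvert^N w_\beta\ge\int\bigl(\operatorname{div}V-(N-1)\lvert V\rvert^{N'}w_\beta^{-1/(N-1)}\bigr)\lvert u\rvert^N .
\end{align*}

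\textbf{Step 2.} The potential has to be expanded. Put $\phi=-c^{N-1}X_1^{(1-\beta)(N-1)}(1+y)$ with $c=(1-\beta)/N'$, and expand $(1+y)^{N'}\le 1+N'y+\frac{N'(N'-1)}{2}y^2+\dots$. The Taylor expansion must be controlled by a lower bound valid for small $y$. This is where the main obstacle lies: $(1+y)^{N'}$ is convex and the quadratic term must be absorbed by the derivative gain $X_2^2\cdots$ exactly as in \cite{BarbatisSeries}. The first-order terms cancel by the choice of $c$. The leading remainder from the derivative of $X_2\cdots X_j$ produces $\frac{1}{2(1-\beta)}c^{N-1}X_1^{N-(N-1)\beta}X_2^2\cdots X_j^2$, and the quadratic error is of lower order, $X_2^2X_3^2\cdots$ times a small factor. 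I will handle this cleanly by a one-dimensional lemma: for $y=\sum_{j\le m}\kappa X_2\cdots X_j$, the tail $\sum_j X_2^2\cdots X_j^2$ dominates $y^2$ up to the needed constant. This mimics the inequality $(1+y)^{N'}\le 1+N'y+\frac{N'}{2(N-1)}y^2\cdot\theta$ used in the unweighted case, with the $X_1$ powers factoring out. Letting $m\to\infty$ by monotone convergence gives \eqref{beta series}. Density of $C_c^\infty(\Omega\setminus\{0\})$ in $W^{1,N}_0(\Omega,w_\beta)$, which holds since $\operatorname{cap}$ of a point is zero for this weight because $\beta<1$, extends the inequality to all $u$.

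\textbf{Step 3 (optimality).} I will use the radial test functions $u_\varepsilon=X_1^{-c'+\varepsilon}$ for the Hardy constant and $X_1^{-c'}X_2^{-1/2+\varepsilon}\cdots$, i.e. $X_1^{-(1-\beta)/N'}X_2^{-1/N}\cdots X_{m}^{-1/N+\varepsilon}$, for the $m$-th term. These are cut off near $\partial B(0,R_\Omega)$ after a rescaling so that the support lies in $\Omega$. In the $t$-variable these are exactly the one-dimensional near-extremals, and computing both sides as $\varepsilon\to0$ shows that the ratio converges to the stated constant while the denominator diverges like $\varepsilon^{-1}$. The cutoff contributions stay bounded, so they vanish relative to the denominator.
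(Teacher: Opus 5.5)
\paragraph{Main gap: optimality of the $m$-improved constants.} The genuine gap is in Step 3. Your test function $u=\eta X_1^{-c}X_2^{-1/N}\cdots X_{m-1}^{-1/N}X_m^{-1/N+\varepsilon}$ is not in $W^{1,N}_0(\Omega,w_\beta)$, and $I_{m-1,\beta}[u]$ is not defined.

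The reason is that the exponent of $X_1$ is exactly $-c$. Near the origin, $\lvert\nabla u\rvert^N w_\beta$ is then comparable to $\lvert x\rvert^{-N}X_1X_2^{-1}\cdots X_{m-1}^{-1}X_m^{-1+N\varepsilon}$, and the Hardy term has the same integrand. In your variable $t=X_1^{-1}$ this integral becomes $\omega_{N-1}\int t^{-1}X_2^{-1}\cdots X_m^{-1+N\varepsilon}\,dt$. Its integrand is at least $t^{-1}(1+\ln t)^{1-N\varepsilon}$, which is not integrable at $t=\infty$, already for $m=2$. The remainder terms with $j\le m-1$ diverge as well. So the numerator is $\infty-\infty$ and only the denominator is finite. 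Perturbing the exponent of $X_m$ alone cannot cure the divergence at the lower levels.

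The missing idea is the one in Proposition \ref{beta improved series optimal}:
\begin{enumerate}[(i)]
\item Regularize every level: take $v=X_1^{-H+\alpha_1/N}X_2^{(-1+\alpha_2)/N}\cdots X_m^{(-1+\alpha_m)/N}$ with all $\alpha_i>0$, so that every term is finite.
\item Expand $\lvert H+w/N\rvert^N$ to second order in $w=-\alpha_1+\sum_{j\ge2}(1-\alpha_j)X_2\cdots X_j$.
\item Use the integration-by-parts identities \eqref{identities}--\eqref{identities2}, e.g.\ $\alpha_1A_1=\sum_{j\ge2}(1-\alpha_j)B_{1j}+O(1)$ uniformly in the $\alpha$'s.
\end{enumerate}
These identities show that the first-order term is $O(1)$. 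They also show that the terms containing $A_1$ and $B_{1l}$, which individually blow up as $\alpha_1\to0$, combine into quantities that stay bounded. This lets you send $\alpha_1,\dots,\alpha_{m-1}$ to $0$ one at a time, arriving at $J=(1-\alpha_m)A_m+O(1)$. Only at that last stage does your one-parameter limit $\alpha_m=N\varepsilon\to0$ become legitimate.

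\paragraph{Secondary, fixable error: the coefficient in Step 2.} Steps 1--2 are the paper's vector-field argument (Proposition \ref{only series beta}). Your Hardy-constant test function $\eta X_1^{-c+\varepsilon}$ is fine; it is a one-parameter simplification of the paper's $\lvert x\rvert^{\varepsilon}X_1^{-H-\tau}$, with membership obtained by truncation. However, the field as written does not give the stated constant.

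First, $\phi$ must be positive; with your minus sign, $\operatorname{div}V<0$. Write $\phi=c^{N-1}X_1^{(N-1)(1-\beta)}(1+\kappa Y)$ with $Y=\sum_{j=2}^mX_2\cdots X_j$. Then $\operatorname{div}V-(N-1)\lvert V\rvert^{N'}X_1^{\beta}$ equals $c^N\lvert x\rvert^{-N}X_1^{N-(N-1)\beta}$ times $N+N\kappa Y+\frac{\kappa}{c}W-(N-1)(1+\kappa Y)^{N'}$. Here $W=\sum_{j=2}^m\sum_{i=2}^jX_2^2\cdots X_i^2X_{i+1}\cdots X_j$ is the derivative gain.

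Use $(1+y)^{N'}\le1+N'y+\frac{N'(N'-1)}{2}y^2$, valid for all $y\ge0$ since $N'\le2$. Combine it with the exact identity $Y^2=2W-Z$, where $Z=\sum_{j=2}^mX_2^2\cdots X_j^2$. The factor is then at least $1+\kappa\bigl(\frac1c-N'\kappa\bigr)W+\frac{N'}{2}\kappa^2Z$.

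So the quadratic loss is not of lower order, and $Z$ does not dominate $Y^2$; it is the other way round. Loss and gain are of the same order. The sharp constant arises only for $\kappa=1/(cN')=1/(1-\beta)$, which cancels $W$ exactly and leaves $\frac{1}{2(1-\beta)}c^{N-1}Z$.

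Your choice $\kappa=\frac{N-1}{N(1-\beta)}$ is the analogue of the coefficient $\frac{p-1}{pM}$ in Proposition \ref{p<k series expansion}, but with $M$ replaced by $1-\beta$ rather than by $c$. With it, the leading coefficient of $X_2^2$ is only $(1-N^{-2})$ times the required one. Hence \eqref{beta series} would not follow as claimed.
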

Now, for $\beta \neq 1,$ we set
\begin{align} \label{beta weighted leray energy}
        I_{N,w_{\beta}}[u] \coloneqq \int_{\Omega} \lvert \nabla u \rvert^N w_{\beta}\, dx - \left \lvert \frac{1-\beta}{N'}\right \rvert^N \int_{\Omega} \frac{\lvert u \rvert ^N}{\lvert x \rvert^N} w_{\beta}^{1- \frac{N'}{\beta}} \, dx.
\end{align}
Analogously to \Cref{Leray-weighted2}, we  prove the following Leray-Trudinger-type result.
\begin{theorem} \label{Leray weighted1}
 Let $\Omega$ be a bounded domain containing the origin and $\beta \in \mathbb{R}$ such that $\beta \neq 1$. Then, there exists a positive constant $ C_{N,\beta}$ such that we have,
    \begin{align*} 
        \sup_{ \left \lbrace u \in W^{1,N}_0\left(\Omega \setminus \{0\}, w_{\beta}\right) : I_{N,w_{\beta}}[u] \le 1 \right \rbrace } \int_{\Omega} e^{\alpha \lvert u \rvert^{N'} X_1^{-\beta} X_2^{\frac{1}{N-1}}} \, dx < \infty,    \mbox{ for all } \alpha <  C_{N,\beta}.
    \end{align*}
    Moreover, for $\beta \in (-\infty,1)$ the exponent $1/(N-1)$ of $X_2$ is optimal in the sense that if we take any smaller exponent of $X_2$ then the result is false.
\end{theorem}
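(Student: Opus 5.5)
We prove Theorem \ref{Leray weighted1} by reducing it, after symmetrization and a change of variables, to a one-dimensional weighted Leray--Trudinger estimate. We then treat the sharpness of the exponent of $X_2$ with an explicit family of test functions.

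\textbf{Reduction to a one-dimensional problem.} First we reduce to radial functions. Extend $u$ by zero to $B(0,R_\Omega)$ and pass to the Schwarz symmetrization $u^*$. The weight $w_\beta$ is monotone in $|x|$: decreasing toward the origin for $\beta>0$, increasing for $\beta<0$. Likewise the Hardy weight $|x|^{-N}w_\beta^{1-N'/\beta}=|x|^{-N}X_1^{N-(N-1)\beta}$ is radially decreasing. For $\beta>0$, a P\'olya--Szeg\H{o}-type inequality for the weighted gradient norm is not automatic. We therefore work instead with the ground state substitution $u=v\,X_1^{\frac{(1-\beta)}{N'}\cdot\frac{N-1}{N}\cdot\ldots}$, i.e.\ $u = v\,\phi$ with $\phi(r)=X_1^{-(1-\beta)(N-1)/N}(r/R_\Omega)$, the virtual ground state of \eqref{weighted Hardy}. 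Along radial lines, the standard $p$-convexity inequality $|a+b|^N\ge|a|^N+N|a|^{N-2}a\cdot b+c_N|b|^N$ (for $N\ge2$) turns $I_{N,w_\beta}[u]$ into a lower bound by $c\int|\nabla v|^N\phi^N w_\beta\,dx$ plus a nonnegative cross term. The latter is handled exactly as in the proof of Theorem \ref{beta series expansion}, i.e.\ the same expansion with the remainder discarded.

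\textbf{One-dimensional estimate.} The remaining energy has a purely radial weight, so we may symmetrize $v$ (equimeasurable rearrangement on spheres), or simply use the spherical-mean argument of \cite{adimurthi2026leraytrudingertypeexponentialintegrability}. Set $t=\ln\ln(eR_\Omega/|x|)$, i.e.\ $X_2$-type coordinates. A radial computation shows $\int|\nabla v|^N\phi^N w_\beta\,dx \gtrsim \int_{t_0}^\infty|v'(t)|^N\,dt$ up to constants depending on $N,\beta$. The log-log variable appears precisely because $\phi^Nw_\beta|x|^{N-1}\,d|x|$ combines with the Jacobian to give $X_1^{-(N-1)}$, which is exactly the Leray-critical weight. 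Hölder then gives $|v(t)|\le (t-t_0)^{1/N'}\|v'\|_N$. This yields
\[|u|^{N'}X_1^{-\beta}X_2^{\frac1{N-1}}\lesssim \ln\frac{eR_\Omega}{|x|}\cdot(\text{bounded}),\]
with the powers of $X_1$ canceling. The integrability for small $\alpha$ follows because $\int |x|^{-\alpha c}\,dx<\infty$ when $\alpha c<N$. This pins down $C_{N,\beta}$. The main obstacle is making the ground state step rigorous for $N>2$ without the loss seen in the non-equivalence noted after \eqref{weighted hardy X_1^-}. We expect to need a Moser-type argument on the $t$-line (Adams' lemma \`a la \cite{di2024optimal,TM}) rather than plain pointwise Hölder, and possibly an extra $X_2$ factor that must be controlled with the improved remainder from Theorem \ref{beta series expansion}.

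\textbf{Sharpness for $\beta<1$.} Take radial test functions $u_\varepsilon = \phi\,\psi_\varepsilon(t)$, where $\psi_\varepsilon$ is a Moser-type truncated function in the $t$ variable, normalized so that $I_{N,w_\beta}[u_\varepsilon]\le1$. Theorem \ref{beta series expansion} shows the energy equals $\int|\psi'|^N$ up to lower-order terms. With the $X_2$ exponent replaced by $\kappa<1/(N-1)$, the exponent at the core becomes $\alpha\,\ln(1/r)\,X_2^{\kappa-1/(N-1)}$. This factor tends to infinity, so $\int e^{\ldots}\,dx$ diverges as $\varepsilon\to0$ for every $\alpha>0$.
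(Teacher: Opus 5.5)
\textbf{Gap 1: the one-dimensional estimate.} Your ground-state step is correct and is Lemma~\ref{elementary leray estimate1}(ii). The cross term is in fact identically zero after integrating by parts, because $|\nabla\phi|^{N-2}\nabla\phi\,\phi\,w_\beta=-|H|^{N-2}H\,x/|x|^N$ is divergence free, where $H=(1-\beta)/N'$. The problem is that the energy you keep is too weak. Radially, with $t=\ln\ln(e/|x|)$, it equals $\int|\nabla v|^NX_1^{-N+1}=\omega_{N-1}\int_0^\infty|v_t|^N\,dt$, and H\"older then only gives $|v|^{N'}\lesssim t$. Hence
\[ |u|^{N'}X_1^{-\beta}X_2^{\frac{1}{N-1}}=|v|^{N'}X_1^{-1}X_2^{\frac{1}{N-1}}\lesssim X_1^{-1}\,t^{\frac{N-2}{N-1}}, \]
so the cancellation you claim holds only for $N=2$.

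No Adams/Moser refinement on the $t$-line can fix this. The function $v=t\,T^{-1/N}$ on $[0,T]$, constant afterwards, has $\int|v_t|^N=1$ and reaches height $T^{1/N'}$. So the $N$-energy alone cannot give more than $X_2^{1}$. The improved remainder of Theorem~\ref{beta series expansion} does not help either: it is a zeroth-order term and gives no pointwise control.

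The missing idea is to keep the quadratic part of the convexity inequality, $|a+b|^N\ge|a|^N+N|a|^{N-2}a\cdot b+\mu_N|a|^{N-2}|b|^2$, with $a=v\nabla\phi$ and $b=\phi\nabla v$. This gives $\int|x|^{2-N}|v|^{N-2}|\nabla v|^2X_1^{-1}\le\zeta_{N,\beta}I_{N,w_\beta}[u]$, which is Lemma~\ref{elementary leray estimate1}(i). For radial $w=|v|^{N/2}$ this energy is a multiple of $\int_0^\infty|w_t|^2\,dt$. Hence $|v|^{N/2}\lesssim t^{1/2}$, i.e.\ $|v|^{N'}\lesssim t^{1/(N-1)}$, which is exactly the factor $X_2^{-1/(N-1)}$.

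For the non-radial reduction, Schwarz symmetrization is not available, since $X_1^{-N+1}$ is radially decreasing and singular at $0$. The paper uses the spherical mean instead:
\begin{itemize}
\item $v-v_0$ is handled by Lemma~\ref{estimates of angular part} and Trudinger's argument with the $N$-energy; no $X_2$ factor is needed there.
\item $v_0$ is handled via $|v_0|^{N/2}\le w_0$ and Proposition~\ref{prop for radial version leray}, using the quadratic energy.
\end{itemize}

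\textbf{Gap 2: the sharpness argument.} Theorem~\ref{beta series expansion} is only a lower bound, so it cannot show the energy ``equals'' anything. For radial $u=\phi\psi$ one has $I_{N,w_\beta}[u]=\omega_{N-1}\int_0^\infty\bigl(|H\psi+\psi_t|^N-|H\psi|^N\bigr)\,dt$. For slowly varying $\psi$ vanishing at both ends, its main part is $\tfrac{N(N-1)}{2}H^{N-2}\int|\psi|^{N-2}\psi_t^2\,dt$, not $\int|\psi_t|^N$.

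If the energy really were $\int|\psi_t|^N$, Moser functions would reach height $T^{1/N'}$. Your computation would then give the factor $X_2^{\kappa-1}$, which for $N\ge3$ would ``disprove'' the positive part for every $\kappa<1$. The factor $X_2^{\kappa-1/(N-1)}$ you wrote requires core height $T^{1/N}$, i.e.\ normalization in the quadratic energy.

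There is a further problem with the test functions. A $\psi$ that is constant on the core makes $u=\psi_TX_1^{-H}$ near $0$, and then $\int|\nabla u|^Nw_\beta=\infty$. So the profile must decay on the core, for example like $e^{-(t-T)/T}$. A direct construction along these lines can be made to work, also for $\beta\le0$, which Theorem~\ref{beta series expansion} does not cover.

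The paper avoids test functions altogether. Suppose the bound held with $X_2^{N'\gamma}$ for some $\gamma<1/N$. Then Young's inequality yields $\int|u|^N|x|^{-N}X_1^{N-(N-1)\beta}X_2^{N\gamma+\rho}\lesssim1$ on $\{I_{N,w_\beta}\le1\}$ with $N\gamma+\rho<2$. This contradicts the optimality of the exponent $2$ in Propositions~\ref{beta improved series optimal} and~\ref{gen alpha improved series constant}.
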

 Finally, for $\beta < 0,$ we prove series expansion result in a more general setting \emph{i.e.} instead of considering distance from a point, we consider distance from some piecewise smooth closed surface $S$  of codimension $k$, for $1 \le k \le N$. We use some convention: for $k=1,$ we take $S=\partial \Omega,$ and for $k=N$ we take $S=\{0\}.$
 \par We set $d(x) = \text{dist}(x,S).$ Note that $d(x)$ is Lipschitz continuous and $\lvert \nabla d \rvert =1$ a.e.. When $S= \partial \Omega$, Brezis and Marcus \cite{BrezisMarcus} proved that,
    \begin{align} 
 \int_{\Omega} \lvert \nabla u \rvert^2 \, dx &\ge \frac{1}{4} \int_{\Omega} u^2 \left(\frac{1}{d^2} + \frac{1}{\text{diam}^2 \Omega}\right) \, dx, \quad \text{ and } \label{BM1} \\
    \int_{\Omega} \lvert \nabla u \rvert^2 \, dx &\ge \frac{1}{4} \int_{\Omega} \frac{u^2}{d^2} \left(1 + X_1^2\left(\frac{d}{\text{diam}\Omega}\right)\right) \label{BM2}
    \end{align}
    holds for all $u \in C_c^{\infty}(\Omega)$, where $\Omega$ is a convex bounded domain.
\par For general $d,$ we assume the following:
\begin{align} 
   &\mbox{for }p \neq k, \quad \Delta_p d^{\frac{p-k}{p-1}} \le 0, \quad \text{in} \quad \Omega \setminus S, \quad \text{and,} \label{p<k assumption}\\ 
   &\mbox{for } p=k, \quad \Delta_p \left(-\ln d\right) \le 0, \quad \text{in} \quad \Omega \setminus S, \label{p=k assumption}
\end{align}
holds in the weak sense, where $\Delta_p v = \operatorname{div} \left( \lvert \nabla v \rvert^{p-2} \nabla v\right).$ One can refer to \cite{Barbatis} for examples of $S$ and discussions on conditions \eqref{p<k assumption} and \eqref{p=k assumption}. We also assume that 
\begin{align}
    R_{d,\Omega} \coloneqq \sup_{x \in \Omega} d(x) < \infty,
\end{align} and observe that for $d(x)= \lvert x \rvert,$ we have $R_{d,\Omega} = R_{\Omega}.$ In \cite{BarbatisSeries}, by assuming \eqref{p<k assumption} and \eqref{p=k assumption} they showed that: for $p \neq k,$ there exists a positive constant $R_0 = R_0(p,k) \ge R_{d,\Omega}$ such that, for any $R \ge R_0$ and for all $u \in C^{\infty}_c(\Omega \setminus S)$ we have,
\begin{align} \label{general unweighted series expansion1}
  \notag  \int_{\Omega} \lvert \nabla u \rvert^p - &\left \lvert\frac{k-p}{p}\right \rvert^{p} \int_{\Omega} \frac{\lvert u \rvert^p}{d^p}\\
  & \hspace{30pt} \ge \frac{p-1}{2p} \left \lvert \frac{k-p}{p}\right \rvert^{p-2} \sum_{j=1}^{\infty} \int_{\Omega} \frac{\lvert u \rvert^p}{d^p}  X_1^2\left(\frac{\lvert x \rvert}{R}\right) \cdots X_j^2 \left(\frac{\lvert x \rvert}{R}\right),
\end{align}
and for $p=k$, we have,
\begin{align} \label{general unweighted series expansion2}
   \notag  \int_{\Omega} &\lvert \nabla u \rvert^k -  \left(\frac{k-1}{k}\right)^k \int_{\Omega} \frac{\lvert u \rvert^k}{d^k} X_1^k\left(\frac{d}{R_{d,\Omega}}\right) \\
  & \hspace{5pt} \ge \frac{1}{2} \left(\frac{k-1}{k}\right)^{k-1} \sum_{j=2}^{\infty} \int_{\Omega} \frac{\lvert u \rvert^k}{d^k} X_1^k\left(\frac{d}{R_{d,\Omega}}\right)  X_2^2\left(\frac{d}{R_{d,\Omega}}\right) \cdots X_j^2\left(\frac{d}{R_{d,\Omega}}\right).
\end{align}
Let us now consider weights of the form $X_1^{\lambda}\left(d/R_{d,\Omega}\right)$ for $\lambda>0$, where $d$ is same as we mentioned earlier together with the assumptions \eqref{p<k assumption} and \eqref{p=k assumption}. If the context is clear, we will write $X_j$ to mean $X_j\left(d/R_{d,\Omega}\right)$ for $j=1,2,\cdots.$ For $\lambda > 0$ and for $m=2,3, \cdots$ we set,
\begin{align} \label{Improved hardy general d}
       \notag  I_{m,\lambda}[u] &\coloneqq \int_{\Omega} \lvert \nabla u \rvert^k X_1^{\lambda} - \left(\frac{\lambda +k-1}{k}\right)^k  \int_{\Omega} \frac{\lvert u \rvert^k}{d^k} X_1^{\lambda + k} \\
    & \hspace{60pt}- \frac{k-1}{2k} \left(\frac{\lambda+k-1}{k}\right)^{k-2} \sum_{i=2}^{m} \int_{\Omega} \frac{\lvert u \rvert^k}{ d^k} X_1^{\lambda+k} X_2^2\cdots X_i^2.
     \end{align}
     For $p=k,$ we prove the following result.
\begin{theorem} \label{p=k series expansion}
      Let $\lambda > 0$, $p=k \ge 2$ and \eqref{p=k assumption} holds. Then for all $u \in W^{1,k}_0 \left(\Omega \setminus S, X_1^{\lambda} \left(d/R_{d,\Omega}\right) \, dx \right)$ the following inequality holds:
    \begin{align}
        \notag \int_{\Omega} \lvert \nabla u \rvert^k X_1^{\lambda} & -\left(\frac{\lambda +k-1}{k}\right)^k  \int_{\Omega} \frac{\lvert u \rvert^k}{d^k} X_1^{\lambda + k}  \\
       & \hspace{20pt} \ge \frac{k-1}{2k} \left(\frac{\lambda+k-1}{k}\right)^{k-2} \sum_{i=2}^{\infty} \int_{\Omega} \frac{\lvert u \rvert^k}{ d^k} X_1^{\lambda+k} X_2^2\cdots X_i^2 \, dx.
    \end{align}
   Moreover, the constants corresponding to the Hardy-type inequality and the  $m-$ improved Hardy-type inequalities are optimal \emph{i.e.} 
    \begin{align}
        \inf_{u \in W^{1,k}_0 \left(\Omega \setminus S, X_1^{\lambda} \left(d/R_{d,\Omega}\right) \, dx \right)} \frac{\int_{\Omega} \lvert \nabla u \rvert^k X_1^{\lambda} }{\int_{\Omega} \frac{\lvert u \rvert^k}{d^k} X_1^{\lambda + k} } = \left(\frac{\lambda +k-1}{k}\right)^k,
    \end{align}
    and for each $m=2,3, \cdots$ we have,
    \begin{align}
         \inf_{u \in W^{1,k}_0 \left(\Omega \setminus S, X_1^{\lambda} \left(d/R_{d,\Omega}\right) \, dx \right)} \frac{I_{m-1,\lambda}[u] }{\int_{\Omega} \frac{\lvert u \rvert^k}{ d^k} X_1^{\lambda+k}X_2^2 \cdots X_m^2 \, dx}.
    \end{align}
\end{theorem}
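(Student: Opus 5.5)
The plan is a ground state (supersolution) argument with a vector field. With $p=k$ we want a radial-type profile $\phi$ in the variable $d$ satisfying, in the weak sense,
\[
-\operatorname{div}\big(X_1^{\lambda}\lvert\nabla\phi\rvert^{k-2}\nabla\phi\big)\ \ge\ V\,\phi^{k-1}\quad\text{in }\Omega\setminus S,
\]
where
\[
V=\left(\frac{\lambda+k-1}{k}\right)^k\frac{X_1^{\lambda+k}}{d^k}+\frac{k-1}{2k}\left(\frac{\lambda+k-1}{k}\right)^{k-2}\sum_{i=2}^{m}\frac{X_1^{\lambda+k}X_2^2\cdots X_i^2}{d^k}.
\]
The standard Picone / Allegretto--Huang identity for the weighted $k$-Laplacian then gives $\int\lvert\nabla u\rvert^kX_1^\lambda\ge\int V\lvert u\rvert^k$ for $u\in C_c^\infty(\Omega\setminus S)$. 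Since the constants do not depend on $m$, letting $m\to\infty$ gives the series inequality, and density extends it to $W^{1,k}_0(\Omega\setminus S,X_1^\lambda)$.

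For the profile I take the Barbatis--Filippas--Tertikas type ansatz
\[
\phi=X_1^{-\frac{\lambda+k-1}{k}}\,(X_2\cdots X_m)^{-1/k},
\]
all $X_j$ evaluated at $d/R_{d,\Omega}$. With $r=d/R_{d,\Omega}$ we have $r\,X_1'=X_1^2$ and $r\,X_j'=X_1\cdots X_j\,X_j$. Hence
\[
\nabla\phi=-\phi\,\frac{X_1}{d}\Big(\frac{\lambda+k-1}{k}+\frac1k\sum_{j=2}^m X_2\cdots X_j\Big)\nabla d,
\]
and $\nabla\phi$ points along $-\nabla d$ with $\lvert\nabla d\rvert=1$. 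To compute the divergence, write $\lvert\nabla\phi\rvert^{k-2}\nabla\phi=-g(d)\,\lvert\nabla d\rvert^{k-2}\nabla d$ with $g\ge0$. Assumption \eqref{p=k assumption}, $\Delta_k(-\ln d)\le 0$, i.e. $\operatorname{div}(d^{1-k}\lvert\nabla d\rvert^{k-2}\nabla d)\ge0$, lets us drop the geometric term with the correct sign: write $g\,\lvert\nabla d\rvert^{k-2}\nabla d=(g d^{k-1})\,d^{1-k}\lvert\nabla d\rvert^{k-2}\nabla d$, so only the one-dimensional derivative of $h=X_1^\lambda g d^{k-1}$ survives. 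Expanding $h'$ by Taylor in the small quantities $X_2,\ X_2X_3,\dots$ gives the leading term $\left(\frac{\lambda+k-1}{k}\right)^k$ and the second-order terms $\frac{k-1}{2k}\left(\frac{\lambda+k-1}{k}\right)^{k-2}X_2^2\cdots X_i^2$, together with a remainder.

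The main obstacle is showing that this remainder is nonnegative. I would use the elementary inequality $(1+t)^{k-1}\ge 1+(k-1)t+\frac{(k-1)(k-2)}{2}t^2\cdot(\text{sign control})$ together with the known bound $\sum_{j\ge2}X_2\cdots X_j\le C$. Alternatively, as in \cite{BarbatisSeries}, I would first prove the inequality with $X_j(d/R)$ for $R$ large, where positivity is clear, and then pass to $R=R_{d,\Omega}$ by monotonicity of the $X_j$ and the fact that $X_1$ is already at its extreme scale. The careful bookkeeping of cross terms like $X_2^2X_3$ against $X_2^2X_3^2$ is where I expect the work to lie.

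For optimality of $\left(\frac{\lambda+k-1}{k}\right)^k$, I localize near a point of $S$ (or near $0$ when $k=N$), where $d$ behaves like the distance in the $k$ normal directions. I test with $u_\varepsilon=X_1^{-\frac{\lambda+k-1}{k}+\varepsilon}\,\psi$, with $\psi$ a cutoff, and let $\varepsilon\to0$. Both integrals blow up like $\varepsilon^{-1}$, and their ratio tends to the constant. For the $m$-improved constants I use $u_\varepsilon=X_1^{-\frac{\lambda+k-1}{k}}(X_2\cdots X_{m-1})^{-1/k}X_m^{-1/k+\varepsilon}\psi$. Expanding $\lvert\nabla u_\varepsilon\rvert^k$ to second order shows that $I_{m-1,\lambda}[u_\varepsilon]$ equals $\frac{k-1}{2k}\left(\frac{\lambda+k-1}{k}\right)^{k-2}\int\frac{\lvert u_\varepsilon\rvert^k}{d^k}X_1^{\lambda+k}X_2^2\cdots X_m^2$ up to $O(1)$, while the denominator diverges as $\varepsilon\to0$. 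This gives the infimum.
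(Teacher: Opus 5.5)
\textbf{The gap: optimality of the $m$-improved constants.} Your test function
\[
u_\varepsilon=X_1^{-L}(X_2\cdots X_{m-1})^{-1/k}X_m^{-1/k+\varepsilon}\psi,\qquad L=(\lambda+k-1)/k,
\]
has infinite energy. Indeed $\lvert u_\varepsilon\rvert^kX_1^{\lambda+k}d^{-k}=\psi^kX_1(X_2\cdots X_{m-1})^{-1}X_m^{-1+k\varepsilon}d^{-k}\ge\psi^kX_1d^{-k}$. By the coarea formula, $\int\psi^kX_1d^{-k}\,dx\ge c\int_0^{r_0}X_1(r/R_{d,\Omega})\,r^{-1}\,dr=\infty$. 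On $\{\psi=1\}$ you have $\lvert\nabla u_\varepsilon\rvert\ge L\lvert u_\varepsilon\rvert X_1d^{-1}$, so $\int\lvert\nabla u_\varepsilon\rvert^kX_1^\lambda=\infty$ as well. Hence $u_\varepsilon\notin W^{1,k}_0(\Omega\setminus S,X_1^\lambda)$, and $I_{m-1,\lambda}[u_\varepsilon]$ has the form $\infty-\infty$.

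Expanding $\lvert L+w/k\rvert^k$ does not repair this. The zeroth-order terms cancel pointwise, but the first-order term $L^{k-1}w\,\lvert u_\varepsilon\rvert^kX_1^{\lambda+k}d^{-k}$ remains, where $w=\sum_{j=2}^mX_2\cdots X_j-k\varepsilon X_2\cdots X_m\ge(1-k\varepsilon)X_2$. This term dominates a multiple of $\psi^kX_1X_2^{k\varepsilon}d^{-k}$, which is not integrable near $S$. For $m\ge3$, the cross terms $X_2^2\cdots X_j^2X_{j+1}\cdots X_l$ $(j<l)$ produced by $w^2$ are not integrable either. These pieces cancel only after integration by parts, and that is meaningful only if every exponent is regularized.

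This is what the paper does. It takes $v=X_1^{-L+\alpha_1/k}X_2^{(-1+\alpha_2)/k}\cdots X_m^{(-1+\alpha_m)/k}$ with all $\alpha_i>0$ and uses
\[
\alpha_i d^{-k}X_1\cdots X_{i-1}X_i^{1+\alpha_i}=\operatorname{div}\big(d^{1-k}X_i^{\alpha_i}\nabla d\big)-d^{-k}\big(d\Delta d+1-k\big)X_i^{\alpha_i},\qquad d\Delta d+1-k=O(d).
\]
This expresses $\alpha_i$ times each divergent integral through integrals of the next level, up to errors $O(1)$ uniform in the $\alpha$'s. The paper then lets $\alpha_1\to0,\dots,\alpha_{m-1}\to0$ in turn. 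In the limit it reaches $I_{m-1,\lambda}[u]\le\frac{k-1}{2k}L^{k-2}\big((1-\alpha_m)A_m+O(1)\big)$, with $A_m=\int\psi^kd^{-k}X_1X_2\cdots X_{m-1}X_m^{1+\alpha_m}\to\infty$ as $\alpha_m\to0$. Your $u_\varepsilon$ is the formal endpoint $\alpha_1=\dots=\alpha_{m-1}=0$, $\alpha_m=k\varepsilon$ of this family. Justifying that limit is the substance of the proof, and your sketch skips it.

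For the first constant, your family $X_1^{-L+\varepsilon}\psi$ is fine and simpler than the paper's $d^{\varepsilon}X_1^{-L-\tau}\eta$. Since it blows up on $S$, though, showing it lies in $W^{1,k}_0(\Omega\setminus S,X_1^\lambda)$ needs a truncation plus a cut-off of small weighted $k$-energy near $S$.

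\textbf{The inequality.} Your ground-state route differs from the paper's but works. Your $\phi$ corresponds to the field $L^{k-1}d^{1-k}X_1^{\lambda+k-1}\big(1+\frac{Y}{kL}\big)^{k-1}\nabla d$, with $Y=\sum_{i=2}^mX_2\cdots X_i$. The paper uses its linearization $L^{k-1}d^{1-k}X_1^{\lambda+k-1}\big(1+\frac{k-1}{kL}Y\big)\nabla d$ in $\int\lvert\nabla u\rvert^kX_1^\lambda\ge\int\big(\operatorname{div}V-(k-1)\lvert V\rvert^{k'}X_1^{-\lambda/(k-1)}\big)\lvert u\rvert^k$. Then only $(1+s)^{k'}\le1+k's+\frac{k'(k'-1)}{2}s^2$ for $s\ge0$, $k'\le2$, is needed.

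The remainder step you left open closes directly. Set $t=Y/(kL)$, $Q=\sum_{i=2}^m\sum_{j=2}^iX_2^2\cdots X_j^2X_{j+1}\cdots X_i$ and $Z=\sum_{i=2}^mX_2^2\cdots X_i^2$. Then
\[
\frac{-\operatorname{div}\big(X_1^\lambda\lvert\nabla\phi\rvert^{k-2}\nabla\phi\big)}{\phi^{k-1}}\ge\frac{X_1^{\lambda+k}}{d^k}\Big[L^k(1+t)^{k-1}\big(1-(k-1)t\big)+\frac{k-1}{k}L^{k-2}(1+t)^{k-2}Q\Big].
\]
Two facts finish it. First, $(1+t)^{k-1}(1-(k-1)t)=1-k(k-1)\int_0^ts(1+s)^{k-2}\,ds\ge1-\frac{k(k-1)}{2}t^2(1+t)^{k-2}$. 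Second, $Q-\frac12Y^2=\frac12Z$. Together they bound the bracket below by $L^k+\frac{k-1}{2k}L^{k-2}(1+t)^{k-2}Z\ge L^k+\frac{k-1}{2k}L^{k-2}Z$.

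A bound valid for all $t\ge0$ is needed, because $Y$ is not small near $\{d=R_{d,\Omega}\}$, where all $X_j$ are close to $1$. Your fallback would not work: proving the inequality for large $R$ and passing to $R_{d,\Omega}$ by monotonicity fails, since the Hardy term on the right-hand side grows as $R$ decreases to $R_{d,\Omega}$.
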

Note that the present weight $X_1^{\lambda} \left(d/R_{d,\Omega}\right)$ gives  different constants from \eqref{general unweighted series expansion2}. For $p \neq k,$ we prove the following result.
\begin{theorem} \label{p not k thm}
     Let $p \neq k$, $\lambda > 0$ and \eqref{p<k assumption} holds. For all $u \in C_c^{\infty}\left(\Omega \setminus S\right)$, the following holds: For $1<p<k,$ we have,
    \begin{align} \label{intro p<k hardy}
        \int_{\Omega} \lvert \nabla u \rvert^p X_1^{\lambda} \left(\frac{d}{R_{d,\Omega}}\right) \ge \left \lvert \frac{k-p}{p}\right \rvert^p  \int_{\Omega} \frac{\lvert u \rvert^p}{ d^p} X_1^{\lambda} \left(\frac{d}{R_{d,\Omega}}\right),
    \end{align}
    and for $p>k,$ we have,
    \begin{align} \label{intro p>k hardy}
         \int_{\Omega} \lvert \nabla u \rvert^p X_1^{-\lambda} \left(\frac{d}{R_{d,\Omega}}\right) \ge \left \lvert \frac{k-p}{p}\right \rvert^p   \int_{\Omega} \frac{\lvert u \rvert^p}{ d^p} X_1^{-\lambda} \left(\frac{d}{R_{d,\Omega}}\right),
    \end{align}
  In both cases \eqref{intro p<k hardy} and \eqref{intro p>k hardy} the constant $\lvert (k-p)/p\rvert^p$ is optimal. Moreover, for $2 \le p < k$, we have the following series expansion,
  \begin{align}
     \notag  \int_{\Omega} \lvert \nabla u \rvert^p X_1^{\lambda} &- \left(\frac{k-p}{p}\right)^p  \int_{\Omega} \frac{\lvert u \rvert^p}{d^p} X_1^{\lambda}  \\
    & \hspace{40pt}\ge \frac{p-1}{2p} \left(\frac{k-p}{p}\right)^{p-2} \left(\sum_{i=1}^{\infty} \int_{\Omega} \frac{\lvert u \rvert^p}{d^p} X_1^{\lambda} X_1^2 \cdots X_i^2 \, dx\right).
  \end{align}
\end{theorem}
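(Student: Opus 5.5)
The plan is to use the vector-field (supersolution) method adapted to the weight $X_1^{\pm\lambda}$, for $1<p<k$ and $p>k$ alike.

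\textbf{Hardy inequality.} Fix a vector field $T$ on $\Omega\setminus S$. For $u\in C_c^\infty(\Omega\setminus S)$, integrate by parts and apply Young's inequality:
\begin{align*}
\int_\Omega \operatorname{div}(T)\,|u|^p = -p\int_\Omega |u|^{p-2}u\,\nabla u\cdot T \le \int_\Omega |\nabla u|^p w + (p-1)\int_\Omega |T|^{p'} w^{-p'/p}|u|^p ,
\end{align*}
where $w=X_1^{\pm\lambda}$. Take
\begin{align*}
T=\operatorname{sgn}(k-p)\,\Big|\tfrac{k-p}{p}\Big|^{p-1} X_1^{\pm\lambda}\,\frac{\nabla d}{d^{p-1}}.
\end{align*}
Then $|T|^{p'}w^{-p'/p}=|\frac{k-p}{p}|^{p}X_1^{\pm\lambda}d^{-p}$. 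Two contributions to $\operatorname{div}T$ need checking.

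First, assumption \eqref{p<k assumption} can be written as $\operatorname{sgn}(k-p)\operatorname{div}(\nabla d\, d^{1-p})\ge (k-p)d^{-p}$, using $|\nabla d|=1$; here $\Delta_p d^{(p-k)/(p-1)}$ is a positive multiple of $\operatorname{div}(d^{1-p}\nabla d)\cdot\operatorname{sgn}(p-k)$. This yields the main term $p|\frac{k-p}{p}|^p X_1^{\pm\lambda}d^{-p}$.

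Second, the derivative of the weight gives $\nabla X_1^{\pm\lambda}=\pm\lambda X_1^{\pm\lambda+1}\nabla d/d$. This contributes $\pm\lambda\operatorname{sgn}(k-p)|\cdot|^{p-1}X_1^{\pm\lambda+1}d^{-p}$, which is nonnegative exactly when the sign is $+$ for $p<k$ and $-$ for $p>k$. That is precisely the choice in \eqref{intro p<k hardy} and \eqref{intro p>k hardy}, so this term can be dropped.

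Combining the two contributions gives $(p-1+1)|\cdot|^p=p|\cdot|^p$ on the left against $(p-1)|\cdot|^p$ on the right, which yields the inequalities. The divergence identity holds in the weak sense because \eqref{p<k assumption} is weak.

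\textbf{Optimality.} Since $X_1\to0$ slowly near $S$, the weight is essentially locally constant at small scales. Near a smooth point of $S$, use test functions $u_\varepsilon = d^{-(k-p)/p+\varepsilon}\phi$ for $p<k$, or $d^{(p-k)/p+\varepsilon}\phi$ for $p>k$, cut off near $S$ and supported in a tubular neighbourhood. The ratio of the two sides then tends to $|\frac{k-p}{p}|^p$ as $\varepsilon\to0$. The term coming from $\nabla X_1^{\lambda}$ carries an extra factor $X_1$; I expect it to be $o(\cdot)$ relative to the divergent main integral, which behaves like $\varepsilon^{-1}$, after using the tubular coordinates of $S$ (codimension $k$). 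This estimate is routine but must be checked with the logarithmic weight.

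\textbf{Series expansion ($2\le p<k$).} I would not redo the unweighted proof of \eqref{general unweighted series expansion1}. Instead, I would repeat the argument of \cite{BarbatisSeries} with the weighted field
\begin{align*}
T=X_1^{\lambda}\,\Big(\tfrac{k-p}{p}\Big)^{p-1}\frac{\nabla d}{d^{p-1}}\Big(1+\tfrac{p-1}{p}\tfrac{p}{k-p}\sum_{i=1}^{m}X_1\cdots X_i\Big),
\end{align*}
with $X_i$ taken at $d/R$ for suitable large $R$, as in the weighted versions above. The divergence of $X_1^\lambda$ again gives an extra nonnegative term, since $\lambda>0$ and $p<k$, which is simply dropped. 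The remaining computation is the unweighted one multiplied by $X_1^\lambda$.

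\textbf{Main obstacle.} In the Young step, the inequality $|a+b|^{p'}\ge\ldots$ needs the elementary expansion $(1+t)^{p'}\le 1+p't+\frac{p'(p'-1)}{2}t^2$, valid for $p\ge2$. This is where the hypothesis $p\ge2$ enters, and it requires controlling the sum $\sum X_1\cdots X_i$ uniformly, which is why the $X_i$ are evaluated at $d/R$ with $R\ge R_0(p,k)$. Letting $m\to\infty$ by monotone convergence then gives the series.
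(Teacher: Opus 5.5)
Your proofs of the two inequalities and of the series expansion follow the paper's argument. You use the same field $M^{p-1}d^{1-p}\nabla d$, with the factor $1+\frac{p-1}{pM}\sum_{i\le m}X_1\cdots X_i$ for the series, multiplied by the weight, and you discard the nonnegative term coming from $\nabla X_1^{\pm\lambda}$. There is one small slip. $\Delta_p d^{(p-k)/(p-1)}$ is a positive multiple of $\operatorname{sgn}(p-k)\operatorname{div}(d^{1-k}\nabla d)$, not of $\operatorname{div}(d^{1-p}\nabla d)$. The hypothesis therefore amounts to $\operatorname{sgn}(k-p)\,(d\Delta d-(k-1))\ge 0$. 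This gives $\operatorname{sgn}(k-p)\operatorname{div}(d^{1-p}\nabla d)\ge \lvert k-p\rvert\, d^{-p}$, with $\lvert k-p\rvert$ rather than $k-p$ when $p>k$.

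The genuine gap is the optimality argument for $1<p<k$. With $u_\varepsilon=\phi\,d^{-M+\varepsilon}$ and $M=(k-p)/p$, the denominator is
\[
\int_\Omega \frac{\lvert u_\varepsilon\rvert^p}{d^p}X_1^{\lambda}\,dx=\int_\Omega \phi^p d^{-k+p\varepsilon}X_1^{\lambda}\,dx .
\]
This is of order $\varepsilon^{\lambda-1}$ for $\lambda<1$ and $\ln(1/\varepsilon)$ for $\lambda=1$, and it stays bounded for $\lambda>1$ (Lemma~\ref{J_gamma}(iii) with $\gamma=-\lambda$). It is never of order $\varepsilon^{-1}$.

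For $\lambda>1$ the family fails outright. Keeping the term you drop in the vector-field computation gives
\[
\int_\Omega\lvert\nabla u\rvert^pX_1^{\lambda}-M^p\int_\Omega\frac{\lvert u\rvert^p}{d^p}X_1^{\lambda}\ge\lambda M^{p-1}\int_\Omega\frac{\lvert u\rvert^p}{d^p}X_1^{\lambda+1}.
\]
Along your family, the ratio of the right-hand side to the denominator tends to $\lambda M^{p-1}\int\phi^pd^{-k}X_1^{\lambda+1}\big/\int\phi^pd^{-k}X_1^{\lambda}>0$. So the quotients stay bounded away from $M^p$.

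The heuristic that $X_1$ is ``essentially locally constant'' is exactly what breaks down. The near-extremal profile $d^{-M}$ produces the borderline density $d^{-k}$, and at that borderline the logarithmic factors decide whether the integral diverges.

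The paper's remedy is to absorb the weight into the test function, taking $V_\varepsilon=\eta\,d^{-M+\varepsilon}X_1^{-\lambda/p}$. Then $\lvert V_\varepsilon\rvert^pd^{-p}X_1^{\lambda}=\eta^pd^{-k+p\varepsilon}$, and the denominator is $J_0(\varepsilon)\ge c\,\varepsilon^{-1}$ for every $\lambda>0$. Differentiating $X_1^{-\lambda/p}$ produces exactly the extra factor $X_1$ you anticipated, which contributes $O(\ln(1/\varepsilon))=o(\varepsilon^{-1})$. Your remarks about a ``$\nabla X_1^{\lambda}$ term'' and an $\varepsilon^{-1}$ main integral only make sense for this modified function.

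For $p>k$ your unweighted choice $\phi\,d^{(p-k)/p+\varepsilon}$ does work. There the weight $X_1^{-\lambda}$ only makes the denominator diverge faster, like $\varepsilon^{-1-\lambda}$.
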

Furthermore, in the spirit of \cite{Adi-San, Adi-Yang}, we establish interpolation inequalities between Hardy-type inequalities and Leray-Trudinger-type inequalities \cite{adimurthi2026leraytrudingertypeexponentialintegrability}, see Section \ref{interpolation} for more details, in particular see \Cref{moser weighted1} and \Cref{moser weighted3}.
\section{Preliminaries} \label{prel}
Throughout the article, we assume that $\Omega$ is a domain in $\mathbb{R}^N$ such that $R_{d,\Omega}= \sup_{x \in \Omega} d(x) < \infty$, where $d(x) = \text{dist}(x,S)$ for a piecewise smooth closed surface $S$ of co-dimension $k$ for $1\le k \le N$. Thus, in the case of $d(x) = \lvert x \rvert,$ we have $R_{d,\Omega}= R_{\Omega} < \infty.$ We denote $\mathbb{B}_N$ as the open unit ball centered at the origin in $\mathbb{R}^N$ and $\omega_{N-1}$ as the surface area of $\partial \mathbb{B}_N.$
\par We first recall some results that will be useful in deriving the sharp forms of  Hardy inequalities in Log-weighted Sobolev spaces $W^{1,N}_0\left(\Omega\setminus \{0\},w_{\beta}\right)$, where $w_{\beta}$ is defined as in\eqref{weights w}. We will use the following Lemma from \cite[Lemma $3.1$]{adimurthi2026leraytrudingertypeexponentialintegrability}.
\begin{lemma}\label{contains lip functions}
    Let $0<\beta <\infty$. Then  $ W^{1,N}_{0}\left(\Omega, X_1^{-\beta(N-1)}\right)$ contains all locally Lipschitz functions vanishing at the boundary. 
\end{lemma}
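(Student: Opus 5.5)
The statement is Lemma~\ref{contains lip functions}: every locally Lipschitz function vanishing on $\partial\Omega$ lies in $W^{1,N}_0(\Omega, X_1^{-\beta(N-1)})$. The plan has three parts: show the weight is integrable, so Lipschitz functions have finite weighted energy; approximate by compactly supported functions; then mollify. The weight $w_\beta = X_1^{-\beta(N-1)} = (\ln(eR_\Omega/|x|))^{\beta(N-1)}$ blows up only logarithmically at the origin. Since $\beta>0$, it is bounded on $\Omega$ away from $0$ and lies in $L^1(\Omega)$, because $\int_0^{R_\Omega} r^{N-1}(\ln(eR_\Omega/r))^{\beta(N-1)}\,dr<\infty$. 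Consequently, for $u$ locally Lipschitz vanishing on $\partial\Omega$, we have $\int_\Omega |\nabla u|^N w_\beta\,dx \le \|\nabla u\|_{L^\infty}^N\|w_\beta\|_{L^1}<\infty$. (I will take $u$ globally Lipschitz on $\overline\Omega$; otherwise I first truncate as below.)

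\emph{Step 1: reduction to compact support.} For $\varepsilon>0$ I set $u_\varepsilon = \operatorname{sign}(u)\,(|u|-\varepsilon)^+$. This function is Lipschitz, has support in $\{|u|\ge\varepsilon\}$, and the support is compact in $\Omega$ because $u$ is continuous and vanishes on $\partial\Omega$. Moreover $|\nabla u_\varepsilon|\le|\nabla u|$ and $\nabla u_\varepsilon=\nabla u\,\chi_{\{|u|>\varepsilon\}}$ a.e. Since $\nabla u=0$ a.e.\ on $\{|u|=0\}$ and $\chi_{\{|u|>\varepsilon\}}\to\chi_{\{|u|>0\}}$, dominated convergence with dominating function $|\nabla u|^N w_\beta\in L^1$ gives $\|\nabla(u_\varepsilon-u)\|_{N,w_\beta}\to0$. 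I should also check that the norm \eqref{weighted norms} controls $u$ itself, so that the closure identifies functions. This follows from the Hardy inequality \eqref{weighted Hardy} with $\gamma=\beta\neq1$, or simply from $w_\beta\ge1$ together with the unweighted Poincaré inequality.

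\emph{Step 2: mollification.} For a compactly supported Lipschitz $v$, I take $v*\rho_\delta\in C_c^\infty(\Omega)$. Then $\nabla(v*\rho_\delta)=(\nabla v)*\rho_\delta$ is uniformly bounded by $\|\nabla v\|_\infty$, has support in a fixed compact set, and converges to $\nabla v$ a.e. Dominated convergence with dominating function $C\,w_\beta\in L^1$ gives convergence in $\|\cdot\|_{N,w_\beta}$.

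Finally I need a diagonal argument over $\varepsilon$ and $\delta$ to produce a single approximating sequence. The main obstacle is only the singularity of $w_\beta$ at $0$. For $\beta>0$ it is dispatched by the integrability computation above. If I wanted to handle $u$ that is merely locally Lipschitz with $\nabla u$ unbounded near $\partial\Omega$, the truncation of Step~1 already confines everything to a compact set where $u$ is Lipschitz, so the argument goes through once $\int|\nabla u|^Nw_\beta<\infty$ is assumed.
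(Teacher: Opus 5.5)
Your proof is correct. The paper itself gives no proof of this lemma: it imports it from \cite[Lemma~3.1]{adimurthi2026leraytrudingertypeexponentialintegrability}. The remark on the Muckenhoupt $A_N$ property before Lemma~\ref{R vanishing at bdry} suggests the intended argument may lean on weighted-Sobolev machinery, though the paper does not say so.

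The one statement of this type the paper does prove, Lemma~\ref{loc lip cpt support}, is handled exactly as in your Step~2: mollification, uniform gradient bounds on a fixed compact set, and dominated convergence with the locally integrable weight as majorant. That lemma, however, assumes compact support from the outset. Your Step~1, the range truncation $\operatorname{sign}(u)(|u|-\varepsilon)^+$, is the extra ingredient that turns ``vanishing on the boundary'' into compact support. It costs nothing, because $|\nabla u_\varepsilon|\le|\nabla u|$ holds pointwise. A spatial cut-off $\phi_\tau$, like the one in the proof of Proposition~\ref{p<k ineq hardy prop}, would instead require an estimate of the error term $\int|\nabla\phi_\tau|^N|u|^N w_\beta$. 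Overall your route uses only $w_\beta\in L^1(\Omega)$ and $w_\beta\ge 1$, the latter solely to identify the limit via Poincar\'e.

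Finally, the finite-energy version you sketch at the end is the one the paper actually needs. In Propositions~\ref{sharp hardy type1} and \ref{Improved sharp without beta} the lemma is applied on $\Omega\setminus\{0\}$ to functions such as $\eta|x|^{\varepsilon}X_1^{-H-\tau}$, whose gradients blow up at the origin. Your truncation argument covers these verbatim, whereas the ``globally Lipschitz on $\overline\Omega$'' reading would not.
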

We now introduce another weight $\ln \left(R/\lvert x \rvert\right)$ for $x \in \mathbb{B}_N$, where $R>1.$ We extend this to the whole of $\mathbb{R}^N$ by setting $1$ outside $\mathbb{B}_N,$ more precisely, we set
\begin{align*}
    E(x) =\begin{cases}
        \left(\ln \left(\frac{R}{\lvert x \rvert}\right)\right)^{N-1}, \quad \lvert x \rvert < 1\\
        1, \quad       \lvert x \rvert \ge 1.
    \end{cases}
\end{align*}
Analogous to \cite[Section 2, Proposition 1]{Sami-Jlel}, it can be shown that $D$ is  Muckenhoupt's $A_N$ weight \emph{i.e.} there exists a positive constant $c_N$ such that
\begin{align*}
    \left(\fint_{B} E(x) \, dx\right)\left(\fint_{B} E^{1/(1-N)}(x) \, dx\right)^{N-1} \le c_{N},
\end{align*}
for all balls $B \subset \mathbb{R}^N$, for more details on Muckenhoupt's weights  see \cite[Chapter $15$]{Heinonen}.  Similar to \cite[Lemma $3.1$]{adimurthi2026leraytrudingertypeexponentialintegrability}, we have the following Lemma. The proof is omitted.
\begin{lemma} \label{R vanishing at bdry}
   Let $R>1$. Then, $ W^{1,N}_{0}\left(\mathbb{B}_N, \left(\ln (R/\lvert x \rvert)\right)^{N-1}\right)$ contains all locally Lipschitz functions vanishing at the boundary. 
\end{lemma}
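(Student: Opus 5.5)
The proof follows the scheme of \cite[Lemma $3.1$]{adimurthi2026leraytrudingertypeexponentialintegrability}, i.e.\ of Lemma \ref{contains lip functions}. Put $E_R(x)=(\ln(R/|x|))^{N-1}$ on $\mathbb{B}_N$. Since $R>1$, on $\mathbb{B}_N$ we have $\ln(R/|x|)\ge \ln R>0$, so $E_R$ is bounded below by a positive constant. It is unbounded above only near the origin, where it behaves like $|\ln|x||^{N-1}$, and this is integrable. Let $u$ be locally Lipschitz on $\mathbb{B}_N$ and vanishing on $\partial\mathbb{B}_N$. We may assume $\int_{\mathbb{B}_N}|\nabla u|^N E_R\,dx<\infty$; otherwise the statement is read as concerning functions of finite weighted energy, which holds e.g.\ when $u$ is Lipschitz up to the boundary, since then $|\nabla u|^N E_R\le C E_R\in L^1$. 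The goal is a sequence in $C^1_c(\mathbb{B}_N)$ converging to $u$ in the norm $\|\nabla\cdot\|_{N,E_R,\mathbb{B}_N}$.

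\textbf{Step 1 (support away from the boundary).} Set $u_\varepsilon=\operatorname{sign}(u)(|u|-\varepsilon)^+$. Then $u_\varepsilon$ is locally Lipschitz, and since $u$ is continuous and vanishes on $\partial\mathbb{B}_N$, $u_\varepsilon$ has compact support in $\mathbb{B}_N$. Moreover $\nabla u_\varepsilon=\nabla u\,\chi_{\{|u|>\varepsilon\}}$, so $|\nabla(u-u_\varepsilon)|^N E_R\le|\nabla u|^N E_R\,\chi_{\{0<|u|\le\varepsilon\}}$ a.e. (using that $\nabla u=0$ a.e.\ on $\{u=0\}$). Dominated convergence gives $u_\varepsilon\to u$ in the weighted norm.

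\textbf{Step 2 (mollification).} Fix $\varepsilon$. The function $u_\varepsilon$ is Lipschitz with support in a compact $K\subset\mathbb{B}_N$. Its mollifications $u_\varepsilon*\rho_\delta$ lie in $C_c^\infty(\mathbb{B}_N)$ for small $\delta$, are supported in a fixed compact set, and satisfy $\nabla(u_\varepsilon*\rho_\delta)\to\nabla u_\varepsilon$ a.e.\ with $|\nabla(u_\varepsilon*\rho_\delta)|\le \operatorname{Lip}(u_\varepsilon)$ uniformly in $\delta$. Because $E_R\in L^1(\mathbb{B}_N)$, dominated convergence gives $\int|\nabla(u_\varepsilon*\rho_\delta-u_\varepsilon)|^N E_R\to0$. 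This is the step where the integrability of $E_R$, rather than its boundedness, is what matters.

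Alternatively, one can extend $E_R$ by $1$ outside $\mathbb{B}_N$ and use the Muckenhoupt $A_N$ property stated above. Then the maximal function is bounded on $L^N(E\,dx)$, so mollification converges in the weighted norm for any $u_\varepsilon$ of finite energy, not only Lipschitz ones.

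\textbf{Main obstacle.} The delicate point is the origin, where the weight blows up and $u$ is only locally Lipschitz. Local Lipschitz continuity on $\mathbb{B}_N$ includes a neighbourhood of $0$, so $\nabla u$ is bounded there. Hence the singular weight only has to be integrable, and it is, since $|\ln r|^{N-1}r^{N-1}$ is integrable near $0$. A diagonal choice of $\varepsilon,\delta$ then finishes the proof.
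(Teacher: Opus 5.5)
Your proof is correct, and it takes a more elementary route than the one the paper points to.

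The paper does not actually prove this lemma. It says only that it is ``similar to'' \cite[Lemma $3.1$]{adimurthi2026leraytrudingertypeexponentialintegrability}, and just before that it records that the extended weight $E$ is a Muckenhoupt $A_N$ weight. So the intended route is presumably the weighted Sobolev theory for $A_N$ (hence $N$-admissible) weights in \cite{Heinonen}. There the $A_N$ condition, through boundedness of the maximal function on $L^N(E\,dx)$, is what makes smooth approximation work in the weighted gradient norm.

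Your argument is self-contained instead. The truncation $\operatorname{sign}(u)(|u|-\varepsilon)^+$ reduces everything to a Lipschitz function with compact support in $\mathbb{B}_N$. For such a function, the uniform bound on the mollified gradients together with the mere integrability $E_R\in L^1(\mathbb{B}_N)$ gives dominated convergence. Hence the $A_N$ property is not needed for this lemma at all. The fact that makes this work is the one you isolate: local Lipschitz continuity includes the origin, which is exactly where the weight is singular.

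What the $A_N$ route gains is generality. It gives density for every finite-energy function vanishing at the boundary, including functions that are not Lipschitz at $0$; your alternative remark already covers this.

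Your caveat that $\int_{\mathbb{B}_N}|\nabla u|^N E_R\,dx<\infty$ must be assumed is appropriate. It is an implicit hypothesis of the statement, since a locally Lipschitz function oscillating near $\partial\mathbb{B}_N$ can vanish there and still have infinite energy. It is satisfied by the functions $u_{r_1,\tau}$ to which the lemma is applied in Proposition \ref{beta=1}.
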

The following result can be seen as a particular case of \cite[Theorem $3.4$]{adimurthi2026leraytrudingertypeexponentialintegrability}.
\begin{theorem} \label{approximation}
    Let $0\le \beta \le 1$. Then, 
    \begin{align*}
        W^{1,N}_{0}\left(\Omega, X_1^{-\beta(N-1)}\right) = W^{1,N}_{0}\left(\Omega \setminus \lbrace 0 \rbrace, X_1^{-\beta(N-1)}\right).
    \end{align*}
\end{theorem}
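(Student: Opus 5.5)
The plan is to show the two inclusions separately. The inclusion $W^{1,N}_{0}(\Omega\setminus\{0\},X_1^{-\beta(N-1)})\subseteq W^{1,N}_{0}(\Omega,X_1^{-\beta(N-1)})$ is immediate, since $C^1_c(\Omega\setminus\{0\})\subset C^1_c(\Omega)$ and both spaces are closures under the same norm $\lVert\nabla\cdot\rVert_{N,w_\beta}$. For the reverse inclusion, by density it suffices to fix $u\in C^1_c(\Omega)$ and produce $u_n\in C^1_c(\Omega\setminus\{0\})$ (or locally Lipschitz functions vanishing near $0$ and on $\partial\Omega$, which Lemma \ref{contains lip functions} lets us use for $\beta>0$; for $\beta=0$ this is the standard unweighted statement) with $\int_\Omega|\nabla(u-u_n)|^N X_1^{-\beta(N-1)}\,dx\to0$.

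The construction is $u_n=u\,\eta_n$, where $\eta_n$ is a radial cutoff of iterated-logarithmic type, exploiting that a point has zero capacity in this borderline weighted $N$-setting. I would take $\eta_n(x)=\phi_n(|x|/R_\Omega)$ with
\[
\phi_n(r)=\min\Bigl\{1,\ \max\Bigl\{0,\ \tfrac{\ln\ln(e/r)-\ln\ln(e/\delta_n)}{\ln\ln(e/\varepsilon_n)-\ln\ln(e/\delta_n)}\Bigr\}\Bigr\}\quad\text{(suitably reoriented so }\phi_n=0\text{ near }0,\ =1\text{ for }r\ge\delta_n),
\]
with $\varepsilon_n\ll\delta_n\to0$. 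Then $\nabla u_n=\eta_n\nabla u+u\nabla\eta_n$. The first term converges to $\nabla u$ in the weighted norm by dominated convergence, as $X_1^{-\beta(N-1)}$ is integrable near $0$. The second term is bounded by $\|u\|_\infty^N\int|\nabla\eta_n|^N X_1^{-\beta(N-1)}$. Using $|\nabla\eta_n|\lesssim \frac{1}{r\ln(e/r)}\cdot\frac{1}{L_n}$ on the annulus, where $L_n$ is the log-log gap, and polar coordinates, this becomes
\[
\frac{\omega_{N-1}}{L_n^N}\int_{\varepsilon_n}^{\delta_n}\frac{(\ln(e/r))^{\beta(N-1)-N}}{r}\,dr .
\]

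The main obstacle is making this tend to zero uniformly over the range $0\le\beta\le1$, and especially at the endpoint $\beta=1$. There the integrand is $r^{-1}(\ln(e/r))^{-1}$, the integral equals $\ln\ln(e/\varepsilon_n)-\ln\ln(e/\delta_n)=L_n$, and the whole expression is $\omega_{N-1}L_n^{1-N}\to0$ as $L_n\to\infty$, since $N\ge2$. For $\beta<1$ the exponent $\beta(N-1)-N<-1$, so the integral is bounded by $C(\ln(e/\delta_n))^{\beta(N-1)-N+1}\to0$. In this case even the simpler log-cutoff $\ln(e/r)$ would suffice. With the iterated-log choice, the integrand $r^{-1}(\ln(e/r))^{\beta(N-1)-N}$ is dominated by $r^{-1}(\ln(e/r))^{-1}$ for $\beta\le1$, so the same bound $L_n^{1-N}$ covers all $\beta\in[0,1]$ at once. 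Choosing, for example, $\delta_n=1/n$ and $\varepsilon_n$ with $L_n=n$ gives the convergence. A final mollification, or a direct appeal to Lemma \ref{contains lip functions}, turns the Lipschitz $u_n$ into $C^1_c(\Omega\setminus\{0\})$ approximants.
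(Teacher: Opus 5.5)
Your argument is correct. It differs from the paper mainly because the paper gives no argument of its own here: it records the identity as a special case of Theorem 3.4 of \cite{adimurthi2026leraytrudingertypeexponentialintegrability} and moves on.

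What you supply is a self-contained zero-capacity proof. The inclusion $\supseteq$ is trivial. For $u\in C^1_c(\Omega)$ you use the cutoff $u\eta_n$ with the $\ln\ln$ broken-line profile, which is the same profile the paper later uses as $\eta_{r_1}$ in \eqref{radial ltm function}. Dominated convergence handles $(1-\eta_n)\nabla u$. The explicit bound
$\int_\Omega|\nabla\eta_n|^N X_1^{-\beta(N-1)}\,dx\le\omega_{N-1}L_n^{1-N}$
holds uniformly for $\beta\in[0,1]$, because $\ln(e/r)\ge1$ and $\beta(N-1)-N\le-1$.

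The citation buys brevity, together with whatever generality the cited theorem carries. Your computation buys transparency: the cost of cutting off at the origin is governed by $\int_0 r^{-1}(\ln(e/r))^{-\beta}\,dr$. This integral diverges precisely when $\beta\le1$, which is why that is the natural range for the statement.

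One small remark: the appeal to Lemma \ref{contains lip functions} is unnecessary, and as stated that lemma concerns $\Omega$ rather than $\Omega\setminus\{0\}$. Since $u\eta_n$ is Lipschitz with compact support in $\Omega\setminus\{0\}$, and the weight is bounded there, plain mollification already gives the required $C^1_c(\Omega\setminus\{0\})$ approximants.
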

We also need the following Hardy inequality, see \cite[Lemma 2.1]{PsaradakisJFA} for a proof.
\begin{lemma} \label{psara}
For $\gamma \in \mathbb{R}$ with $\gamma \neq 1$ and for all $u \in C^{\infty}_c\left(\Omega \setminus \{0\} \right)$ we have,
\begin{align} \label{Hardy type}
     \int_{\Omega} \lvert \nabla u \rvert^N & X_1^{-\gamma(N-1)} \, dx \ge   \left \lvert \frac{1-\gamma}{N'}\right \rvert^N \int_{\Omega} \frac{\lvert u \rvert ^N}{\lvert x \rvert^N} X_1^{N - (N-1) \gamma} \, dx.
\end{align}
\end{lemma}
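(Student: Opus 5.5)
The statement to prove is Lemma \ref{psara}: for $\gamma\neq 1$ and $u\in C_c^\infty(\Omega\setminus\{0\})$,
\[
\int_\Omega |\nabla u|^N X_1^{-\gamma(N-1)}\,dx \ \ge\ \Bigl|\tfrac{1-\gamma}{N'}\Bigr|^N \int_\Omega \frac{|u|^N}{|x|^N} X_1^{N-(N-1)\gamma}\,dx .
\]
The plan is the standard vector-field (divergence) method. Throughout, write $X_1=X_1(|x|/R_\Omega)$, and note that $|x|/R_\Omega\le 1$ on $\Omega$, so $X_1\in(0,1]$.

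\textbf{Step 1: the vector field and its divergence.} Set
\[
V(x)=\frac{x}{|x|^N}\,X_1^{\,1-(N-1)\gamma} ,
\]
which is smooth on $\Omega\setminus\{0\}$. Two facts do the work. First, $\frac{d}{dr}X_1(r/R_\Omega)=X_1^2/r$. Second, $\operatorname{div}(x/|x|^N)=0$ away from the origin. Together they give
\[
\operatorname{div}V=(1-(N-1)\gamma)\,X_1^{-(N-1)\gamma}\,\frac{X_1^2}{|x|^N}=(1-(N-1)\gamma)\,\frac{X_1^{\,2-(N-1)\gamma}}{|x|^N}.
\]
This is not yet the weight we want: the target right-hand side carries $X_1^{N-(N-1)\gamma}$, so the exponent of $X_1$ must be tuned.

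\textbf{Step 2: adjust the exponent.} Use instead
\[
V=\frac{x}{|x|^N}\,X_1^{a},\qquad a=N-1-(N-1)\gamma=(N-1)(1-\gamma).
\]
Then $\operatorname{div}V=a\,X_1^{a+1}/|x|^N$, and the exponent matches the target because $a+1=N-(N-1)\gamma$. Also $|V|=|x|^{1-N}X_1^{a}$ and $a\neq 0$ since $\gamma\ne1$. Multiplying by $\operatorname{sgn}(a)$ we may take $a>0$. Integrate by parts; there are no boundary terms because $u$ has compact support away from $0$:
\[
|a|\int_\Omega \frac{|u|^N}{|x|^N}X_1^{a+1}
=\Bigl|\int_\Omega |u|^N \operatorname{div}V\Bigr|
\le N\int_\Omega |u|^{N-1}|\nabla u|\,|x|^{1-N}X_1^{a}.
\]

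\textbf{Step 3: H\"older and absorption.} Split the integrand on the right as
\[
\Bigl(|u|^{N-1}|x|^{1-N}X_1^{(a+1)(N-1)/N}\Bigr)\cdot\Bigl(|\nabla u|\,X_1^{\,a-(a+1)(N-1)/N}\Bigr)
\]
and apply H\"older with exponents $N'$ and $N$. The first factor produces $\bigl(\int |u|^N|x|^{-N}X_1^{a+1}\bigr)^{1/N'}$. For the second factor, the exponent of $X_1$ is
\[
N\Bigl(a-\tfrac{(a+1)(N-1)}{N}\Bigr)=Na-(a+1)(N-1)=a-(N-1)=-(N-1)\gamma,
\]
which is exactly the weight on the left. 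Dividing through by the first factor (finite and nonzero, otherwise the inequality is trivial) and raising to the $N$th power gives the constant
\[
\Bigl(\frac{|a|}{N}\Bigr)^N=\Bigl(\frac{(N-1)|1-\gamma|}{N}\Bigr)^N=\Bigl|\frac{1-\gamma}{N'}\Bigr|^N,
\]
as claimed.

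\textbf{Main obstacle.} The only delicate point is the integration by parts when $|u|^N$ is merely Lipschitz (for $N\ge2$ it is in fact $C^1$ with gradient $N|u|^{N-2}u\nabla u$). This is harmless here, because $\operatorname{supp}u$ avoids both the origin and $\partial\Omega$, so $V$ is smooth on a neighbourhood of the support and no boundary contributions arise.
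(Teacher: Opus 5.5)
Your proof is correct. With $V=x|x|^{-N}X_1^{(N-1)(1-\gamma)}$ one gets $\operatorname{div}V=(N-1)(1-\gamma)X_1^{N-(N-1)\gamma}|x|^{-N}$, and your H\"older splitting yields exactly the weight $X_1^{-(N-1)\gamma}$ and the constant $\lvert (1-\gamma)/N'\rvert^N$; Step 1 is superfluous but harmless.

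The paper does not prove this lemma itself; it cites Psaradakis and Spector. Your vector-field, integration-by-parts and H\"older argument is the same scheme the paper uses for the $\gamma=1$ analogue in Proposition~\ref{beta=1}, with $X_2^{N-1}$ in place of $X_1^{(N-1)(1-\gamma)}$, so it is essentially the intended proof.
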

Now, assume that $\Omega \cap S$ contains the origin, where $S$ is a piecewise smooth surface of co-dimension $k$ for $1 \le k \le N$. Let $d(x) = \text{dist}(x,S)$ satisfies the conditions \eqref{p<k assumption} and \eqref{p=k assumption}. Let $\eta \in C^{\infty}_c\left(B(0,\delta)\right)$ such that $\eta =1$ on $B(0,\delta/2)$.
The following technical Lemma can be found in \cite[Lemma 5.2]{Barbatis}.
\begin{lemma} \label{J_gamma}
   For $\gamma \in \mathbb{R}$ and $\varepsilon > 0$ small enough we define,
    \begin{align} \label{equation for J}
        J_{\gamma}(\varepsilon) \coloneqq \int_{\Omega} \eta^p d^{-k + \varepsilon p} X_1^{-\gamma}\left(\frac{d}{R_{d,\Omega}}\right) \, dx.
    \end{align}
    Then we have,
    \begin{enumerate}[(i)]
        \item $c \varepsilon^{-1-\gamma} \le J_{\gamma}(\varepsilon) \le C \varepsilon^{-1-\gamma}$, \quad $\gamma>-1$;
        \item $J_{\gamma}(\varepsilon) = \frac{p\varepsilon}{\gamma + 1}J_{\gamma+1}(\varepsilon) + O_{\varepsilon}(1),$ \quad $\gamma>-1$;
        \item $J_{\gamma}(\varepsilon) = O_{\varepsilon}(1),$ \quad $\gamma < -1$.
    \end{enumerate}
\end{lemma}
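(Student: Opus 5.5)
The plan is to reduce everything to a one-dimensional integral in the variable $t=d(x)$ via the coarea formula, and then to analyse that integral explicitly after the logarithmic change of variables $s=\ln(eR_{d,\Omega}/t)$, i.e. $X_1^{-1}(t/R_{d,\Omega})=s$. Since $\lvert\nabla d\rvert=1$ a.e., the coarea formula gives
\begin{align*}
J_\gamma(\varepsilon)=\int_0^{\delta'} t^{-k+\varepsilon p}\,X_1^{-\gamma}\!\left(\frac{t}{R_{d,\Omega}}\right)A(t)\,dt,\qquad A(t)\coloneqq\int_{\{d=t\}}\eta^p\,d\mathcal{H}^{N-1},
\end{align*}
where $\delta'$ is such that $\operatorname{supp}\eta\subset\{d<\delta'\}$ (for $\delta$ small, $\delta'\le\delta$). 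The geometric input is that, because $S$ is a piecewise smooth surface of codimension $k$ passing through the origin and $\eta=1$ on $B(0,\delta/2)$, the level sets of $d$ near $S$ are tubes with $(k-1)$-dimensional spherical cross sections. Hence $A(t)=t^{k-1}a(t)$ with $0<c\le a(t)\le C$ for $0<t<\delta'$ small. We may also take $a$ of bounded variation: smooth, with $a'$ bounded, away from a lower-dimensional singular set of $S$. Establishing this description of $A$, using only piecewise smoothness of $S$, is the step I expect to be the main obstacle. If it proves awkward, I would first localize near a smooth portion of $S$ inside $B(0,\delta)$, use tubular coordinates $x=\sigma+t\theta$ with $\sigma\in S$ and $\theta$ a unit normal, and write the Jacobian as $t^{k-1}(1+O(t))$.

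With $A(t)=t^{k-1}a(t)$ the integrand becomes $t^{-1+\varepsilon p}X_1^{-\gamma}a(t)$. Substituting $t=R_{d,\Omega}e^{1-s}$ gives $t^{\varepsilon p-1}dt=-R_{d,\Omega}^{\varepsilon p}e^{\varepsilon p}e^{-\varepsilon p s}\,ds$. Therefore, up to constants bounded above and below uniformly in $\varepsilon$,
\begin{align*}
J_\gamma(\varepsilon)\asymp\int_{s_0}^{\infty}s^{\gamma}e^{-\varepsilon p s}\,\tilde a(s)\,ds,
\end{align*}
with $\tilde a$ pinched between positive constants. For (i), when $\gamma>-1$, I would compare with $\int_{s_0}^\infty s^\gamma e^{-\varepsilon ps}\,ds=(\varepsilon p)^{-1-\gamma}\bigl(\Gamma(\gamma+1)+o(1)\bigr)$; the lower limit $s_0$ costs only $O(1)$ because $\gamma>-1$. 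For (iii), when $\gamma<-1$, the bound $\int_{s_0}^\infty s^\gamma\,ds<\infty$ gives $O_\varepsilon(1)$ uniformly in $\varepsilon$.

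For (ii) I would integrate by parts in $s$, using
\begin{align*}
\frac{d}{ds}\Bigl(\frac{s^{\gamma+1}}{\gamma+1}\Bigr)=s^\gamma .
\end{align*}
This yields
\begin{align*}
\int s^\gamma e^{-\varepsilon ps}\tilde a\,ds=\Bigl[\tfrac{s^{\gamma+1}}{\gamma+1}e^{-\varepsilon ps}\tilde a\Bigr]+\frac{\varepsilon p}{\gamma+1}\int s^{\gamma+1}e^{-\varepsilon ps}\tilde a\,ds-\frac{1}{\gamma+1}\int s^{\gamma+1}e^{-\varepsilon ps}\tilde a'(s)\,ds .
\end{align*}
The middle term is exactly $\frac{p\varepsilon}{\gamma+1}J_{\gamma+1}(\varepsilon)$, once one undoes the change of variables (the same $\tilde a$ appears). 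The boundary term is bounded: at $s=s_0$ it is a constant, and at $s=\infty$ it vanishes because of $e^{-\varepsilon p s}$ (equivalently, $\eta$ vanishes near $\partial B(0,\delta)$). For the last term, $\tilde a'(s)=-t\,a'(t)$ with $t=R_{d,\Omega}e^{1-s}$, and $\lvert a'\rvert\le C$ by the tubular-coordinate expansion and because $\eta$ is smooth. Hence $\lvert\tilde a'(s)\rvert\le Ce^{-s}$, and $\int s^{\gamma+1}e^{-s}\,ds<\infty$ uniformly in $\varepsilon$, giving the $O_\varepsilon(1)$ error.

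Equivalently, in the original variables, (ii) follows from integrating $\operatorname{div}\bigl(\eta^p d^{\varepsilon p}X_1^{-\gamma-1}\nabla d\bigr)$, where $\nabla d\cdot\nabla X_1^{-\gamma-1}=(\gamma+1)X_1^{-\gamma}/d$. The error terms, involving $\Delta d-(k-1)/d$ and $\nabla\eta$, are bounded, again by the local geometry of $S$.
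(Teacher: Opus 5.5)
Your proposal is correct. The paper gives no proof of this lemma; it quotes it from \cite[Lemma 5.2]{Barbatis}. The natural comparison is the standard argument, which the paper itself runs for the analogous identities \eqref{identities gen alpha}--\eqref{identities2 gen alpha} via \eqref{divergen identity}.

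For (i) and (iii) that argument is exactly yours: the coarea formula plus $\mathcal{H}^{N-1}(\{d=t\}\cap B(0,\delta))\asymp t^{k-1}$.

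For (ii) it stays in $\mathbb{R}^N$. One integrates
\begin{align*}
\operatorname{div}\bigl(\eta^p d^{1-k+\varepsilon p}X_1^{-\gamma-1}\nabla d\bigr)
&=\varepsilon p\,\eta^p d^{-k+\varepsilon p}X_1^{-\gamma-1}-(\gamma+1)\,\eta^p d^{-k+\varepsilon p}X_1^{-\gamma}\\
&\quad+\eta^p d^{-k+\varepsilon p}\bigl(d\Delta d+1-k\bigr)X_1^{-\gamma-1}+p\,\eta^{p-1}d^{1-k+\varepsilon p}X_1^{-\gamma-1}\nabla\eta\cdot\nabla d
\end{align*}
over $\Omega$. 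Then $d\Delta d+1-k=O(d)$ from \cite{Ambro-son} makes the last two terms $O_\varepsilon(1)$.

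You instead integrate by parts in $s=X_1^{-1}$ and pay with $\lvert a'\rvert\le C$. This is the same geometric fact. By the first variation of area,
$t^{k-1}a'(t)=\int_{\{d=t\}}\bigl[\eta^p\bigl(\Delta d-\tfrac{k-1}{d}\bigr)+\nabla\eta^p\cdot\nabla d\bigr]\,d\mathcal{H}^{N-1}$,
so your bound on $a'$ is an averaged form of $\Delta d-(k-1)/d=O(1)$.

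\begin{itemize}
\item Your version turns all three parts into statements about one explicit Gamma-type integral and shows transparently where the exponent $-1-\gamma$ comes from.
\item The divergence version needs only a pointwise bound on $d\Delta d$ and no parametrization of the level sets.
\end{itemize}

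Either way you need $S$ to be smooth near the origin. This costs nothing, since the origin may be chosen on $S$, and the paper tacitly assumes it when invoking \cite{Ambro-son}. Also, your lower bound $a\ge c$ holds only on some interval $(0,\delta_0)$, because $a$ vanishes near $t=\delta'$; that is all (i) needs.

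Your closing ``equivalently'' sentence, however, is wrong as written, for two reasons.
\begin{itemize}
\item With the field $\eta^p d^{\varepsilon p}X_1^{-\gamma-1}\nabla d$ and $k\ge 2$, every term carries the weight $d^{\varepsilon p-1}$. That weight is integrable near $S$ uniformly in $\varepsilon$, so the identity says nothing about $J_\gamma$. The field must carry $d^{1-k+\varepsilon p}$, as above.
\item The sign is off: $\nabla d\cdot\nabla X_1^{-\gamma-1}=-(\gamma+1)X_1^{-\gamma}/d$, because $X_1^{-1}(d/R_{d,\Omega})=\ln(eR_{d,\Omega}/d)$ decreases in $d$. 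With your sign one would get $\varepsilon p\,J_{\gamma+1}+(\gamma+1)J_\gamma=O_\varepsilon(1)$, which is impossible for $\gamma>-1$ by (i).
\end{itemize}
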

\section{Sharp Constants in Hardy-type inequalities} \label{sharp}
In this section, we will prove sharp Hardy-type inequalities in Log-weighted Sobolev spaces. The sharpness proofs are based on suitable choice of test functions. Similar techniques were used in \cite{ASan, Barbatis, BarbatisSeries}.
\begin{proposition} \label{beta=1}
  Let $\Omega$ be a bounded domain $\mathbb{R}^N$ containing the origin. Then, for any $u \in W^{1,N}_0\left(\Omega, X_1^{-N+1}\, dx \right)$ we have,
    \begin{align} \label{beta=1 ineq}
        \int_{\Omega} \frac{\lvert u \rvert^N}{\lvert x \rvert^N} X_2^{N} X_1 \, dx \le \left(\frac{N}{N-1} \right)^N \int_{\Omega} \lvert \nabla u \rvert^N X_1^{-N+1} \, dx.
    \end{align}
    Moreover, the constant $\left(N/(N-1)\right)^N$ is sharp.
\end{proposition}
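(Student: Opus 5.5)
The inequality will follow from a divergence (vector field) argument combined with H\"older's inequality. Sharpness will follow from a family of test functions that approximate the formal minimizer $X_2^{-(N-1)/N}$.

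\textbf{Inequality.} I first work with $u\in C^1_c(\Omega)$. Write $r=\lvert x\rvert$ and $X_j=X_j(r/R_\Omega)$. Two elementary differentiation rules will be used repeatedly:
\begin{align*}
  \frac{d}{dr}X_1=\frac{X_1^2}{r}, \qquad \frac{d}{dr}X_2=\frac{X_2^2X_1}{r}.
\end{align*}
Take the vector field $V(x)=x\lvert x\rvert^{-N}X_2^{N-1}$. Since $\operatorname{div}\big(x\lvert x\rvert^{-N}g(\lvert x\rvert)\big)=g'(\lvert x\rvert)\lvert x\rvert^{1-N}$, we get
\begin{align*}
  \operatorname{div}V=(N-1)\frac{X_2^NX_1}{\lvert x\rvert^N}\quad\text{in }\Omega\setminus\{0\}.
\end{align*}
Multiply by $\lvert u\rvert^N$ and integrate over $\Omega\setminus B(0,\varepsilon)$. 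The boundary term on $\lvert x\rvert=\varepsilon$ is bounded by $C\,\|u\|_\infty^N X_2^{N-1}(\varepsilon/R_\Omega)$, which tends to $0$ as $\varepsilon\to0$. This gives
\begin{align*}
  (N-1)A:=(N-1)\int_\Omega\frac{\lvert u\rvert^N}{\lvert x\rvert^N}X_2^NX_1\,dx\le N\int_\Omega \lvert u\rvert^{N-1}\lvert\nabla u\rvert\frac{X_2^{N-1}}{\lvert x\rvert^{N-1}}\,dx .
\end{align*}
Split the integrand as $\big(\lvert u\rvert^{N-1}X_2^{N-1}X_1^{(N-1)/N}\lvert x\rvert^{1-N}\big)\cdot\big(\lvert\nabla u\rvert X_1^{-(N-1)/N}\big)$ and apply H\"older with exponents $N'$ and $N$. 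This yields $(N-1)A\le N A^{1/N'}B^{1/N}$, where $B=\int_\Omega\lvert\nabla u\rvert^NX_1^{-N+1}\,dx$. Hence $A\le (N/(N-1))^NB$. For general $u\in W^{1,N}_0(\Omega,X_1^{-N+1})$, I approximate by $C^1_c$ functions, which are dense by definition of the space. Along an a.e.\ convergent subsequence, Fatou's lemma transfers the bound to the limit.

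\textbf{Sharpness.} Equality in H\"older forces $\nabla u$ to be radial with $u'/u=-\frac{N-1}{N}\frac{X_2X_1}{r}$, i.e.\ $u=X_2^{-(N-1)/N}$. This function is not admissible because both sides diverge. So for small $\epsilon>0$ I take
\begin{align*}
  u_\epsilon=\eta\, X_2^{-\frac{N-1}{N}+\epsilon},
\end{align*}
with $\eta$ a cutoff equal to $1$ near the origin and supported in a ball inside $\Omega$. The function $u_\epsilon$ is bounded and locally Lipschitz away from $0$. Its admissibility in $W^{1,N}_0(\Omega,X_1^{-N+1})$ should follow from \Cref{contains lip functions} with $\beta=1$, after a truncation near the origin if needed.

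Using the substitution $dX_2=X_2^2X_1\,dr/r$, the denominator near the origin is
\begin{align*}
  \omega_{N-1}\int X_2^{1+N\epsilon}X_1\,\frac{dr}{r}=\omega_{N-1}\int_0^{c} X_2^{N\epsilon-1}\,dX_2=\frac{\omega_{N-1}c^{N\epsilon}}{N\epsilon}.
\end{align*}
Since $u_\epsilon'=\big(\epsilon-\frac{N-1}{N}\big)X_2^{-\frac{N-1}{N}+\epsilon}\,X_2X_1/r$, the main part of the numerator is $\lvert\frac{N-1}{N}-\epsilon\rvert^N$ times the same integral. The cutoff contributions are $O(1)$, uniformly in $\epsilon$. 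Therefore the quotient tends to $((N-1)/N)^N$ as $\epsilon\to0$, which proves the constant is sharp.

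\textbf{Main obstacle.} I expect the delicate points to be technical rather than structural. First, the boundary term at the origin must vanish; the decay of $X_2^{N-1}$ handles this. Second, I must verify that the test functions lie in the weighted space and that the cross terms produced by $\nabla\eta$ stay bounded as $\epsilon\to0$, so that they are negligible against the $1/(N\epsilon)$ blow-up.
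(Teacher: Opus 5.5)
Your proof is correct. The inequality part is the paper's argument: the field $x\lvert x\rvert^{-N}X_2^{N-1}$, whose divergence is $(N-1)X_2^NX_1\lvert x\rvert^{-N}$, followed by H\"older with exponents $N'$ and $N$. The only difference there is how the origin is handled. You work with $C^1_c(\Omega)$ and treat the sphere $\lvert x\rvert=\varepsilon$ by hand; that boundary term is in fact nonpositive, so it can simply be dropped. The paper instead reduces to $C^1_c(\Omega\setminus\{0\})$ via Theorem~\ref{approximation}.

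For sharpness you use a genuinely different family of test functions.
\begin{itemize}
\item The paper rescales to $\mathbb{B}_N$ and uses $G_{r,R}^{\tau}-G_{1,R}^{\tau}$, which is essentially $X_2^{-\tau}$ frozen to a constant on $B(0,r_1)$, with a supercritical exponent $\tau>(N-1)/N$. It computes both integrals explicitly by a binomial expansion, lets $r_1\to0$ to get the ratio $\frac{N-1}{N}\tau^{N-1}$, and then lets $\tau\to(N-1)/N$.
\item Your subcritical choice $\eta X_2^{-\frac{N-1}{N}+\epsilon}$ makes both integrals finite without truncation. It also gives the exact pointwise identity $\lvert\nabla v_\epsilon\rvert^NX_1^{-N+1}=\bigl(\frac{N-1}{N}-\epsilon\bigr)^N v_\epsilon^N\lvert x\rvert^{-N}X_2^NX_1$.
\item So the quotient is at most $\bigl(\frac{N-1}{N}-\epsilon\bigr)^N$ plus an $O(1)$ cutoff error divided by a denominator of size about $\omega_{N-1}/(N\epsilon)$. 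That is a single limit with essentially no bookkeeping, which is shorter than the paper's two-parameter computation.
\end{itemize}

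The price is admissibility, and one sentence of yours is wrong. $u_\epsilon$ is not bounded: its exponent is negative and $X_2(r)=\bigl(1+\ln\ln(e/r)\bigr)^{-1}\to0$ as $r\to0$. Hence Lemma~\ref{contains lip functions} does not apply to it directly; truncation is required, not optional. The fix is easy. $T_Mu_\epsilon=\min\{u_\epsilon,M\}$ is Lipschitz with compact support and constant near the origin, so it is admissible. Moreover $\int\lvert\nabla T_Mu_\epsilon\rvert^NX_1^{-N+1}\le\int\lvert\nabla u_\epsilon\rvert^NX_1^{-N+1}$, while the denominator for $T_Mu_\epsilon$ increases to that of $u_\epsilon$ by monotone convergence. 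So you can use $T_Mu_\epsilon$ with $M$ large directly as test functions, without even proving $u_\epsilon$ lies in the space. The paper's test functions are Lipschitz from the start, so this issue never arises there.
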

\begin{proof}
      By \Cref{approximation}, we can restrict ourselves to $u \in C^1_c(\Omega\setminus\{0\})$. We have, 
    \begin{align*}
        -\int_{\Omega} \left \langle \nabla \left(\lvert u \rvert^N\right), \frac{x}{\lvert x \rvert^N} X_2^{N-1} \right \rangle \, dx &=  \int_{\Omega} \lvert u \rvert^N \operatorname{div}\left(\frac{x }{\lvert x \rvert^N} X_2^{N-1}\right) \, dx\\
        &=(N-1) \int_{\Omega} \lvert u \rvert^N \frac{X_2^N X_1}{\lvert x \rvert^N} \, dx.
    \end{align*}
    Thus, by using H\"older's inequality, we obtain,
    \begin{align*}
        (N-1) \int_{\Omega} \lvert u \rvert^N \frac{X_2^N X_1}{\lvert x \rvert^N} &\le N \int_{\Omega} \lvert u \rvert^{N-1} \frac{X_2^{N-1}}{\lvert x \rvert^{N-1}} X_1^{1/N'} X_1^{-1/N'} \lvert \nabla u \rvert \, dx\\
        &\le N \left(\int_{\Omega}  \frac{\lvert u \rvert^N}{\lvert x \rvert^N} X_2^{N} X_1 \, dx \right)^{1/N'} \left(\int_{\Omega} \lvert \nabla u \rvert^N X_1^{-N+1} \, dx\right)^{1/N},
    \end{align*}
    which implies \eqref{beta=1 ineq}. Finally, we prove the optimality of the constant $\left(N/(N-1)\right)^N$.
Since $\Omega$ contains the origin, so take $\delta > 0$ small enough such that $B(0,\delta) \subset \Omega.$ Since infimum over $C_c^{\infty}(\Omega)$ is smaller then infimum over $C_c^{\infty}\left(B(0,\delta)\right)$, so we can restrict ourselves to $u \in C_c^{\infty}\left(B(0,\delta)\right)$. Thus, by using scaling, we can have,
\begin{align*}
    &\inf_{u \in C_c^{\infty}\left(B(0,\delta)\right)\setminus \{0\}} \frac{\int_{B(0,\delta)} \lvert \nabla u \rvert^N \left(\ln \frac{e R_{\Omega}}{\lvert x \rvert}\right)^{N-1} \, dx}{\int_{B(0,\delta)} \frac{\lvert u \rvert ^N  \left(\ln \frac{e R_{\Omega}}{\lvert x \rvert}\right)^{-1}}{\lvert x \rvert^N \left(\ln \left(e \ln \frac{e R_{\Omega}}{\lvert x \rvert}\right)\right)^{N}} \, dx}\\
     &\qquad \qquad \qquad=\inf_{u \in W^{1,N}_0 \left(\mathbb{B}_N, \left(\ln \frac{R}{\lvert x \rvert}\right)^{N-1}\right) \setminus \{0\}} \frac{\int_{\mathbb{B}_N} \lvert \nabla u \rvert^N \left(\ln \frac{ R}{\lvert x \rvert}\right)^{N-1} \, dx}{\int_{\mathbb{B}_N} \frac{\lvert u \rvert ^N  \left(\ln \frac{ R}{\lvert x \rvert}\right)^{-1}}{\lvert x \rvert^N \left(\ln \left(e \ln \frac{ R}{\lvert x \rvert}\right)\right)^{N}} \, dx}
\end{align*}
where, $R \coloneqq eR_{\Omega}\delta^{-1} >1.$
For $0<r_1 < 1$ and $(N-1)/N < \tau < 1$, we define
\begin{align}
    u(x) = u_{r_1, \tau}(\lvert x \rvert) &\coloneqq \begin{cases}
        \left(\ln \left( e \ln \frac{R}{r_1}\right) \right)^{\tau} - \left(\ln \left( e \ln R\right)\right)^{\tau}, \quad 0 \le \lvert x \rvert \le r_1 \\
        \left(\ln \left(e \ln \frac{R}{\lvert x \rvert}\right) \right)^{\tau} - \left(\ln \left(e \ln  R\right)\right)^{\tau}, \quad r_1 \le \lvert x \rvert \le 1.
    \end{cases}
\end{align}
Thus, we have,
\begin{align*}
    \nabla u(x) &= \begin{cases}
        0, \quad 0< \lvert x \rvert < r_1\\
        -\tau \left(\ln \left(e \ln \frac{R}{\lvert x \rvert}\right) \right)^{\tau -1} \frac{1}{\ln \frac{R}{\lvert  x \rvert}}\frac{x}{\lvert x \rvert^2}, \quad r_1 < \lvert x \rvert <1.
    \end{cases}
\end{align*}
From Lemma \ref{R vanishing at bdry}, it follows that $u \in W^{1,N}_0 \left(\mathbb{B}_N, \left(\ln (R/\lvert x \rvert)\right)^{N-1}\right)$. For simplicity, we use some temporary notations: $G_{r,R} \coloneqq \ln \left(e \ln \frac{R}{r}\right)$, so in particular $G_{r_1,R} = \ln \left(e \ln \frac{R}{r_1}\right)$ and $G_{1,R}= \ln (e \ln R)$. Now, we have,
\begin{align*}
    \int_{\mathbb{B}_N}\lvert \nabla u \rvert^N \left(\ln \frac{ R}{\lvert x \rvert}\right)^{N-1} \, dx &= \omega_{N-1} \tau^N \int_{r_1}^1 G_{r,R}^{N \tau -N} \frac{1}{r \ln \frac{R}{r}} \, dr\\
    &= \frac{\omega_{N-1} \tau^N}{N \tau -N+1} \left[G_{r_1,R}^{N\tau - N + 1} - G_{1,R}^{N \tau - N +1}\right].
\end{align*}
On the other hand we have,
\begin{align*}
    &\int_{\mathbb{B}_N} \frac{\lvert u \rvert ^N \left(\ln \frac{R}{\lvert x \rvert}\right)^{-1}}{\lvert x \rvert^N \left(\ln e \ln \frac{ R}{\lvert x \rvert}\right)^{N}}\, dx\\
    &= \omega_{N-1} \left[\int^{r_1}_{0} \frac{\left[G_{r_1,R}^{\tau} - G_{1,R}^{\tau}\right]^N }{ r \ln \frac{R}{r} G_{r,R}^{N} } \, dr + \sum_{k=0}^N (-1)^k \binom{N}{k} G_{1,R}^{\tau k } \int_{r_1}^1 \frac{G_{r,R}^{\tau(N-k) -N}}{r \ln \frac{R}{r}} \, dr \right]\\
    &= \frac{\omega_{N-1}}{N-1} \sum_{k=0}^N (-1)^k \binom{N}{k}  G_{1,R}^{\tau k } G_{r_1,R}^{(N-k)\tau -N+1} \\
    &\hspace{30pt}+ \omega_{N-1} \sum_{k=0}^N \frac{(-1)^k \binom{N}{k} G_{1,R}^{\tau k }}{\tau (N-k) -N+1}\left[G_{r_1,R}^{\tau (N-k) -N+1}
    - G_{1,R}^{\tau (N-k) -N+1}\right]\\
      &= \frac{N \tau \omega_{N-1} G_{r_1,R}^{N \tau -N+1}}{(N-1)\left(N \tau - N+1\right)}+\sum_{k=1}^N c_{N,k,\tau}G_{1,R}^{\tau k }G_{r_1,R}^{(N-k)\tau - N + 1} + c_{N,\tau} G_{1,R}^{N \tau -N+1}.
\end{align*}
Therefore we have,
\begin{align*}
    \frac{\int_{\mathbb{B}_N}\lvert \nabla u \rvert^N \left(\ln \frac{ R}{\lvert x \rvert}\right)^{N-1}}{\int_{\mathbb{B}_N} \frac{\lvert u \rvert ^N \left(\ln \frac{R}{\lvert x \rvert}\right)^{-1}}{\lvert x \rvert^N \left(\ln e \ln \frac{ R}{\lvert x \rvert}\right)^{N}}} &= \frac{N-1}{N} \tau^{N-1} \left[1- \left(\frac{G_{1,R}}{G_{r_1,R}}\right)^{N \tau -N+1}\right]\\
    & \times \left[1 + \sum_{k=1}^N c_{N,k,\tau} \left(\frac{G_{1,R}}{G_{r_1,R}}\right)^{k \tau} + c_{N, \tau} \left(\frac{G_{1,R}}{G_{r_1,R}}\right)^{N \tau -N + 1} \right]^{-1},
\end{align*}
where the right hand side goes to $\left((N-1)/N\right) \tau^{N-1}$ as $r_1 \to 0.$ Now, we can take $\tau \to (N-1)/N$ to conclude the result.
\end{proof}
Let us now deal with the case $ \beta \neq1$ with $\beta > 0.$
\begin{proposition} \label{sharp hardy type1}
 Let $\Omega$ be bounded domain $\mathbb{R}^N$ containing the origin. Consider $\beta > 0$ and $\beta \neq 1$, then for all $u \in W^{1,N}_{0}\left(\Omega \setminus \{0\}, X_1^{-\beta(N-1)}\right)$, we have the following inequality,
    \begin{align} \label{hardy type X_1}
        \int_{\Omega} \frac{\lvert u \rvert ^N}{\lvert x \rvert^N} X_1^{N - (N-1) \beta} \, dx \le \frac{1}{\lvert H \rvert^N}  \int_{\Omega} \lvert \nabla u \rvert^N X_1^{-\beta(N-1)} \, dx,
    \end{align}
    where $H = (1-\beta)/N'$. Moreover, then the constant $1/\lvert H \rvert^N$  is sharp.
\end{proposition}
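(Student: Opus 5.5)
The inequality \eqref{hardy type X_1} itself is exactly Lemma \ref{psara} with $\gamma=\beta$, applied on $C^{\infty}_c(\Omega\setminus\{0\})$. By definition of $W^{1,N}_{0}\left(\Omega\setminus\{0\},X_1^{-\beta(N-1)}\right)$ as a closure, it then extends to the whole space. Take $u_n\to u$ in the weighted gradient norm. The inequality shows that $u_n/\lvert x\rvert \cdot X_1^{1-\beta(N-1)/N}$ is Cauchy in $L^N$. Passing to an a.e. convergent subsequence and using Fatou's lemma (or the $L^N$ limit directly) gives \eqref{hardy type X_1} for $u$. Alternatively, I would redo the divergence argument of Proposition \ref{beta=1} with the vector field $x\lvert x\rvert^{-N}X_1^{(1-\beta)(N-1)}$. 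Using $\frac{d}{dr}X_1(r/R_\Omega)=X_1^2/r$, its divergence is $(1-\beta)(N-1)\lvert x\rvert^{-N}X_1^{N-(N-1)\beta}$. Hölder then gives the constant $\lvert H\rvert^{-N}$, and the sign of $1-\beta$ is irrelevant after taking absolute values.

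For sharpness I would follow the test-function scheme of Proposition \ref{beta=1}. First localize to $B(0,\delta)\subset\Omega$ and rescale to $\mathbb{B}_N$ with the weight $L(x)\coloneqq\ln(R/\lvert x\rvert)$, $R=eR_\Omega/\delta$. The weights become $L^{\beta(N-1)}$ in the numerator and $\lvert x\rvert^{-N}L^{(N-1)\beta-N}$ in the denominator. The natural extremal profile is a power $L^{s}$ with $s$ near the critical exponent $H$, since $\lvert\nabla L^s\rvert^N L^{\beta(N-1)}\sim \lvert s\rvert^N r^{-N}L^{N(s-1)+\beta(N-1)}$ and $\lvert L^s\rvert^N r^{-N}L^{(N-1)\beta-N}$ have the same form. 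Both integrals are then $\int r^{-1}L^{Ns-N+\beta(N-1)}\,dr$, with ratio $\lvert s\rvert^N$. This integral diverges logarithmically exactly when $Ns-N+\beta(N-1)=-1$, i.e. $s=H$.

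Case $\beta<1$ ($H>0$): take $u=L^{\tau}-(\ln R)^{\tau}$ for $r_1\le\lvert x\rvert\le1$ and constant on $B(0,r_1)$, with $\tau$ slightly larger than $H$. Then the exponent $N\tau-N+\beta(N-1)+1>0$, so both integrals blow up like $L(r_1)^{N\tau-N+\beta(N-1)+1}$ as $r_1\to0$. The lower-order binomial terms and the inner-ball contribution are of lower or equal order. The inner ball gives $[L(r_1)^\tau]^N\int_0^{r_1} r^{-1}L^{(N-1)\beta-N}dr\sim L(r_1)^{N\tau+(N-1)\beta-N+1}/(N-1-(N-1)\beta+1)$, which must be tracked since it is of the same order. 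It only adds to the denominator, which is harmless because we want the ratio to tend to $H^N$ from above. In fact I expect the ratio's limit to be some explicit $f(\tau)$ with $f(\tau)\to H^N$ as $\tau\downarrow H$, exactly as in Proposition \ref{beta=1}. Membership in the space follows from Lemma \ref{R vanishing at bdry}-type results (Lemma \ref{contains lip functions} with Theorem \ref{approximation} for $\beta\le1$). A cutoff away from $0$ is unnecessary since $u$ is constant near $0$.

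Case $\beta>1$ ($H<0$): now $L^{s}$ with $s<0$ decays at the origin, and the blow-up must come from the boundary side, which is awkward because $L(1)=\ln R$ is finite. Instead I would use $u=L^{\tau}\varphi$ with $\tau<H$ close to $H$ and truncate near the origin, $u=L^\tau$ on $\varepsilon<\lvert x\rvert<1/2$, with Lipschitz cutoffs near $0$ and $1$. Here the exponent $N\tau-N+\beta(N-1)+1<0$, so the integrals diverge at $r\to0$ like $L(\varepsilon)^{\text{negative}}$... which does not diverge. So the correct choice is $\tau$ slightly \emph{above} $H$ (exponent $>0$ is impossible to make integrable near $0$) together with a cutoff $u=L^\tau\min(1,\,\ln(\lvert x\rvert/\varepsilon)/\ln 2)$ near $0$. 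The main integrals then diverge as $\varepsilon\to0$, with cutoff errors bounded. Also needed is that such compactly supported-away-from-$0$ Lipschitz functions lie in the space, which holds by mollification since the weight is bounded above and below there. The main obstacle is this bookkeeping: showing the cutoff contributions are $o$ of the principal term and identifying the correct side of $H$ for $\tau$. Then let $\varepsilon\to0$ and $\tau\to H$.
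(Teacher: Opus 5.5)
Your proof of the inequality is correct, whether via Lemma~\ref{psara} plus density or via the vector field $x\lvert x\rvert^{-N}X_1^{(1-\beta)(N-1)}$, whose divergence you computed correctly. Your sharpness argument for $0<\beta<1$ is also correct, and it differs from the paper's route. The paper uses the single family $V_\varepsilon=\eta\,\lvert x\rvert^{\varepsilon}X_1^{-H-\tau}$ for both signs of $H$ and reads off the quotient from the asymptotics of $J_\gamma(\varepsilon)$ in Lemma~\ref{J_gamma}. Your truncated profile, parallel to Proposition~\ref{beta=1}, instead gives the explicit limit quotient $H\tau^{N-1}$ as $r_1\to 0$, which tends to $H^N$ as $\tau\downarrow H$. (The inner-ball constant is $(N-1)(1-\beta)=NH$, not $N-(N-1)\beta$.)

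The case $\beta>1$ has a genuine gap: your claim that the cutoff errors are bounded is false. Write $t=\ln(R/\lvert x\rvert)$ and $T=\ln(R/\varepsilon)$. The two integrals become $\omega_{N-1}\int\lvert\partial_t u\rvert^N t^{\beta(N-1)}\,dt$ and $\omega_{N-1}\int\lvert u\rvert^N t^{\beta(N-1)-N}\,dt$. Your cutoff $\min(1,\ln(\lvert x\rvert/\varepsilon)/\ln 2)$ (taken as $0$ for $\lvert x\rvert<\varepsilon$) equals $\min(1,(T-t)/\ln 2)$, so it drops from $1$ to $0$ over a $t$-interval of length $\ln 2$. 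Its contribution to the numerator is therefore about $(\ln 2)^{1-N}T^{N\tau+\beta(N-1)}$. Both principal terms are only of order $T^{N(\tau-H)}$, and $N\tau+\beta(N-1)=N(\tau-H)+N-1$. So the cutoff term dominates by a factor of order $T^{N-1}\to\infty$, and the quotient blows up instead of approaching $\lvert H\rvert^N$.

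Some cost is unavoidable here. For $\beta>1$ the origin has positive capacity for the weight $t^{\beta(N-1)}$, since $\int^\infty t^{-\beta}\,dt<\infty$. By H\"older, bringing $u$ from $T^\tau$ down to $0$ beyond $t=T$ costs at least $(\beta-1)^{N-1}T^{N(\tau-H)}$, the same power as the main term. The argument therefore only closes with a cutoff spread over a $t$-range of length comparable to $T$. Its cost is then $O(T^{N(\tau-H)})$ with a constant bounded as $\tau\to H$, i.e.\ a relative error $O(\tau-H)$ that disappears when you let $\tau\downarrow H$. Two cutoffs that work are:
\begin{itemize}
\item a cutoff linear in $t$ on $[T,2T]$;
\item the paper's multiplier $\lvert x\rvert^{\varepsilon}$, essentially $e^{-\varepsilon t}$, whose relative cost is exactly the factor $\tau$ (your $\tau-H$) in the paper's product after applying Lemma~\ref{J_gamma}(ii).
\end{itemize}
With such a cutoff your $\beta>1$ argument goes through; with the dyadic cutoff as written, it fails.
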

\begin{proof}
       We show the optimality of the constant as the proof of the inequality \eqref{hardy type X_1} follows from Lemma \ref{psara}. Let $\delta>0$ be such that $B(0,\delta) \subset \Omega.$ Now, consider $\eta \in C_c^{\infty}\left(B(0,\delta)\right)$ such that $0\le \eta \le 1$ and $\eta = 1$ on $B\left(0,\delta/2\right)$.
For any small enough $\varepsilon>0$ and $\tau > 0$ we set $v_{\varepsilon} (x)\coloneqq \lvert x \rvert^{\varepsilon} X_1^{-H-\tau}\left(\lvert x \rvert/R_{\Omega}\right)$ and $V_{\varepsilon} \coloneqq \eta v_{\varepsilon}$. $V_{\varepsilon}$ is locally lipschitz that vanishes at the boundary of $\Omega \setminus \{0\}.$ Thus, from Lemma \ref{contains lip functions}, it follows that $V_{\varepsilon} \in W^{1,N}_{0}\left(\Omega \setminus \lbrace 0 \rbrace, X_1^{-\beta(N-1)}\right).$ Note that, $\nabla V_{\varepsilon} = \eta \nabla v_{\varepsilon} + v_{\varepsilon} \nabla \eta$ and using an elementary inequality:
\begin{align} \label{elt ineq1}
    \lvert a + b \rvert^p \le \lvert a \rvert^p + c_p \left(\lvert b \rvert^p + \lvert b \rvert \lvert a \rvert^{p-1}\right), \quad a,b \in \mathbb{R}^N, \quad p>1;
\end{align} we deduce that,
\begin{align*}
    \int_{\Omega}& \lvert \nabla V_{\varepsilon}\rvert^N X_1^{-\beta(N-1)} \, dx\\ &\le \int_{B(0,\delta)} \eta^N \lvert \nabla v_{\varepsilon} \rvert^N X_1^{-\beta(N-1)} \, dx + c_N \int_{B(0,\delta)} v_{\varepsilon}^N \lvert \nabla \eta \rvert^N X_1^{-\beta(N-1)} \, dx \\ &\hspace{20pt}+ c_N \int_{B(0,\delta)} v_{\varepsilon} \lvert \nabla v_{\varepsilon} \rvert^{N-1} \eta^{N-1} \lvert \nabla \eta \rvert X_1^{-\beta(N-1)} \, dx=  I_1 + I_2 + I_3.
\end{align*}
 We now show that $I_2, I_3 = O_{\varepsilon}(1)$ as $\varepsilon \to 0.$ Observe that,
\begin{align} \label{v_epsilon}
\nabla v_{\varepsilon} = \lvert x \rvert^{\varepsilon -1} X_1^{-H-\tau} \left(\varepsilon +(-H - \tau) X_1\right) \frac{x}{\lvert x \rvert}.
\end{align}
Note that $\varepsilon +(-H- \tau) X_1$ is bounded on $B(0,\delta).$
Thus, since $\eta$ is smooth, we have,
\begin{align*}
    I_3 &\le \Tilde{c} \int_{B(0,\delta)} \lvert x \rvert^{N \varepsilon -N +1} X_1^{-NH-N \tau - \beta(N-1)} \, dx = O_{\varepsilon}(1) \quad \text{as} \quad \varepsilon \to 0.
\end{align*}
Similarly, we can show that $I_2 = O_{\varepsilon}(1)$ as $\varepsilon \to 0.$ From \Cref{v_epsilon} we have,
\begin{align*}
    \lvert \nabla v_{\varepsilon} \rvert^N \le \lvert x \rvert^{N \varepsilon -N} X_1^{(\beta-1)(N-1)-N \tau}  \sum_{k=0}^N \binom{N}{k} \varepsilon^{N-k} \lvert H +\tau \rvert^k X_1^k.
\end{align*}
Thus, using the above relation, we ended up with,
\begin{align*}
    I_1 &\le \sum_{k=0}^N \binom{N}{k} \varepsilon^{N-k} \lvert H +\tau \rvert^k\int_{B(0,\delta)} \eta^N \lvert x \rvert^{N \varepsilon -N}  X_1^{-N \tau -N+1 +k}\\
    &= \sum_{k=0}^N \binom{N}{k} \varepsilon^{N-k} \lvert H +\tau \rvert^k J_{N \tau +N-1 -k}(\varepsilon),
\end{align*}
where $J$ is defined as in \eqref{equation for J} for the case $d(x) = \lvert x \rvert.$ Since, by our choice, $\tau >0$ so we have $(N \tau + N-1) -N >-1$. Thus, $(ii)$ of Lemma \ref{J_gamma}  and induction yields,
\begin{align*}
     \varepsilon^{N-k}  J_{N \tau +N-1 -k}(\varepsilon) = \left(\tau + 1 - \frac{1}{N} - \frac{k}{N}\right) &\left(\tau + 1 - \frac{1}{N} - \frac{k+1}{N}\right) \cdots\\
     & \left(\tau + 1 - \frac{1}{N} - \frac{N-1}{N}\right) J_{N \tau - 1} + O_{\varepsilon}(1),
\end{align*}
for $k=0, \cdots N-1.$  On the other hand, we have,
\begin{align*}
    \int_{\Omega} \frac{\lvert V_{\varepsilon} \rvert^N}{\lvert x \rvert^N}X_1^{N-(N-1)\beta}\, dx &= \int_{\Omega} \eta^N \lvert x \rvert^{N \varepsilon - N} X_1^{-NH-N\tau+N-(N-1)\beta}\, dx= J_{N \tau -1}(\varepsilon).
\end{align*}
Therefore, we have
\begin{align*}
    &\frac{\int_{\Omega} \lvert \nabla V_{\varepsilon}\rvert^N X_1^{-\beta(N-1)} \, dx}{ \int_{\Omega} \frac{\lvert V_{\varepsilon} \rvert^N}{\lvert x \rvert^N} X_1^{N-(N-1)\beta}\, dx}\\
    &\hspace{40pt}\le \lvert H + \tau \rvert^N + O_{\varepsilon}(1) J^{-1}_{N \tau -1 }(\varepsilon)\\
    & \hspace{40pt} \quad +\sum_{k=0}^{N-1} \lvert H + \tau \rvert^N \binom{N}{k} \left(\tau + 1 - \frac{1}{N} - \frac{k}{N}\right) \left(\tau + 1 - \frac{1}{N} - \frac{k+1}{N}\right) \cdots\tau.
\end{align*}
Now, by $(i)$ of \Cref{J_gamma} we have $J_{N \tau  -1}(\varepsilon) \to \infty$ as $\varepsilon \to 0$. Thus, first letting $\varepsilon \to 0$ and then $\tau \to 0$ in the above expression yields the sharpness of the constant.
\end{proof}
Let us now consider the case of distance from a piecewise smooth closed surface $S$. Without loss of generality assume that $0 \in S \cap \Omega$. We will write $X_j$ to mean $X_j\left(d/R_{d,\Omega}\right)$ for $j=1,2,\cdots$. We now prove the following Lemma.
\begin{lemma} \label{loc lip cpt support}
    Suppose $u$ is a locally lipschitz function defned on $\Omega$ that has compact support then $u \in W_0^{1,p}\left(\Omega \setminus S ,X_1^{\gamma}(\left(d/R_{d,\Omega}\right)\right)$ for any $\gamma \in \mathbb{R}$ .
\end{lemma}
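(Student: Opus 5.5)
We need to show that a compactly supported locally Lipschitz $u$ can be approximated, in the weighted norm $\left(\int \lvert \nabla \cdot\rvert^p X_1^{\gamma}\right)^{1/p}$, by $C^1_c(\Omega\setminus S)$ functions. The plan is to first cut $u$ off away from $S$ with a logarithmic (capacity-type) cutoff in the variable $d$, and then mollify the resulting compactly supported Lipschitz function in $\Omega\setminus S$. The second step is standard: on a compact subset of $\Omega\setminus S$ the weight $X_1^{\gamma}(d/R_{d,\Omega})$ is bounded above and below, so mollification converges in $W^{1,p}$ of that set and hence in the weighted norm. The whole difficulty is therefore the cutoff near $S$.

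For small $\varepsilon>0$, define $\psi_\varepsilon(x)=\phi_\varepsilon(d(x))$ with $\phi_\varepsilon(t)=0$ for $t\le \varepsilon^2$, $\phi_\varepsilon(t)=\ln(t/\varepsilon^2)/\ln(1/\varepsilon)$ for $\varepsilon^2\le t\le \varepsilon$, and $\phi_\varepsilon(t)=1$ for $t\ge\varepsilon$. Since $d$ is Lipschitz with $\lvert\nabla d\rvert=1$ a.e., $\psi_\varepsilon$ is Lipschitz. Set $u_\varepsilon=\psi_\varepsilon u$, which is Lipschitz with compact support in $\Omega\setminus S$. Then
\[
\nabla(u-u_\varepsilon)=(1-\psi_\varepsilon)\nabla u-u\nabla\psi_\varepsilon .
\]
The first term tends to $0$ in $L^p(X_1^\gamma)$ by dominated convergence, provided $\lvert\nabla u\rvert^pX_1^\gamma\in L^1$. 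This holds because $\nabla u$ is bounded on $\operatorname{supp}u$ and $X_1^\gamma(d/R)$ is integrable near $S$: it is bounded if $\gamma\ge0$, and if $\gamma<0$ it grows only like $\lvert\ln d\rvert^{-\gamma}$, which is integrable using the coarea formula / tubular neighbourhood estimate $\lvert\{d<t\}\cap K\rvert\le Ct^{k}$ for compact $K$, valid since $S$ is piecewise smooth of codimension $k$.

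The main obstacle is the second term, which is bounded by
\[
\lVert u\rVert_\infty^p\,(\ln(1/\varepsilon))^{-p}\int_{K\cap\{\varepsilon^2<d<\varepsilon\}} d^{-p}X_1^{\gamma}\,dx .
\]
Using the coarea formula, $\lvert\nabla d\rvert=1$, and $\mathcal H^{N-1}(\{d=t\}\cap K)\le Ct^{k-1}$, this is at most $C(\ln 1/\varepsilon)^{-p}\int_{\varepsilon^2}^{\varepsilon} t^{k-1-p}\lvert\ln t\rvert^{-\gamma}\,dt$, up to constants in $X_1$. For $p=k$ this is of order $(\ln 1/\varepsilon)^{1-p}$ times a power $(\ln 1/\varepsilon)^{-\gamma}$, so the logarithmic cutoff alone may fail when $\gamma$ is very negative. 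The remedy is to replace the logarithmic cutoff by a cutoff in an iterated-logarithm variable, i.e. $\phi$ linear in $\ln\ln(1/t)$ (or a higher iterate) between two scales. The polylog weights then produce the factor $(\ln\ln)^{1-p}$, and one checks that a suitable choice of scales, for example $\ln(1/t)$ ranging over $[L,L^{M}]$ with $M$ large, makes the total tend to $0$.

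For $p<k$, an ordinary linear cutoff $\phi(t)=\min(1,(t-\varepsilon)_+/\varepsilon)$ suffices, since $\varepsilon^{k-p}\lvert\ln\varepsilon\rvert^{-\gamma}\to0$. For $p>k$, with bounded $u$ this approach fails, and I would instead exploit that the lemma claims membership for every $\gamma$; I expect the intended setting is $p\le k$, or else one uses that the estimate only needs the weight's integrability, and I would verify the case $p>k$ separately. Having shown $u_\varepsilon\to u$ in the weighted norm, the mollification step finishes the proof.
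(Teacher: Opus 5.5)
\textbf{The gap.} Your proof breaks at the remedy for $p=k$ with very negative $\gamma$, and in the case $p>k$ that you leave open. Neither can be repaired, because in those ranges the statement with $\Omega\setminus S$ is false.

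In the variable $s=\ln(1/t)$, your bound for the cutoff term with $p=k$ is, up to constants, $\int\lvert d\phi/ds\rvert^{k}s^{-\gamma}\,ds$. By H\"older, any profile rising from $0$ to $1$ on $[s_1,s_2]$ makes this at least $\bigl(\int_{s_1}^{s_2}s^{\gamma/(k-1)}\,ds\bigr)^{1-k}$. For $\gamma<1-k$ this stays bounded below, and it even blows up as $s_1\to\infty$. So no choice of scales or iterates helps. Your own choice gives $((M-1)\ln L)^{-k}\int_L^{L^M}s^{-k-\gamma}\,ds\to\infty$. The iterated logarithm only rescues the borderline $\gamma=1-k$, where the plain logarithmic cutoff gives $O(1)$. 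For $p>k$, the analogous quantity $\int_0 t^{-(k-1)/(p-1)}\lvert\ln t\rvert^{\gamma/(p-1)}\,dt$ is finite for every $\gamma$.

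This is a genuine trace, not a weakness of your method. Take $S=\{0\}$, $k=N$. Integrate the spherical mean of a Lipschitz $g$ with compact support in $\Omega$ from $0$ to $R_\Omega$ and apply H\"older. This gives $\lvert g(0)\rvert\le C\bigl(\int_\Omega\lvert\nabla g\rvert^{p}X_1^{\gamma}\,dx\bigr)^{1/p}$ whenever $\int_0^{R_\Omega}\bigl(r^{N-1}X_1^{\gamma}(r/R_\Omega)\bigr)^{-1/(p-1)}dr<\infty$. That holds for $p>N$ and all $\gamma$, and for $p=N$, $\gamma<1-N$. Apply it to $g=f_n-u$ with $f_n\in C^1_c(\Omega\setminus\{0\})$. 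It shows that no $u$ with $u(0)\neq0$ lies in $W^{1,p}_0(\Omega\setminus\{0\},X_1^{\gamma})$. Compare the restriction $\beta\le1$, i.e.\ $\gamma\ge1-N$, in \Cref{approximation}. So your cutoff argument proves the lemma exactly for $p<k$, and for $p=k$ with $\gamma\ge1-k$. Beyond that range the statement fails in general.

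\textbf{Comparison with the paper.} The paper does no cutoff at all. It mollifies $u$ in $\Omega$ and uses dominated convergence, needing only that $X_1^{\gamma}(d/R_{d,\Omega})$ is integrable near $S$. This is the same coarea bound you use for your first term. Its approximants $\rho_\varepsilon*u$ lie in $C^\infty_c(\Omega)$, not in $C^\infty_c(\Omega\setminus S)$. So what that argument really proves is $u\in W^{1,p}_0(\Omega,X_1^{\gamma})$, which is true for all $p$ and $\gamma$. It gives membership in the $\Omega\setminus S$ space only when $\operatorname{supp}u$ avoids $S$, by mollifying with radius below $\operatorname{dist}(\operatorname{supp}u,S)$. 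That is the only way the lemma is actually used, for $V_\tau=\phi_\tau V_\varepsilon$. The cutoff $\phi_\tau$ is then removed by the explicit $K_1,K_2$ estimates.

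You identified the real difficulty correctly. The right resolution is to restrict the statement, not to seek a better cutoff. Restrict it to $W^{1,p}_0(\Omega,\cdot)$, or to $u$ supported away from $S$, or to the range $p<k$ or $p=k,\ \gamma\ge1-k$ where your cutoffs work.
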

\begin{proof}
    We have $suppu =K$ is compact. We can assume that $u $ is bounded. Let $\rho_{\varepsilon}$ be the standard family of mollifiers and take $\varepsilon$ small enough such that $u_{\varepsilon} \coloneqq \rho_{\varepsilon} * u \in C_c^{\infty}(\Omega).$ Now, $u_{\varepsilon}  \to u$ and $\nabla u_{\varepsilon}  \to \nabla u$ a.e. and $\| u_{\varepsilon}  \| \le \|u \|_{L^{\infty}}$,  $\| \nabla u_{\varepsilon}  \| \le \| \nabla u \|_{L^{\infty}}$. Let us show that $X_1^{\gamma}(\left(d/R_{d,\Omega}\right)$ is integrable on $K$ for any $\gamma \in \mathbb{R}.$ Without loss of generality we can assume that $0 \in \Omega \setminus S,$ so take $\delta > 0$ such that $B(0,\delta) \subset \Omega$. Note that, it's enough to check at $B(0,\delta).$ By the coarea formula and the fact $\int_{B(0,\delta) \cap \{d=r\}} d \mathcal{H}^{N-1}(x) \le c r^{k-1}$ we have,
    \begin{align*}
        \int_{B(0,\delta)} X_1^{\gamma}\left(\frac{d}{R_{d,\Omega}}\right) \, dx &= \int_{0}^{\delta} \int_{d=r}  X_1^{\gamma}\left(\frac{r}{R_{d,\Omega}}\right)  d \mathcal{H}^{N-1} \, dr\\
        &\le c \int_{0}^{\delta} X_1^{\gamma}\left(\frac{r}{R_{d,\Omega}}\right) r^{k-1} \, dr < \infty.
    \end{align*}
    Thus, dominated convergence Theorem yields that $u_{\varepsilon} \to u$ in $W^{1,p}_0(\Omega \setminus S,X_1^{\gamma}).$
\end{proof}
\begin{proposition} \label{p<k ineq hardy prop}
    Let $p \neq k$, $\lambda > 0$ and \eqref{p<k assumption} holds. Denote, $M \coloneqq (k-p)/p.$ Then  for all $u \in C_c^{\infty}\left(\Omega \setminus S\right)$, the following holds: For $1<p<k,$ we have,
    \begin{align} \label{p<k ineq hardy}
         \int_{\Omega} \lvert \nabla u \rvert^p X_1^{\lambda} \left(\frac{d}{R_{d,\Omega}}\right) \ge \lvert M \rvert^p  \int_{\Omega} \frac{\lvert u \rvert^p}{ d^p} X_1^{\lambda} \left(\frac{d}{R_{d,\Omega}}\right),
    \end{align}
    and for $p>k,$ we have,
    \begin{align} \label{p>k Hardy}
        \int_{\Omega} \lvert \nabla u \rvert^p X_1^{-\lambda} \left(\frac{d}{R_{d,\Omega}}\right)  \ge \lvert M \rvert^p  \int_{\Omega} \frac{\lvert u \rvert^p}{ d^p} X_1^{-\lambda}\left(\frac{d}{R_{d,\Omega}}\right),
    \end{align}
  In both the cases \eqref{p<k ineq hardy} and \eqref{p>k Hardy}, the constant $\lvert M \rvert^p$ is optimal.
\end{proposition}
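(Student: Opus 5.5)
We prove the inequalities by a vector-field (divergence) argument adapted to the weight, and prove optimality with test functions of the form $\eta\, d^{-M+\varepsilon}$, controlled by Lemma \ref{J_gamma}.

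\textbf{Inequality for $1<p<k$.} Take the field
\begin{align*}
F=-\frac{\nabla d}{d^{p-1}}\,X_1^{\lambda}.
\end{align*}
Write $\Delta_p d^{\frac{p-k}{p-1}}$ out explicitly. Using $|\nabla d|=1$, assumption \eqref{p<k assumption} gives, in the weak sense,
\begin{align*}
\operatorname{div}\!\left(\frac{\nabla d}{d^{p-1}}\right)\le \frac{p-k}{d^{p}},
\qquad\text{that is}\qquad
-\operatorname{div}\!\left(\frac{\nabla d}{d^{p-1}}\right)\ge \frac{k-p}{d^p}.
\end{align*}
Since $X_1'(r)=X_1^2(r)/r$, we have $\nabla (X_1^{\lambda})=\lambda X_1^{\lambda+1}\nabla d/d$. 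Hence
\begin{align*}
\operatorname{div}F\ge \frac{k-p}{d^p}X_1^{\lambda}-\lambda \frac{X_1^{\lambda+1}}{d^p}.
\end{align*}
The extra term has the wrong sign, so this field must be modified. One option is to use $F=-\nabla d\, d^{1-p}X_1^{\lambda}$ directly and absorb the term. A cleaner option is to work with the unweighted inequality and treat the weight as follows. For $p<k$ the weight $X_1^{\lambda}$ is increasing in $d$, and the Hardy mechanism uses the outward flux. With the convention above the field is $-\nabla d\,d^{1-p}$, pointing toward $S$. The derivative of the weight then contributes $-\lambda X_1^{\lambda+1}d^{-p}\,\nabla d\cdot\nabla d$ inside $\operatorname{div}(-\nabla d\,d^{1-p}X_1^\lambda)$, which again has the wrong sign.

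I would therefore try the other orientation. Apply the ground-state/Picone identity with a profile $\phi$ solving the weighted radial ODE with respect to $d$, or more simply check the sign in the one-dimensional model. For $p<k$ the optimal profile is $d^{-M}$, which decreases away from $S$. The weighted Euler–Lagrange equation gives a constant
\begin{align*}
|M|^{p-2}M\bigl(M\,(p-1)-\ldots\bigr)
\end{align*}
plus a term $\lambda X_1 |M|^{p-2}M$. Its sign equals the sign of $M$, which is positive since $p<k$. That is exactly why $X_1^{\lambda}$, with $\lambda>0$, is the admissible weight when $p<k$, and $X_1^{-\lambda}$ when $p>k$ (where $M<0$).

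So the concrete plan is as follows. Set $\phi=d^{-M}$ and compute
\begin{align*}
-\operatorname{div}\!\bigl(X_1^{\pm\lambda}|\nabla\phi|^{p-2}\nabla\phi\bigr)
\ge |M|^p\,\frac{X_1^{\pm\lambda}}{d^p}\,\phi^{p-1},
\end{align*}
using \eqref{p<k assumption} for the $p$-Laplacian part and the sign of $\pm\lambda M\ge0$ for the weight-derivative part. Then apply the standard Picone/Allegretto–Huang argument: multiply by $|u|^p/\phi^{p-1}$, integrate by parts, and use the convexity inequality
\begin{align*}
|\nabla u|^p\ge |\nabla\phi|^{p-2}\nabla\phi\cdot\nabla\!\left(\frac{|u|^p}{\phi^{p-1}}\right).
\end{align*}
This yields \eqref{p<k ineq hardy} and \eqref{p>k Hardy}. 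There are no boundary terms since $u\in C_c^\infty(\Omega\setminus S)$. The main obstacle I expect is justifying the weak superharmonicity with the weight, i.e. that the product rule holds in the weak sense for the Lipschitz $d$. I would handle this by testing \eqref{p<k assumption} against $\psi X_1^{\pm\lambda}$ with $\psi\ge0$, which is legitimate since $X_1^{\pm\lambda}(d/R_{d,\Omega})$ is Lipschitz away from $S$.

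\textbf{Optimality.} Fix $\eta$ as before Lemma \ref{J_gamma}, with $0\in S\cap\Omega$, and set
\begin{align*}
V_\varepsilon=\eta\, d^{-M+\varepsilon}\quad\text{for } p<k,
\end{align*}
or $d^{-M+\varepsilon}$ with $\varepsilon$ of the appropriate sign for $p>k$, adapting the exponent so that $d^{(-M+\varepsilon)p-p}=d^{-k+\varepsilon p}$. By Lemma \ref{loc lip cpt support} (after truncation near $S$ if necessary), $V_\varepsilon$ is admissible. Since $|\nabla d|=1$,
\begin{align*}
|\nabla(d^{-M+\varepsilon})|^p=|M-\varepsilon|^p\,d^{-k+\varepsilon p}.
\end{align*}
The cutoff terms are $O_\varepsilon(1)$ by the elementary inequality \eqref{elt ineq1}, exactly as in Proposition \ref{sharp hardy type1}. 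Thus
\begin{align*}
\frac{\int|\nabla V_\varepsilon|^pX_1^{\pm\lambda}}{\int|V_\varepsilon|^pd^{-p}X_1^{\pm\lambda}}
\le |M-\varepsilon|^p+\frac{O_\varepsilon(1)}{J_{\mp\lambda}(\varepsilon)}.
\end{align*}
For $X_1^{\lambda}$ the relevant quantity is $J_{-\lambda}$ with $\gamma=-\lambda$. When $-\lambda>-1$, Lemma \ref{J_gamma}(i) gives $J\to\infty$. If instead $\lambda\ge1$, I would bound $J_{-\lambda}(\varepsilon)$ from below directly, via the coarea estimate with $\mathcal H^{N-1}(\{d=r\})\approx r^{k-1}$ near $0\in S$, as $\int_0 r^{\varepsilon p-1}X_1^{\lambda}\,dr$. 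This still diverges like $\varepsilon^{-1+\lambda}$, or logarithmically when $\lambda=1$; for $\lambda>1$ it stays bounded. In that last case I would instead use $d^{-M}X_1^{-\sigma}$ with suitable $\sigma$ to force divergence, mimicking the $\tau$-trick of Proposition \ref{sharp hardy type1}, and then send $\varepsilon\to0$ followed by $\sigma\to0$. Letting $\varepsilon\to0$ gives $|M|^p$ in every case.
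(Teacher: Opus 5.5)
Your final plan is correct and is essentially the paper's proof. The Picone argument with $\phi=d^{-M}$ is the paper's vector-field argument in another form, since $-|\nabla\phi|^{p-2}\nabla\phi/\phi^{p-1}=|M|^{p-2}M\,d^{1-p}\nabla d$ is exactly the field $V$ used there. Likewise, your repaired test function $\eta\,d^{-M+\varepsilon}X_1^{-\sigma}$ becomes the paper's $\eta\,d^{-M+\varepsilon}X_1^{-\lambda/p}$ when $\sigma=\lambda/p$; this choice makes the denominator exactly $J_0(\varepsilon)$ for every $\lambda>0$ and removes your case split in $\lambda$.

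Two corrections are needed:
\begin{enumerate}[(a)]
\item Your first attempt reverses the hypothesis. For $p<k$, \eqref{p<k assumption} means $d\Delta d\ge k-1$ (weakly), i.e.\ $\operatorname{div}(d^{1-p}\nabla d)\ge (k-p)d^{-p}$. So the outward field $M^{p-1}d^{1-p}\nabla d$ works directly, the weight only adds $\lambda M^{p-1}X_1^{\lambda+1}d^{-p}\ge 0$, and there is no wrong-sign term to repair.
\item No second limit $\sigma\to0$ is needed. For fixed $\sigma$ with $\sigma p-\lambda>-1$, the derivative of the logarithmic factor contributes only $o\bigl(J_{\sigma p-\lambda}(\varepsilon)\bigr)$, unlike the critical case of Proposition~\ref{sharp hardy type1}. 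Letting $\sigma\to0$ would in fact destroy the divergence of the denominator when $\lambda>1$.
\end{enumerate}
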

\begin{proof}
Let us first prove the inequality \eqref{p<k ineq hardy}. Let $V$ be a $C^1$ vector field defined on $\Omega \setminus S$. Now, for $u \in C_c^{\infty}\left(\Omega \setminus S\right)$, we have,
    \begin{align} \label{div eqn1}
     \notag  \int_{\Omega} \operatorname{div} \left(X_1^{\lambda} V\right) \lvert u \rvert^p \, dx &= -p \int_{\Omega}\langle X_1^{\lambda} V, \nabla u \rangle \lvert u \rvert^{p-2} u \, dx \\
    \notag &\le p \int_{\Omega} X_1^{\lambda/p} \lvert V \rvert \lvert \nabla u \rvert \lvert u \rvert^{p-1} X_1^{\lambda/p'} \, dx\\
       &\le \int_{\Omega} \lvert \nabla u \rvert^p X_1^{\lambda} \, dx + (p-1)\int_{\Omega} \lvert V \rvert^{p'} \lvert u \rvert^p X_1^{\lambda} \, dx,
    \end{align}
    where in the last step we have used Young's inequality: $ab \le a^p/p + b^{p'}/p'$ for any $a,b \ge 0$ with $p,p'$ are H\"older conjugates.
This implies that,
\begin{align} \label{equation for p<k}
    \int_{\Omega} \lvert \nabla u \rvert^p X_1^{\lambda} \, dx \ge \int_{\Omega} \left[\operatorname{div} \left(X_1^{\lambda} V\right) - (p-1) \lvert V \rvert^{p'} X_1^{\lambda}\right] \lvert u \rvert^p \, dx.
\end{align}
Thus, it's enough to show that, $\operatorname{div} \left(X_1^{\lambda} V\right) - (p-1) \lvert V \rvert^{p'} X_1^{\lambda} \ge M^pd^{-p}X_1^{\lambda}$. Consider the vector field $V(x) = M^{p-1}{d^{-p+1}(x)} \nabla d(x)$. Thus, by using \eqref{p<k assumption} we have,
\begin{align*}
    \operatorname{div} V &= M^{p-1} \left[\frac{\Delta d}{d^{p-1}(x)} + (1-p) \frac{\lvert \nabla d \rvert^2}{d^p}\right]\\
    &= M^{p-1} \frac{d \Delta d- (p-1)}{d^p}\stackrel{\eqref{p<k assumption}}{\ge} M^{p-1} \frac{k-p}{d^p} =p \frac{M^p}{d^p}.
\end{align*}
This implies that,
\begin{align*}
    \operatorname{div}(X_1^{\lambda} V) &= X_1^{\lambda} \operatorname{div}V + \left \langle V, \lambda d^{-1} X_1^{\lambda + 1} \nabla d  \right \rangle \ge p M^p d^{-p} X_1^{\lambda}.
\end{align*}
On the other hand, we have, $\lvert V \rvert^{p'} X_1^{\lambda} = M^p d^{-p}X_1^{\lambda}$. Therefore, we have $\operatorname{div} \left(X_1^{\lambda} V\right) - (p-1) \lvert V \rvert^{p'} X_1^{\lambda} \ge M^pd^{-p}X_1^{\lambda}$ and \eqref{p<k ineq hardy} follows. To prove \eqref{p>k Hardy}, we can proceed similarly as above by choosing the vector field, $V(x)= M\lvert M \rvert^{p-2} d^{-p+1} \nabla d(x).$
\par Now, we prove the sharpness of the constant. Again we consider $\eta \in C_c^{\infty}\left(B(0,\delta)\right)$ such that $0\le \eta \le 1$ and $\eta = 1$ on $B\left(0,\delta/2\right)$.
For any $\varepsilon>0$, we set $v_{\varepsilon} \coloneqq d^{-M+\varepsilon} X_1^{ - \lambda/p}\left(d/R_{d,\Omega}\right)$ and $V_{\varepsilon} \coloneqq \eta v_{\varepsilon}$. Take $\phi \in C^{\infty}([0,\infty))$ such that $\phi(t) = 0$ for $t \le 1$ and $\phi(t) = 1,$ for $t \ge 2.$ For $\tau>0,$ set $\phi_{\tau}(x) = \phi ( d(x)/\tau).$ Thus,
\begin{align} \label{cut off functions}
    \phi_{\tau}(x) = \begin{cases}
        0, \quad d(x) \le \tau \\
        1, \quad d(x) \ge 2 \tau.
    \end{cases}
\end{align}
Observe that, $\lvert \nabla \phi_{\tau} \rvert \le c/\tau.$ Set $V_{\tau} = \phi_{\tau} V_{\varepsilon},$ note that $V_{\tau}$ is locally lipschitz and has compact support, so by Lemma \ref{loc lip cpt support},  $V_{\tau} \in W^{1,p}_0(\Omega \setminus S, X_1^{\lambda} ).$ Note that we have,
\begin{align*}
    \int_{\Omega} \lvert \nabla V_{\tau} - \nabla V_{\varepsilon} \rvert^p X_1^{\lambda} \, dx &= C_p\int_{\Omega} \lvert \phi_\tau -1 \rvert^p \lvert \nabla V_{\varepsilon} \rvert^p X_1^{\lambda} \, dx +C_p  \int_{\Omega} \lvert \nabla \phi_\tau  \rvert^p \lvert V_{\varepsilon} \rvert^p X_1^{\lambda} \, dx\\
    &\coloneqq  K_1 + K_2.
\end{align*}
Now, by dominated convergence Theorem, $K_1 \to 0,$ as $\tau \to 0.$ Now, 
\begin{align*}
    K_2 & = C_p  \int_{\Omega} \lvert \nabla \phi_\tau  \rvert^p \lvert V_{\varepsilon} \rvert^p X_1^{\lambda} \, dx \le C \tau^{-p} \int_{B(0,\delta) \cap \{ \tau <  d < 2 \tau \} } d^{-k+p+\varepsilon p} \, dx = \Tilde{C} \tau^{\varepsilon p},
    \end{align*}
    which goes to $0,$ as $\tau \to 0,$ where in the last step we have used the coarea formula together the fact that $\int_{B(0,\delta) \cap \{d=r\}} d \mathcal{H}^{N-1}(x) \le c r^{k-1}.$ Thus, $V_{\varepsilon} \in W^{1,p}_0\left(\Omega \setminus S, X_1^{\lambda}\right)$.
\par Note that, $\nabla V_{\varepsilon} = \eta \nabla v_{\varepsilon} + v_{\varepsilon} \nabla \eta$. Thus, an elementary inequality \eqref{elt ineq1} yields,
\begin{align*}
    \int_{\Omega} \lvert \nabla V_{\varepsilon}\rvert^p X_1^{\lambda} \, dx \le \int_{B(0,\delta)} &\eta^p \lvert \nabla v_{\varepsilon} \rvert^p X_1^{\lambda} \, dx+ c_p \int_{B(0,\delta)} v_{\varepsilon}^p \lvert \nabla \eta \rvert^p X_1^{\lambda} \, dx\\
    &+ c_p \int_{B(0,\delta)} v_{\varepsilon} \lvert \nabla v_{\varepsilon} \rvert^{p-1} \eta^{p-1} \lvert \nabla \eta \rvert X_1^{\lambda} \, dx \coloneqq I_1 + I_2 + I_3.
\end{align*}
We show that $I_2, I_3 = O_{\varepsilon}(1)$ as $\varepsilon \to 0.$
Observe that,
\begin{align} \label{v_epsilon second}
\lvert \nabla v_{\varepsilon} \rvert \le \lvert M-\varepsilon \rvert d^{-M+\varepsilon -1} X_1^{- \lambda/p} + C_{p,\lambda} d^{-M+\varepsilon -1} X_1^{- \lambda/p+1} .
\end{align}
Note that we have,
\begin{align*}
    I_3 &= c_p \int_{B(0,\delta)} v_{\varepsilon} \lvert \nabla v_{\varepsilon} \rvert^{p-1} \eta^{p-1} \lvert \nabla \eta \rvert X_1^{\lambda} \, dx \le C \int_{B(0,\delta)} d^{1-k+ p \varepsilon} \,dx = O_{\varepsilon}(1).
\end{align*}
Similarly we can show that $I_3 = O_{\varepsilon}(1)$. Again we use \eqref{elt ineq1} to \eqref{v_epsilon second} and deduce,
\begin{align*}
    I_1 &\le (M-\varepsilon)^p \int_{B(0,\delta)} \eta^p d^{-k+ \varepsilon p} \, dx + c_{p,\lambda} \int_{B(0,\delta)}  \eta^p d^{-k+ \varepsilon p} X_1^p \, dx.\\
    &\le (M-\varepsilon)^p \int_{\Omega} \frac{\lvert V_{\varepsilon} \rvert^p}{d^p} X_1^{\lambda} \, dx + O_{\varepsilon}(1)=(M-\varepsilon)^p J_{0}(\varepsilon) + O_{\varepsilon}(1),
\end{align*}
where $J$ is defined as in \eqref{equation for J}. Therefore, we have,
\begin{align*}
    \frac{\int_{\Omega} \lvert \nabla V_{\varepsilon}\rvert^p X_1^{\lambda} \, dx}{\int_{\Omega} \frac{\lvert V_{\varepsilon} \rvert^p}{d^p} X_1^{\lambda} \, dx} \le (M-\varepsilon)^p  + J^{-1}_{0}(\varepsilon)O_{\varepsilon}(1).
\end{align*}
Letting $\varepsilon \to 0,$ by $(i)$ of \Cref{J_gamma} we have $J_{0}(\varepsilon) \to \infty$ and the sharpness follows.
\par To prove the sharpness in \eqref{p>k Hardy}, we choose $v_{\varepsilon}= d^{-M+ \varepsilon} X_1^{\lambda/p}(d/R_{d,\Omega})$ and proceed similarly. Thus, we omit the details.
\end{proof}
\begin{proposition} \label{p=k hardy ineq prop}
    Let $\lambda > 0$, $p=k \ge 2$ and \eqref{p=k assumption} holds. Then for all $u \in W^{1,k}_0(\Omega \setminus S, X_1^{\lambda} \left(d/R_{d,\Omega}\right) \, dx )$, we have the following inequality,
    \begin{align} \label{p=k hardy ineq}
        \int_{\Omega} \lvert \nabla u \rvert^k X_1^{\lambda} \left(\frac{d}{R_{d,\Omega}}\right) \, dx \ge \left(\frac{\lambda + k-1}{k}\right)^k  \int_{\Omega} \frac{\lvert u \rvert^k}{d^k} X_1^{\lambda + k} \left(\frac{d}{R_{d,\Omega}}\right) \, dx.
    \end{align}
    Moreover, the constant $(\lambda + k -1)^k/ k^k$ is optimal.
\end{proposition}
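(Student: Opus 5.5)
The plan is to prove \eqref{p=k hardy ineq} by the same vector-field (divergence) method as in Proposition~\ref{p<k ineq hardy prop}, and to prove optimality with the test functions of Proposition~\ref{sharp hardy type1}, transplanted to the distance $d$. Throughout, write $A\coloneqq(\lambda+k-1)/k$.

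\emph{The inequality.} By the definition of $W^{1,k}_0(\Omega\setminus S, X_1^{\lambda})$ it suffices to take $u\in C_c^\infty(\Omega\setminus S)$; Fatou's lemma on the right-hand side then passes the inequality to the closure. For a $C^1$ vector field $V$ on $\Omega\setminus S$, integrate $\operatorname{div}(X_1^{\lambda}V)\,|u|^k$ by parts and apply Young's inequality exactly as in \eqref{div eqn1}, now with $p=k$. This gives
\begin{align*}
\int_\Omega |\nabla u|^k X_1^{\lambda}\,dx \ \ge\ \int_\Omega\Big[\operatorname{div}(X_1^{\lambda}V)-(k-1)|V|^{k'}X_1^{\lambda}\Big]|u|^k\,dx .
\end{align*}
I choose $V=A^{k-1}d^{1-k}X_1^{k-1}\nabla d$. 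Since $|\nabla d|=1$ a.e., we have $\nabla X_1(d/R_{d,\Omega})=d^{-1}X_1^2\nabla d$. Hence
\begin{align*}
\operatorname{div}(X_1^{\lambda}V)=A^{k-1}\Big[X_1^{\lambda+k-1}\operatorname{div}\big(d^{1-k}\nabla d\big)+(\lambda+k-1)\,d^{-k}X_1^{\lambda+k}\Big].
\end{align*}
Observe that $|\nabla(-\ln d)|^{k-2}\nabla(-\ln d)=-d^{1-k}\nabla d$. Therefore assumption \eqref{p=k assumption} says exactly that $\operatorname{div}(d^{1-k}\nabla d)\ge0$ weakly, and the first term can be dropped. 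Using $A^{k-1}(\lambda+k-1)=kA^k$ and $|V|^{k'}X_1^{\lambda}=A^k d^{-k}X_1^{\lambda+k}$, the bracket is at least $(k-(k-1))A^kd^{-k}X_1^{\lambda+k}=A^kd^{-k}X_1^{\lambda+k}$, which is \eqref{p=k hardy ineq}. The weak form of the assumption is enough here, because the weighted field $X_1^{\lambda+k-1}$ times the nonnegative test function $|u|^k$ is an admissible Lipschitz test function.

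\emph{Optimality.} Take $0\in S\cap\Omega$ and $\eta\in C_c^\infty(B(0,\delta))$ with $\eta=1$ on $B(0,\delta/2)$, as before. For small $\varepsilon>0$ and $\tau>0$, set $v_\varepsilon=d^{\varepsilon}X_1^{-A-\tau}$ and $V_\varepsilon=\eta v_\varepsilon$. This is the analogue of Proposition~\ref{sharp hardy type1}, which corresponds to $d=|x|$, $k=N$, $\lambda=-\beta(N-1)$, so that $A=H$.
\begin{enumerate}[(1)]
\item Membership: truncate with $\phi_\tau(d)$ as in \eqref{cut off functions} and invoke Lemma~\ref{loc lip cpt support}. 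The error term $K_2$ is bounded by a constant times $\tau^{\varepsilon k}$ by the coarea bound, so it tends to $0$.
\item Gradient: $\nabla v_\varepsilon=d^{\varepsilon-1}X_1^{-A-\tau}\big(\varepsilon-(A+\tau)X_1\big)\nabla d$. The exponents combine as $-k(A+\tau)+\lambda+k=1-k\tau$, so the main term is
\begin{align*}
\int\eta^k|\nabla v_\varepsilon|^kX_1^{\lambda}\le\sum_{j=0}^k\binom{k}{j}\varepsilon^{k-j}(A+\tau)^jJ_{k\tau+k-1-j}(\varepsilon),
\end{align*}
while the denominator equals $J_{k\tau-1}(\varepsilon)$.
\item Cross terms: the terms $I_2,I_3$ involving $\nabla\eta$ are $O_\varepsilon(1)$, because they live away from the singularity.
\end{enumerate}

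\emph{Main obstacle.} The delicate step is the bookkeeping in the binomial sum. Each $\varepsilon^{k-j}J_{k\tau+k-1-j}$ must be reduced to $J_{k\tau-1}$ by iterating Lemma~\ref{J_gamma}(ii); all indices stay $>-1$ since $\tau>0$. Each reduction contributes a factor $\prod(\tau+\cdots)/k$, and the product for every $j<k$ contains the factor $\tau$. The quotient is therefore bounded by $(A+\tau)^k+O(\tau)+O_\varepsilon(1)J_{k\tau-1}^{-1}(\varepsilon)$. By Lemma~\ref{J_gamma}(i), $J_{k\tau-1}(\varepsilon)\to\infty$ as $\varepsilon\to0$. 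Letting first $\varepsilon\to0$ and then $\tau\to0$ gives the sharp constant $A^k$.
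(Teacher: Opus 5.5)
Your proposal is correct and follows the paper's proof essentially step for step. Your $X_1^{\lambda}V$ is exactly the paper's field $L^{k-1}d^{1-k}X_1^{\lambda+k-1}\nabla d$, and reading \eqref{p=k assumption} as $\operatorname{div}(d^{1-k}\nabla d)\ge 0$ in the weak sense is a slightly cleaner way to use the hypothesis. The optimality argument also matches the paper: the same test functions $\eta\, d^{\varepsilon}X_1^{-L-\tau}$, the same cutoff $\phi_\tau$, and the same iteration of Lemma~\ref{J_gamma}(ii) down to $J_{k\tau-1}$.

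Two justifications need small adjustments. When $k<N$, the support of $\nabla\eta$ meets $S$, so $I_2,I_3=O_\varepsilon(1)$ does not hold because these terms live away from the singularity. It holds because their integrands are at most $d^{1-k+k\varepsilon}$ times a power of $X_1^{-1}$, which is integrable uniformly in $\varepsilon$ by the coarea bound $\mathcal{H}^{N-1}(\{d=r\}\cap B(0,\delta))\le c\,r^{k-1}$. Likewise, your bound for $K_2$ picks up a logarithmic factor, which does not affect its vanishing.
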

\begin{proof}
  First, we prove the inequality \eqref{p=k hardy ineq}. For this purpose, take $V$ to be $C^1$ vector field on $\Omega \setminus S.$ For all $u \in C^{\infty}_c\left(\Omega \setminus S\right)$ and any $\gamma \in \mathbb{R}$ we have,
\begin{align} \label{divergence eqn k gen alpha}
   \notag \int_{\Omega} \operatorname{div} V \lvert u \rvert^k \, dx &= - k \int_{\Omega} \langle V, \nabla u \rangle \lvert u \rvert^{k-2} u \, dx\\
  \notag &\le k \int_{\Omega} \lvert \nabla u \rvert X_1^{\gamma/k} \lvert V \rvert \lvert u \rvert^{k-1} X_1^{-\gamma/k} \,dx\\
    &\le \int_{\Omega} \lvert \nabla u \rvert^k X_1^{\gamma} \, dx + (k-1) \int_{\Omega} \lvert V \rvert^{k'} \lvert u \rvert^k X_1^{-\gamma/(k-1)} \, dx,
\end{align}
where the last step follows from Young's inequality.
Thus, in particular we have,
\begin{align} \label{p=k eqn}
    \int_{\Omega} \lvert \nabla u \rvert^k X_1^{\lambda} \, dx \ge \int_{\Omega} \left(\operatorname{div} V-(k-1) \lvert V \rvert^{k'} X_1^{-\lambda/(k-1)}\right) \lvert u \rvert^{k} \, dx.
\end{align}
Now, consider the vector field,
\begin{align*}
    V(x) = L^{k-1} \nabla d(x)d^{-k+1}(x) X_1^{\lambda + k-1}, \mbox{ for } L = (\lambda+k-1)/k.
\end{align*}
Thus, we have,
\begin{align*}
    \operatorname{div} V(x) &= L^{k-1} d^{-k} X_1^{\lambda +k-1} \left[(\lambda + k-1) X_1-k+1+ d\Delta d\right] \\
    & \stackrel{\eqref{p=k assumption}}{\ge} L^{k-1} (\lambda + k-1) d^{-k} X_1^{\lambda +k}= k L^k d^{-k} X_1^{\lambda +k}.
\end{align*}
Therefore by using the above computation, from \eqref{p=k eqn} we have,
\begin{align*}
    \int_{\Omega} \lvert \nabla u \rvert^k X_1^{\lambda} \, dx &\ge L^k\int_{\Omega} \frac{1}{d^k}\left(k X_1^{\lambda +k} - (k-1) X_1^{\frac{k(\lambda+k-1)}{k-1}}X_1^{\frac{-\lambda}{k-1}}\right) \lvert u \rvert^k \, dx\\
    &=L^k \int_{\Omega} \frac{\lvert u \rvert^k}{d^k} X_1^{\lambda + k} \, dx.
\end{align*}
\par We still need to prove the optimality of the constant. Without loss of generality, assume that $0 \in \Omega \cap S$, so take a $\delta > 0$ such that $B(0,\delta) \subset \Omega.$ Now, consider $\eta \in C_c^{\infty}\left(B(0,\delta)\right)$ such that $0\le \eta \le 1$ and $\eta = 1$ on $B\left(0,\delta/2\right)$. 
For any small enough $\varepsilon>0$ and $\tau > 0$ we set $v_{\varepsilon} \coloneqq d^{\varepsilon} X_1^{-L-\tau}$ and $V_{\varepsilon} \coloneqq \eta v_{\varepsilon}$. We set $V_{\tau}=\phi_{\tau} V_{\varepsilon},$ where $\phi_{\tau}$ is defined as in \eqref{cut off functions}. Analogous to the proof of Proposition \ref{p<k ineq hardy prop}, we can show that $V_{\varepsilon} \in W^{1,k}_0(\Omega \setminus S, X_1^{\lambda}).$ Note that, $\nabla V_{\varepsilon} = \eta \nabla v_{\varepsilon} + v_{\varepsilon} \nabla \eta$ and using an elementary inequality \eqref{elt ineq1}, we deduce,
\begin{align*}
      \int_{\Omega} \lvert \nabla V_{\varepsilon}\rvert^k X_1^{\lambda} \, dx \le \int_{B(0,\delta)} &\eta^k \lvert \nabla v_{\varepsilon} \rvert^k X_1^{\lambda} \, dx + c_k \int_{B(0,\delta)} v_{\varepsilon}^k \lvert \nabla \eta \rvert^k X_1^{\lambda} \, dx\\
      &+ c_k \int_{B(0,\delta)} v_{\varepsilon} \lvert \nabla v_{\varepsilon} \rvert^{k-1} \eta^{k-1} \lvert \nabla \eta \rvert X_1^{\lambda} \, dx \coloneqq I_1 + I_2 + I_3.
\end{align*}
We show that $I_2, I_3 = O_{\varepsilon}(1)$ as $\varepsilon \to 0.$ Observe that,
\begin{align} \label{v_epsilond}
\nabla v_{\varepsilon} =  d^{\varepsilon -1} X_1^{-L-\tau} \left(\varepsilon +(-L - \tau) X_1\right) \nabla d.
\end{align}
Note that $\varepsilon +(-L- \tau) X_1$ is bounded on $B(0,\delta).$
Thus, we can deduce that,
\begin{align*}
    I_3 \le c \int_{B(0,\delta)} d^{1-k+ \varepsilon k} X_1^{1-k-\tau k}(d/R_{\Omega}) \, dx = O_{\varepsilon}(1) \quad \text{as} \quad \varepsilon \to 0.
\end{align*}
Similarly, we can show that $I_2 = O_{\varepsilon}(1)$ as $\varepsilon \to 0.$
From \Cref{v_epsilond} we have,
\begin{align*}
    \lvert \nabla v_{\varepsilon} \rvert^k
    &\le d^{k \varepsilon -k} X_1^{1-k-\lambda-k\tau}  \sum_{i=0}^k \binom{k}{i} \varepsilon^{k-i} \lvert L +\tau \rvert^i X_1^i.
\end{align*}
Thus, we have,
\begin{align*}
    I_1 &\le \sum_{i=0}^k \binom{k}{i} \varepsilon^{k-i} \lvert L +\tau \rvert^i J_{-1+k+k \tau -i}; \hspace{4pt} J_{-1+k+k \tau -i}= \int_{B(0,\delta)} \eta^kd^{k\varepsilon-k}X_1^{1-k-k\tau+i}.
\end{align*}
Since, by our choice, $\tau >0$ so we have $(k \tau + k-1) -k >-1$. Thus, by $(ii)$ of Lemma \ref{J_gamma}  and using induction we can deduce,
\begin{align*}
     \varepsilon^{k-i}  J_{k \tau +k-1 -i}(\varepsilon) = \left(\tau + 1 - \frac{1}{k} - \frac{i}{k}\right) &\left(\tau + 1 - \frac{1}{k} - \frac{i+1}{k}\right) \cdots\\
     &\left(\tau + 1 - \frac{1}{k} - \frac{k-1}{k}\right) J_{k \tau - 1} + O_{\varepsilon}(1),
\end{align*}
for $i=0, \cdots k-1.$  On the other hand, we have,
\begin{align*}
    \int_{\Omega} \frac{\lvert V_{\varepsilon} \rvert^k}{d^k}X_1^{\lambda+k}\, dx &= \int_{\Omega} \eta^k d^{k \varepsilon - k} X_1^{1-k \tau}\, dx= J_{k \tau -1}(\varepsilon),
\end{align*}
where $J$ is defined as in \eqref{equation for J}. Therefore, we have
\begin{align*}
    \frac{\int_{\Omega} \lvert \nabla V_{\varepsilon}\rvert^k X_1^{\lambda} \, dx}{ \int_{\Omega} \frac{\lvert V_{\varepsilon} \rvert^k}{d^k} X_1^{\lambda + k}\, dx}\\
    &\le \lvert M + \tau \rvert^k + O_{\varepsilon}(1) J^{-1}_{k \tau -1 }(\varepsilon)\\
    & \qquad \qquad +\sum_{i=0}^{k-1}\binom{k}{i} \left(\tau + 1 - \frac{1}{k} - \frac{i}{k}\right) \left(\tau + 1 - \frac{1}{k} - \frac{i+1}{k}\right) \cdots\tau.
\end{align*}
Now, by $(i)$ of Lemma \ref{J_gamma} we have $J_{k \tau  -1}(\varepsilon) \to \infty$ as $\varepsilon \to 0$. Thus, first letting $\varepsilon \to 0$ and then $\tau \to 0$ in the above expression yields the sharpness of the constant.
\end{proof}
\section{Series Expansion} \label{expansion}
In this section, we will prove that we can improve the Hardy-type inequalities by adding suitable terms and get the infinite series expansion of remainder terms. We will also give the proof of \Cref{p not k thm}. The proofs are based on the suitable choice of the vector field. This approach was first developed in \cite{Barbatis} and was further studied in \cite{BarbatisSeries}.
\begin{proposition} \label{series expansion without beta}
     For all $u \in W^{1,N}_{0}\left(\Omega, X_1^{-N+1}\right)$, we have the following inequality,
    \begin{align}
        \notag \int_{\Omega} \lvert \nabla u \rvert^N X_1^{-N+1} \, dx &- \left(\frac{N-1}{N}\right)^N \int_{\Omega} \frac{\lvert u \rvert ^N}{\lvert x \rvert^N} X_2^N X_1 \, dx\\
 & \quad \ge \frac{1}{2}\left(\frac{N-1}{N}\right)^{N-1} \sum_{j=3}^{\infty} \int_{\Omega} \frac{\lvert u \rvert ^N}{\lvert x \rvert^N} X_2^N X_1 X_3^2 \cdots X_j^2 \, dx.
    \end{align}
\end{proposition}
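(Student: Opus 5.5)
By \Cref{approximation} (with $\beta=1$) and density, it is enough to prove the inequality for $u\in C^1_c(\Omega\setminus\{0\})$, and it is enough to prove it for every finite truncation $m\ge 3$ of the series. The case $m=\infty$ then follows by monotone convergence, since every term of the series is nonnegative. I will use the vector field method already used in \Cref{p=k hardy ineq prop}. Take \eqref{divergence eqn k gen alpha} with $k=N$, $d=\lvert x\rvert$ and $\gamma=-(N-1)$, so that $X_1^{-\gamma/(N-1)}=X_1$. For every $C^1$ vector field $V$ on $\Omega\setminus\{0\}$ this gives
\begin{align*}
\int_{\Omega}\lvert\nabla u\rvert^N X_1^{-N+1}\,dx \ \ge\ \int_{\Omega}\Big(\operatorname{div}V-(N-1)\lvert V\rvert^{N'}X_1\Big)\lvert u\rvert^N\,dx .
\end{align*}
The proof then reduces to a pointwise inequality for a well-chosen radial field.

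I take $V=\frac{x}{\lvert x\rvert^N}f(r)$ with $r=\lvert x\rvert$, so that $\operatorname{div}V=r^{1-N}f'(r)$. I will use the derivative rules $r\,\frac{d}{dr}X_1=X_1^2$ and $r\,\frac{d}{dr}X_j=X_j^2X_{j-1}\cdots X_1$. Let $a=\frac{N-1}{N}$ and $B_j=X_3\cdots X_j$, with $B_2=1$. In the spirit of \cite{Barbatis,BarbatisSeries} I choose
\begin{align*}
f=a^{N-1}X_2^{N-1}\Big(1+\frac{1}{2a}\sum_{j=3}^m B_j\Big).
\end{align*}
With this choice, $rf'$ is $(N-1)a^{N-1}X_2^NX_1(1+\frac{1}{2a}\sum_j B_j)$ plus the terms coming from differentiating the $B_j$, namely
\begin{align*}
\frac{a^{N-2}}{2}\,X_2^{N-1}\sum_{j}B_j\sum_{i=3}^{j}X_i\cdots X_3X_2X_1 .
\end{align*}
On the other side, $(N-1)\lvert V\rvert^{N'}X_1=(N-1)a^NX_2^NX_1\,r^{-N}\,(1+y)^{N'}$, where $y=\frac{1}{2a}\sum_j B_j$.

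Next I expand $(1+y)^{N'}$. The zeroth-order term cancels against the main term of $\operatorname{div}V$, leaving exactly $a^NX_2^NX_1/r^N$. This is the Hardy term with the constant of \Cref{beta=1}. The linear terms cancel exactly, because $N'(N-1)a^N\cdot\frac1{2a}=\frac{N-1}{2a}a^{N}\cdot\frac{N}{N-1}$, which matches $(N-1)a^{N-1}\frac1{2a}$. What remains is to compare the derivative terms of the $B_j$ with the quadratic and higher terms $c_N y^2+O(y^3)$ of the Taylor expansion. The derivative terms contain the desired remainder $\frac{a^{N-1}}{2}X_2^NX_1\sum_j B_j^2$, coming from the diagonal $i=j$ contributions, together with off-diagonal positive terms $B_jB_i$. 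Following the telescoping argument of \cite{BarbatisSeries}, these off-diagonal terms should dominate the cross terms of $y^2$, since $y^2$ is a sum of products $B_iB_j$ with the same structure.

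The main obstacle is this last bookkeeping step: showing that the Taylor remainder of $(1+y)^{N'}$ is absorbed by the off-diagonal terms uniformly in $m$. Here I would use the elementary bound $(1+y)^{N'}\le 1+N'y+\frac{N'}{2(N-1)}y^2\,(1+y)^{N'-2}$ together with the smallness of $X_j$ for $j\ge 3$. The structure $X_2^NX_1\leftrightarrow X_1$ is exactly the Barbatis--Filippas--Tertikas $p=N$ expansion after the substitution $X_{j}\mapsto X_{j-1}$, since $X_2(r/R_\Omega)=X_1(X_1(r/R_\Omega))$. For this reason I expect their estimate to transfer after the index shift, possibly only after checking that $X_3$ is bounded by a suitably small constant on $\Omega$.
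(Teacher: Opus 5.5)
Your reduction to $C^1_c(\Omega\setminus\{0\})$, the formula $\operatorname{div}V=r^{1-N}f'(r)$, the derivative rules and the zeroth-order cancellation are all correct. The linear cancellation actually holds for any coefficient $c$ in $f=a^{N-1}X_2^{N-1}(1+c\sum_j B_j)$. The gap is your choice $c=\frac{1}{2a}=\frac{N}{2(N-1)}$.

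With $S=\sum_{j=3}^m B_j$ and $B_iB_j=X_3^2\cdots X_i^2X_{i+1}\cdots X_j$ for $i\le j$, one has exactly
\[
\frac{\lvert x\rvert^N}{a^{N-1}X_2^NX_1}\Big(\operatorname{div}V-(N-1)\lvert V\rvert^{N'}X_1\Big)=(N-1)(1+cS)+c\sum_{3\le i\le j\le m}B_iB_j-\frac{(N-1)^2}{N}(1+cS)^{N'} .
\]
After the cancellations, the quadratic part is $(c-\frac{c^2}{2})\sum_jB_j^2+(c-c^2)\sum_{i<j}B_iB_j$. The point your bookkeeping misses is that $-\frac{c^2}{2}S^2$ also has a diagonal part, and it must be paid by the diagonal derivative terms. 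With your $c$, those derivative terms carry $\frac{a^{N-2}}{2}=a^{N-1}c$, not $\frac{a^{N-1}}{2}$.

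The net diagonal coefficient is therefore $\frac12-\frac{(1-c)^2}{2}$, which is strictly less than $\frac12$ unless $c=1$, i.e.\ unless $N=2$. For $N\ge3$ the shortfall contains $\frac{(N-2)^2}{8(N-1)^2}X_3^2$. The off-diagonal terms cannot absorb it: for $3\le i<j$ one has $B_iB_j\le X_3^2X_4=o(X_3^2)$ as $\lvert x\rvert\to0$, and for fixed $m$ the cubic Taylor terms are $O(X_3^3)$. So the pointwise inequality you need fails near the origin. Since $u$ may be supported there, your field only gives a strictly smaller constant in front of the first remainder term.

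The missing idea is to take $c=1$, which maximizes $c-\frac{c^2}{2}$. This is the paper's field
\[
V=a^{N-1}\Big(X_2^{N-1}+\sum_{j=3}^mX_2^{N-1}X_3\cdots X_j\Big)\frac{x}{\lvert x\rvert^N}.
\]
With it the cross terms cancel identically and the diagonal coefficient is exactly $\frac12$. Because $N'\le2$, the global bound $(1+y)^{N'}\le1+N'y+\frac{N}{2(N-1)^2}y^2$ holds for all $y\ge0$. So no higher-order remainder, telescoping, or smallness is needed.

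None would be available anyway. The weights are normalized by $R_\Omega$, so $X_3(\lvert x\rvert/R_\Omega)$ is close to $1$ near the farthest points of $\Omega$, and $X_j\to1$ as $j\to\infty$. Relatedly, your proposed bound $(1+y)^{N'}\le1+N'y+\frac{N'}{2(N-1)}y^2(1+y)^{N'-2}$ goes the wrong way for $N\ge3$. Since $t\mapsto(1+t)^{N'-2}$ is decreasing, the Taylor remainder is at least $\frac{N'(N'-1)}{2}(1+y)^{N'-2}y^2$. The correct upper bound replaces $(1+y)^{N'-2}$ by $1$. For $N=2$ your $c$ equals $1$ and your argument coincides with the paper's.
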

\begin{proof} 
From \Cref{approximation}, we can assume that $u \in C^{\infty}_c\left(\Omega \setminus \{0\}\right)$. Let $V$ be a $C^1$ vector field defined on $\Omega \setminus \{0\}$. Putting $k=N$ and $\gamma = -N+1$ in \eqref{divergence eqn k gen alpha} yields,
    \begin{align} \label{expansion eq}
     \int_{\Omega} \lvert \nabla u \rvert^N  X_1^{-N+1} \, dx \ge  \int_{\Omega} \left(\operatorname{div} V -(N-1) \lvert V \rvert^{N'} X_1\right) \lvert u \rvert^N \, dx.
\end{align}
Consider the following vector field.
    \begin{align*}
        V(x) = \left(\frac{N-1}{N}\right)^{N-1} \left(X_2^{N-1} + \sum_{j=3}^m X_2^{N-1} X_3 \cdots X_j\right) \frac{x}{\lvert x \rvert^N}
    \end{align*}
Thus, we have,
\begin{align} \label{div eqn X}
   \notag  \operatorname{div} V &= \frac{(N-1)^N}{N^{N-1}} \frac{X_2^NX_1}{\lvert x \rvert^N} \Bigg[1+ \sum_{j=3}^m X_3 \cdots X_j\\
   &\hspace{100pt}+\frac{1}{N-1}\sum_{j=3}^m \sum_{i=3}^j X_3^2 \cdots X_i^2X_{i+1} \cdots X_j \Bigg].
\end{align}
On the other hand,
\begin{align} \label{V eqn X}
 \lvert V \rvert^{N'} X_1 = \left(\frac{N-1}{N}\right)^N  \frac{X_2^NX_1}{\lvert x \rvert^N} \left[1+ \sum_{j=3}^m X_3\cdots X_j\right]^{N'}.
 \end{align}
 Now, by Taylor series expansion we can estimate the last term as,
\begin{align*}
    \left[1+ \sum_{j=3}^m X_3 \cdots X_j\right]^{N'} &\le 1 + N'\sum_{j=3}^m X_3 \cdots X_j +\frac{N}{2(N-1)^2} \left(\sum_{j=3}^m X_3 \cdots X_j\right)^2.
\end{align*} 
Thus, from \eqref{div eqn X}, \eqref{V eqn X} and using the above expansion, we can deduce,
\begin{align*}
    &\operatorname{div} V -(N-1) \lvert V \rvert^{N'} X_1 \\ 
    &\qquad \ge \left( \frac{N-1}{N}\right)^{N-1} \frac{X_2^N X_1}{\lvert x \rvert^N} \\ 
    &\hspace{50pt}\times\left[\frac{N-1}{N}+\sum_{j=3}^m \sum_{i=3}^j X_3^2 \cdots X_i^2X_{i+1} \cdots X_j -\frac{1}{2} \left(\sum_{j=3}^m X_3 \cdots X_j\right)^2\right].
\end{align*}
Expanding the square in the last term of the above equation we ended up with,
\begin{align*}
    \operatorname{div} V -&(N-1) \lvert V \rvert^{N'} X_1 \ge \left( \frac{N-1}{N}\right)^{N-1} \frac{X_2^N X_1}{\lvert x \rvert^N}\left[\frac{N-1}{N} +\frac{1}{2} \sum_{j=3}^m X_3^2 \cdots X_j^2\right].
\end{align*}
The above equation together with \eqref{expansion eq} implies the proof.
\end{proof}
\begin{proposition} \label{only series beta}
Let $0 < \beta < 1.$ Then for  all $u \in W^{1,N}_{0}\left(\Omega , w_{\beta}\right)$ , we have the following infinite improvement of remainder terms:
    \begin{align}
      \notag   \int_{\Omega} \lvert \nabla u \rvert^N & X_1^{-\beta(N-1)} \, dx - \left(\frac{1-\beta}{N'}\right)^N \int_{\Omega} \frac{\lvert u \rvert ^N}{\lvert x \rvert^N} X_1^{ N - \beta(N-1)} \, dx\\
        & \quad \ge \frac{1}{2(1-\beta)}\left(\frac{1-\beta}{N'}\right)^{N-1} \sum_{j=2}^{\infty} \int_{\Omega} \frac{\lvert u \rvert ^N}{\lvert x \rvert^N} X_1^{N- \beta(N-1)} X_2^2 \cdots X_j^2 \, dx
        \label{beta series prop}.
    \end{align}
\end{proposition}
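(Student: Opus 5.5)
Following the proof of Proposition~\ref{series expansion without beta}, I will use the vector field method. By Theorem~\ref{approximation} (valid since $0<\beta<1$) it suffices to take $u \in C_c^{\infty}(\Omega\setminus\{0\})$; the general case then follows by density and Fatou's lemma. Putting $k=N$ and $\gamma=-\beta(N-1)$ in \eqref{divergence eqn k gen alpha} gives, for any $C^1$ field $V$ on $\Omega\setminus\{0\}$,
\begin{align*}
\int_{\Omega} \lvert \nabla u \rvert^N X_1^{-\beta(N-1)}\,dx \ge \int_{\Omega}\left(\operatorname{div}V-(N-1)\lvert V\rvert^{N'}X_1^{\beta}\right)\lvert u\rvert^N\,dx .
\end{align*}
Fix $m\ge 2$ and set $H=(1-\beta)/N'$. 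I will choose
\begin{align*}
V(x)=H^{N-1}X_1^{1-\beta(N-1)+N-2}\cdots
\end{align*}
More precisely, the exponent is dictated by homogeneity: I will take $V=H^{N-1}X_1^{a}\bigl(1+\sum_{j=2}^m \kappa X_2\cdots X_j\bigr)\frac{x}{\lvert x\rvert^N}$. Here $a$ is chosen so that $\lvert V\rvert^{N'}X_1^{\beta}\sim \lvert x\rvert^{-N}X_1^{N-\beta(N-1)}$, that is, $aN'+\beta = N-\beta(N-1)$, which gives $a=(N-1)(1-\beta)$. The constant $\kappa$ will be fixed below.

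Next I compute the divergence using $\operatorname{div}(x/\lvert x\rvert^N)=0$, $\nabla X_1 = X_1^2 \frac{x}{\lvert x\rvert^2}$, and more generally $\frac{x}{|x|}\cdot\nabla X_j = \frac{1}{|x|}X_1\cdots X_{j-1}X_j^2$. This gives
\begin{align*}
\operatorname{div}V = H^{N-1}\frac{X_1^{a+1}}{\lvert x\rvert^N}\Bigl[a\bigl(1+\kappa\textstyle\sum_j X_2\cdots X_j\bigr)+\kappa\sum_{j=2}^m\sum_{i=2}^j X_2^2\cdots X_i^2X_{i+1}\cdots X_j\Bigr],
\end{align*}
with $a+1 = N-\beta(N-1)$. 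For the other term, with $S=\sum_{j=2}^m X_2\cdots X_j$, I will use the Taylor bound $(1+\kappa S)^{N'}\le 1+N'\kappa S+\frac{N'(N'-1)}{2}\kappa^2S^2$. This bound holds for $N\ge2$ because $N'\le2$ and $S\ge0$, exactly as in the previous proof.

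Collecting terms, the constant part is $H^{N-1}(a-(N-1)H)=H^{N-1}\cdot H$, since $a=(N-1)(1-\beta)=N H$. The terms linear in $S$ carry the coefficient $a\kappa-(N-1)N'\kappa H = \kappa(a-NH)=0$, so they cancel. What remains is
\begin{align*}
H^{N-1}\frac{X_1^{N-\beta(N-1)}}{\lvert x\rvert^N}\Bigl[H+\kappa\sum\sum X_2^2\cdots X_i^2X_{i+1}\cdots X_j-\tfrac{(N-1)N'(N'-1)}{2}H\kappa^2 S^2\Bigr],
\end{align*}
where $(N-1)N'(N'-1)=N'$. Choosing $\kappa$ so that $\frac{N'}{2}H\kappa = \frac12$, i.e.\ $\kappa=1/(N'H)=1/(1-\beta)$, makes the negative term equal to $\frac{\kappa}{2}S^2$. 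The combinatorial identity from the previous proof, $\sum_j\sum_{i\le j}X_2^2\cdots X_i^2X_{i+1}\cdots X_j-\frac12S^2=\frac12\sum_j X_2^2\cdots X_j^2$, then leaves the bracket equal to $H+\frac{1}{2(1-\beta)}\sum_{j=2}^m X_2^2\cdots X_j^2$. This yields the inequality truncated at $m$ with the constant $\frac{1}{2(1-\beta)}H^{N-1}$. Letting $m\to\infty$ by monotone convergence gives \eqref{beta series prop}.

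I expect the main obstacle to be the bookkeeping rather than any new idea. First, the exponent $a$ must be pinned down so that the linear terms cancel exactly; this uses $a=NH$, which is the defining relation of the sharp constant. Second, the divergence of the iterated logarithms has to be computed correctly. Third, I must verify the identity $\sum_{j}\sum_{i\le j}(\cdots)-\frac12 S^2=\frac12\sum X_2^2\cdots X_j^2$. This is the same algebra as in Proposition~\ref{series expansion without beta}, with $X_3$ replaced by $X_2$; it can be checked by induction on $m$ using $(S_{m-1}+P_m)^2$ with $P_m=X_2\cdots X_m$.
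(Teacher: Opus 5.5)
Your proposal is correct and follows essentially the same argument as the paper. The paper also reduces via Theorem \ref{approximation} and applies \eqref{divergence eqn k gen alpha} with $\gamma=-\beta(N-1)$. Your vector field, with $a=(N-1)(1-\beta)$ and $\kappa=1/(1-\beta)$, is exactly the paper's, and you use the same second-order Taylor bound before expanding $S^2$. Your bookkeeping checks out: the linear terms cancel because $a=NH$, $(N-1)N'(N'-1)=N'$, and $\sum_{i\le j}P_iP_j-\tfrac12 S^2=\tfrac12\sum_j P_j^2$ with $P_j=X_2\cdots X_j$. Your density-plus-Fatou remark also makes explicit a step the paper leaves implicit.
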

\begin{proof}
By \Cref{approximation}, we can assume that $u \in C_c^{\infty}\left(\Omega \setminus \{0\}\right)$. Take any $C^1$ vector field $V$ defined on $\Omega \setminus \{0\}$. Now, putting $k=N$ and $\gamma = -\beta(N-1)$ in \eqref{divergence eqn k gen alpha} yields,
\begin{equation}
     \label{div conclusion conclusion beta}
    \int_{\Omega} \lvert \nabla u \rvert^N X_1^{-\beta(N-1)} \, dx \ge \int_{\Omega} \left(\operatorname{div}V - (N-1)\lvert V \rvert^{N'}X_1^{\beta}\right) \lvert u \rvert^N \, dx.
\end{equation}
Consider the vector field defined by,
\begin{align}
    V = \left(\frac{1-\beta}{N'}\right)^{N-1} X_1^{(N-1)(1- \beta)}  \left( 1 + \frac{1}{(1-\beta)}\sum_{j=2}^m  X_2 \cdots X_j\right)  \frac{x}{\lvert x \rvert^N} .
\end{align}
By simple computations, we have,
\begin{align} \label{divergence beta}
  \notag   \operatorname{div} V &= \left(\frac{1-\beta}{N'}\right)^N \frac{X_1^{N- \beta(N-1)}}{\lvert x \rvert^N} \Bigg[N+ \frac{N}{1-\beta}\sum_{j=2}^m X_2 \cdots X_j\\
   & \hspace{100pt}+\frac{N'}{(1-\beta)^2}\sum_{j=2}^m \sum_{i=2}^j X_2^2 \cdots X_i^2X_{i+1} \cdots X_j \Bigg].
\end{align}
On the other hand we have,
\begin{align} \label{vector field}
 \lvert V \rvert^{N'} X_1^{\beta} = \left(\frac{1-\beta}{N'}\right)^N  \frac{X_1^{N- \beta(N-1)}}{\lvert x \rvert^N} \left[1+ \frac{1}{1-\beta}\sum_{j=2}^m X_2 \cdots X_j\right]^{N'}.
\end{align}
To estimate the term we will use Taylor's expansion,
\begin{align} \label{Taylor ineq1}
    \left[1+ \frac{1}{1-\beta}\sum_{j=2}^m X_2 \cdots X_j\right]^{N'}
    \le 1 &+ \frac{N'}{1-\beta} \sum_{j=2}^m X_2 \cdots X_j \notag\\ 
    & +\frac{N}{2(N-1)^2(1-\beta)^2} \left(\sum_{j=2}^m X_2 \cdots X_j\right)^2.
\end{align}
Thus, from  \eqref{vector field} we can deduce that,
\begin{align} \label{V eqn beta}
   \notag  (N-1) \lvert V \rvert^{N'} X_1^{\beta} &\le \left(\frac{1-\beta}{N'} \right)^N \frac{X_1^{N- \beta(N-1)}}{\lvert x \rvert^N} \Bigg[N-1 + \frac{N}{1-\beta} \sum_{j=2}^mX_2 \cdots X_j\\
  & \hspace{100pt}+\frac{N'}{2(1-\beta)^2}\left(\sum_{j=2}^m X_2 \cdots X_j\right)^2\Bigg].
\end{align}
Again as in the proof of Proposition \ref{series expansion without beta}, expanding the square in \eqref{V eqn beta}, we have,
\begin{align*}
    \operatorname{div} V - (N-1) \lvert V \rvert^{N'}X_1^{\beta} \ge \left(\frac{1-\beta}{N'} \right)^N  \frac{X_1^{N- \beta(N-1)}}{\lvert x \rvert^N} \Bigg[ 1+  \frac{N'}{2(1-\beta)^2} \sum_{j=2}^m X_2^2 \cdots X_j^2 \Bigg].
\end{align*}
Hence, the result follows from \eqref{div conclusion conclusion beta}.
\end{proof}
\begin{proposition} \label{p=k series expansion prop}
 Let $\lambda>0, k \ge 2$ and \eqref{p=k assumption} holds. Define $L = (\lambda + k -1)/ k.$ Then for all $u \in W^{1,k}_0(\Omega \setminus S, X_1^{\lambda} \, dx )$  we have the following inequality,
    \begin{align}
    \notag  \int_{\Omega} \lvert \nabla u \rvert^k X_1^{\lambda} -& L^k  \int_{\Omega} \frac{\lvert u \rvert^k}{d^k} X_1^{\lambda+k} \\
     &\hspace{50pt} \ge \frac{k-1}{2k} L^{k-2} \left(\sum_{i=2}^{\infty} \int_{\Omega} \frac{\lvert u \rvert^k}{d^k} X_1^{\lambda+k} X_2^2 \cdots X_i^2 \, dx\right).
    \end{align}
\end{proposition}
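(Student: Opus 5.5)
We argue exactly as in Proposition~\ref{p=k hardy ineq prop} and Proposition~\ref{only series beta}, by a suitable choice of vector field. Since $W^{1,k}_0(\Omega\setminus S, X_1^{\lambda})$ is the closure of $C^\infty_c(\Omega\setminus S)$, it suffices to take $u\in C_c^\infty(\Omega\setminus S)$. For such $u$ we fix $m\ge 2$ and prove the inequality with the sum truncated at $i=m$. We then pass to general $u$ by Fatou's lemma along an approximating sequence, and let $m\to\infty$ by monotone convergence. The starting point is \eqref{p=k eqn}:
\begin{align*}
\int_\Omega |\nabla u|^k X_1^{\lambda}\,dx \ge \int_\Omega \Big(\operatorname{div}V-(k-1)|V|^{k'}X_1^{-\lambda/(k-1)}\Big)|u|^k\,dx .
\end{align*}

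We choose the vector field
\begin{align*}
V = L^{k-1} X_1^{\lambda+k-1}\big(1+a\,S_m\big)\frac{\nabla d}{d^{k-1}}, \qquad S_m:=\sum_{j=2}^m X_2\cdots X_j ,
\end{align*}
where the constant $a>0$ is fixed later. We use the derivative rules $\frac{d}{dt}X_1(t/R)=X_1^2/t$ and $\frac{d}{dt}X_j=X_j^2X_{j-1}\cdots X_1/t$. These give
\begin{align*}
\nabla\big(X_2\cdots X_j\big)=\frac{X_1}{d}\sum_{i=2}^j X_2^2\cdots X_i^2X_{i+1}\cdots X_j\,\nabla d .
\end{align*}
Together with $\operatorname{div}(d^{1-k}\nabla d)=d^{-k}(d\Delta d-k+1)\ge 0$, which is the consequence of \eqref{p=k assumption} already used in Proposition~\ref{p=k hardy ineq prop}, we obtain
\begin{align*}
\operatorname{div}V\ge L^{k}\frac{X_1^{\lambda+k}}{d^k}\Big[k(1+aS_m)+\frac{a}{L}T_m\Big],\qquad T_m:=\sum_{j=2}^m\sum_{i=2}^j X_2^2\cdots X_i^2X_{i+1}\cdots X_j .
\end{align*}
The second term is computed directly:
\begin{align*}
(k-1)|V|^{k'}X_1^{-\lambda/(k-1)}=(k-1)L^k\frac{X_1^{\lambda+k}}{d^k}\,(1+aS_m)^{k'} .
\end{align*}

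Because $k\ge2$ we have $1<k'\le 2$, so the third derivative of $x\mapsto(1+x)^{k'}$ is nonpositive on $x\ge0$. Taylor's formula therefore gives $(1+x)^{k'}\le 1+k'x+\frac{k'(k'-1)}{2}x^2$ for $x\ge0$, and since $(k-1)k'(k'-1)/2=\frac{k}{2(k-1)}$,
\begin{align*}
(k-1)(1+aS_m)^{k'}\le k-1+k\,aS_m+\frac{k}{2(k-1)}a^2S_m^2 .
\end{align*}
Subtracting, the linear terms in $S_m$ cancel and we are left with
\begin{align*}
\operatorname{div}V-(k-1)|V|^{k'}X_1^{-\lambda/(k-1)}\ge L^k\frac{X_1^{\lambda+k}}{d^k}\Big[1+\frac{a}{L}T_m-\frac{k a^2}{2(k-1)}S_m^2\Big].
\end{align*}
Next we use the combinatorial identity $S_m^2=2T_m-\sum_{j=2}^m X_2^2\cdots X_j^2$; only the inequality $S_m^2\le 2T_m-\sum_j X_2^2\cdots X_j^2$ is actually needed. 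This is the same identity used implicitly in the proofs of Proposition~\ref{series expansion without beta} and Proposition~\ref{only series beta}, and I would verify it by induction on $m$. Choosing $a$ so that $a/L=k a^2/(k-1)$, that is $a=(k-1)/(kL)$, the $T_m$ terms cancel and the bracket becomes
\begin{align*}
1+\frac{k a^2}{2(k-1)}\sum_{j=2}^m X_2^2\cdots X_j^2 .
\end{align*}
Multiplying by $L^k$, the coefficient of the sum is $L^k\cdot\frac{k-1}{2kL^2}=\frac{k-1}{2k}L^{k-2}$, which is exactly the claimed constant.

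The main obstacle is the algebraic step: checking the divergence computation with all the iterated-logarithm derivatives, and confirming the square identity for $S_m^2$ (in particular its signs). A secondary point is that the Taylor bound genuinely requires $k'\le2$, which is where the hypothesis $k\ge2$ enters.
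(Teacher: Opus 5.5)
Your proposal is correct and follows the paper's proof essentially step for step. You use the same vector field $V=L^{k-1}X_1^{\lambda+k-1}\bigl(1+\frac{k-1}{kL}\sum_{i=2}^m X_2\cdots X_i\bigr)\nabla d/d^{k-1}$ in \eqref{p=k eqn}, the same second-order Taylor bound for $(1+x)^{k'}$ (valid because $k'\le 2$), and the same expansion of the square. Your explicit constant bookkeeping, $L^k\cdot\frac{k-1}{2kL^2}=\frac{k-1}{2k}L^{k-2}$, and your density and monotone-convergence step are details the paper leaves implicit. In fact the paper's displayed intermediate coefficient $\frac{k-1}{2L^2}$ drops the factor $1/k$; your value is the right one.
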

\begin{proof}
   For any $C^1$ vector field $V$ on $\Omega \setminus S$ and $u \in C^{\infty}_c(\Omega \setminus S)$, from \eqref{p=k eqn} we have,
    \begin{align*}
        \int_{\Omega} \lvert \nabla u \rvert^k X_1^{\lambda} \, dx \ge \int_{\Omega} \left(\operatorname{div} V-(k-1) \lvert V \rvert^{k'} X_1^{-\lambda/(k-1)}\right) \lvert u \rvert^{k} \, dx.
    \end{align*}
Consider the vector field,
    \begin{align*}
         V(x) = L^{k-1} \frac{\nabla d(x)}{d^{k-1}(x)} X_1^{\lambda + k-1} \left(1+ \frac{k-1}{kL}\sum_{i=2}^m X_2 \cdots X_i\right).
    \end{align*}
A straightforward computation yields,
\begin{align*}
    &\operatorname{div} V -(k-1) \lvert V \rvert^{k'} X_1^{-\lambda/(k-1)} \\
    & \qquad \ge L^k \frac{X_1^{\lambda + k}}{d^k} \left(k + \frac{k-1}{L}\sum_{i=2}^m X_2 \cdots X_i + \frac{k-1}{kL^2} \sum_{i=2}^m  \sum_{j=2}^i X_2^2 \cdots X_j^2 X_{j+1} \cdots X_i\right)\\
    & \hspace{140pt}- L^k \frac{X_1^{\lambda + k}}{d^k}(k-1)\left(1+ \frac{k-1}{kL}\sum_{i=2}^m X_2 \cdots X_i\right)^{k'}.
\end{align*}
To estimate the term in the right hand side, we will use Taylor's expansion,
\begin{align*}
    \left[1+ \frac{k-1}{kL}\sum_{i=2}^m X_2 \cdots X_i\right]^{k'} &\le 1 + \frac{1}{L} \sum_{i=2}^m X_2 \cdots X_i +\frac{1}{2k L^2} \left(\sum_{i=2}^m X_2 \cdots X_i\right)^2.
\end{align*}
Again as in the proof of Proposition \ref{series expansion without beta}, expanding the square in the last equation we can conclude that,
\begin{align*}
    \operatorname{div} V -(k-1) \lvert V \rvert^{k'} X_1^{-\lambda/(k-1)} \ge \frac{L^k}{d^k} X_1^{\lambda + k} \left(1+ \frac{k-1}{2L^2} \sum_{i=2}^m X_2^2 \cdots X_i^2\right).
\end{align*}
Hence, the result follows.
\end{proof}
\subsection{Proof of Theorem \ref{p not k thm}} The proof of \Cref{p not k thm} follows from Proposition \ref{p<k ineq hardy prop} and Proposition \ref{p<k series expansion} proved below.
\begin{proposition} \label{p<k series expansion}
 Let $\lambda > 0$ and $2 \le p<k$. Then for all $u \in W^{1,p}_0(\Omega \setminus S, X_1^{\lambda} \, dx )$  we have the following inequality,
    \begin{align}
       \notag  \int_{\Omega} \lvert \nabla u \rvert^p X_1^{\lambda} &- \left(\frac{k-p}{p}\right)^p  \int_{\Omega} \frac{\lvert u \rvert^p}{d^p} X_1^{\lambda}  \\
      &\hspace{40pt} \ge \frac{p-1}{2p} \left(\frac{k-p}{p}\right)^{p-2} \left(\sum_{i=1}^{\infty} \int_{\Omega} \frac{\lvert u \rvert^p}{d^p} X_1^{\lambda} X_1^2 \cdots X_i^2 \, dx\right).
    \end{align}
\end{proposition}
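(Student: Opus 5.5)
The plan is the vector-field (ground-state) method already used in Proposition~\ref{p<k ineq hardy prop} and Proposition~\ref{p=k series expansion prop}. By the definition of $W^{1,p}_0(\Omega\setminus S, X_1^{\lambda}\,dx)$ and Fatou's lemma, it suffices to take $u\in C_c^\infty(\Omega\setminus S)$. For such $u$ and any $C^1$ vector field $V$ on $\Omega\setminus S$, inequality \eqref{equation for p<k} gives
\begin{align*}
\int_\Omega |\nabla u|^p X_1^\lambda \ge \int_\Omega \left[\operatorname{div}(X_1^\lambda V)-(p-1)|V|^{p'}X_1^\lambda\right]|u|^p .
\end{align*}
It then suffices, for each fixed $m$, to bound the bracket from below by $d^{-p}X_1^\lambda\bigl(M^p+\frac{p-1}{2p}M^{p-2}\sum_{i=1}^m X_1^2\cdots X_i^2\bigr)$, where $M=(k-p)/p>0$. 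Letting $m\to\infty$ by monotone convergence then gives the full series.

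\emph{Choice of field.} Write $\Sigma_m:=\sum_{i=1}^m X_1\cdots X_i$. I take
\begin{align*}
V = M^{p-1} d^{1-p}\,\bigl(1+a\Sigma_m\bigr)\nabla d,\qquad a=\frac{p-1}{pM}.
\end{align*}
Since $V\cdot\nabla d\ge 0$, we have $\operatorname{div}(X_1^\lambda V)\ge X_1^\lambda\operatorname{div}V$; the $\lambda$-term is simply discarded, exactly as in Proposition~\ref{p<k ineq hardy prop}. For $\operatorname{div}V$ I combine two facts. The first is $\operatorname{div}(d^{1-p}\nabla d)\ge (k-p)d^{-p}$, which comes from \eqref{p<k assumption} via $d\Delta d\ge k-1$, as in the earlier proof. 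The second is the derivative rule $\frac{d}{dr}X_j(r)=\frac1r X_1\cdots X_{j-1}X_j^2$, which gives
\begin{align*}
\nabla \Sigma_m = d^{-1}\,\sum_{i=1}^m\sum_{j=1}^i X_1^2\cdots X_j^2X_{j+1}\cdots X_i\,\nabla d .
\end{align*}
Together these yield $\operatorname{div}V\ge M^{p-1}d^{-p}\bigl[(k-p)(1+a\Sigma_m)+a\sum_{i}\sum_{j\le i}X_1^2\cdots X_j^2X_{j+1}\cdots X_i\bigr]$.

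\emph{Taylor step.} This is where the hypothesis $p\ge2$ enters. Since $1<p'\le 2$, the third derivative of $t\mapsto(1+t)^{p'}$ is nonpositive on $t\ge0$, so
\begin{align*}
(1+t)^{p'}\le 1+p't+\tfrac{p'(p'-1)}{2}t^2,\qquad t\ge 0.
\end{align*}
Apply this with $t=a\Sigma_m$ in $(p-1)|V|^{p'}=(p-1)M^pd^{-p}(1+a\Sigma_m)^{p'}$. The terms then sort by order in $\Sigma_m$:
\begin{enumerate}[(i)]
\item The constant terms give $M^{p-1}(k-p)-(p-1)M^p=M^p$.
\item The linear terms cancel automatically, because $(k-p)M^{p-1}=pM^p=(p-1)p'M^p$.
\item The quadratic part is $M^{p-1}a\sum_i\sum_{j\le i}X_1^2\cdots X_j^2X_{j+1}\cdots X_i-\frac{p}{2(p-1)}M^pa^2\Sigma_m^2$.
\end{enumerate}

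\emph{Key identity and main obstacle.} The main obstacle is the bookkeeping in this quadratic part. It rests on the algebraic identity already implicit in Proposition~\ref{series expansion without beta}:
\begin{align*}
\Sigma_m^2=2\sum_{i=1}^m\sum_{j=1}^i X_1^2\cdots X_j^2X_{j+1}\cdots X_i-\sum_{i=1}^m X_1^2\cdots X_i^2 ,
\end{align*}
which I would verify by expanding the square and pairing the cross terms $(X_1\cdots X_j)(X_1\cdots X_i)$ for $j<i$. With $a=\frac{p-1}{pM}$ we have $M^{p-1}a=2\cdot\frac{p}{2(p-1)}M^pa^2$, so the double sums cancel exactly. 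What remains is $\frac{p}{2(p-1)}M^pa^2\sum_i X_1^2\cdots X_i^2=\frac{p-1}{2p}M^{p-2}\sum_i X_1^2\cdots X_i^2$. This is precisely the claimed constant, and it completes the proof.
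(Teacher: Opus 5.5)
Your proposal is correct and follows the paper's proof. It uses the same field $V=M^{p-1}d^{1-p}\bigl(1+\frac{p-1}{pM}\sum_{i=1}^m X_1\cdots X_i\bigr)\nabla d$, discards the $\lambda$-term in the same way via $\langle V,\nabla d\rangle\ge 0$, and then does the Barbatis--Filippas--Tertikas computation, which the paper omits and you carry out explicitly: constant, linear and quadratic terms, together with the identity $\Sigma_m^2=2\sum_{i}\sum_{j\le i}X_1^2\cdots X_j^2X_{j+1}\cdots X_i-\sum_i X_1^2\cdots X_i^2$. Your remark that for $p\ge 2$ the bound $(1+t)^{p'}\le 1+p't+\frac{p'(p'-1)}{2}t^2$ holds for all $t\ge 0$ is exactly what allows working with $X_i(d/R_{d,\Omega})$ here, with no need to enlarge $R$ as in the unweighted result.
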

\begin{proof}
We denote $M= (k-p)/p.$ Note that we have,
    \begin{align*}
    \int_{\Omega} \lvert \nabla u \rvert^p X_1^{\lambda} \ge \int_{\Omega} \left[\operatorname{div} \left(X_1^{\lambda} V\right) - (p-1) \lvert V \rvert^{p'} X_1^{\lambda}\right] \lvert u \rvert^p \, dx.
\end{align*}
We set $Y(t) \coloneqq \sum_{i=1}^m X_1(t) \cdots X_i(t)$ and $Z(t) = \sum_{i=1}^m X_1^2(t) \cdots X_i^2(t).$ To prove the inequality we have to show that,
\begin{align*}
    \operatorname{div} \left(X_1^{\lambda} V\right) - (p-1) \lvert V \rvert^{p'} X_1^{\alpha} \ge \frac{M^p}{d^p} \left(1+ \frac{p-1}{2p M^2}Z\left(\frac{d}{R_{d,\Omega}}\right)\right).
\end{align*}
Consider the following vector field,
\begin{align*}
    V(x) = M^{p-1}\frac{\nabla d(x)}{d^{p-1}(x)} \left(1+ \frac{p-1}{pM}Y\left(\frac{d(x)}{R_{d,\Omega}}\right) \right).
\end{align*}
 Now, $\operatorname{div} \left(X_1^{\lambda} V\right)  = X_1^{\lambda} \operatorname{div} V + \langle V, \frac{\lambda}{d} X_1^{\lambda + 1} \nabla d \rangle \ge X_1^{\lambda} \operatorname{div} V.$ The computation of $\operatorname{div} V$ and the rest of the proof follows as in \cite[Proof of Theorem $A(1)$]{BarbatisSeries}, so we omit the details.
\end{proof}
\section{Best constants in the Improved Hardy-type inequalities} \label{Best constant in series}
In this section, we prove sharp constants in the improved Hardy-type inequalities and use these results to conclude the proofs of \Cref{series expansion}, \Cref{beta series expansion}, and \Cref{p=k series expansion}. The proofs are based on suitable choice of test functions together with some elementary inequalities. Similar techniques were used in \cite{FT, BarbatisSeries}.
\subsection{Proof of Theorem \ref{series expansion}} The proof of Theorem \ref{series expansion} follows from Proposition \ref{Improved sharp without beta} proved below in combination with Proposition \ref{beta=1} and Proposition \ref{series expansion without beta}. 
\begin{proposition} \label{Improved sharp without beta}
Let $\Omega$ be a bounded domain containing the origin. Suppose that for all $u \in W^{1,N}_0\left(\Omega, X_1^{-N+1}\right)$, there exists a positive $C_N$ such that the following inequality holds:
\begin{align*}
    I_{m -1}[u] \ge C_{N} \int_{\Omega} \frac{\lvert u \rvert ^N}{\lvert x \rvert^N} X_2^N X_1 X_3^2 \cdots X_k^{\rho} \, dx,
\end{align*}
where $I_m[u]$ is defined as in \eqref{finite series}.
Then $\rho \ge 2.$ Moreover, if $\rho = 2,$ then we have $ C_{N} \le \frac{1}{2} \left(\frac{N-1}{N}\right)^{N-1}.$    
\end{proposition}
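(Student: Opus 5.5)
We prove the proposition by testing $I_{m-1}[u]\ge C_N\int \frac{|u|^N}{|x|^N}X_2^NX_1X_3^2\cdots X_{m-1}^2X_m^{\rho}$ (here $k$ in the statement is read as the last index $m$) against a family of radial functions concentrating at the origin. The construction follows the optimality proofs of \cite{FT, BarbatisSeries} and the proof of Proposition \ref{beta=1}. As there, we localize to $B(0,\delta)\subset\Omega$ with a cutoff $\eta$. By Lemma \ref{contains lip functions} and Theorem \ref{approximation}, locally Lipschitz radial test functions are admissible.

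\textbf{Test functions.} The ground-state heuristic comes from the vector field of Proposition \ref{series expansion without beta}. The equality case of the Hölder/Young step suggests $u_0=X_2^{-(N-1)/N}$, which is the extremal profile for \eqref{sharp const1}. At the $j$-th level the improved profile is
\begin{align*}
u_0\,X_3^{-1/N}\cdots X_{m-1}^{-1/N},
\end{align*}
which is the $p=N$ analogue of the ground states in \cite{BarbatisSeries}. We therefore take
\begin{align*}
u_\varepsilon(x)=\eta(x)\,X_2^{-\frac{N-1}{N}}X_3^{-\frac1N}\cdots X_{m-1}^{-\frac1N}X_m^{-\frac1N+\varepsilon},\qquad \varepsilon\to0^+ .
\end{align*}
The exponent $+\varepsilon$ on $X_m$ makes the denominator $\int \frac{|u_\varepsilon|^N}{|x|^N}X_2^NX_1X_3^2\cdots X_{m-1}^2X_m^{2}$ diverge like $c/\varepsilon$ while keeping every term of $I_{m-1}[u_\varepsilon]$ finite.

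\textbf{Expansion of $I_{m-1}[u_\varepsilon]$.} Writing $r=|x|$ and using $\frac{d}{dr}X_j=\frac1r X_1X_2\cdots X_j\cdot X_j$, we get
\begin{align*}
|\nabla u_\varepsilon|=\frac{N-1}{N}\,\frac{X_1X_2\,u_\varepsilon}{r}\Big[1+\tfrac{1}{N-1}\sum_{j=3}^{m-1}X_3\cdots X_j+\big(\tfrac{1}{N-1}-\tfrac{N\varepsilon}{N-1}\big)X_3\cdots X_m\Big]
\end{align*}
on $B(0,\delta/2)$. Set $A=\sum_{j=3}^{m}X_3\cdots X_j$. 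We expand $[1+\frac{1}{N-1}A+\dots]^N$ by Taylor's formula to second order with a third-order remainder, noting that $X_1^{-N+1}(X_1X_2)^N=X_2^NX_1$. The zeroth-order term cancels the Hardy term. The first-order terms in $X_3\cdots X_j$ integrate to lower-order quantities: after radial integration by parts, via the analogue of Lemma \ref{J_gamma}(ii) in the variable $X_m$, they are of the form $J$-type integrals which are $O(1)$. The quadratic terms reproduce exactly $\frac12\big(\frac{N-1}{N}\big)^{N-1}\sum_{j=3}^{m-1}$ of the subtracted series plus $\frac12\big(\frac{N-1}{N}\big)^{N-1}$ times the new $X_m^2$ weight, up to $O(\varepsilon)\cdot(\text{denominator})+O(1)$. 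The cubic remainder carries an extra factor $X_3\cdots X_j$ and is integrable uniformly in $\varepsilon$. Cutoff errors are $O(1)$ as in Proposition \ref{sharp hardy type1}. Hence
\begin{align*}
I_{m-1}[u_\varepsilon]\le \Big(\tfrac12\big(\tfrac{N-1}{N}\big)^{N-1}+O(\varepsilon)\Big)\int\frac{|u_\varepsilon|^N}{|x|^N}X_2^NX_1X_3^2\cdots X_m^2+O(1).
\end{align*}

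\textbf{Conclusions.} If $\rho=2$, divide by the diverging denominator and let $\varepsilon\to0$ to get $C_N\le\frac12(\frac{N-1}{N})^{N-1}$.

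If $\rho<2$, the same family (with $\varepsilon$ chosen in $(0,(2-\rho)/N)$ small) shows the inequality fails. With weight $X_m^{\rho}$, the right-hand integral $\int \frac{dr}{r}X_1\cdots X_{m-1}X_m^{\rho-1+N\varepsilon}$ behaves like $\int t^{\rho-2+N\varepsilon}$ in the variable $t=X_m^{-1}$. It therefore diverges faster, of order $\varepsilon^{-1}$ times a positive power of the cutoff scale, than $I_{m-1}[u_\varepsilon]$, which by the estimate above is $O(1/\varepsilon)$ at the $X_m^2$ scale. For a cleaner argument in this case we instead truncate at radius $r_1$, as in Proposition \ref{beta=1}: take the profile above with $\varepsilon=0$, made constant on $B(0,r_1)$. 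Then $I_{m-1}\sim\log X_m^{-1}(r_1)$ while the $X_m^\rho$ integral is of order $X_m^{\rho-2}(r_1)\to\infty$ faster, giving $C_N=0$ in the limit, a contradiction.

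The main obstacle is the bookkeeping in the Taylor expansion. One must verify that all cross terms $X_3\cdots X_iX_3\cdots X_j$ recombine exactly into $\frac12\sum X_3^2\cdots X_j^2$ with the precise constant, mirroring the algebraic identity used in Proposition \ref{series expansion without beta}, and that the first-order terms are genuinely $O(1)$ after integration by parts in $r$. I would handle this by passing to the variable $s=X_m^{-1}$, in which the measure $\frac{dr}{r}X_1\cdots X_m$ becomes $ds$, so each integral reduces to a one-dimensional power integral.
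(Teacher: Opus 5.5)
There is a genuine gap at the first step. Your family $u_\varepsilon=\eta X_2^{-\frac{N-1}{N}}X_3^{-\frac1N}\cdots X_{m-1}^{-\frac1N}X_m^{-\frac1N+\varepsilon}$ has \emph{infinite} energy. So it is not in $W^{1,N}_0(\Omega,X_1^{-N+1})$, and $I_{m-1}[u_\varepsilon]$ is of the form $\infty-\infty$.

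On $B(0,\delta/2)$ you have $|\nabla u_\varepsilon|=\frac{N-1}{N}\frac{X_1X_2}{|x|}u_\varepsilon\big(1+\frac{w}{N-1}\big)$ with $w=\sum_{j=3}^{m-1}X_3\cdots X_j+(1-N\varepsilon)X_3\cdots X_m\ge0$. Pass to the variable $t=X_2^{-1}(r)$, for which $dt=-X_1\,dr/r$ and $X_3=(1+\ln t)^{-1}$, and set $t_\delta=X_2^{-1}(\delta/2)$. Then
\[
\int_{B(0,\delta/2)}|\nabla u_\varepsilon|^NX_1^{-N+1}\,dx\ \ge\ \Big(\frac{N-1}{N}\Big)^N\omega_{N-1}\int_{t_\delta}^{\infty}\frac{X_3^{-1}\cdots X_{m-1}^{-1}X_m^{-1+N\varepsilon}}{t}\,dt=\infty .
\]
The Hardy term and every subtracted term with weight $X_3^2\cdots X_j^2$, $j\le m-1$, also diverge; only the $X_m^2$-weighted integral is finite.

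The first-order Taylor term is not $O(1)$ either. Since $\frac{d}{dr}X_j^{-1}=-\frac1rX_1\cdots X_{j-1}$, its density is an exact derivative: $\frac{X_1X_2}{r}Q\,w=-Q'$ with $Q=X_3^{-1}\cdots X_{m-1}^{-1}X_m^{-1+N\varepsilon}$. It therefore integrates to $Q(0^+)-Q(\delta/2)=+\infty$, and because $w\ge0$ there is nothing for an integration by parts to cancel.

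The same identity defeats your truncated fallback for $\rho<2$. Take the $\varepsilon=0$ profile frozen on $B(0,r_1)$ and set $P:=X_3^{-1}\cdots X_m^{-1}$.
\begin{enumerate}
\item The first-order term contributes $\omega_{N-1}(\frac{N-1}{N})^{N-1}P(r_1)+O(1)$.
\item The Hardy integral over $B(0,r_1)$ removes only $\frac1N$ of this amount.
\item Using $(1+x)^N\ge1+Nx+\binom N2x^2$ for $x\ge0$, the quadratic part dominates the subtracted series on $\{|x|>r_1\}$.
\end{enumerate}
Hence $I_{m-1}[u]\ge\omega_{N-1}(\frac{N-1}{N})^NP(r_1)(1+o(1))-O(1)$, not $\sim\log X_m^{-1}(r_1)$. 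This dominates the right-hand side, which grows only like $X_m^{\rho-2}(r_1)$ for $\rho\in(1,2)$. It suffices to refute such $\rho$, since $X_m\le1$.

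The missing idea is the multi-parameter family with ordered limits used in the paper, following \cite{BarbatisSeries}: $v=X_2^{(-N+1+\alpha_2)/N}X_3^{(-1+\alpha_3)/N}\cdots X_m^{(-1+\alpha_m)/N}$ with \emph{all} $\alpha_j>0$.
\begin{enumerate}
\item The shift $\alpha_2>0$ makes every integral finite and puts the constant $-\alpha_2$ into $w$.
\item The first-order term $-\alpha_2A_2+\sum_l(1-\alpha_l)B_{2l}$ is then $O(1)$ by the integration by parts \eqref{identity log log}, even though $\alpha_2A_2$ alone blows up as $\alpha_2\to0$.
\item The quadratic terms, including the cross terms $B_{jl}$, are rewritten via \eqref{identity log log} and \eqref{identity log back}. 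These cross terms are not lower order and do not recombine algebraically as in the vector-field proof.
\item This rewriting lets you send $\alpha_2\to0$, then $\alpha_3\to0$, and so on up to $\alpha_{m-1}\to0$, with errors uniform in the remaining parameters, reaching $J=(1-\alpha_m)A_m+O(1)$.
\item Only then is $\alpha_m$ sent to $0$ when $\rho=2$, where $A_m\to\infty$, or to $2-\rho$ when $\rho<2$, where the denominator blows up while $A_m$ stays bounded.
\end{enumerate}
Your single parameter on $X_m$ skips exactly this mechanism. The claimed bound $I_{m-1}[u_\varepsilon]\le(\frac12(\frac{N-1}{N})^{N-1}+O(\varepsilon))\cdot(\text{denominator})+O(1)$ cannot be rescued for that family.
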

\begin{proof}
     Since $\Omega$ is an open set contains the origin, so take $\delta > 0$ small enough such that $B(0,\delta) \subset \Omega.$ Now, consider $\eta \in C_c^{\infty}\left(B(0,\delta)\right)$ such that $0\le \eta \le 1$ and $\eta = 1$ on $B\left(0,\delta/2\right)$. Now, for given $\alpha_2, \cdots, \alpha_m>0$ small enough we set,
    \begin{align*}
        v(x) \coloneqq X_2^{\frac{-N+1 + \alpha_2}{N}} X_3^{\frac{-1+\alpha_3}{N}} \cdots X_m^{\frac{-1+\alpha_m}{N}} \quad \text{and} \quad u(x) \coloneqq \eta(x) v(x).
    \end{align*}
   Thus, we can derive,
   \begin{align*}
       \nabla v(x) = - \frac{X_1}{\lvert x \rvert} X_2^{\frac{1+\alpha_2}{N}}X_3^{\frac{-1+ \alpha_3}{N}} \cdots X_m^{\frac{-1+\alpha_m}{N}} \left[\frac{N-1}{N} + \frac{w(x)}{N}\right] \frac{x}{\lvert x \rvert},
   \end{align*}
 where $w(x) = - \alpha_2 + \sum_{j=3}^m (1-\alpha_{j})X_3 \cdots X_j.$ Now, since $\delta>0$ is small enough so $X_i$'s are also small so we can assume that $w(x)$ is small enough on $B(0, \delta)$. In particular $w(x)$ is much smaller than $(N-1)/N$ on $B(0,\delta)$. It can be checked that, $\int_{\Omega} \lvert \nabla u \rvert^N X_1^{-N+1} \, dx < \infty.$ Now, for $\varepsilon > 0$ small enough consider, $u_{\varepsilon}(x) \coloneqq X_1^{\varepsilon} u(x).$ Since $u_{\varepsilon}$ vanishes at the boundary of $\Omega \setminus \{0\}$, so, from Lemma \ref{contains lip functions} it follows that, $u_{\varepsilon} \in W^{1,N}_0 \left( \Omega \setminus \{0\}, X_1^{-N+1}\right)$. We will show that
 \begin{align} \label{convergence1}
     \int_{\Omega} \lvert \nabla u_{\varepsilon} - \nabla u\rvert^N X_1^{-N+1} \, dx \to 0.
 \end{align}
We have,
\begin{align*}
    \int_{\Omega} \lvert \nabla u_{\varepsilon} - \nabla u\rvert^N X_1^{-N+1} \, dx &\le c_N \varepsilon^N \int_{\Omega} \frac{\lvert u \rvert^N}{\lvert x \rvert^N} X_1^{N \varepsilon+1}\, dx\\
    & \qquad +  c_N \int_{\Omega} \lvert X_1^{\varepsilon} \nabla u - \nabla u\rvert^N X_1^{-N+1} \, dx \coloneqq A + B.
\end{align*}
By direct application of dominated convergence Theorem, it can be shown that $B \to 0$ as $\varepsilon \to 0.$ We show that $A \to 0$ as $\varepsilon \to 0.$ We can find $c_{\alpha_2}$ such that,
\begin{align*}
    \lvert v \rvert^N \le c_{\alpha_2} X_2^{-N + 1 +\frac{\alpha_2}{2}}.
\end{align*}
Therefore we have,
\begin{align*}
    A &= \Tilde{C_N} \varepsilon^N c_{\alpha_2} \int_{0}^{\delta} X_2^{-N + 1 + \frac{\alpha_2}{2}}(r) X_1^{N \varepsilon + 1}(r) r^{-1} \, dr\\
    &= \Tilde{C}\varepsilon^N \int_{c_\delta}^{\infty} z^{N - 1 - \frac{\alpha_2}{2}}  e^{-N \varepsilon z} \, dz \le \Tilde{C_1} \varepsilon^{\frac{\alpha_2}{2}} \int_{0}^{\infty} e^{-v} v^{N-1 -\frac{\alpha_2}{2}} \, dv \to 0 \quad \text{as} \quad \varepsilon \to 0.
\end{align*}
Thus, \eqref{convergence1} follows and together with \Cref{approximation}, we can conclude that, $u \in W^{1,N}_0\left( \Omega, X_1^{-N+1} \right).$
Now, using an elementary inequality \eqref{elt ineq1}, we deduce,
   \begin{align*}
       \int_{\Omega} \lvert \nabla u \rvert^N X_1^{-N+1} \, dx &\le \int_{\Omega} \eta^N \lvert \nabla v \rvert^N X_1^{-N+1} \, dx + c_N\int_{\Omega} \lvert \nabla \eta \rvert^N \lvert v \rvert^N X_1^{-N+1} \, dx\\
       & \qquad  + c_N \int_{\Omega} \lvert \nabla \eta \rvert \lvert \eta \rvert^{N-1} \lvert \nabla v \rvert^{N-1} \lvert v \rvert X_1^{-N+1} \, dx \coloneqq I_1 + I_2 + I_3.
   \end{align*} 
     We first show that $I_2, I_3 = O(1)$ uniformly with respect to $\alpha_i$'s for $i=2, \cdots m$.
    \begin{align} \label{I_2 without beta}
        I_2 = c\int_{0}^{\delta} X_2^{-N+1+\alpha_2}X_3^{-1+\alpha_3} \cdots X_m^{-1+ \alpha_m} X_1^{-N+1} \, r^{N-1} dr = O(1).
    \end{align}
Again using the fact that $w(x)$ is much smaller than $(N-1)/N$ on $B(0,\delta)$, we have,
\begin{align} \label{I_3 without beta}
    I_3 \le C_{N, \delta} \int_{0}^{\delta} X_2^{\alpha_2} X_3^{-1+ \alpha_3} \cdots X_m^{-1+\alpha_m} \, dr = O(1).
\end{align}
Since $w$ is small compared to $(N-1)/N$ by Taylor series expansion, we may have,
\begin{align*}
   \left \lvert \frac{N-1}{N} + \frac{w(x)}{N}\right \rvert^N  \le \left(\frac{N-1}{N}\right)^N + \left(\frac{N-1}{N}\right)^{N-1} w+ \frac{1}{2} \left(\frac{N-1}{N}\right)^{N-1} w^2 + c \lvert w \rvert^3
\end{align*}
Thus, using the above inequality, we have,
\begin{align} \label{expanding and getting J}
  \notag   \int_{\Omega} \lvert \nabla u \rvert^N& X_1^{-N+1} \, dx\\
  \notag   &\le  \int_{\Omega} \eta^N \lvert \nabla v \rvert^N X_1^{-N+1} \, dx + O(1)\\
  \notag   &= \int_{\Omega} \frac{\eta^N}{\lvert x \rvert^{N}} X_1 X_2 ^{1+ \alpha_2} X_3^{-1+ \alpha_2} \cdots X_m ^{-1+ \alpha_m} \left \lvert \frac{N-1}{N} + \frac{w(x)}{N}\right \rvert^N \, dx + O(1)\\
   \notag  &\le \int_{\Omega} \frac{\eta^N}{\lvert x \rvert^{N}} X_1 X_2 ^{1+ \alpha_2} X_3^{-1+ \alpha_2} \cdots X_m ^{-1+ \alpha_m} \Bigg[\left(\frac{N-1}{N}\right)^N + \left(\frac{N-1}{N}\right)^{N-1} w\\
    &\hspace{100pt}+ \frac{1}{2} \left(\frac{N-1}{N}\right)^{N-1} w^2 + c \lvert w \rvert^3 \Bigg] \, dx + O(1).
\end{align}
We now show that $\Lambda \coloneqq \int_{\Omega} \eta^N\lvert x \rvert^{-N} X_1 X_2 ^{1+ \alpha_2} X_3^{-1+ \alpha_2} \cdots X_m ^{-1+ \alpha_m} \lvert w \rvert^3 \, dx $ is uniformly bounded with respect to $\alpha_i$'s. Since $\lvert w \rvert^3 \le c \left( \alpha_2^3 + X_3^3\right)$, so we have,
\begin{align} \label{finiteness of L}
    \Lambda\le c \int_{\Omega} \frac{\alpha_2^3 \eta^N}{\lvert x \rvert^{N}}& X_1 X_2 ^{1+ \alpha_2} X_3^{-1+ \alpha_2} \cdots X_m ^{-1+ \alpha_m} \, dx\notag\\
    &+ c\int_{\Omega} \frac{\eta^N}{\lvert x \rvert^{N}} X_1 X_2 ^{1+ \alpha_2} X_3^{2+ \alpha_2} \cdots X_m ^{-1+ \alpha_m}  \, dx,
\end{align}
where the first and second integrals are bounded uniformly in $\alpha_i's$ due to factors $\alpha_2^3$ and $X_3^{2 + \alpha_2}$, respectively. Thus, by expanding $w^2$ in \eqref{expanding and getting J}, and collecting similar terms, we ended up with,
\begin{align*}
   {I}_{m-1}[u] \le \frac{1}{2} \left(\frac{N-1}{N}\right)^{N-1} J + O(1),
\end{align*}
where $J$ is defined as follows:
\begin{align} \label{error term J}
    J = A_m + \sum_{j=2}^m \left( \alpha_j^2 - 2 \alpha_j \right) A_j + 2 \sum_{j=2}^{m-1} \sum_{l = j+1}^m (1- \alpha_j) (1- \alpha_l) B_{jl},
\end{align}
with $A_j$ and $B_{jl}$ are defined as,
\begin{align*}
    A_j = \int_{\Omega} \frac{\eta^N}{\lvert x \rvert^{N}} X_1 X_2 ^{1+ \alpha_2} X_3^{1+ \alpha_3} \cdots X_j ^{1+ \alpha_m} X_{j+1}^{-1 + \alpha_{j+1}} \cdots X_m^{-1+ \alpha_m}\, dx \hspace{4pt} \text{for} \hspace{4pt} 2 \le j \le m,
\end{align*}
\begin{align*}
    B_{2l} = \int_{\Omega} \frac{\eta^N}{\lvert x \rvert^{N}} X_1 X_2 ^{1+ \alpha_2} X_3^{\alpha_3} \cdots X_l^{\alpha_l} X_{l+1}^{-1+ \alpha_{l+1}} \cdots X_m^{-1 +\alpha_m} \, dx \hspace{4pt} \text{for} \hspace{4pt} 3 \le j \le m,
\end{align*}
\begin{align*}
    B_{jl} = \int_{\Omega} \frac{\eta^N}{\lvert x \rvert^{N}} X_1 X_2 ^{1+ \alpha_2} X_3^{1+\alpha_3} \cdots X_j^{1+\alpha_j} X_{j+1}^{\alpha_{j+1}} \cdots X_l^{\alpha_l} X_{l+1}^{-1+ \alpha_{l+1}} \cdots X_m^{-1+ \alpha_m} \, dx ,
\end{align*}
for $ 3 \le j \le  l \le m$ with $B_{ll}=A_l.$ We want to take limit $\alpha_2 \to 0$ in \eqref{error term J}. Except $A_2$ and $B_{2l}$, all terms have finite limit as  $\alpha_2 \to 0$. We will combine those terms and then will tend $\alpha_2 \to 0.$ Now, by an application of integration by parts yields,
\begin{align} \label{identity log log}
   \notag \alpha_2 A_2 &= \alpha_2 \int_{\Omega} \frac{\eta^N}{\lvert x \rvert^{N}} X_1 X_2 ^{1+ \alpha_2} X_3^{-1+ \alpha_3} \cdots X_m ^{-1+ \alpha_m} \, dx\\
    \notag &= \omega_{N-1} \int_{0}^{\delta} \eta(r)^N \left(X_2^{\alpha_2}(r)\right)' X_3^{-1+ \alpha_3}(r) \cdots X_m ^{-1+ \alpha_m}(r) \, dr\\
   \notag &= \sum_{l=3}^m (1- \alpha_l) B_{2l} - N \omega_{N-1} \int_{0}^{\delta} \eta^{N-1} \eta' X_2^{\alpha_2} X_3^{-1+ \alpha_3} \cdots X_m^{-1 + \alpha_m} \, dr + O(1)\\
    &= \sum_{l=3}^m (1- \alpha_l) B_{2l} + O(1).
\end{align}
Similarly, we can show that,
\begin{align} \label{identity log back}
    \alpha_2 B_{2l} = - \sum_{k=3}^l \alpha_k B_{kl} + \sum_{k=l+1}^m (1- \alpha_k) B_{lk} + O(1).
\end{align}
Therefore, we have
\begin{align} \label{first error term without beta}
   \notag \left(\alpha_2^2 - 2 \alpha_2\right)A_2 &+ 2 \sum_{l=3}^m(1- \alpha_2)(1-\alpha_l) B_{2l} \\
   \notag &\stackrel{\eqref{identity log log}}{=}- \sum_{l=3}^m (1- \alpha_l) \alpha_2 B_{2l} + O(1)\\
  \notag  &\stackrel{\eqref{identity log back}}{=} - \sum_{l=3}^m (1- \alpha_l) \left(- \sum_{j=3}^l \alpha_j B_{jl} + \sum_{j=l+1}^m (1- \alpha_j) B_{lj}\right) + O(1)\\
    &= \sum_{j=3}^m \left(\alpha_j - \alpha_j^2\right) A_j + \sum_{j=3}^{m-1} \sum_{l=j+1}^m (2\alpha_j -1) (1- \alpha_l) B_{jl} + O(1).
\end{align}
 Substituting this into \eqref{error term J} and then letting $\alpha_2 \to 0$
we ended up with,
\begin{align}
    J = A_m - \sum_{j=3}^m\alpha_j A_j + \sum_{j=3}^{m-1} \sum_{l= j+1}^m (1- \alpha_l)B_{jl} + O(1), \quad (\alpha_2 = 0).
\end{align}
Similarly we want to let $\alpha_3 \to 0.$ Repeating the above process, we finally have,
\begin{align} \label{error term second J}
    J = (1- \alpha_m) A_m + O(1), \quad (\alpha_2= \alpha_3= \cdots =\alpha_{m-1} = 0).
\end{align}
Therefore, we have
\begin{align*}
    C_N \le \frac{1}{2} \left(\frac{N-1}{N}\right)^{N-1} \frac{(1- \alpha_m) A_m + O(1)}{\int_{\Omega} \frac{\eta^N}{\lvert x \rvert^{N}} X_1 X_2 X_3 \cdots X_m ^{\rho-1+ \alpha_m}}.
\end{align*}
 If $\rho  < 2,$ then letting $\alpha_m \to 2-\rho$ yields that the denominator goes to infinity while the numerator remains bounded, and hence $C_N=0.$ If $\rho=2,$ then the numerator becomes $A_m$ and $A_m \to \infty$ as $\alpha_m \to 0$ and hence we have,
   $C_N \le \frac{1}{2}\left(\frac{N-1}{N}\right)^{N-1}$.
\end{proof}
\subsection{Proof of Theorem \ref{beta series expansion}} The Proof of \Cref{beta series expansion} follows from the following Proposition \ref{beta improved series optimal} together with Proposition \ref{sharp hardy type1}, Proposition \ref{only series beta}, and \Cref{approximation}.

\begin{proposition} \label{beta improved series optimal}
Let $\Omega$ be a bounded domain containing the origin and $0<\beta<1$. Suppose that for all $u \in W_0^{1,N}\left(\Omega,X_1^{-\beta(N-1)}\right)$, there is some positive constant $C_{\beta,N}$ such that the following inequality holds:
\begin{align*}
    I_{m -1,\beta}[u] \ge C_{\beta,N} \int_{\Omega} \frac{\lvert u \rvert ^N}{\lvert x \rvert^N} X_1^{N - (N-1)\beta}X_2^2 \cdots X_k^{\rho} \, dx,
\end{align*}
where $I_{m,\beta}$ is defined as in \eqref{Improved beta hardy}.
Then $\rho \ge 2.$ Moreover, if $\rho = 2,$ then we have $ C_{\beta,N} \le \frac{N-1}{2N}H^{N-2},$ where $H=(1-\beta)/N'.$
\end{proposition}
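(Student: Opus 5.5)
The plan is to adapt the proof of Proposition \ref{Improved sharp without beta} to the weight $X_1^{-\beta(N-1)}$, with $X_1$ now taking the role that $X_2$ played there. First I would identify the ground state. For $v=X_1^{-a}$ one has $\lvert \nabla v\rvert^N X_1^{-\beta(N-1)}=a^N\lvert x\rvert^{-N}X_1^{N-aN-\beta(N-1)}$, which is $a^N$ times the Hardy integrand $\lvert v\rvert^N\lvert x\rvert^{-N}X_1^{N-(N-1)\beta}$. Using $(X_1)'=X_1^2/r$, this integral diverges exactly logarithmically when $N-aN-\beta(N-1)=1$, that is, when $a=H=(1-\beta)/N'$.

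With $\delta,\eta$ as before and small $\alpha_1,\dots,\alpha_m>0$, I set
\[
v=X_1^{\frac{-(N-1)(1-\beta)+\alpha_1}{N}}X_2^{\frac{-1+\alpha_2}{N}}\cdots X_m^{\frac{-1+\alpha_m}{N}},\qquad u=\eta v .
\]
A direct computation gives
\[
\nabla v=-\frac{X_1}{\lvert x\rvert}\,v\,\Big[H+\frac{w}{N}\Big]\frac{x}{\lvert x\rvert},\qquad w=-\alpha_1+\sum_{j=2}^m(1-\alpha_j)X_2\cdots X_j .
\]
Here $w$ is small on $B(0,\delta)$. To show $u\in W^{1,N}_0(\Omega,X_1^{-\beta(N-1)})$, I would approximate by $u_\varepsilon=X_1^{\varepsilon}u$. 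Each $u_\varepsilon$ is locally Lipschitz and vanishes at the origin, so Lemma \ref{contains lip functions} applies to it. The error $\varepsilon^N\int\lvert u\rvert^N\lvert x\rvert^{-N}X_1^{N\varepsilon+N-\beta(N-1)}$ tends to $0$ by the substitution $z=1/X_1$, giving a factor $\varepsilon^{\alpha_1/2}$ as before. Theorem \ref{approximation} then places $u$ in the right space.

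Next, using \eqref{elt ineq1}, the cutoff terms $I_2,I_3$ are $O(1)$ uniformly in the $\alpha_i$. For the main term I would use the Taylor bound
\[
\Big\lvert H+\tfrac{w}{N}\Big\rvert^N\le H^N+H^{N-1}w+\tfrac{N-1}{2N}H^{N-2}w^2+c\lvert w\rvert^3 .
\]
The $H^N$ term reproduces the Hardy term exactly. The cubic remainder $\Lambda$ is uniformly bounded because $\lvert w\rvert^3\lesssim \alpha_1^3+X_2^3$. The key observation is that the series coefficient in \eqref{Improved beta hardy} equals $\frac{1}{2(1-\beta)}H^{N-1}=\frac{N-1}{2N}H^{N-2}$. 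Hence, after expanding $w^2$ and subtracting the $m-1$ series terms, $I_{m-1,\beta}[u]\le \frac{N-1}{2N}H^{N-2}J+O(1)$. Here $J$ has the same structure as \eqref{error term J}, with $A_j,B_{jl}$ now built from $\eta^N\lvert x\rvert^{-N}X_1^{1+\alpha_1}X_2^{1+\alpha_2}\cdots$ and with the index shifted to start at $1$. The linear term $H^{N-1}w$ must be absorbed as well: it is handled by integration by parts, as in \eqref{identity log log}.

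The main obstacle is the successive elimination of the divergent terms, done as in \eqref{identity log log} and \eqref{identity log back}. Integrating by parts in $r$ with $(X_1^{\alpha_1})'=\alpha_1X_1^{\alpha_1+1}/r$ gives
\[
\alpha_1A_1=\sum_{l\ge2}(1-\alpha_l)B_{1l}+O(1),
\]
together with the analogous identity for $\alpha_1B_{1l}$. The boundary terms are killed by $\eta$ and are bounded uniformly. These identities let me combine the terms that blow up as $\alpha_1\to0$ and pass to the limit. I then repeat the argument for $\alpha_2,\dots,\alpha_{m-1}$, arriving at $J=(1-\alpha_m)A_m+O(1)$. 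The step needing care is checking that every constant remains uniform in the not-yet-eliminated $\alpha$'s.

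Finally I compare with the denominator
\[
\int\eta^N\lvert x\rvert^{-N}X_1X_2\cdots X_{m-1}X_m^{\rho-1+\alpha_m}.
\]
If $\rho<2$, letting $\alpha_m\to2-\rho$ sends the denominator to infinity while the numerator stays bounded, forcing $C_{\beta,N}=0$, a contradiction; hence $\rho\ge2$. If $\rho=2$, the ratio is $\frac{N-1}{2N}H^{N-2}\big((1-\alpha_m)A_m+O(1)\big)/A_m$. Since $A_m\to\infty$ as $\alpha_m\to0$, this yields $C_{\beta,N}\le\frac{N-1}{2N}H^{N-2}$.
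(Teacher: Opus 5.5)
Your argument is the paper's own: the same test function (note $(N-1)(1-\beta)/N=H$), the same third-order Taylor bound together with $\frac{1}{2(1-\beta)}H^{N-1}=\frac{N-1}{2N}H^{N-2}$, and the linear term removed by $\alpha_1A_1=\sum_{l\ge2}(1-\alpha_l)B_{1l}+O(1)$. The successive elimination down to $J=(1-\alpha_m)A_m+O(1)$ and the final comparison with the denominator are also the paper's. One step fails as written, though: the proof that $u\in W^{1,N}_0(\Omega,X_1^{-\beta(N-1)})$ via $u_\varepsilon=X_1^{\varepsilon}u$. Near the origin
\[
X_1^{\varepsilon}u=X_1^{\varepsilon-H+\frac{\alpha_1}{N}}X_2^{\frac{-1+\alpha_2}{N}}\cdots X_m^{\frac{-1+\alpha_m}{N}},
\]
and $H-\alpha_1/N>0$. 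So for every small $\varepsilon$ this function is unbounded at $0$. It neither vanishes at the origin nor is locally Lipschitz there, and Lemma \ref{contains lip functions} does not apply. In fact $u_\varepsilon$ is just your test function with $\alpha_1$ replaced by $\alpha_1+N\varepsilon$, so the step is circular. The device worked in Proposition \ref{Improved sharp without beta} only because there $v$ blows up like a power of $\ln\ln(1/\lvert x\rvert)$, which $X_1^{\varepsilon}$ does kill. Here $v$ blows up like a power of $\ln(1/\lvert x\rvert)$, which it does not.

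The repair is easy. The paper uses $u_\varepsilon=\lvert x\rvert^{\varepsilon}u$, which vanishes at $0$ because any power of $\lvert x\rvert$ beats powers of $\ln(1/\lvert x\rvert)$. The extra gradient contribution is $\varepsilon^N\int\lvert x\rvert^{N\varepsilon-N}\lvert u\rvert^N X_1^{-\beta(N-1)}\,dx$. With $z=\ln(e/\lvert x\rvert)$ this is bounded by $\varepsilon^N\int_c^\infty e^{-N\varepsilon z}z^{N-1-\alpha_1}\,(\text{powers of }\ln z,\ln\ln z,\dots)\,dz$. That behaves like $\varepsilon^{\alpha_1}$ up to logarithmic factors, so it tends to $0$.

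Alternatively, truncate: $\min\{u,M\}$ is locally Lipschitz with compact support in $\Omega$, so it lies in the space by Lemma \ref{contains lip functions}. It converges to $u$ in the weighted norm by dominated convergence, since $\int_\Omega\lvert\nabla u\rvert^N X_1^{-\beta(N-1)}\,dx<\infty$ for $\alpha_1>0$. With either fix, the rest of your proposal goes through as in the paper.
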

\begin{proof}
 By \Cref{approximation}, we can assume that $u \in C_c^{\infty}\left(\Omega \setminus \{0\}\right).$ Since $\Omega$ is an open set contains the origin, so let $\delta > 0$ small enough such that $B(0,\delta) \subset \Omega.$ Now, consider $\eta \in C_c^{\infty}\left(B(0,\delta)\right)$ such that $0\le \eta \le 1$ and $\eta = 1$ on $B\left(0,\delta/2\right)$. Now, for given $\alpha_1, \cdots, \alpha_m>0$ small enough we set,
    \begin{align*}
        v(x) = X_1^{-H+\frac{\alpha_1}{N}} X_2^{\frac{-1+\alpha_2}{N}} \cdots X_m^{\frac{-1+\alpha_m}{N}} \quad \text{and} \quad u(x) = \eta(x) v(x).
    \end{align*}
Thus, we can derive,
   \begin{align*}
       \nabla v(x) = - \frac{x}{\lvert x \rvert^2} X_1^{1-H+\frac{ \alpha_1}{N} } X_2^{\frac{-1+ \alpha_2}{N}} \cdots X_m^{\frac{-1+\alpha_m}{N}} \left[H + \frac{w(x)}{N}\right] ,
   \end{align*}
 where $w(x) = - \alpha_1 + \sum_{j=2}^m (1-\alpha_{j})X_2 \cdots X_j.$ 
 Again as in the proof of Proposition \ref{Improved sharp without beta} by defining $u_{\varepsilon}(x) \coloneqq \lvert x \rvert^{\varepsilon} u(x)$ for small enough $\varepsilon>0$, we can similarly show that $u \in W^{1,N}_0\left( \Omega, X_1^{-\beta(N-1)} \right).$ Now, using an elementary inequality \eqref{elt ineq1} we deduce,
   \begin{align*}
       \int_{\Omega} \lvert \nabla u \rvert^N X_1^{-\beta(N-1)} \, dx &\le \int_{\Omega} \eta^N \lvert \nabla v \rvert^N X_1^{-\beta(N-1)} \, dx + c_N\int_{\Omega} \lvert \nabla \eta \rvert^N \lvert v \rvert^N X_1^{-\beta(N-1)} \, dx\\
       &\hspace{3pt}+ c_N \int_{\Omega} \lvert \nabla \eta \rvert \lvert \eta \rvert^{N-1} \lvert \nabla v \rvert^{N-1} \lvert v \rvert X_1^{-\beta(N-1)} \, dx \coloneqq I_1 + I_2 + I_3.
   \end{align*} 
Analogous to \eqref{I_2 without beta} and \eqref{I_3 without beta}, we can show that $I_2, I_3 = O(1)$ uniformly with respect to $\alpha_i$'s for $i=1,2, \cdots m$. Now, note that,
\begin{align*}
    I_1=\int_{\Omega} \eta^N \lvert \nabla v \rvert^N X_1^{-\beta(N-1)}=  \int_{\Omega} \frac{\eta^N}{\lvert x \rvert^{N}} X_1 ^{1+ \alpha_2} X_2^{-1+ \alpha_2} \cdots X_m ^{-1+ \alpha_m} \left \lvert H + \frac{w(x)}{N}\right \rvert^N.
\end{align*}
Since $w$ is small compared to $H,$ by Taylor series expansion, we may have,
\begin{align} \label{taylor improved const H}
   \left \lvert  H + \frac{w(x)}{N}\right \rvert^N \le H^N + H^{N-1}w + \frac{N-1}{2N} H^{N-2} w^2 + c \lvert w \rvert^3
\end{align}
Using this estimate \eqref{taylor improved const H}, we can bound $I_1$ by,
\begin{align} \label{estimate on first integral}
    I_1 \le J_1 + J_2 + J_3 + J_4 + O(1),
\end{align}
where
\begin{align*}
    J_1 &= H^N  \int_{\Omega} \frac{\eta^N}{\lvert x \rvert^{N}} X_1^{1+ \alpha_2} X_2^{-1+ \alpha_2} \cdots X_m ^{-1+ \alpha_m} \, dx = H^N  \int_{\Omega} \frac{\lvert u \rvert^N}{\lvert x \rvert^N} X_1^{N - \beta(N-1)} \, dx\\
    J_2 &= H^{N-1}\int_{\Omega} \frac{\eta^N}{\lvert x \rvert^{N}} X_1^{1+ \alpha_2} X_2^{-1+ \alpha_2} \cdots X_m ^{-1+ \alpha_m} w(x)\, dx\\
    J_3 &=  \frac{N-1}{2N} H^{N-2} \int_{\Omega} \frac{\eta^N}{\lvert x \rvert^{N}} X_1^{1+ \alpha_2} X_2^{-1+ \alpha_2} \cdots X_m ^{-1+ \alpha_m} w^2 \, dx\\
    J_4 &= c \int_{\Omega} \frac{\eta^N}{\lvert x \rvert^{N}} X_1^{1+ \alpha_2} X_2^{-1+ \alpha_2} \cdots X_m ^{-1+ \alpha_m} \lvert w(x) \rvert^3 \, dx.
\end{align*}
We now show that $J_2, J_4 = O(1)$ is uniformly bounded with respect to $\alpha_i$'s. Observe that $\lvert w \rvert^3 \le c \left( \alpha_1^3 + X_2^3\right)$. Similar argument as in \eqref{finiteness of L} shows that $J_4 = O(1).$
\par Let us now show that $J_2 = O(1).$ Let us first define some quantities.
\begin{align*}
    A_i &=\int_{\Omega} \frac{ \eta^N}{\lvert x \rvert^{N}} X_1 ^{1+ \alpha_1} X_2^{1+ \alpha_2} \cdots X_i ^{1+ \alpha_i} X_{i+1}^{-1+ \alpha_{i+1}} \cdots X_m^{-1+ \alpha_m} \quad \text{for} \quad 1\le i \le m\\
    B_{1l} &= \int_{\Omega} \frac{ \eta^N}{\lvert x \rvert^{N}} X_1 ^{1+ \alpha_1} X_2^{\alpha_2} \cdots X_l ^{\alpha_l} X_{l+1}^{-1+ \alpha_{l+1}} \cdots X_m^{-1+ \alpha_m} \, dx \quad \text{for} \quad 2\le l \le m\\
    B_{il} &= \int_{\Omega} \frac{ \eta^N}{\lvert x \rvert^{N}} X_1 ^{1+ \alpha_1} X_2^{1+ \alpha_2} \cdots X_i ^{1+ \alpha_i} X_{i+1}^{\alpha_{i+1}}X_{l+1}^{-1+ \alpha_{l+1}} \cdots X_m^{-1+ \alpha_m} \, dx,
\end{align*}
for $2\le i \le l \le m$ with $B_{ll} = A_l.$
Similar to \eqref{identity log log}, we can prove that,
\begin{align}
   \notag \alpha_1 A_1 = \sum_{i=2}^m (1- \alpha_i) B_{1l} + O(1).
\end{align}
Thus, it follows that, $J_2 = O(1).$ By similar arguments (see also \cite[Equation $3.9$, Equation $3.10$]{BarbatisSeries}), we can prove that, for $1 \le i \le m-1$ and $\alpha_1=\cdots = \alpha_{i-1}=0$,
\begin{align} 
    \alpha_i A_i &= \sum_{j=i+1}^m (1- \alpha_j) B_{ij} + O(1),\label{identities}\\  
    \alpha_i B_{ij} &= -\sum_{l=i+1}^{j} \alpha_l B_{lj} + \sum_{l=j+1}^m (1-\alpha_l) B_{jl} + O(1),\label{identities2}
\end{align}
Thus, using \eqref{estimate on first integral} we have,
\begin{align*}
    I_{m-1, \beta} [u] &\le   \frac{N-1}{2N}H^{N-2} \int_{\Omega} \frac{\eta^N}{\lvert x \rvert^{N}} X_1^{1+ \alpha_2} X_3^{-1+ \alpha_2} \cdots X_m ^{-1+ \alpha_m}\\
    &\hspace{6pt}\times \Bigg[ w^2 - \sum_{j=2}^{m-1} X_2^2 \cdots X_j^2\Bigg]\,dx + O(1) \eqqcolon  \frac{1}{2(1-\beta)}\left(\frac{1-\beta}{N'}\right)^{N-1} J + O(1).
\end{align*}
where $J$ is defined as follows:
\begin{align*}
    J &= \int_{\Omega} \frac{\eta^N}{\lvert x \rvert^{N}} X_1^{1+ \alpha_2} X_3^{-1+ \alpha_2} \cdots X_m ^{-1+ \alpha_m} \Bigg[\alpha_1^2 + \sum_{i=2}^m (1- \alpha_i)^2X_2^2 \cdots X_i^2\\
    & \hspace{40pt}-\sum_{i=2}^{m-1} X_2^2 \cdots X_i^2 - 2 \alpha_1 \sum_{l=2}^m (1-\alpha_l) X_2 \cdots X_l\\
    & \hspace{45pt}+ 2 \sum_{l=2}^{m-1}\sum_{i=l+1}^m (1- \alpha_i)(1-\alpha_l) X_2^2 \cdots X_i^2 X_{i+1}\cdots X_l\Bigg] \, dx\\
    &= \alpha_1^2 A_1 + A_m + \sum_{i=2}^m (\alpha_i^2 - 2 \alpha_i) A_i - 2 \alpha_1 \sum_{l=2}^m (1-\alpha_l) B_{1l}\\
    & \hspace{100pt}+ 2 \sum_{l=2}^{m-1}\sum_{i=l+1}^m (1- \alpha_i)(1-\alpha_l) B_{il} + O(1).
\end{align*}
We want to take limit $\alpha_1 \to 0$. Except $A_1$
and $B_{1l}$, all terms have finite limit as  $\alpha_1 \to 0$. By similar computations as in \eqref{first error term without beta}, using the identities \eqref{identities} and \eqref{identities2}, when we combine those terms involving $A_1$ and $B_{1l}$, we will have,
\begin{align*}
    \alpha_1^2 A_1 - 2 \alpha_1 \sum_{l=2}^m (1-\alpha_l) B_{1l}
    &=\sum_{i=2}^m \left(\alpha_i-\alpha_i^2\right) A_i \\
    & \hspace{50pt}+ \sum_{i=2}^{m-1} \sum_{l=i+1}^m (2\alpha_i -1) (1- \alpha_l) B_{il} + O(1).
\end{align*}
Substituting the above in $J$, and then letting $\alpha_1 \to 0$
we ended up with,
\begin{align}
    J = A_m - \sum_{i=2}^m\alpha_i A_i + \sum_{i=2}^{m-1} \sum_{l=i+1}^m (1- \alpha_l) B_{il} + O(1) \quad (\alpha_1 = 0),
\end{align}
where $O(1)$ is uniform with respect to $\alpha_1, \cdots, \alpha_m.$
 Repeating the above process, we finally have,
\begin{align} \label{p=k error term second J}
    J = (1- \alpha_m) A_m + O(1), \quad (\alpha_1= \alpha_2= \cdots =\alpha_{m-1} = 0).
\end{align}
The rest of the proof is similar to Proposition \ref{Improved sharp without beta}, so we omit the details.
\end{proof}
\subsection{Proof of Theorem \ref{p=k series expansion}} The proof of \Cref{p=k series expansion} follows from Proposition \ref{gen alpha improved series constant} proved in the following with Proposition \ref{p=k hardy ineq prop} and Proposition \ref{p=k series expansion prop}.

\begin{proposition} \label{gen alpha improved series constant}
    Let $\lambda>0$ and $\Omega$ be a domain in $\mathbb{R}^N$ with $R_{d,\Omega} < \infty$. Let $S$ be a piecewise closed smooth surface of codimension $k$ for $2\le k \le N$ and assume that $S \cap \Omega \neq \emptyset.$ For $k=N,$ we set $S=\{0\}$. Now, suppose that for some constant $C>0$ and $\gamma \in \mathbb{R}$ the following inequality holds:
    \begin{align}
        I_{m-1, \lambda}[u] \ge C  \int_{\Omega} \frac{\lvert u \rvert^k}{d^k} X_1^{\lambda+k} X_2^2 \cdots X_m^{\rho} \, dx,
    \end{align}
    where $I_{m,\lambda}[u]$ is defined as in \eqref{Improved hardy general d}. Then $\rho \ge 2.$ Moreover, if $\rho = 2$ then $C \le \frac{k-1}{2k} L^{k-2},$ for $L = (\lambda+k-1)/k$.
\end{proposition}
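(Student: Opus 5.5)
We follow the scheme of Proposition \ref{Improved sharp without beta} and Proposition \ref{beta improved series optimal}, now for the distance function $d$ and the weight $X_1^{\lambda}$. Assume $0\in S\cap\Omega$ and fix $\delta>0$ with $B(0,\delta)\subset\Omega$ and all $X_i(d/R_{d,\Omega})$ small on $B(0,\delta)$. Take the cut-off $\eta$ as before. For small $\alpha_1,\dots,\alpha_m>0$ we use the test function
\begin{align*}
v=X_1^{-L+\frac{\alpha_1}{k}}X_2^{\frac{-1+\alpha_2}{k}}\cdots X_m^{\frac{-1+\alpha_m}{k}},\qquad u=\eta v .
\end{align*}
Since $X_1'(t)=X_1^2/t$, we get
\begin{align*}
\nabla v=-\frac{\nabla d}{d}\,X_1^{1-L+\frac{\alpha_1}{k}}X_2^{\frac{-1+\alpha_2}{k}}\cdots X_m^{\frac{-1+\alpha_m}{k}}\left[L+\frac{w}{k}\right],
\qquad w=-\alpha_1+\sum_{j=2}^m(1-\alpha_j)X_2\cdots X_j .
\end{align*}
Because $kL=\lambda+k-1$, we have $|\nabla v|^kX_1^{\lambda}=d^{-k}X_1^{1+\alpha_1}X_2^{-1+\alpha_2}\cdots X_m^{-1+\alpha_m}|L+w/k|^k$. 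Likewise $|u|^kd^{-k}X_1^{\lambda+k}$ carries the same product weight with $\eta^k$. Thus the whole computation reduces, via the coarea formula with $\mathcal H^{N-1}(\{d=r\}\cap B(0,\delta))\le cr^{k-1}$, to one-dimensional integrals $\int_0^\delta(\cdots)\,dr/r$. These are exactly the radial integrals of the $\beta$-case, with $H$ replaced by $L$ and $N$ by $k$.

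First we must check admissibility, i.e. $u\in W^{1,k}_0(\Omega\setminus S,X_1^\lambda)$. For this we approximate by $u_\varepsilon=d^{\varepsilon}\phi_\tau u$, using Lemma \ref{loc lip cpt support} and the cut-offs \eqref{cut off functions}. The error terms are controlled as in the proofs of Proposition \ref{p<k ineq hardy prop} and Proposition \ref{Improved sharp without beta}: the term $\varepsilon^k\int|u|^kd^{-k}d^{k\varepsilon}X_1^{\lambda}$ tends to $0$ by the substitution $z=\ln(1/r)$.

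Next, splitting $\nabla u=\eta\nabla v+v\nabla\eta$ with \eqref{elt ineq1}, the terms $I_2,I_3$ are $O(1)$ uniformly in the $\alpha_i$. This holds because they live on $\{\delta/2<|x|<\delta\}$; near $S$ inside that annulus they are still integrable, since $d^{-k+1}$ is integrable against $r^{k-1}dr$. For $I_1$ we use the Taylor bound
\begin{align*}
|L+w/k|^k\le L^k+L^{k-1}w+\tfrac{k-1}{2k}L^{k-2}w^2+c|w|^3 .
\end{align*}
The cubic term is $O(1)$ because $|w|^3\le c(\alpha_1^3+X_2^3)$. Subtracting $L^k\int|u|^kd^{-k}X_1^{\lambda+k}$ and the $m-2$ remainder terms of $I_{m-1,\lambda}$ then gives
\begin{align*}
I_{m-1,\lambda}[u]\le \tfrac{k-1}{2k}L^{k-2}J+O(1),
\end{align*}
where $J$ is expressed through the integrals $A_i,B_{il}$ defined with the weight $d^{-k}\eta^k$ in place of $|x|^{-N}\eta^N$.

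The key step, and the main obstacle, is the bookkeeping that removes the divergent contributions. We prove the integration-by-parts identities \eqref{identities}--\eqref{identities2} in this setting by writing each integral as $\int_0^\delta(\cdots)\,dr$ through coarea and using $(X_i^{\alpha_i})'$. This holds up to the boundary terms coming from $\eta'$, which are $O(1)$. One subtlety is that the level-set measure is only bounded by $cr^{k-1}$ rather than equal to it. For the exact identities we therefore have the following options. One is to take $S=\{0\}$-like geometry locally (the statement sets $S=\{0\}$ when $k=N$). Otherwise, we use that for piecewise smooth $S$ of codimension $k$ the measure satisfies $\mathcal H^{N-1}(\{d=r\}\cap B)=c_S r^{k-1}(1+O(r))$ near a smooth point of $S$, and we center the construction at such a point; the $O(r)$ error produces only $O(1)$ terms.

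With the identities in hand, we let $\alpha_1\to0$, then $\alpha_2\to0$, and so on successively, exactly as in \eqref{first error term without beta}. This yields $J=(1-\alpha_m)A_m+O(1)$, while the right-hand side integral equals $\int\eta^kd^{-k}X_1X_2\cdots X_{m-1}X_m^{\rho-1+\alpha_m}$. If $\rho<2$, letting $\alpha_m\to2-\rho$ makes the denominator blow up while the numerator stays bounded, contradicting $C>0$. If $\rho=2$, the ratio tends to $\tfrac{k-1}{2k}L^{k-2}$ as $\alpha_m\to0$, since $A_m\to\infty$.
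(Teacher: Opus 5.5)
Your proposal is correct. Its architecture matches the paper's: the test function $\eta X_1^{-L+\alpha_1/k}X_2^{(-1+\alpha_2)/k}\cdots X_m^{(-1+\alpha_m)/k}$, admissibility via $d^{\varepsilon}$-approximation, the cubic Taylor bound, and the successive limits $\alpha_1\to0,\dots,\alpha_{m-1}\to0$ ending in $J=(1-\alpha_m)A_m+O(1)$.

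The genuine difference is how you establish the integration-by-parts identities \eqref{identities gen alpha}--\eqref{identities2 gen alpha}.

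\emph{The paper's route.} It works directly in $\mathbb{R}^N$. It uses the pointwise identity \eqref{divergen identity} and integrates the divergence term by parts against $\eta^k X_{i+1}^{-1+\alpha_{i+1}}\cdots X_m^{-1+\alpha_m}$. The defect term is controlled by $d\Delta d+1-k=O(d)$, taken from Ambrosio--Soner; this is second-order information on $d$ near $S$.

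\emph{Your route.} You reduce everything to one variable with the coarea formula. Each integral becomes $\int_0^\delta g(r)r^{-k}(\cdots)\,dr$ with $g(r)=\int_{\{d=r\}}\eta^k\,d\mathcal{H}^{N-1}$. You then need only the zeroth-order tube asymptotic $g(r)r^{1-k}=c+h(r)$, with $c>0$ and $|h(r)|\le Cr$, valid near a smooth point of $S$. This suffices:
\begin{enumerate}
\item The constant part $c$ is integrated by parts exactly, using $(X_i^{\alpha_i})'=\alpha_i X_1\cdots X_{i-1}X_i^{1+\alpha_i}/r$.
\item The part $h$ is never differentiated. It only appears against $dr/r$-type weights, and $|h|/r\le C$ leaves integrands bounded by $X_{i+1}^{-1}\cdots X_m^{-1}$. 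These contributions are therefore $O(1)$ uniformly in the $\alpha_i$'s.
\end{enumerate}

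\emph{What each buys.} Your argument is more elementary: it needs no curvature and no second derivatives of $d$. It also makes explicit a hypothesis the paper uses only tacitly, namely that the construction must be centred at a smooth point of $S$. The paper's appeal to Ambrosio--Soner requires this as well.

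\emph{Two small fixes.}
\begin{enumerate}
\item State the asymptotic for the $\eta^k$-weighted level-set measure, not for $\mathcal{H}^{N-1}(\{d=r\}\cap B)$, since $\eta\not\equiv 1$ on the level sets. The weighted version follows from the same tube formula because $\eta$ is Lipschitz.
\item The linear Taylor term $L^{k-1}\int(\cdots)\,w$ is disposed of by the $i=1$ case of the identity. You use this before writing $I_{m-1,\lambda}[u]\le\frac{k-1}{2k}L^{k-2}J+O(1)$, but prove the identity only afterwards.
\end{enumerate}
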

\begin{proof}
   Without loss of generality, assume that $0 \in \Omega \cap S$. Take $\delta > 0$ small enough so that $B(0,\delta) \subset \Omega.$ Consider $\eta \in C_c^{\infty}(B(0,\delta))$ such that $0 \le \eta \le 1$ and $\eta = 1$ on $B(0,\delta/2).$ Now, given $\alpha_1, \cdots, \alpha_m>0$ small enough, we set,
    \begin{align*}
        v(x) = X_1^{-L + \frac{\alpha_{1}}{k}} X_2^{\frac{-1+\alpha_2}{k}} \cdots X_m^{\frac{-1+\alpha_m}{k}} \quad \text{and} \quad u(x) = \eta(x) v(x).
    \end{align*}
Thus, we can derive,
   \begin{align*}
       \nabla v(x) = - \frac{\nabla d}{d} X_1^{1-L+\frac{ \alpha_1}{k} } X_2^{\frac{-1+ \alpha_2}{k}} \cdots X_m^{\frac{-1+\alpha_m}{k}} \left[L + \frac{w(x)}{k}\right] ,
   \end{align*}
 where $w(x) = - \alpha_1 + \sum_{i=2}^m (1-\alpha_{i})X_2 \cdots X_i.$ Analogous to the proof of Proposition \ref{Improved sharp without beta} and arguments in the proof of Proposition \ref{p<k ineq hardy prop}, by considering, $u_{\varepsilon}(x) \coloneqq d^{\varepsilon} u(x)$ for small enough $\varepsilon>0$, we can similarly show that $u \in W^{1,N}_0\left( \Omega \setminus S, X_1^{\lambda} \right).$
Now, using an elementary inequality \eqref{elt ineq1} we deduce,
   \begin{align*}
       \int_{\Omega} \lvert \nabla u \rvert^k X_1^{\lambda} \, dx &\le \int_{\Omega} \eta^k \lvert \nabla v \rvert^k X_1^{\lambda} \, dx + c_k \int_{\Omega} \lvert \nabla \eta \rvert^k \lvert v \rvert^k X_1^{\lambda} \, dx\\
       &\hspace{70pt}+ c_k \int_{\Omega} \lvert \nabla \eta \rvert \lvert \eta \rvert^{k-1} \lvert \nabla v \rvert^{k-1} \lvert v \rvert X_1^{\lambda} \, dx \coloneqq I_1 + I_2 + I_3.
   \end{align*} 
 Analogously to \eqref{I_2 without beta} and \eqref{I_3 without beta}, by using coarea formula, we can show that $I_2, I_3 = O(1)$ is uniformly bounded with respect to $\alpha_i$'s.
Now, note that,
\begin{align*}
    I_1=\int_{\Omega} \eta^k \lvert \nabla v \rvert^k X_1^{\lambda} \, dx =  \int_{\Omega} \frac{\eta^k}{d^k} X_1 ^{1+ \alpha_2} X_2^{-1+ \alpha_2} \cdots X_m ^{-1+ \alpha_m} \left \lvert L + \frac{w(x)}{k}\right \rvert^k \, dx.
\end{align*}
Since $w$ is small compared to $L$ by Taylor series expansion, we may have,
\begin{align} \label{taylor estimate L}
   \left \lvert  L + \frac{w(x)}{k}\right \rvert^k \le L^k + L^{k-1}w + \frac{k-1}{2k} L^{k-2} w^2 + c \lvert w \rvert^3.
\end{align}
Using this estimate \eqref{taylor improved const H}, we can bound $I_1$ by,
\begin{align*}
    I_1 \le J_1 + J_2 + J_3 + J_4 + O(1),
\end{align*}
where
\begin{align*}
    J_1 &= L^k  \int_{\Omega} \frac{\eta^k}{d^{k}} X_1^{1+ \alpha_1} X_2^{-1+ \alpha_2} \cdots X_m ^{-1+ \alpha_m} \, dx = L^k \int_{\Omega} \frac{\lvert u \rvert^k}{d^k} X_1^{\lambda+k} \, dx\\
    J_2 &= L^{k-1} \int_{\Omega} \frac{\eta^k}{d^{k}} X_1^{1+ \alpha_1} X_2^{-1+ \alpha_2} \cdots X_m ^{-1+ \alpha_m} w(x)\, dx\\
    J_3 &=  \frac{k-1}{2k} L^{k-2} \int_{\Omega} \frac{\eta^k}{d^k} X_1^{1+ \alpha_1} X_2^{-1+ \alpha_2} \cdots X_m ^{-1+ \alpha_m} w^2 \, dx\\
    J_4 &= c \int_{\Omega} \frac{\eta^k}{d^{k}} X_1^{1+ \alpha_1} X_2^{-1+ \alpha_2} \cdots X_m ^{-1+ \alpha_m} \lvert w(x) \rvert^3 \, dx.
\end{align*}
We now show that $J_2, J_4 = O(1)$ is uniformly bounded with respect to $\alpha_i$'s. Observe that $\lvert w \rvert^3 \le c \left( \alpha_1^3 + X_2^3\right)$. Thus, similar to \eqref{radial ltm function}, we can show that $J_4=O(1),$ see also \cite[Equation $3.16$]{Barbatis}.
Before proceeding further, let us fix some notations,
\begin{align*}
    A_i &=\int_{\Omega} \frac{ \eta^k}{d^{k}} X_1 ^{1+ \alpha_1} X_2^{1+ \alpha_2} \cdots X_i ^{1+ \alpha_i} X_{i+1}^{-1+ \alpha_{i+1}} \cdots X_m^{-1+ \alpha_m} \, dx \quad \text{for} \quad 1\le i \le m\\
    \Gamma_{1j} &= \int_{\Omega} \frac{ \eta^k}{d^{k}} X_1 ^{1+ \alpha_1} X_2^{\alpha_2} \cdots X_j ^{\alpha_j} X_{j+1}^{-1+ \alpha_{j+1}} \cdots X_m^{-1+ \alpha_m} \, dx \quad \text{for} \quad 2\le j \le m\\
    \Gamma_{ij} &= \int_{\Omega} \frac{ \eta^k}{d^{k}} X_1 ^{1+ \alpha_1} X_2^{1+ \alpha_2} \cdots X_i ^{1+ \alpha_i} X_{i+1}^{\alpha_{i+1}}X_{j+1}^{-1+ \alpha_{j+1}} \cdots X_m^{-1+ \alpha_m}, \hspace{3pt} \text{for} \hspace{2pt} 2\le i \le j \le m
\end{align*}
with $\Gamma_{ii} = A_{i}.$ For $1 \le i \le m-1$ and $\alpha_1=\cdots = \alpha_{i-1}=0$, we have,
\begin{align} 
    \alpha_i A_i &= \sum_{j=i+1}^m (1- \alpha_j) \Gamma_{ij} + O(1), \label{identities gen alpha}\\
    \alpha_i \Gamma_{ij} &= -\sum_{l=i+1}^{j} \alpha_l \Gamma_{lj} + \sum_{l=j+1}^m (1-\alpha_l) \Gamma_{jl} + O(1),   \label{identities2 gen alpha}
\end{align}
see the proof of Equations $(3.9),(3.10)$ in \cite{BarbatisSeries}. Although for the sake of completeness, we prove \eqref{identities gen alpha} and \eqref{identities2 gen alpha}. Following the step $3$ in the prrof of \cite[Proposition $3.1$]{BarbatisSeries}, a straightforward computation yields,
\begin{align} \label{divergen identity}
    \alpha_i d^{-k} X_1 \cdots X_{i-1}X_i^{1+ \alpha_i} = \operatorname{div}\left(d^{-k+1}X_i^{\alpha_i} \nabla d\right) - d^{-k}\left(d \Delta d + 1-k\right)X_i^{\alpha_i}.
\end{align}
Thus, 
\begin{align*}
    \alpha_i A_i &= \int_{\Omega} \eta^k \operatorname{div}\left(d^{-k+1}X_i^{\alpha_i} \nabla d\right) X_{i+1}^{-1+\alpha_{i+1}} \cdots X_m^{-1+\alpha_m} \, dx \\
    & \qquad\qquad- \int_{\Omega} \eta^k d^{-k}\left(d \Delta d + 1-k\right)X_i^{\alpha_i} X_{i+1}^{-1+\alpha_{i+1}} \cdots X_m^{-1+\alpha_m} \, dx \eqqcolon K_1 - K_2.
\end{align*}
By a direct consequence of \cite[Theorem $3.2$]{Ambro-son} we have, $d \Delta d + 1-k = O(d)$ as $d \to 0.$ Indeed, take $D = d^2/2$, then $\Delta D = 1 + d \Delta d$ and by \cite[Theorem $3.2$]{Ambro-son} we have, $\Delta D = k + O(d)$ as $d \to 0$ and hence combining these two we get $d \Delta d + 1-k = O(d)$ as $d \to 0.$ Thus, we can estimate $K_2$ by,
\begin{align*}
    c \int_{\Omega} \eta^k d^{-k+1}X_i^{\alpha_i} X_{i+1}^{-1+\alpha_{i+1}} \cdots X_m^{-1+\alpha_m} \, dx, \quad \text{for some constant} \hspace{3pt} c.
\end{align*}
which is uniformly bounded in $\alpha_i's.$ In case of $K_1,$ using integrating by parts yields,
\begin{align*}
    K_1 &= \int_{\Omega} \langle \nabla \eta^k, \nabla d \rangle d^{-k+1}X_i^{\alpha_i} X_{i+1}^{-1+\alpha_{i+1}} \cdots X_m^{-1+\alpha_m} \, dx\\
    & \hspace{90pt}- \int_{\Omega} \eta^k d^{-k+1}X_i^{\alpha_i} \left \langle \nabla d, \nabla \left(X_{i+1}^{-1+\alpha_{i+1}} \cdots X_m^{-1+\alpha_m} \right)\right\rangle \, dx,
\end{align*}
where the first term is $O(1)$ and the second term is $\sum_{j=i+1}^m (1- \alpha_j) \Gamma_{ij}.$ This proves \eqref{identities gen alpha}. Similarly, to prove \eqref{identities2 gen alpha}, we can again use \eqref{divergen identity} and proceed similarly, details are omitted.
Putting $i=1$ in \eqref{identities gen alpha} yields $J_2= O(1).$ Thus,
\begin{align*}
    I_{m-1}[u] &\le \frac{k-1}{2k}L^{k-2}\frac{\eta^k}{d^{k}} X_1^{1+ \alpha_1} X_2^{-1+ \alpha_2} \cdots X_m ^{-1+ \alpha_m} \left[ w^2 - \sum_{i=2}^m X_2^2 \cdots X_i^2\right] + O(1)\\
    & \eqqcolon \frac{k-1}{2k}L^{k-2} J + O(1),
\end{align*}
where we expand $w^2$ and collect the similar terms to define $J,$ more precisely,
\begin{align*}
    J &\coloneqq \alpha_1^2 A_1 + A_m + \sum_{i=2}^m (\alpha_i^2 - 2 \alpha_i) A_i - 2 \alpha_1 \sum_{l=2}^m (1-\alpha_l) \Gamma_{1l}\\
    &\hspace{120pt}+ 2 \sum_{l=2}^{m-1}\sum_{i=l+1}^m (1- \alpha_i)(1-\alpha_l) \Gamma_{il} + O(1).
\end{align*}
We  want to take the limit $\alpha_1 \to 0$ in the above. If we see separately, except for $A_1$ and $\Gamma_{1l}$, all terms have finite limit. Now, if we combine them, then using \eqref{identities gen alpha} and \eqref{identities2 gen alpha} (see also Step $7$ in the proof of \cite[Proposition $3.1$]{BarbatisSeries}), we can deduce that,
\begin{align*}
    \alpha_1^2 A_1 - 2 \alpha_1 \sum_{l=2}^m (1-\alpha_l) \Gamma_{1l}
    &=\sum_{i=2}^m \left(\alpha_i- \alpha_i^2\right) A_i \\
    & \qquad+ \sum_{i=2}^{m-1} \sum_{l=i+1}^m (2\alpha_i -1) (1- \alpha_l) \Gamma_{il} + O(1).
\end{align*}
Now, we can take the limit $\alpha_1 \to 0$ in $J$, letting $\alpha_1 \to 0$ we end up with,
\begin{align}
    J = A_m - \sum_{i=2}^m\alpha_i A_i + \sum_{i=2}^{m-1} \sum_{l= i+1}^m (1- \alpha_l) \Gamma_{il} + O(1) \quad (\alpha_1 = 0),
\end{align}
where $O(1)$ is uniform with respect to $\alpha_1, \cdots, \alpha_m.$
 Repeating the above process, we finally have,
\begin{align} \label{p=k error term second J second}
    J = (1- \alpha_m) A_m + O(1), \quad (\alpha_1= \alpha_2= \cdots =\alpha_{m-1} = 0).
\end{align}
The rest of the proof is similar to Proposition \ref{Improved sharp without beta}, so we omit the details.
\end{proof}

\section{Leray-Trudinger-type Improvement} \label{Log leray}
In this section, we will prove \Cref{Leray-weighted2} and \Cref{Leray weighted1}.
Let us first state the spherical harmonic decomposition of $L^2$ functions on $\mathbb{S}^{N-1}$. See \cite[Chapter IV, Section 2]{stein-weiss} and \cite[Chapter 1]{FengYuan} for details.
\begin{lemma}\label{decomposition lemma}
Let $u\in  L^{2}\left(\mathbb{B}_N\right)$, then employing the spherical coordinates $x= \left(r,\theta\right)$ in $\mathbb{B}_{N}$, we can decompose 
\begin{align}\label{spherical harmonic decomposotion}
u(x) = \sum_{k=0}^{\infty} u_k(r) f_k(\theta),
\end{align} 
 where $u_0(r) = \fint_{\mathbb{S}^{N-1}} u(r \theta) \, d \sigma\left(\theta\right)$, $f_0(\theta) = 1,$ $\left \lbrace f_k\right \rbrace_{k\in \mathbb{Z}^{+}}$ forms an orthonormal basis for $L^2(\mathbb{S}^{N-1})$ and satisfy 
 \begin{align*}
 -\Delta_{\mathbb{S}^{N-1}} f_k = k \left(k+N-2\right) f_k \mbox{ on } \mathbb{S}^{N-1}.
 \end{align*}
\end{lemma}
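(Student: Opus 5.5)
The plan is to reduce the statement to the completeness of spherical harmonics in $L^2(\mathbb{S}^{N-1})$, applied sphere by sphere, and then to integrate in $r$ via polar coordinates. Throughout, $L^2(\mathbb{S}^{N-1})$ is taken with the normalized measure $\fint d\sigma$, so that $f_0 = 1$ has unit norm and the formula for $u_0$ is the correct Fourier coefficient. One point about indexing: because eigenspaces have multiplicity, the eigenvalue attached to $f_k$ should be read as $\ell(\ell+N-2)$, where $\ell = \ell(k)$ is the degree of the harmonic to which $f_k$ belongs. I will prove it in that form, which is the standard reading of the statement.

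\textbf{Step 1 (the basis on the sphere).} Let $\mathcal{H}_\ell$ be the space of homogeneous harmonic polynomials of degree $\ell$ restricted to $\mathbb{S}^{N-1}$. Writing $\Delta = \partial_r^2 + \frac{N-1}{r}\partial_r + r^{-2}\Delta_{\mathbb{S}^{N-1}}$ and applying it to $P(x) = r^\ell f(\theta)$ with $\Delta P = 0$ gives $-\Delta_{\mathbb{S}^{N-1}} f = \ell(\ell+N-2) f$.

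The spaces $\mathcal{H}_\ell$ are mutually orthogonal, since $\Delta_{\mathbb{S}^{N-1}}$ is symmetric on the closed manifold $\mathbb{S}^{N-1}$ and the eigenvalues $\ell(\ell+N-2)$ are distinct. For completeness, I will use the algebraic decomposition $\mathcal{P}_\ell = \mathcal{H}_\ell \oplus |x|^2\mathcal{P}_{\ell-2}$ of homogeneous polynomials, which follows from the Fischer inner product, together with $|x|=1$ on the sphere. These show that restrictions of all polynomials lie in $\bigoplus_\ell \mathcal{H}_\ell$. By Stone--Weierstrass, polynomials are dense in $C(\mathbb{S}^{N-1})$, hence dense in $L^2(\mathbb{S}^{N-1})$. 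Choosing an orthonormal basis of each finite-dimensional $\mathcal{H}_\ell$ and concatenating, with $\mathcal{H}_0$ spanned by $f_0 = 1$, produces $\{f_k\}$. I expect this completeness argument to be the main obstacle; alternatively one can cite \cite[Chapter IV, Section 2]{stein-weiss} or \cite[Chapter 1]{FengYuan} for it.

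\textbf{Step 2 (slicing $u$).} By polar coordinates and Fubini,
\[
\lVert u\rVert_{L^2(\mathbb{B}_N)}^2 = \omega_{N-1}\int_0^1 \fint_{\mathbb{S}^{N-1}} \lvert u(r\theta)\rvert^2 \, d\sigma \, r^{N-1}\, dr < \infty .
\]
Hence for a.e. $r\in(0,1)$ the slice $u(r\,\cdot)$ belongs to $L^2(\mathbb{S}^{N-1})$. Define
\[
u_k(r) = \fint_{\mathbb{S}^{N-1}} u(r\theta)\, f_k(\theta)\, d\sigma(\theta).
\]
These are measurable in $r$ by Fubini, and $u_0$ is exactly the spherical mean appearing in the statement.

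\textbf{Step 3 (convergence in $L^2(\mathbb{B}_N)$).} For a.e. $r$, Step 1 gives $u(r\,\cdot) = \sum_k u_k(r) f_k$ in $L^2(\mathbb{S}^{N-1})$, together with Parseval's identity $\fint \lvert u(r\theta)\rvert^2\, d\sigma = \sum_k \lvert u_k(r)\rvert^2$.

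Let $S_n = \sum_{k\le n} u_k(r) f_k(\theta)$ denote the partial sums. Then
\[
\lVert u - S_n \rVert_{L^2(\mathbb{B}_N)}^2 = \omega_{N-1}\int_0^1 \sum_{k>n}\lvert u_k(r)\rvert^2\, r^{N-1}\, dr .
\]
The integrand tends to $0$ pointwise and is dominated by $\fint \lvert u(r\theta)\rvert^2\, d\sigma$, which is integrable against $r^{N-1}\,dr$. Dominated convergence therefore shows that the series \eqref{spherical harmonic decomposotion} converges to $u$ in $L^2(\mathbb{B}_N)$, which completes the proof.
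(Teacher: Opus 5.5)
Your proof is correct and follows the standard route. The paper gives no proof of its own and defers to Stein--Weiss (Chapter IV, Section 2) and Chapter 1 of the other cited text, which carry out the same argument: the eigenvalue relation from homogeneity, completeness via $\mathcal{P}_\ell=\mathcal{H}_\ell\oplus|x|^2\mathcal{P}_{\ell-2}$ plus Stone--Weierstrass, and then your slice-by-slice Parseval and dominated-convergence passage to $L^2(\mathbb{B}_N)$. Reading the eigenvalue as $\ell(k)(\ell(k)+N-2)$ and normalizing the surface measure are the right fixes, since the statement as literally written fails when eigenspaces have multiplicity greater than one.
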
 
We now recall a Lemma which will be used in proving \Cref{Leray weighted1} and \Cref{Leray-weighted2}. We refer to \cite[Lemma 4.2]{adimurthi2026leraytrudingertypeexponentialintegrability} for the proof, see also \cite[Equation (28)]{di2024optimal}.
\begin{lemma}\label{estimates of angular part}
  There exists a positive constant $c_N$ such that for any $v \in C^1_c(\mathbb{B}_N)$ we have,
    \begin{align}\label{estimates of angular part equation}
        \int_{\mathbb{B}_N} \left\lvert \nabla \left[(v-v_0)X_1^{-1+1/N}\right] \right \rvert^N \, dx \le c_N \int_{\mathbb{B}_N} \lvert \nabla v \rvert^N X_1^{-N+1} \, dx,
    \end{align}
    where $v_0$ is the spherical mean of $v$ \emph{i.e.} $v_0(r) = \fint_{\mathbb{S}^{N-1}} v(r \theta) \, d \sigma\left(\theta\right)$.
\end{lemma}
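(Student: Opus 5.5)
We prove \Cref{estimates of angular part equation} by reducing everything to one-dimensional radial inequalities for each spherical harmonic mode, combined with a Poincaré-type inequality on $\mathbb{S}^{N-1}$ for functions of zero mean. Write $w = v - v_0$, so that $\int_{\mathbb{S}^{N-1}} w(r\theta)\, d\sigma(\theta) = 0$ for every $r$, and set $\psi(r) = X_1^{-1+1/N}(r)$. The product rule gives
\begin{align*}
\nabla (w \psi) = \psi \nabla w + w \psi'(r)\frac{x}{|x|}, \qquad |\psi'(r)| = \frac{N-1}{N}\, \frac{X_1^{1/N}}{r}.
\end{align*}
Hence
\begin{align*}
\int_{\mathbb{B}_N} |\nabla (w\psi)|^N \le c_N \int_{\mathbb{B}_N} |\nabla w|^N X_1^{-N+1}\,dx + c_N \int_{\mathbb{B}_N} \frac{|w|^N}{|x|^N} X_1\,dx .
\end{align*}
The plan is to control both terms on the right by $\int |\nabla v|^N X_1^{-N+1}$.

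For the first term, we use $|\nabla w| \le |\nabla v| + |v_0'(r)|$ together with $|v_0'(r)| \le \fint_{\mathbb{S}^{N-1}} |\partial_r v(r\theta)|\, d\sigma$. By Jensen's inequality,
\begin{align*}
\int |v_0'|^N X_1^{-N+1} \le \int |\nabla v|^N X_1^{-N+1},
\end{align*}
so the first term is bounded by a constant times $\int|\nabla v|^N X_1^{-N+1}$.

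The second term is the crucial one. Note first that the Hardy inequality of Proposition \ref{beta=1} has the weight $X_2^N X_1$, which is too weak to control $|w|^N|x|^{-N}X_1$. The decisive ingredient is therefore the zero spherical mean of $w$. On each sphere $\{|x|=r\}$ we apply the $L^N$ Poincaré inequality on $\mathbb{S}^{N-1}$:
\begin{align*}
\int_{\mathbb{S}^{N-1}} |w(r\theta)|^N\, d\sigma \le C_N \int_{\mathbb{S}^{N-1}} |\nabla_\theta w(r\theta)|^N\, d\sigma .
\end{align*}
This holds for mean-zero functions because the first nonzero eigenvalue $N-1$ from Lemma \ref{decomposition lemma} is positive; for $L^N$ we would use the standard compactness argument rather than eigenvalues. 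Since $|\nabla_\theta w| = r\,|\nabla_{\tan} w| = r\,|\nabla_{\tan} v|$ (the radial function $v_0$ has no tangential gradient), this gives
\begin{align*}
\frac{|w|^N}{r^N} \lesssim |\nabla_{\tan} v|^N \quad \text{after averaging over spheres.}
\end{align*}
Integrating against $X_1\, r^{N-1}dr$ yields
\begin{align*}
\int \frac{|w|^N}{|x|^N}X_1 \le C_N \int |\nabla v|^N X_1 \le C_N \int |\nabla v|^N X_1^{-N+1},
\end{align*}
where the last step uses $X_1 \le 1 \le X_1^{-N+1}$ on $\mathbb{B}_N$ (valid since $X_1 \le 1$ for $r\le 1$). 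The power of $X_1$ is thus harmless; only the angular derivative was needed.

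The main obstacle I anticipate is justifying the $L^N$ (rather than $L^2$) spherical Poincaré inequality with a constant depending only on $N$, and checking that $\nabla_\theta w = \nabla_\theta v$ holds pointwise for $C^1$ functions near the origin. Near the origin $v_0$ is $C^1$ away from $0$, and all integrals converge because $v \in C^1_c$ and the weights are bounded. Combining the two bounds gives \eqref{estimates of angular part equation}.
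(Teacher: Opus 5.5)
Your proof is correct and complete. The paper does not prove this lemma itself; it quotes it from \cite[Lemma 4.2]{adimurthi2026leraytrudingertypeexponentialintegrability} (see also \cite[Equation (28)]{di2024optimal}), so there is no in-paper argument to compare against.

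Your argument supplies a self-contained proof, and it appears to rest on the same mechanism as the cited sources. You use the product rule, Jensen's inequality for $v_0'$, and the $L^N$ Poincar\'e--Wirtinger inequality on the connected sphere $\mathbb{S}^{N-1}$ (valid for $N\ge 2$ by Rellich compactness), applied to the mean-zero function $\theta\mapsto (v-v_0)(r\theta)$. Vanishing spherical mean gives a Hardy inequality with no logarithmic loss. This is why you even get the angular term bounded by $\int_{\mathbb{B}_N}\lvert\nabla_{\mathrm{tan}} v\rvert^N X_1\,dx$, which lies well below the right-hand side.

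One small correction to your last paragraph: the weights are not bounded near the origin. $X_1^{-N+1}$ blows up there, and $X_1\lvert x\rvert^{-N}$ is not integrable. No extra care is needed, though. By the mean value theorem, $\lvert v(x)-v_0(\lvert x\rvert)\rvert\le 2\lVert\nabla v\rVert_\infty\lvert x\rvert$. In any case, your chain of inequalities between nonnegative integrands ends at the finite quantity $\int_{\mathbb{B}_N}\lvert\nabla v\rvert^N X_1^{-N+1}\,dx$.
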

We also need the following Proposition, for a proof, see \cite[Proposition $3$]{seriesofoptimalleray} and see also \cite[Proposition $3.3$]{di2024optimal}.
\begin{proposition} \label{prop for radial version leray}
    Let $ q \ge 1$ and $\Gamma$ be the Gamma function. Then, there exists a positive constant $d_N$ such that for all $u \in C^1_{c, rad}\left(\mathbb{B}_N\right)$ and $j=1,2,\cdots,$ we have,
    \begin{align} \label{I_N radial estimate}
       &\left( \fint_{\mathbb{B}_N} \left(\lvert u \rvert^{2/N} X_1^{-1 + 1/N} X_{j+1}^{1/N}\right)^q \, dx\right)^{1/q}\notag \\
       & \qquad\le d_N \Gamma^{1/q}\left(1+ \frac{q(N-1)}{N}\right) \left(\fint_{\mathbb{B}_N} \lvert x \rvert^{2-N} \lvert \nabla u \rvert^2 X_1^{-1} \cdots X_j^{-1} \, dx\right)^{1/N}.
    \end{align}
\end{proposition}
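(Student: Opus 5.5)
The plan is to reduce \eqref{I_N radial estimate} to a one-dimensional pointwise bound by a change of variables adapted to the iterated logarithms, and then to integrate that bound against the weight $X_1^{-q(N-1)/N}$. The constant $\Gamma^{1/q}$ should come from this last integration.

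\emph{Step 1 (change of variables).} For radial $u(x)=u(r)$ the right-hand energy is
\[
E:=\int_{\mathbb{B}_N} \lvert x \rvert^{2-N}\lvert \nabla u\rvert^2 X_1^{-1}\cdots X_j^{-1}\,dx=\omega_{N-1}\int_0^1 r\,\lvert u'(r)\rvert^2 \bigl(X_1\cdots X_j\bigr)^{-1}dr,
\]
where $X_i=X_i(r)$. Using $r\,\frac{d}{dr}X_i = X_1\cdots X_{i-1}X_i^2$, I introduce
\[
s(r):=\ln\frac{1}{X_j(r)}=\frac{1}{X_{j+1}(r)}-1 .
\]
Then $\frac{ds}{dr}=-\frac{X_1\cdots X_j}{r}$, and $s$ decreases from $+\infty$ (as $r\to0$) to $0$ (at $r=1$). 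Writing $u(r)=w(s)$, I get $E=\omega_{N-1}\int_0^\infty \lvert w'(s)\rvert^2\,ds$, and $w(0)=0$ because $u$ has compact support in $\mathbb{B}_N$.

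\emph{Step 2 (pointwise bound).} By Cauchy--Schwarz, $\lvert w(s)\rvert\le s^{1/2}\bigl(E/\omega_{N-1}\bigr)^{1/2}$. Since $X_{j+1}=(1+s)^{-1}$, this gives
\[
\lvert u\rvert^{2/N}X_{j+1}^{1/N}\le \Bigl(\frac{s}{1+s}\Bigr)^{1/N}\Bigl(\frac{E}{\omega_{N-1}}\Bigr)^{1/N}\le \Bigl(\frac{E}{\omega_{N-1}}\Bigr)^{1/N}.
\]
This is the only place where the specific exponent $1/N$ of $X_{j+1}$ and the power $2/N$ of $\lvert u\rvert$ are used.

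\emph{Step 3 (weight integral).} It remains to bound $\bigl(\fint_{\mathbb{B}_N}X_1^{-q(N-1)/N}\,dx\bigr)^{1/q}$. With $a=q(N-1)/N$ this integral equals $\int_0^1 N r^{N-1}\bigl(\ln(e/r)\bigr)^a dr$. Substituting $r^N=e^{-t}$ gives $\ln(e/r)=1+t/N$, so the integral is
\[
\int_0^\infty(1+t/N)^a e^{-t}\,dt\le 2^a\bigl(1+N^{-a}\Gamma(1+a)\bigr)\le C^a\,\Gamma(1+a).
\]
The last inequality holds because $\Gamma(1+a)$ is bounded below by a positive constant for $a\ge0$. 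Taking the $1/q$ power, $C^{a/q}=C^{(N-1)/N}$ is independent of $q$.

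Finally, the factor $\omega_{N-1}^{-1/N}$ together with the normalization of the averaged integral on the right of \eqref{I_N radial estimate} (which contributes $\lvert\mathbb{B}_N\rvert^{1/N}$) is absorbed into $d_N$. I expect no real obstacle; the only point requiring care is Step 1, namely the identity $r\,(X_1\cdots X_j)^{-1}\,(ds/dr)^2=X_1\cdots X_j/r$. This identity is exactly what converts the weighted radial energy into the flat Dirichlet energy $\int_0^\infty\lvert w'\rvert^2\,ds$.
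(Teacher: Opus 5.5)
Your proof is correct. The paper gives no proof of its own here; it cites \cite[Proposition 3]{seriesofoptimalleray} (with the case $j=1$ in \cite[Proposition 3.3]{di2024optimal}). Your argument is essentially the one used for such radial estimates, and it has three ingredients:
\begin{itemize}
\item the one-dimensional bound $\lvert u(r)\rvert^2\le \omega_{N-1}^{-1}\ln\bigl(1/X_j(r)\bigr)\int_{\mathbb{B}_N}\lvert x\rvert^{2-N}\lvert\nabla u\rvert^2X_1^{-1}\cdots X_j^{-1}\,dx$, which your substitution $s=\ln(1/X_j)$ packages as Cauchy--Schwarz for $w$;
\item the observation $X_{j+1}\ln(1/X_j)=s/(1+s)\le 1$;
\item the explicit integral of $X_1^{-q(N-1)/N}$, which produces the $\Gamma$ factor.
\end{itemize}

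One small imprecision concerns the step $2^a\bigl(1+N^{-a}\Gamma(1+a)\bigr)\le C^a\,\Gamma(1+a)$. This fails as $a\to0$, so it is not justified for all $a\ge 0$ by the lower bound on $\Gamma$ alone. It does hold because $q\ge1$ forces $a=q(N-1)/N\ge 1/2$.

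A cleaner route avoids the issue. The shift $t\mapsto t+N$ gives exactly
\[
\int_0^\infty(1+t/N)^a e^{-t}\,dt=e^N N^{-a}\,\Gamma(1+a,N)\le e^N\,\Gamma(1+a),
\]
so after taking $1/q$-th powers the extra constant is $e^{N/q}\le e^N$.
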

\subsection{Proof of Theorem \ref{Leray-weighted2}} We are now ready to prove \Cref{Leray-weighted2}. Let us first prove the following Lemma.
\begin{lemma} \label{first lemma for proof weighted leray}
Let $u \in C^1_c\left(\Omega \setminus \{0\}\right)$ and set $v \coloneqq u X_2^{1-\frac{1}{N}}$. Recall that $ \Tilde{I}_N[u]$ as defined in \eqref{weighted Leray energy1}. Then we have the following estimates,
\begin{enumerate} 
\item[(i)] For some positive constant $\zeta_N$, there holds,
\begin{align} \label{estimate of weighted leray2}
        \int_{\Omega} \lvert x \rvert^{2-N} \lvert v \rvert^{N-2} \lvert \nabla v \rvert^2 X_2^{-1} X_1^{-1} \, dx \le \zeta_{N}  \Tilde{I}_N[u].
     \end{align}
\item[(ii)] We also have,
\begin{align} \label{second estimate on weighted leray2}
     \int_{\Omega} \lvert \nabla v \rvert^{N} X_2^{-N+1} X_1^{-N+1} \le \left(2^{N-1} -1\right) \Tilde{I}_N[u].
\end{align}
    \end{enumerate}
\end{lemma}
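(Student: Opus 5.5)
The plan is a ground state substitution. We write $u = v X_2^{-1+\frac1N}$ and expand the weighted energy around the "virtual ground state" $X_2^{-1+1/N}$. Since $u\in C^1_c(\Omega\setminus\{0\})$, $v$ has the same support, so all integrations by parts in $r=\lvert x\rvert$ have no boundary terms.

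First I compute the derivatives of the iterated logarithms. From $X_1(r)=(\ln(e/r))^{-1}$ we get $\frac{d}{dr}X_1 = X_1^2/r$. Then the chain rule gives $\frac{d}{dr}X_2 = X_2^2X_1/r$, and in general $\nabla X_2 = X_2^2X_1\, x/\lvert x\rvert^2$. Hence
\begin{align*}
\nabla u = X_2^{-1+\frac1N}\Big(\nabla v + \tfrac{N-1}{N}\, v\, X_1X_2\, \tfrac{x}{\lvert x\rvert^2}\Big) \eqqcolon X_2^{-1+\frac1N}(a+b),
\end{align*}
with $a = \frac{N-1}{N} v X_1X_2 x/\lvert x\rvert^2$ and $b=\nabla v$. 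Therefore
\begin{align*}
\lvert\nabla u\rvert^N X_1^{-N+1} = X_2^{-N+1}X_1^{-N+1}\lvert a+b\rvert^N .
\end{align*}
I split $\lvert a+b\rvert^N = \lvert a\rvert^N + N\lvert a\rvert^{N-2}\langle a,b\rangle + R(a,b)$ and treat the three pieces separately.

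The first piece gives
\begin{align*}
X_2^{-N+1}X_1^{-N+1}\lvert a\rvert^N = \left(\tfrac{N-1}{N}\right)^N \lvert u\rvert^N X_2^NX_1/\lvert x\rvert^N,
\end{align*}
which is exactly the Hardy term subtracted in $\Tilde I_N[u]$. For the cross term, the weights collapse:
\begin{align*}
X_2^{-N+1}X_1^{-N+1}(X_1X_2/r)^{N-1} = r^{1-N}.
\end{align*}
So it equals $\left(\frac{N-1}{N}\right)^{N-1}\int_\Omega r^{1-N}\partial_r(\lvert v\rvert^N)\,dx$. In polar coordinates this is $\omega$-integrated $\int_0^\infty \partial_r\lvert v\rvert^N\,dr = 0$, by the compact support away from the origin. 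Hence
\begin{align*}
\Tilde I_N[u] = \int_\Omega X_2^{-N+1}X_1^{-N+1}R(a,b)\,dx .
\end{align*}

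Both estimates now follow from pointwise lower bounds on $R$, valid for $N\ge 2$. For (i), I use the standard inequality $R(a,b)\ge c_N\lvert a\rvert^{N-2}\lvert b\rvert^2$. With $\lvert a\rvert^{N-2} = (\frac{N-1}{N})^{N-2}\lvert v\rvert^{N-2}(X_1X_2/r)^{N-2}$, the weight becomes
\begin{align*}
X_2^{-N+1}X_1^{-N+1}(X_1X_2)^{N-2}r^{2-N} = r^{2-N}X_1^{-1}X_2^{-1}.
\end{align*}
This gives \eqref{estimate of weighted leray2} with $\zeta_N = \big(c_N(\frac{N-1}{N})^{N-2}\big)^{-1}$. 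For (ii), I use the sharp Lindqvist-type inequality
\begin{align*}
\lvert a+b\rvert^N \ge \lvert a\rvert^N + N\lvert a\rvert^{N-2}\langle a,b\rangle + \frac{\lvert b\rvert^N}{2^{N-1}-1}, \quad N\ge2,
\end{align*}
which immediately yields \eqref{second estimate on weighted leray2}.

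The main obstacle is justifying these elementary inequalities with the stated constant $2^{N-1}-1$ (and a uniform $c_N$ in (i)). I would prove them by reducing to $a=e_1$ by homogeneity, and then to a one-variable (or two-variable, via $b=(s,t)$) convexity argument for $t\mapsto\lvert e_1+tb\rvert^N$. Beyond that, the only care needed is the vanishing of the cross term, which uses that $v$ is supported away from $0$ and from $\partial\Omega$.
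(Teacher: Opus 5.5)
Your proposal is correct and follows the paper's proof: the same substitution $u = vX_2^{-1+1/N}$, the same vanishing of the cross term (the paper writes it as $\operatorname{div}(x/\lvert x\rvert^N)=0$ on $\Omega\setminus\{0\}$, you as $\int_0^\infty \partial_r\lvert v\rvert^N\,dr=0$), and the same two elementary vector inequalities (the $\mu_N\lvert a\rvert^{N-2}\lvert b\rvert^2$ bound and Lindqvist's bound with constant $1/(2^{N-1}-1)$), which the paper cites rather than reproves. One slip is harmless: $\nabla X_2^{-1+1/N} = -\frac{N-1}{N}X_2^{1/N}X_1\,x/\lvert x\rvert^2$, so your $a$ should carry a minus sign, but nothing changes because both inequalities hold for all $a,b$ and the cross term vanishes for either sign.
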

\begin{proof}
     Let us first prove \eqref{estimate of weighted leray2}. We have $u = v X_2^{-1+\frac{1}{N}}$. By using the inequality (see \cite[Lemma $3.1$ $(ii).b$ ]{Barbatis}): for some positive constant $\mu_N$, there holds, \[ \lvert b-a \rvert^N \ge \lvert a \rvert^N  + \mu_N\lvert a \rvert^{N-2} \lvert b \rvert^2 - N \lvert a \rvert^{N-2} \langle a, b\rangle \quad \text{for all} \hspace{3pt} a,b \in \mathbb{R}^N, \] 
    we can deduce that,
    \begin{align*}
        \lvert \nabla u \rvert^N &= \left \lvert X_2^{-1+\frac{1}{N}}\nabla v -\left(-1+\frac{1}{N}\right) X_2^{\frac{1}{N}} X_1\frac{x}{\lvert x \rvert^2} v \right\rvert^N\\
        &\begin{aligned} \ge \left \lvert \frac{N-1}{N}\right \rvert^{N}\frac{\lvert v \rvert^N}{\lvert x \rvert^N} X_2 X_1^{N} &+\frac{1}{\zeta_{N}} \lvert x \rvert^{2-N} \lvert v \rvert^{N-2} \lvert \nabla v \rvert^2 X_2^{-1} X_1^{N-2}\\
        & -N \left \lvert \frac{N-1}{N}\right \rvert^{N-1} X_1^{N-1} \left \langle \nabla \left( \lvert v \rvert^{N} \right), \frac{x}{\lvert x \rvert^N}  \right \rangle.
        \end{aligned}
    \end{align*}
   Multiplying both sides by $X_1^{-N+1}$ and doing the integration over $\Omega$ yields,
    \begin{align*}
        \int_{\Omega} &\lvert \nabla u \rvert^N X_1^{-N+1)} \, dx - \left ( \frac{N-1}{N}\right )^N \int_{\Omega} \frac{\lvert u \rvert ^N}{\lvert x \rvert^N} X_2^{N} X_1 \, dx \\
        & \qquad \ge \frac{1}{\zeta_{N}} \int_{\Omega} \lvert x \rvert^{2-N} \lvert v \rvert^{N-2} \lvert \nabla v \rvert^2 X_2^{-1} X_1^{-1} \, dx + \Tilde{C}_N \int_{\Omega} \lvert v \rvert^N \operatorname{div}\left(\frac{x}{\lvert x \rvert^N}\right)\, dx.
    \end{align*}
This completes the proof, since $\operatorname{div} \left(\frac{x}{\lvert x \rvert^N}\right) = 0$ on $\Omega \setminus \{0\}.$
 \par Similarly, by using the inequality, (see \cite{Peter}):
    \[\lvert b-a \rvert^N \ge \lvert a \rvert^N  + \frac{1}{2^{N-1}-1} \lvert b \rvert^N - N \lvert a \rvert^{N-2} \langle a, b\rangle \quad \text{for} \quad a,b \in \mathbb{R}^N,\]
    we can derive that $\int_{\Omega} \lvert \nabla v \rvert^{N} X_1^{-N+1} \le \left(2^{N-1}-1\right) \Tilde{I}_N[u].$
\end{proof}
\begin{proof} [Proof of \Cref{Leray-weighted2}] \label{proof of leray-weighted2}
  By \Cref{approximation} we can assume that $u \in C^1_c(\Omega \setminus \{0\})$. Extending by zero outside we may assume $u \in C^1_c\left(B\left(0,R_{\Omega}\right)\right) \setminus \{0\}$ and by scaling, we may assume that $R_{\Omega} = 1.$ Set $v = u X_2^{1 - 1/N}$. By using \Cref{decomposition lemma}, we write $v(x) = \sum_{k=0} v_k(r) f_k(\theta)$. Now, set $u_0= v_0 X_2^{-1+1/N}$.  Following Trudinger's technique \cite{Trudinger}, it's enough to establish the following inequality,
    \begin{align}
        \left(\int_{\mathbb{B}_N} \left(\lvert u \rvert X_1^{-1/N'}X_2^{1/N'}X_3^{1/N}\right)^q \right)^{1/q} \le c_N q^{1-1/N} \Tilde{I}_N[u], \quad \text{for} \quad q>N.
    \end{align}
    Then choosing $q=mN'$ and summing over $m=1,2,\cdots$, we will have the inequality \eqref{weighted leray-moser}. Note that we have,
    \begin{align*}
        &\left(\int_{\mathbb{B}_N} \left(\lvert u \rvert X_1^{-1/N'}X_2^{1/N'}X_3^{1/N}\right)^q \right)^{1/q} \\&\le\left(\int_{\mathbb{B}_N} \left(\lvert u -u_0\rvert X_1^{-1/N'}X_2^{1/N'}\right)^q \right)^{1/q}+\left(\int_{\mathbb{B}_N} \left(\lvert u_0 \rvert X_1^{-1/N'}X_2^{1/N'}X_3^{1/N}\right)^q \right)^{1/q}\\
        &\le \left(\int_{\mathbb{B}_N} \left(\lvert v -v_0\rvert X_1^{-1/N'} \right)^q \right)^{1/q} +\left(\int_{\mathbb{B}_N} \left(\lvert v_0 \rvert X_1^{-1/N'}X_3^{1/N}\right)^q \right)^{1/q} \coloneqq I_1 + I_2,
    \end{align*}
   where in the first step we have used the fact that $X_3 \le 1.$ As, $\left( v- v_0 \right) X_1^{-1 + 1/N} \in C_c^1\left(\mathbb{B}_N \setminus \{0\}\right)$, so from the proof of \cite[Theorem $1$]{Trudinger} for $q>N$, we have,
\begin{align*}
   \left(\int_{\mathbb{B}_N} \left(\lvert v- v_0 \rvert X_1^{-1 + \frac{1}{N}}\right)^q \, dx\right)^{1/q} &\le c_{N} q^{1-\frac{1}{N}}\left(\int_{\mathbb{B}_N} \left\lvert \nabla \left[(v-v_0)X_1^{-1 + \frac{1}{N}} \right] \right\rvert^N\right)^{1/N}\\
    &\stackrel{\eqref{estimates of angular part equation}}{\le}  c(N) q^{1-\frac{1}{N}} \int_{\mathbb{B}_N} \lvert \nabla v \rvert^N X_1^{-N+1} \, dx\\
     &\stackrel{\left(X_2 \leq 1\right)}{\le}  c(N) q^{1-\frac{1}{N}} \int_{\mathbb{B}_N} \lvert \nabla v \rvert^N X_1^{-N+1} X_2^{-N+1} \, dx \\
    & \stackrel{\eqref{second estimate on weighted leray2}}{\le} c_{N}  q^{1-\frac{1}{N}} \Tilde{I}_{N} [u].
\end{align*}
Now, define, $w \coloneqq \lvert v \rvert^{N/2}$. Then we have,
\begin{align}
    \lvert v_0(r) \rvert^{N/2} \le \left \lvert \fint_{\mathbb{S}^{N-1}} v(x) \, d \sigma(x)\right \rvert^{N/2} \le \fint_{\mathbb{S}^{N-1}} \lvert v(x) \rvert^{N/2} \, d \sigma(x)= w_0(r).
\end{align}
Thus, using above and from Proposition \ref{prop for radial version leray} we can have,
\begin{align*}
  I_2 \le  & \left( \fint_{\mathbb{B}_N} \left(w_0^{2/N} X_1^{-1 + 1/N} X_{3}^{1/N}\right)^q \, dx\right)^{1/q} \\
    & \le d_N \Gamma^{1/q}\left(1+ \frac{q(N-1)}{N}\right) \left(\fint_{\mathbb{B}_N} \lvert x \rvert^{2-N} \lvert \nabla w_0 \rvert^2 X_1^{-1} X_2^{-1} \, dx\right)^{1/N},
\end{align*}
where for large $q$, we use Stirling's approximation $\Gamma(1+x) \sim x^xe^{-x}$. We now show that, $\int_{\mathbb{B}_N} \lvert x \rvert^{2-N} \lvert \nabla w_0 \rvert^2 X_2^{-1} X_1^{-1} \le \int_{\mathbb{B}_N} \lvert x \rvert^{2-N} \lvert \nabla w \rvert^2 X_2^{-1} X_1^{-1}.$ The proof follows similarly as the proof of \cite[Equation $3.10$]{seriesofoptimalleray}. For the completeness, we prove this. First, set $f(x) \coloneqq \phi(\lvert x \rvert) w(x)$, where $\phi(\lvert x \rvert) = \lvert x \rvert^{1-N/2} X_1^{-1/2} X_2^{-1/2}.$ Now the spherical mean of $f$ is given by $f_0(r) = \phi(r) w_0(r).$ Thus,
\begin{align*}
    \int_{\mathbb{B}_N} \lvert \nabla f_0 \rvert^2 \, dx &= - \int_{\mathbb{B}_N} \phi \Delta \phi w_0^2 \, dx + \int_{\mathbb{B}_N} \phi^2  \lvert \nabla w_0 \rvert^2 \, dx\\
    &= - \int_{\mathbb{B}_N} \frac{\Delta \phi}{\phi} f_0^2 \, dx + \int_{\mathbb{B}_N} \phi^2  \lvert \nabla w_0 \rvert^2 \, dx\\
    &= \frac{(N-2)^2}{4 \lvert x \rvert^2}\int_{\mathbb{B}_N} \frac{f_0^2}{\lvert x \rvert^2} + \frac{1}{4} \sum_{i=1}^2 \int_{\mathbb{B}_N} \frac{X_1^2 X_i^2}{\lvert x \rvert^2} f_0^2 \, dx\\
    & \hspace{100pt}+ \int_{\mathbb{B}_N} \lvert x \rvert^{2-N} X_1^{-1} X_2^{-1} \lvert \nabla w_0 \rvert^2 \, dx.
\end{align*}
Therefore by using \cite[Equation $7.6$]{FT}, we have,
\begin{align}
\int_{\mathbb{B}_N} &\lvert x \rvert^{2-N} X_1^{-1} X_2^{-1} \lvert \nabla w_0 \rvert^2 \, dx\\
\notag  &=  \int_{\mathbb{B}_N} \lvert \nabla f_0 \rvert^2 \, dx - \frac{(N-2)^2}{4 \lvert x \rvert^2}\int_{\mathbb{B}_N} \frac{f_0^2}{\lvert x \rvert^2} -\frac{1}{4} \sum_{i=1}^2 \int_{\mathbb{B}_N} \frac{X_1^2 X_i^2}{\lvert x \rvert^2} f_0^2 \, dx\\
   \notag &\le \int_{\mathbb{B}_N} \lvert \nabla f \rvert^2 \, dx - \frac{(N-2)^2}{4 \lvert x \rvert^2}\int_{\mathbb{B}_N} \frac{f^2}{\lvert x \rvert^2}\-\frac{1}{4} \sum_{i=1}^2 \int_{\mathbb{B}_N} \frac{X_1^2 X_i^2}{\lvert x \rvert^2} f^2 \, dx\\
   \notag &=  \int_{\mathbb{B}_N} \lvert x \rvert^{2-N} X_1^{-1} X_2^{-1} \lvert \nabla w \rvert^2 \, dx \\
  \notag  &\le c_{N} \int_{\Omega} \lvert x \rvert^{2-N} \lvert v \rvert^{N-2} \lvert \nabla v \rvert^2 X_2^{-1} X_1^{-1} \, dx\stackrel{\eqref{estimate of weighted leray2}}{\le } \Tilde{c_{N}} \Tilde{I}_N[u],
\end{align}
where the second last inequality follows from the fact that $w = \lvert v \rvert^{N/2}.$
\par We still need to prove the optimality of the exponent. Let $0 < \gamma < 1/N.$ If possible let for any $u \in W^{1,N}_{0}\left(\Omega \setminus \{0\}, X_1^{-N+1}\right)$ with $\Tilde{I}_{N}[u] \le 1,$ there holds,
\begin{align} \label{gamma leray trudinger}
    \int_{\mathbb{B}_N} e^{\alpha \lvert u  X_1^{\frac{-1}{N'}} X_2^{\frac{1}{N'}} X_3^{\gamma}\rvert^{N'}} \le C_{N}, \quad \text{for all} \quad \alpha < A_N.
\end{align}
Since $0 < \gamma < 1/N,$ so there exists $1 < \rho <2$ such that $N \gamma + \rho < 2.$ We show that,
\begin{align} \label{}
    \int_{\Omega} \frac{\lvert u \rvert ^N}{\lvert x \rvert^N} X_2^N X_1 X_3^{N \gamma + \rho} \le \Tilde{C}_N \Tilde{I}_N[u],
\end{align}
which can happen only if $N \gamma + \rho \ge 2$ due to Proposition \ref{Improved sharp without beta}.
We have,
\begin{align*}
   \int_{\mathbb{B}_N} &\frac{\lvert u \rvert ^N}{\lvert x \rvert^N} X_2^N X_1 X_3^{N \gamma + \rho}\\
  &= \left(\frac{\alpha}{N-1} \right)^{1-N}\int_{\mathbb{B}_N} \left( \left(\frac{\alpha}{N-1} \right)^{N-1} \frac{\lvert u \rvert^N}{X_1^{N-1}} X_2^{N -1} X_3^{N \gamma} \right)\left(\frac{X_1^{N} X_2X_3^{\rho}}{\lvert x \rvert^N}\right) \, dx\\
   & \le \left(\frac{\alpha}{N-1} \right)^{1-N} 2^{N-2} \left(
\int_{\mathbb{B}_N} e^{\alpha \lvert u  X_1^{\frac{-1}{N'}} X_2^{\frac{1}{N'}} X_3^{\frac{1}{N}}\rvert^{N'}} + K \right),
\end{align*}
where, $K = \int_{\mathbb{B}_N} \left(1+\frac{X_1^{N} X_2 X_3^{\rho}}{\lvert x \rvert^N}\right) \left(\ln \left(1 + \left(\frac{X_1^{N} X_2X_3^{\rho}}{\lvert x \rvert^N}\right)^{1/(N-1)}\right)\right)^{N-1}$ and the last inequality follows from the following version of Young's inequality: for any $a,b \ge 0,$
\begin{equation} \label{original young's ineq}
    ab \le 2^{N-2} \left[ e^{(N-1) a ^{1/(N-1)}} + 2^{N-2}(1+b) \left(\ln \left(1+b^{1/(N-1)}\right)\right)^{N-1}\right].
\end{equation}
For a proof of \eqref{original young's ineq} one can refer to \cite[Equation $4.5$]{PsaradakisJFA}. A straightforward computation shows that $K$ is finite, see also \cite[Equation $4.5$]{seriesofoptimalleray}. This contradicts \eqref{gamma leray trudinger}.
\end{proof}
\subsection{Proof of Theorem \ref{Leray weighted1}} Finally, we prove \Cref{Leray weighted1}. First recall the quantity $I_{N,w_{\beta}}[u]$ as defined in \eqref{beta weighted leray energy}. Analogously, to the proof of Lemma \ref{first lemma for proof weighted leray}, we can prove the following Lemma.
\begin{lemma} \label{elementary leray estimate1}
    Let $u \in C^1_c\left(\Omega \setminus \{0\}\right), \beta \neq 1$ and set $v \coloneqq u X_1^{(1-\beta)/N'}$. Then we have,
\begin{enumerate} 
\item[(i)] For some positive constant $\zeta_{N,\beta},$ there holds,
\begin{align} \label{estimate of weighted leray1}
        \int_{\Omega} \lvert x \rvert^{2-N} \lvert v \rvert^{N-2} \lvert \nabla v \rvert^2 X_1^{-1} \, dx \le \zeta_{N,\beta}  I_{N,w_{\beta}}[u].
        \end{align}
\item[(ii)] We also have,
\begin{align} \label{second estimate of weighted leray1}
     \int_{\Omega} \lvert \nabla v \rvert^{N} X_1^{-N+1} \le \left(2^{N-1} -1\right) I_{N,w_{\beta}}[u].
\end{align}
    \end{enumerate}
\end{lemma}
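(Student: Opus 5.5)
The plan is to mimic the proof of Lemma \ref{first lemma for proof weighted leray}: write $\nabla u$ as a difference of two vectors, apply the two elementary vector inequalities quoted there (from \cite{Barbatis} and \cite{Peter}), multiply by the weight $w_\beta = X_1^{-\beta(N-1)}$, and integrate. The cross term will vanish because $\operatorname{div}(x/\lvert x\rvert^N)=0$ on $\Omega\setminus\{0\}$.

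\textbf{Setup.} Set $H=(1-\beta)/N'$, which is nonzero since $\beta\neq 1$, so that $u=vX_1^{-H}$. Since $\nabla X_1 = X_1^2\, x/\lvert x\rvert^2$, we get
\begin{align*}
\nabla u = b-a,\qquad b:=X_1^{-H}\nabla v,\qquad a:=H X_1^{1-H}\, v\,\frac{x}{\lvert x\rvert^2}.
\end{align*}
The only real work is checking that the exponents of $X_1$ collapse, using the identity $NH=(1-\beta)(N-1)$.

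\textbf{Exponent bookkeeping.} The three terms behave as follows.
\begin{itemize}
\item Leading term: $\lvert a\rvert^N w_\beta = \lvert H\rvert^N \lvert v\rvert^N X_1^{N-NH-\beta(N-1)}\lvert x\rvert^{-N} = \lvert H\rvert^N \lvert v\rvert^N X_1\lvert x\rvert^{-N}$. This equals $\lvert H\rvert^N \lvert u\rvert^N \lvert x\rvert^{-N} X_1^{N-(N-1)\beta}$, which is precisely the subtracted term in $I_{N,w_\beta}[u]$, because $w_\beta^{1-N'/\beta}=X_1^{N-\beta(N-1)}$.
\item Cross term: $N\lvert a\rvert^{N-2}\langle a,b\rangle w_\beta$ has $X_1$-exponent $(N-1)(1-H)-H-\beta(N-1)=(N-1)(1-\beta)-NH=0$. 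Hence it equals $H\lvert H\rvert^{N-2}\left\langle \nabla(\lvert v\rvert^N), x/\lvert x\rvert^N\right\rangle$. Since $v\in C^1_c(\Omega\setminus\{0\})$, integrating by parts against the divergence-free field $x/\lvert x\rvert^N$ shows this integrates to zero.
\item Mixed term: $\lvert a\rvert^{N-2}\lvert b\rvert^2 w_\beta = \lvert H\rvert^{N-2}\lvert x\rvert^{2-N}\lvert v\rvert^{N-2}\lvert\nabla v\rvert^2 X_1^{(N-2)(1-H)-2H-\beta(N-1)}$. The exponent is $N-2-NH-\beta(N-1)=-1$.
\end{itemize}

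\textbf{Conclusion.} For (i), apply $\lvert b-a\rvert^N\ge \lvert a\rvert^N+\mu_N\lvert a\rvert^{N-2}\lvert b\rvert^2-N\lvert a\rvert^{N-2}\langle a,b\rangle$. After multiplying by $w_\beta$ and integrating, this gives $I_{N,w_\beta}[u]\ge \mu_N\lvert H\rvert^{N-2}\int_\Omega \lvert x\rvert^{2-N}\lvert v\rvert^{N-2}\lvert\nabla v\rvert^2X_1^{-1}$. So (i) holds with $\zeta_{N,\beta}=(\mu_N\lvert H\rvert^{N-2})^{-1}$, which is where $\beta\neq1$ is needed.

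For (ii), use instead $\lvert b-a\rvert^N\ge \lvert a\rvert^N+(2^{N-1}-1)^{-1}\lvert b\rvert^N-N\lvert a\rvert^{N-2}\langle a,b\rangle$. Here $\lvert b\rvert^N w_\beta=\lvert\nabla v\rvert^N X_1^{-NH-\beta(N-1)}=\lvert\nabla v\rvert^N X_1^{-(N-1)}$, which yields (ii) directly.

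The only delicate point I anticipate is justifying the vanishing of the cross term, i.e. the integration by parts near the origin. This is harmless because $u$, and hence $v$, has compact support away from $0$.
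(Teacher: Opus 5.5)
Your proposal is correct and follows the paper's route exactly. The paper proves this lemma ``analogously'' to Lemma~\ref{first lemma for proof weighted leray}: split $\nabla u = X_1^{-H}\nabla v - H X_1^{1-H} v\, x/\lvert x\rvert^2$, apply the two elementary vector inequalities, and note that the cross term vanishes because $\operatorname{div}(x/\lvert x\rvert^N)=0$ on $\Omega\setminus\{0\}$. Your exponent bookkeeping via $NH=(1-\beta)(N-1)$ is right, giving $\zeta_{N,\beta}=(\mu_N\lvert H\rvert^{N-2})^{-1}$ for (i) and the constant $2^{N-1}-1$ for (ii).
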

\begin{proof} [Proof of \Cref{Leray weighted1}]
    Let $u \in C^1_c(\Omega \setminus \{0\})$, extending by zero outside we may assume $u \in C^1_c\left(B\left(0,R_{\Omega}\right)\right) \setminus \{0\}$ and by scaling, we may assume that $R_{\Omega} = 1.$ Set $v = u X_1^{(1-\beta)/N'}$. By \Cref{decomposition lemma}, we write $v(x) = \sum_{k=0} v_k(r) f_k(\theta)$. Now, set $u_0= v_0 X_1^{(\beta-1)/N'}$. Again we will use Trudinger's technique \cite{Trudinger} and show that,
    \begin{align}
        \left(\int_{\mathbb{B}_N} \left(\lvert u \rvert X_1^{-\beta/N'}X_3^{1/N}\right)^q \right)^{1/q} \le c_N q^{1-1/N} I_{N,w_{\beta}}[u], \quad \text{for} \quad q>N.
    \end{align}
    Note that we have,
    \begin{align*}
        &\left(\int_{\mathbb{B}_N} \left(\lvert u \rvert X_1^{-\beta/N'}X_2^{1/N}\right)^q \right)^{1/q}\\
        &\le \left(\int_{\mathbb{B}_N} \left(\lvert u -u_0\rvert X_1^{-\beta/N'}X_2^{1/N}\right)^q \right)^{1/q}+\left(\int_{\mathbb{B}_N} \left(\lvert u_0 \rvert X_1^{-\beta/N'}X_2^{1/N}\right)^q \right)^{1/q}\\
        &\le \left(\int_{\mathbb{B}_N} \left(\lvert v -v_0\rvert X_1^{-1/N'}X_2^{1/N}\right)^q \right)^{1/q} +\left(\int_{\mathbb{B}_N}
        \left(\lvert v_0 \rvert X_1^{-1/N'}X_2^{1/N}\right)^q \right)^{1/q} \coloneqq I_1 + I_2,
    \end{align*}
   To estimate $I_1,$ first note that $X_2 \le 1$ and as $\left( v- v_0 \right) X_1^{-1 + \frac{1}{N}} \in C_c^1\left(\mathbb{B}_N \setminus \{0\}\right)$ so from the proof of \cite[Theorem $1$]{Trudinger} for $q>N$, we have,
\begin{align*}
    \left(\int_{\mathbb{B}_N} \left(\lvert v- v_0 \rvert X_1^{-1 + \frac{1}{N}}\right)^q \, dx\right)^{1/q} &\le c_{1}(N) q^{1-\frac{1}{N}}\left(\int_{\mathbb{B}_N} \left\lvert \nabla \left[(v-v_0)X_1^{-1 + \frac{1}{N}} \right] \right\rvert^N\right)^{1/N}\\
    &\stackrel{\eqref{estimates of angular part equation}}{\le}  c(N) q^{1-\frac{1}{N}} \int_{\mathbb{B}_N} \lvert \nabla v \rvert^N X_1^{-N+1} \, dx\\
    & \stackrel{\eqref{second estimate of weighted leray1}}{\le} c_{N,\beta}  q^{1-\frac{1}{N}} I_{N, w_{\beta}} [u].
\end{align*}
Again as in the proof of \eqref{Leray-weighted2}, we set  $w \coloneqq \lvert v \rvert^{N/2}$. Then we have,
\begin{align}
    \lvert v_0(r) \rvert^{N/2} \le \left \lvert \fint_{\mathbb{S}^{N-1}} v(x) \, d \sigma(x)\right \rvert^{N/2} \le \fint_{\mathbb{S}^{N-1}} \lvert v(x) \rvert^{N/2} \, d \sigma(x)= w_0(r).
\end{align} Note that by using above, we have,
\begin{align*}
   I_2 &\le \left( \int_{\mathbb{B}_N} \left(\lvert w_0 \rvert^{2/N} X_1^{-1/N'} X_2^{1/N}\right)^q \, dx \right)^{1/q} \\
   &\stackrel{\eqref{I_N radial estimate}}{\le} d_N  \Gamma^{1/q}\left(1+ \frac{q(N-1)}{N}\right) \left(\int_{\mathbb{B}_N} \lvert x \rvert^{2-N} \lvert \nabla w_0 \rvert^2 X_1^{-1} \, dx\right)^{1/N},
\end{align*}
Now for large $q$, we use Stirling's approximation $\Gamma(1+x) \sim x^xe^{-x}$. Similar to the proof of \Cref{Leray-weighted2}, by setting  $f(x) \coloneqq  \lvert x \rvert^{1-N/2} X_1^{-1/2} w(x),$ (see also the proof of \cite[Equation $27$]{di2024optimal}) we can show that,
\begin{align*}
    \int_{\mathbb{B}_N} \lvert x \rvert^{2-N} \lvert \nabla w_0 \rvert^2 X_1^{-1} \, dx &\le \int_{\mathbb{B}_N} \lvert x \rvert^{2-N} \lvert v \rvert^{N-2} \lvert \nabla v \rvert^2 X_1^{-1} \, dx.
\end{align*}
Now, because of \eqref{estimate of weighted leray1} we finally have,
\begin{align*}
      \left( \int_{\mathbb{B}_N} \left(\lvert v_0 \rvert X_1^{-1/N'} X_2^{1/N}\right)^q \, dx \right)^{1/q} \le c(N,\beta) q^{1-1/N} I_{N,w_{\beta}}[u].
\end{align*}
\par Now, we will prove the optimality of the exponent for $\beta \in (-\infty,1)$. Let $0 < \gamma < 1/N.$ We will prove by contradiction. Assume that, for any $u \in W^{1,N}_{0}\left(\Omega \setminus \{0\}, w_{\beta}\right)$ with $I_{N, w_{\beta}}[u] \le 1,$ there holds,
\begin{align} \label{not true leray}
    \int_{\mathbb{B}_N} e^{\alpha \lvert u  X_1^{\frac{-\beta}{N'}} X_2^{\gamma}\rvert^{N'}} \le C_{N\beta}, \quad \text{for all} \quad \alpha < A_{N,\beta}.
\end{align}
Since $0 < \gamma < 1/N,$ so there exists $1 < \rho <2$ such that $N \gamma + \rho < 2.$ We will prove,
\begin{align} \label{gamma leray-Trui2}
    \int_{\Omega} \frac{\lvert u \rvert ^N}{\lvert x \rvert^N} X_1^{N - (N-1) \beta}X_2^{N \gamma + \rho} \le c_{N,\beta},
\end{align}
which is valid only if $N \gamma + \rho \ge 2,$ for $\beta \in (0,1)$ see Proposition \ref{beta improved series optimal} and in the case of $\beta \in (-\infty,0)$ it follows from Proposition \ref{gen alpha improved series constant}.
Analogous to the proof of \Cref{Leray-weighted2}, we can have,
\begin{align*}
    \int_{\mathbb{B}_N} &\frac{\lvert u \rvert ^N}{\lvert x \rvert^N} X_1^{N - (N-1) \beta}X_2^{N \gamma + \rho} \, dx\\
    & =  \left(\frac{\alpha}{N-1} \right)^{1-N} \int_{\mathbb{B}_N} \left( \left(\frac{\alpha}{N-1} \right)^{N-1} \lvert u \rvert^N X_1^{-\beta(N-1)} X_2^{N \gamma} \right) \left(\frac{X_1^{N} X_2^{\rho}}{\lvert x \rvert^N}\right) \, dx\\
    &\le \left(\frac{\alpha}{N-1} \right)^{1-N} 2^{N-2} \left(
\int_{\mathbb{B}_N} e^{\alpha \lvert u  X_1^{\frac{-\beta}{N'}}  X_2^{\gamma}\rvert^{N'}} + K \right),
\end{align*}
where, $K = \int_{\mathbb{B}_N} \left(1+\frac{X_1^{N} X_2^{\rho}}{\lvert x \rvert^N}\right) \left(\ln \left(1 + \left(\frac{X_1^{N} X_2^{\rho}}{\lvert x \rvert^N}\right)^{1/(N-1)}\right)\right)^{N-1} \, dx$. A simple computations yields the finiteness of $K$ (see also \cite{PsaradakisJFA}) and this contradicts \eqref{not true leray}.
\end{proof}
\section{Interpolation with Leray-Trudinger-type Inequalities} \label{interpolation}
An interpolation of \eqref{Hardy2} and the Moser-Trudinger inequality \cite{Moser} was established by Adimurthi and Sandeep \cite{Adi-San} (see \cite{Adi-Yang} for generalization in the whole of $\mathbb{R}^N$). More precisely, their result states as: for $\alpha > 0$ and $\sigma \in [0,N)$ the following holds,
    \begin{align} \label{Adi-San singular}
        \sup_{ \{u \in W^{1,N}_{0}(\Omega): \int_{\Omega} \lvert \nabla u \rvert^N \, dx \le 1 \} } \int_{\Omega} \frac{e^{\alpha \lvert u \rvert^{N'}}}{\lvert x \rvert^{\sigma}} \, dx < \infty
    \end{align}
    if and only if $\alpha \le \omega_{N-1}^{1/(N-1)} (N- \sigma).$
    \par Let $0<\beta<1,$ We now introduce another log weight:
    \begin{align*}
        \Tilde{w}_{\beta}(x) = \left(\ln \left(\frac{1}{\lvert x \rvert}\right)\right)^{\beta(N-1)} \quad \text{for} \quad 0< \lvert x \rvert < 1.
    \end{align*}
    This weight was first introduced in \cite{RufClanchi2015,RufN}. In \cite{adimurthi2026leraytrudingertypeexponentialintegrability}, the following sharp radial Leray-Trudinger inequalities (see \cite[Theorem $1.7,$ Proposition $1.2$]{adimurthi2026leraytrudingertypeexponentialintegrability}) were established:
        For $0<\beta<1$, there holds,
     \begin{equation} \label{equation sharp radial Leray moser beta}
 \sup_{\left \lbrace u \in W^{1,N}_{0,rad}\left(\mathbb{B}_{N}, \Tilde{w}_{\beta}\right) :\left \lVert \nabla u \right\rVert_{N, \Tilde{w}_{\beta}} \le 1\right\rbrace}  \fint_{\mathbb{B}_N} e^{\alpha \lvert u \rvert^{N'}  X_1^{-\beta}} \, dx <\infty,
    \end{equation} iff $\alpha \leq \alpha_{N, \beta}\coloneqq N\omega_{N-1}^{\frac{1}{N-1}} (1-\beta) $; while for $\beta=1,$ we have,
     \begin{equation} \label{equation sharp radial Leray moser}
\sup_{\left \lbrace u \in W^{1,N}_{0,rad}\left(\mathbb{B}_{N}, w_{1}\right) :\left \lVert \nabla u \right\rVert_{N, w_{1}} \le 1\right\rbrace}  \fint_{\mathbb{B}_N} e^{\alpha \lvert u \rvert^{N'} X_2 X_1^{-1}} \, dx <\infty,
    \end{equation} iff $\alpha \leq \alpha_{N} \coloneqq N \omega_{N-1}^{1/(N-1)}$, where $w_1$ is defined as in \eqref{weights w}.
    In this article, we establish an interpolation between Hardy-type inequalities and Leray-Trudinger-type inequalities \eqref{equation sharp radial Leray moser}, \eqref{equation sharp radial Leray moser beta}. In the spirit of \eqref{Adi-San singular} we prove the following results. 
\begin{proposition} \label{moser weighted1}
     Let $N\geq 2$. Then, 
     \begin{equation} \label{equation sharp radial moser}
\sup_{\left \lbrace u \in W^{1,N}_{0,rad}\left(\mathbb{B}_{N}, w_{1}\right) :\left \lVert \nabla u \right\rVert_{N, w_{1}} \le 1\right\rbrace}  \fint_{\mathbb{B}_N} \frac{e^{\alpha \lvert u \rvert^{N'} X_2 X_1^{-1}}}{\lvert x \rvert^{\sigma}} \, dx <\infty,
    \end{equation} iff $\alpha/\alpha_N + \sigma/N  \le 1$, where $\alpha>0$ and $\sigma \in [0,N)$. Moreover, for $ \alpha/\alpha_N + \sigma/N \le 1$ the supremum is attained in $W^{1,N}_{0,rad}\left(\mathbb{B}_{N}, w_{1}\right)$.
\end{proposition}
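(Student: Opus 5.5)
We reduce to a one-dimensional problem via the standard Moser change of variables, as in the Adimurthi–Sandeep argument for the singular Moser–Trudinger inequality. For radial $u$ on $\mathbb{B}_N$ (so $R_\Omega=1$, $X_1=(1+\ln(1/r))^{-1}$), set $r=e^{-t/N}$ and $\varphi(t)=\omega_{N-1}^{1/N}N^{-1/N'}\,u(r)$. Then $\lVert\nabla u\rVert_{N,w_1}^N=\int_0^\infty|\varphi'|^N(1+t/N)^{N-1}\,dt$, and $dx/|x|^\sigma$ turns into a multiple of $e^{-(1-\sigma/N)t}\,dt$. Writing $s=1+t/N$, the exponent $\alpha|u|^{N'}X_2X_1^{-1}$ becomes $(\alpha/\alpha_N)\,N|\varphi|^{N'}\,s/(1+\ln s)$, up to the normalisation in which $\alpha_N$ is the sharp constant of \eqref{equation sharp radial Leray moser}. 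A second substitution $\psi(\tau)=\varphi$ with $d\tau=ds/s^{\,N-1}$... is not needed. The cleaner route is to reuse \eqref{equation sharp radial Leray moser} itself by rescaling, as follows.

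\textbf{Sufficiency.} For $\sigma\in(0,N)$ and $\alpha/\alpha_N\le 1-\sigma/N$, define the dilation $\tilde u(y)=\lambda^{-1/N'}u(y^{\lambda'})$, i.e. in the $t$-variable $\tilde\varphi(t)=(1-\sigma/N)^{1/N'}\varphi(t/(1-\sigma/N))$. This absorbs the factor $e^{\sigma t/N}$: it turns $\int e^{\alpha(\cdots)}|x|^{-\sigma}$ into $\int e^{\alpha(\cdots)/(1-\sigma/N)}$ over the unweighted measure. The weight $(1+t/N)^{N-1}$ and the factor $s/(1+\ln s)$ are not invariant under $t\mapsto ct$. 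Since they are monotone and comparable up to lower order, I would compare $(1+ct/N)$ with $c(1+t/N)$. For $c=1/(1-\sigma/N)>1$ we have $1+ct/N\le c(1+t/N)$, so the gradient norm of $\tilde\varphi$ is at most $1$. We then check that the exponent only gains the factor $1/(1-\sigma/N)$ up to terms of the form $\ln c/\ln s$, and these produce a bounded multiplicative error $e^{O(1)}$ on the integrand once $|\varphi|^{N'}\lesssim(1+\ln s)/s$, which follows from Hölder. Finiteness then follows from \eqref{equation sharp radial Leray moser} with constant $\alpha/(1-\sigma/N)\le\alpha_N$.

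\textbf{Necessity.} For $\alpha/\alpha_N+\sigma/N>1$ we use Moser-type test functions adapted to the weight. For large $T$, let $\varphi_T$ be the extremal-profile function that is linear in $\tau=\ln(1+t/N)$ on $[0,\ln(1+T/N)]$ and constant afterwards, normalised so that $\int|\varphi_T'|^N(1+t/N)^{N-1}=1$. Then $|\varphi_T(T)|^{N'}\approx(\ln(1+T/N))^{1/(N-1)}$... Here I must match the exact test family used in \cite{adimurthi2026leraytrudingertypeexponentialintegrability} to prove sharpness of $\alpha_N$. On $t>T$ the integrand is about $\exp\big((\alpha/\alpha_N)T - (1-\sigma/N)t\big)$. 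Integrating gives growth $e^{(\alpha/\alpha_N-1+\sigma/N)T}\to\infty$.

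\textbf{Attainment.} This is the main obstacle. Take a maximizing sequence, which we may assume radial and nonnegative. It is bounded, so it converges weakly in $W^{1,N}_{0,rad}(\mathbb{B}_N,w_1)$ and a.e. Vitali convergence fails only if the sequence concentrates at the origin. Following Carleson–Chang and Adimurthi–Sandeep, I would compute the concentration level explicitly in the $t$-variable: for a concentrating sequence the limit of the functional equals $|\mathbb{B}_N|^{-1}\int|x|^{-\sigma}$ plus an explicit constant. I would then exhibit a test function whose value exceeds that level, which rules out concentration. In the subcritical case $\alpha/\alpha_N+\sigma/N<1$, the strict inequality gives uniform integrability directly, via a Lions-type improvement. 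The delicate part is the critical case, where the double-log factor $X_2$ makes the Carleson–Chang computation nonstandard.
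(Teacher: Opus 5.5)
Your necessity argument is essentially the paper's. Profiles linear in $\ln(1+t/N)=\ln\ln(e/|x|)$ and constant near the origin are exactly the functions $\eta_{r_1}$ of \eqref{radial ltm function}. There is a normalization slip: with $\varphi=N^{1/N'}\omega_{N-1}^{1/N}u$ one has $\lVert\nabla u\rVert_{N,w_1}^N=\int_0^\infty|\varphi'|^N(1+t/N)^{N-1}\,dt$, and the exponent is $\frac{\alpha}{\alpha_N}|\varphi|^{N'}\frac{s}{1+\ln s}$. Then $|\varphi_T(T)|^{N'}=N\ln s_T$ with $s_T=1+T/N$, not $(\ln s_T)^{1/(N-1)}$. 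Your divergence rate is nevertheless correct, since $\frac{\ln s_T}{1+\ln s_T}\to1$.

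The sufficiency part does not work. Your dilation $\tilde\varphi(t)=c^{-1/N'}\varphi(ct)$ with $c=(1-\sigma/N)^{-1}$ is the unweighted Moser scaling. It does give $\lVert\tilde\varphi\rVert\le1$, but the exponent gains the factor $c^2$, not $c$. One factor comes from $|\varphi(c\tau)|^{N'}=c\,|\tilde\varphi(\tau)|^{N'}$. A second comes from $\frac{s}{1+\ln s}$, whose value at $s=1+c\tau/N$ is asymptotic to $c$ times its value at $s=1+\tau/N$. The $\ln c/\ln s$ terms you mention are only the correction to this second factor, which you have dropped.

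Consequences of the missing factor:
\begin{itemize}
\item At $\alpha/\alpha_N=1/c$ you would need \eqref{equation sharp radial Leray moser} at level $c\alpha_N>\alpha_N$.
\item With the bounds you actually have, the argument only covers $\alpha/\alpha_N\le(1-\sigma/N)^2$. This misses even part of the subcritical range, which the paper handles by H\"older.
\item Your error control is also wrong. H\"older gives $|\varphi(t)|^{N'}\le N\ln s\,\lVert\varphi\rVert^{N'}$, a growth bound, not $|\varphi|^{N'}\lesssim(1+\ln s)/s$. So relative corrections of order $\ln c/\ln s$, in an exponent as large as $\frac{\alpha}{\alpha_N}Ns\frac{\ln s}{1+\ln s}$, are not $O(1)$.
\end{itemize}

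The missing idea is that this very H\"older estimate, Lemma \ref{radial lemma}, settles the critical case directly, with no rescaling. For $\lVert\nabla u\rVert_{N,w_1}\le1$ and $a=\alpha/\alpha_N$ it gives $\alpha|u|^{N'}X_2X_1^{-1}\le aN\frac{L}{1+L}\ln\frac{e}{|x|}$, where $L=\ln\ln\frac{e}{|x|}$.
\begin{itemize}
\item When $a+\sigma/N=1$, the substitution $z=\ln(e/|x|)$ turns the integral into a multiple of $e^{aN}\int_1^\infty e^{-aNz/(1+\ln z)}\,dz<\infty$.
\item When $a+\sigma/N<1$, the same bound gives the majorant $e^{aN}|x|^{-aN-\sigma}$.
\end{itemize}

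Because these majorants do not depend on $u$, attainment is also immediate. A maximizing sequence converges, after passing to a subsequence, locally uniformly on $0<|x|\le1$. The limit stays in the closed unit ball by weak lower semicontinuity, and dominated convergence passes to the limit in the functional.

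There is therefore no concentration at the origin to exclude. The Carleson--Chang analysis you single out as the main obstacle, and leave unexecuted, is unnecessary. Your remark that $X_2$ makes the critical case delicate is backwards: the factor $\frac{L}{1+L}<1$ produced by $X_2$ is exactly the slack that yields the integrable majorant.
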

\begin{theorem} \label{moser weighted3}
    Let $N\geq 2$ and $0<\beta <1$. Then 
     \begin{equation} \label{equation sharp radial moser beta}
  \sup_{\left \lbrace u \in W^{1,N}_{0,rad}\left(\mathbb{B}_{N}, \Tilde{w}_{\beta}\right) :\left \lVert \nabla u \right\rVert_{N, \Tilde{w}_{\beta}} \le 1\right\rbrace}  \fint_{\mathbb{B}_N} \frac{e^{\alpha \lvert u \rvert^{N'}  X_1^{-\beta}}}{\lvert x \rvert^{\sigma}} \, dx <\infty,
    \end{equation} iff $\alpha/\alpha_{N, \beta} + \sigma/N \le 1 $, where $\alpha>0$ and $\sigma \in [0,N)$.
\end{theorem}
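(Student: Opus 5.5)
The natural route is to reduce the radial weighted problem to a one-dimensional one and then follow the Adimurthi--Sandeep scaling argument behind \eqref{Adi-San singular}.

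\emph{Reduction.} For radial $u$ on $\mathbb{B}_N$ with $\tilde w_\beta=(\ln(1/|x|))^{\beta(N-1)}$, substitute $|x|=e^{-t/N}$ and set $\phi(t)=\omega_{N-1}^{1/N}N^{(N-1)/N}u(e^{-t/N})$. Then
\[
\|\nabla u\|_{N,\tilde w_\beta}^N=N^{-\beta(N-1)}\int_0^\infty|\phi'|^N t^{\beta(N-1)}\,dt .
\]
Moreover $X_1^{-\beta}$ equals, up to the harmless choice of the normalisation inside the logarithm, a constant multiple of $t^\beta$. Also $|x|^{-\sigma}dx$ becomes $\frac{\omega_{N-1}}{N}e^{-(1-\sigma/N)t}\,dt$. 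So \eqref{equation sharp radial moser beta} is equivalent to finiteness of
\[
\sup\Big\{\int_0^\infty e^{c\,\alpha|\phi|^{N'}t^{\beta}-(1-\sigma/N)t}\,dt:\ \int_0^\infty|\phi'|^Nt^{\beta(N-1)}dt\le \kappa,\ \phi(0)=0\Big\}.
\]
Here $c,\kappa$ are explicit constants, chosen so that for $\sigma=0$ this is exactly the one-dimensional form of \eqref{equation sharp radial Leray moser beta} with threshold $\alpha_{N,\beta}$.

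\emph{Sufficiency.} Set $\theta=1-\sigma/N\in(0,1]$ and rescale $t=s/\theta$, i.e. $\psi(s)=\theta^{\gamma}\phi(s/\theta)$. The exponent $\gamma$ is fixed by requiring $|\phi|^{N'}t^\beta=\theta^{-1}\,|\psi|^{N'}s^\beta$ up to a factor. A direct computation shows that the weighted energy $\int|\psi'|^Ns^{\beta(N-1)}ds$ is then invariant. Indeed this is precisely the dilation invariance of the weight, since $t\mapsto t/\theta$ preserves $\int|\phi'|^Nt^{\beta(N-1)}$ after the rescaling $\psi=\theta^{(1-\beta)/N'}\phi(\cdot/\theta)$. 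Hence $\theta^{-\beta}|\phi(t)|^{N'}t^\beta$ becomes $\theta^{-1}|\psi(s)|^{N'}s^\beta$. The integral becomes
\[
\theta^{-1}\int_0^\infty e^{(\alpha/\theta)c|\psi|^{N'}s^\beta - s}\,ds,
\]
so the $\sigma$-problem with parameter $\alpha$ is the $\sigma=0$ problem with parameter $\alpha/\theta$. It is therefore finite iff $\alpha/\theta\le\alpha_{N,\beta}$, i.e. $\alpha/\alpha_{N,\beta}+\sigma/N\le1$, invoking the sharp radial result \eqref{equation sharp radial Leray moser beta}, which is an iff and so gives both directions at once.

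\emph{Main obstacle.} The key step is checking that the rescaling is compatible with all three ingredients at once: the weight $t^{\beta(N-1)}$, the boundary condition at $t=0$ (i.e. $u=0$ on $\partial\mathbb{B}_N$), and the exponent $t^\beta$. In particular the paper's $X_1=(\ln(e/r))^{-1}$ differs from $\ln(1/r)$ by an additive constant. I would handle this by noting $X_1^{-\beta}\le (\ln(1/r))^\beta+C$. The resulting extra factor $e^{C\alpha|u|^{N'}}$ must be absorbed using the radial pointwise bound $|u(r)|^{N'}\lesssim (\ln(1/r))^{1-\beta}$, obtained from H\"older on $u(r)=\int_r^1|u'|$, which gives a lower-order correction $t^{1-\beta}=o(t)$. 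This correction is absorbed when $\alpha/\alpha_{N,\beta}+\sigma/N<1$. In the borderline case I would instead rely on the explicit form of \eqref{equation sharp radial Leray moser beta} in the $t$-variable.

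\emph{Necessity.} For necessity, take Moser-type functions concentrated near $t=\infty$, i.e. $\phi_n$ linear-in-$t^{1-\beta}$ up to $t=n$ and constant afterwards. These saturate the energy and make the exponent exceed $\theta t$ on $[n,\infty)$ whenever $\alpha/\alpha_{N,\beta}+\sigma/N>1$, so the integral blows up.
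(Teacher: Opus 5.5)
Your rescaling $s=\theta t$, $\psi=\theta^{(1-\beta)/N'}\phi(\cdot/\theta)$ with $\theta=1-\sigma/N$ is exactly the paper's substitution $v(r)=a^{(1-\beta)/N'}u(r^{1/a})$ with $a=\theta$. Your check that it preserves the energy and the boundary condition, and sends $|\phi|^{N'}t^\beta$ to $\theta^{-1}|\psi|^{N'}s^\beta$, is correct.

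The gap is in passing between two exponents: $(\ln\frac1r)^\beta$, for which the rescaling is exact, and the actual exponent $X_1^{-\beta}=(\ln\frac er)^\beta$, for which \eqref{equation sharp radial Leray moser beta} is known. Your bound $X_1^{-\beta}\le(\ln\frac1r)^\beta+1$ costs a factor $e^{\alpha|u|^{N'}}$. The H\"older bound $|u(r)|^{N'}\le\frac{\omega_{N-1}^{-1/(N-1)}}{1-\beta}(\ln\frac1r)^{1-\beta}$ only reduces this to $e^{Ct^{1-\beta}}$, which is unbounded. That factor can be absorbed in the strict case, which follows from H\"older anyway as in the paper. It cannot be absorbed at $\alpha/\alpha_{N,\beta}+\sigma/N=1$. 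There the Moser functions saturate the pointwise bound, and on them your majorant is $\gtrsim e^{cn^{1-\beta}}\to\infty$, even though the true integrals stay bounded. So the critical case, which is the only nontrivial part of sufficiency, is left to the unspecified ``explicit form in the $t$-variable''.

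The missing step is short, and there are two ways to supply it.

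(i) As the paper does, rescale the exact exponent. Since $\theta\le1$,
$(1+\frac tN)^\beta=\theta^{-\beta}(\theta+\frac sN)^\beta\le\theta^{-\beta}(1+\frac sN)^\beta$; this is the paper's observation $\ln\frac{e^{1/a}}{s}\ge\ln\frac es$. So the additive constant works in your favour, and the critical $\sigma$-integral is at most $\theta^{-1}$ times the $\sigma=0$ integral at $\alpha_{N,\beta}$.

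(ii) Alternatively, keep your reduction but compare the exponents more sharply. Since $(1+L)^\beta-L^\beta\le\min\{1,\beta L^{\beta-1}\}$, the H\"older bound gives
\[
0\le |u|^{N'}\Big(X_1^{-\beta}-\big(\ln\tfrac1r\big)^{\beta}\Big)\le \frac{\beta\,\omega_{N-1}^{-1/(N-1)}}{1-\beta}
\]
uniformly on $\{\|\nabla u\|_{N,\tilde w_\beta}\le1\}$. The two versions of the problem then differ by a bounded factor for every $(\alpha,\sigma)$. Your unified argument (exact rescaling plus the ``iff'' in \eqref{equation sharp radial Leray moser beta}) then gives both directions at once. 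That is tidier than the paper's three separate cases of H\"older, scaling, and explicit test functions.

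Separately, in your necessity sketch the exponent $\approx c\,n^{1-\beta}t^\beta$ on $[n,\infty)$ grows sublinearly. It therefore does not exceed $\theta t$ on all of $[n,\infty)$. Instead use $t^\beta\ge n^\beta$ there and integrate $e^{-\theta t}$ over $[n,\infty)$, as the paper does with $\xi_\varepsilon$.
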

For the non-radial functions, a non sharp version of Proposition \ref{moser weighted1} and \Cref{moser weighted3} just follows from the H\"older's inequality. Let us now state a Lemma which will be useful later, for the proof one can refer to Lemma $5$ of \cite{RufClanchi2015, RufN}.
\begin{lemma}\label{radial lemma}
    Let  $u \in C^1_{c,\mathrm{rad}}(\mathbb{B}_N)$. Then, the following point wise estimates hold.
        \begin{align*}
        \lvert u(x) \rvert \le \frac{1}{\omega_{N-1}^{1/N}} \left(\ln \ln \frac{e}{\lvert x \rvert}\right)^{1/N'} \left(\int_{\mathbb{B}_N} \lvert \nabla u \rvert^N X_1^{-N+1} \, dx \right)^{1/N}.
  \end{align*} 
\end{lemma}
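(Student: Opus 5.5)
The plan is a direct one-dimensional argument: write $u$ as the integral of its radial derivative and apply H\"older's inequality with weights chosen to reproduce the weighted energy.

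Let $u\in C^1_{c,\mathrm{rad}}(\mathbb{B}_N)$ and write $u(x)=u(r)$ with $r=\lvert x\rvert$. Since $u$ has compact support in $\mathbb{B}_N$, we have $u(1)=0$. Hence for $0<r<1$,
\begin{align*}
\lvert u(r)\rvert \le \int_r^1 \lvert u'(s)\rvert \, ds .
\end{align*}
Inside the integral, multiply and divide by $s^{(N-1)/N}\left(\ln\frac{e}{s}\right)^{(N-1)/N}$. Then apply H\"older's inequality with exponents $N$ and $N'$:
\begin{align*}
\lvert u(r)\rvert \le \left(\int_r^1 \lvert u'(s)\rvert^N s^{N-1}\left(\ln\tfrac{e}{s}\right)^{N-1} ds\right)^{1/N}\left(\int_r^1 \frac{ds}{s\,\ln\frac{e}{s}}\right)^{1/N'} .
\end{align*}

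Next we identify the two factors. Because $X_1^{-N+1}(s)=\left(\ln\frac{e}{s}\right)^{N-1}$ and $u$ is radial, polar coordinates give
\begin{align*}
\int_0^1 \lvert u'(s)\rvert^N s^{N-1}\left(\ln\tfrac{e}{s}\right)^{N-1} ds = \frac{1}{\omega_{N-1}}\int_{\mathbb{B}_N}\lvert \nabla u\rvert^N X_1^{-N+1}\,dx .
\end{align*}
The integral over $[r,1]$ is at most this quantity. For the second factor, substitute $t=\ln\frac{e}{s}$, so that $dt=-ds/s$. This turns the integral into $\int_1^{\ln(e/r)} \frac{dt}{t} = \ln\ln\frac{e}{r}$. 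Combining the two factors gives exactly the claimed estimate with the constant $\omega_{N-1}^{-1/N}$.

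No step is a genuine obstacle. The only point needing care is choosing the H\"older splitting so that the conjugate weight $s^{-1}\left(\ln\frac{e}{s}\right)^{-1}$ integrates exactly to $\ln\ln\frac{e}{r}$. This is what produces the double-logarithmic growth rather than the single logarithm of the unweighted Moser case.
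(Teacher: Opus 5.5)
Your proof is correct: integrating $u'$ from $r$ to $1$ and applying H\"older with the splitting $s^{(N-1)/N}\bigl(\ln\frac{e}{s}\bigr)^{(N-1)/N}$ turns the conjugate factor into $\int_r^1 \frac{ds}{s\ln(e/s)}=\ln\ln\frac{e}{r}$, and polar coordinates then give exactly the constant $\omega_{N-1}^{-1/N}$. The paper does not reprove the lemma; it cites Lemma 5 of Calanchi and Ruf, whose proof is this same one-dimensional H\"older computation, so your route matches the intended argument.
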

We also need another Lemma, see \cite[Lemma $2.1$]{adimurthi2026leraytrudingertypeexponentialintegrability} for a proof.
\begin{lemma}\label{contains lip functions next}
    Let, $0<\beta <\infty$. Then  $ W^{1,N}_{0}\left(\mathbb{B}_N, \Tilde{w}_{\beta}\right)$ contains all locally Lipschitz functions vanishing at the boundary. 
\end{lemma}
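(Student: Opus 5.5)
The plan is to approximate a given locally Lipschitz $u$ vanishing on $\partial\mathbb{B}_N$ by $C^1_c(\mathbb{B}_N)$ functions in the norm $\lVert \nabla \cdot \rVert_{N,\Tilde{w}_{\beta}}$, in two steps: a truncation that produces compact support, then a mollification. I interpret the hypothesis in the usual way: $u$ is continuous on $\overline{\mathbb{B}_N}$, locally Lipschitz in $\mathbb{B}_N$, $u=0$ on $\partial\mathbb{B}_N$, and $\int_{\mathbb{B}_N}\lvert\nabla u\rvert^N\Tilde{w}_{\beta}\,dx<\infty$. This finiteness is needed for the statement to make sense, and it holds automatically if $u$ is Lipschitz up to the boundary, since $\Tilde{w}_{\beta}\in L^1(\mathbb{B}_N)$. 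Indeed, in radial coordinates $\int_0^1(\ln(1/r))^{\beta(N-1)}r^{N-1}\,dr<\infty$: the logarithmic singularity at $r=0$ is integrable against $r^{N-1}$, and the weight tends to $0$ as $r\to1$.

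\emph{Step 1 (compact support by truncation).} For $t>0$ set $u_t\coloneqq \operatorname{sign}(u)\,(\lvert u\rvert-t)^{+}$. Since $u$ is continuous on $\overline{\mathbb{B}_N}$ and vanishes on $\partial\mathbb{B}_N$, the set $\{\lvert u\rvert\ge t\}$ is a compact subset of $\mathbb{B}_N$, so $u_t$ is Lipschitz with compact support in $\mathbb{B}_N$. Moreover $\nabla u_t=\nabla u\,\mathbf 1_{\{\lvert u\rvert>t\}}$ a.e. Hence $\lvert\nabla u_t-\nabla u\rvert^N\Tilde{w}_{\beta}\le \lvert\nabla u\rvert^N\Tilde{w}_{\beta}\in L^1$, and this integrand tends to $0$ a.e. as $t\to0$: on $\{u=0\}$ we have $\nabla u=0$ a.e. by Stampacchia's theorem. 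Dominated convergence then gives $\lVert\nabla(u_t-u)\rVert_{N,\Tilde{w}_{\beta}}\to0$.

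\emph{Step 2 (mollification).} Fix $t$, let $K=\operatorname{supp}u_t\Subset\mathbb{B}_N$, and let $\rho_\varepsilon$ be a standard mollifier with $\varepsilon<\operatorname{dist}(K,\partial\mathbb{B}_N)/2$. Then $\rho_\varepsilon*u_t\in C^\infty_c(\mathbb{B}_N)$, its support lies in a fixed compact $K'$, and $\lvert\nabla(\rho_\varepsilon*u_t)\rvert\le \operatorname{Lip}(u_t)$. Also $\nabla(\rho_\varepsilon*u_t)\to\nabla u_t$ a.e. Since $\Tilde{w}_{\beta}\in L^1(K')$ (the only singularity in $K'$ is the integrable one at the origin), dominated convergence gives $\rho_\varepsilon*u_t\to u_t$ in the weighted norm. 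A diagonal argument then places $u$ in the closure of $C^1_c(\mathbb{B}_N)$, as in the proof of Lemma \ref{loc lip cpt support}.

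The main obstacle is Step 1, near $\partial\mathbb{B}_N$. There $\Tilde{w}_{\beta}$ degenerates, so a merely locally Lipschitz $u$ may have a gradient that is unbounded near the boundary, and a standard cutoff $\phi(\lvert x\rvert)$ with $\lvert\nabla\phi\rvert\sim 1/\tau$ would create a term $\tau^{-N}\int_{1-\tau<\lvert x\rvert<1}\lvert u\rvert^N\Tilde{w}_{\beta}$ that is awkward to control. The truncation avoids this entirely, because it produces no cutoff gradient term and relies only on the continuity of $u$ up to the boundary. If one instead insists on a cutoff, the fallback is to use that $\Tilde{w}_{\beta}\sim (1-\lvert x\rvert)^{\beta(N-1)}$ near $\partial\mathbb{B}_N$ and that $\lvert u\rvert\lesssim 1-\lvert x\rvert$ there when $u$ is Lipschitz up to the boundary; the cutoff term is then $O(\tau^{1+\beta(N-1)})\to0$.
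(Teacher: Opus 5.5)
Your proof is correct, and your reading of the hypothesis is the right one: continuity up to $\partial\mathbb{B}_N$ with zero boundary values, local Lipschitz continuity in the open ball, and $\int_{\mathbb{B}_N}\lvert\nabla u\rvert^N\Tilde{w}_{\beta}\,dx<\infty$. Without finite weighted energy the statement cannot hold.

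The paper gives no argument of its own for this lemma; it only cites \cite[Lemma 2.1]{adimurthi2026leraytrudingertypeexponentialintegrability}. The closest in-paper analogue is Lemma \ref{loc lip cpt support}. Its proof (mollify, use integrability of the weight on the compact support, apply dominated convergence) is exactly your Step 2, but it assumes compact support from the start.

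Your Step 1 is the extra ingredient, and it is the right choice for how the lemma is used. In the proof of Theorem \ref{moser weighted3} it is applied to $\xi_{\varepsilon}$ from \eqref{broken-line}, whose radial derivative behaves like $(\ln(1/r))^{-\beta}$ as $r\to 1$. So $\xi_{\varepsilon}$ is not Lipschitz up to the boundary, and your cutoff fallback, which relies on $\lvert u\rvert\lesssim 1-\lvert x\rvert$, would not cover it. The truncation $\operatorname{sign}(u)(\lvert u\rvert-t)^{+}$, by contrast, uses only continuity up to the boundary and finite energy. It works uniformly for every $\beta\in(0,\infty)$ with no estimate on $u$ near $\partial\mathbb{B}_N$, and it even gives approximants converging uniformly to $u$.
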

\begin{proof} [Proof of Proposition \ref{moser weighted1}]
    Let us first assume that $\alpha/\alpha_{N} + \sigma/N < 1$. So, there exists $t>1$ such that $\alpha/\alpha_{N} + (\sigma t)/N = 1$. Thus, in this case, \eqref{equation sharp radial moser} follows by Holder's inequality together with \eqref{equation sharp radial Leray moser}, indeed we have
    \begin{align*}
        \int_{\mathbb{B}_N} \frac{e^{\alpha_N \lvert u \rvert^{N'} X_2 X_1^{-1}}}{\lvert x \rvert^{\sigma}} \, dx \le  \left(\int_{\mathbb{B}_N} e^{\alpha_N \lvert u \rvert^{N'} X_2 X_1^{-1}} \, dx\right)^{\frac{\alpha}{\alpha_N}} \left(\int_{\mathbb{B}_N} \frac{ dx}{\lvert x \rvert^{N/t}}\right)^{\frac{\sigma t}{N}}.
    \end{align*}
  \par  Now, consider $\alpha/\alpha_{N} + \sigma/N = 1$. Let $a=\alpha/\alpha_N$ so, $1-a = \sigma/N$. Assume that $u \in C^1_{c, rad}\left(\mathbb{B}_N\right).$ Using Lemma \ref{radial lemma}, we can deduce that,
  \begin{align*}
          \int_{\mathbb{B}_N} \frac{e^{\alpha \lvert u \rvert^{N'} X_2 X_1^{-1}}}{\lvert x \rvert^{\sigma}} \, dx &= \omega_{N-1} \int_{0}^{1} e^{\alpha \lvert u \rvert^{N'}(r) X_2(r) X_1^{-1}(r)} r^{N-1- \sigma} \, dr \\
    &  \le \omega_{N-1} \int_{0}^{1} e^{a N \frac{\ln \ln\frac{e}{r}}{1 + \ln \ln\frac{e}{r}}\ln\frac{e}{r}} r^{aN-1} \, dr\\
  &=\omega_{N-1} e^{aN} \int_{1}^{\infty} e^{-\frac{aNz}{1+ \ln z}} \, dz\le C_N.
     \end{align*}
\par Finally, assume that $\alpha/\alpha_{N} + \sigma/N > 1.$ For $0<r_1<1$, we define
    \begin{align}\label{radial ltm function}
       \eta_{r_1}(x) \coloneqq \begin{cases}
        \frac{1}{\omega_{N-1}^{1/N}} \left(\ln \ln \frac{e}{r_1}\right)^{1/N'}, \quad 0 \le \lvert x \rvert < r_1\\
        \frac{1}{\omega_{N-1}^{1/N}} \frac{\ln \ln \frac{e}{\lvert x \rvert}}{\left(\ln \ln \frac{e}{r_1}\right)^{1/N}}, \quad r_1 \le \lvert x \rvert \le 1.
    \end{cases}
    \end{align} 
It can be checked that $\int_{\mathbb{B}_N} \lvert \nabla \eta_{r_1} \rvert^N X_1^{-N+1} =1$ and from Lemma \ref{contains lip functions}, it follows that $\eta_{r_1} \in W^{1,N}_0\left(\mathbb{B}_N, X_1^{-N+1}\right)$. Note that we have,
\begin{align}\label{moser prop 1}
\frac{1}{\omega_{N-1}}\int_{\mathbb{B}_N} \frac{e^{\alpha \left\lvert \eta_{r_1} \right\rvert^{N'} X_2X_1^{-1}}}{\lvert x \rvert^{\sigma}}\geq   \int_{0}^{r_1} e^{\alpha \eta_{r_1}^{N'}(r) X_2(r) X_1^{-1}(r)} r^{N-\sigma-1} \, dr \eqqcolon I_{r_1}.
\end{align}
Since $X_2 X_1^{-1}$ is a decreasing function in $(0,1)$, so we have
$$X_2(r) X_1^{-1}(r) \ge \left(\ln \frac{e}{r_1}\right)\left(1 + \ln \ln \frac{e}{r_1}\right)^{-1} \eqqcolon f(r_1), \text{ for all } r \in (0,r_1).$$
Thus, we have,
\begin{align*}
    I_{r_1}\ge \int_{0}^{r_1} e^{ \alpha \omega_{N-1} ^ { - \frac{1}{N-1}} \ln \frac{e}{r_1}f(r_1)} r^{N-\sigma-1} \, dr= \frac{e^{N-\sigma}}{N-\sigma} e^{N  \left(\frac{\alpha}{\alpha_N}f(r_1) + \frac{\sigma}{N}-1\right) \ln \frac{e}{r_1}} \to \infty,
\end{align*}
as $r_1 \to 0$ since $\lim_{r_1 \to 0} f(r_1) =1$.
\par The attainability of the supremum follows from Lemma \ref{radial lemma} together with the dominated convergence Theorem, we omit the details.
\end{proof}
\begin{proof} [Proof of \Cref{moser weighted3}]
Assume that $ u \in C^1_{c, rad}\left(\mathbb{B}_N\right).$ Let us first deal with the case $\Tilde{w}_{\beta}$. As in the proof of Proposition \ref{moser weighted1}, if $\alpha/\alpha_{N, \beta} +\sigma/N < 1,$ then the boundedness of the integral follows by the Holder's inequality.
  \par Now, assume that $\alpha/\alpha_{N, \beta} + \sigma/N = 1.$ Write $a \coloneqq \alpha/\alpha_{N, \beta},$ so $\sigma/N = 1-a.$ Now, set $v(r) = a^{\frac{1-\beta}{N'}} u\left(r^{1/a}\right)$. Then $v'(r) = a^{\frac{1-\beta}{N'}-1} u'(r^{1/a})r^{1/a-1}$ and
     \begin{align*}
         \int_{0}^{1} \lvert v'(r) \rvert^{N} \left(\ln \frac{1}{r}\right)^{\beta (N-1)} r^{N-1} \, dr=\int_{0}^1 \lvert u'(s) \rvert^N \left(\ln \frac{1}{s}\right)^{\beta (N-1)} s^{N-1} \, ds.
     \end{align*}
We also have,
\begin{align*}
 \int_{0}^{1} e^{\alpha_{N,\beta} \lvert v(r) \rvert^{N'} \left(\ln \frac{e}{r}\right)^{\beta}} r^{N-1} \, dr &= \int_{0}^1 e^{\alpha_{N,\beta} a^{1-\beta}\lvert u(r^{1/a}) \rvert^{N'} \left(\ln \frac{e}{r}\right)^{\beta}} r^{N-1} \, dr\\
 &= a\int_{0}^1 e^{a\alpha_{N,\beta} \lvert u(s) \rvert^{N'} \left(\ln \frac{e^{1/a}}{s}\right)^{\beta}} s^{Na-1} \, ds   \\
 &\ge  a\int_{0}^{1} e^{a \alpha_{N,\beta} \lvert u(r) \rvert^{N'}  X_1^{-\beta}} r^{aN-1} \, dr .
\end{align*}
Therefore we have,
\begin{align*}
   \sup_{\left \lbrace \left \lVert \nabla u \right\rVert_{N, \Tilde{w}_{\beta}} \le 1\right \rbrace} \int_{\mathbb{B}_N} \frac{e^{\alpha \lvert u \rvert^{N'}  X_1^{-\beta}}}{\lvert x \rvert^{\sigma}} \, dx \stackrel{\eqref{equation sharp radial Leray moser beta}}{\le} \frac{1}{a}\sup_{\left \lbrace \left \lVert \nabla v \right\rVert_{N, \Tilde{w}_{\beta}} \le 1\right \rbrace}\int_{\mathbb{B}_N} e^{\alpha_{N,\beta} \lvert v \rvert^{N'} X_1^{-\beta}} \, dx < \infty.
\end{align*}
\par Finally assume that $\alpha/\alpha_{N, \beta} + \sigma/N> 1.$ For $\varepsilon \in (0,1)$, we define,
\begin{align} \label{broken-line}
        \xi_{\varepsilon}(r) \coloneqq \begin{cases}
            \frac{1}{\omega_{N-1}^{\frac{1}{N}}(1-\beta)^{\frac{1}{N'}}} \left(\ln \frac{1}{\varepsilon}\right)^{\frac{1-\beta}{N'}} \quad 0 \le r < \varepsilon\\
            \frac{1}{\omega_{N-1}^{\frac{1}{N}}(1-\beta)^{\frac{1}{N'}}} \left[\frac{\ln \frac{1}{r}}{\left(\ln \frac{1}{\varepsilon}\right)^{1/N}}\right]^{1-\beta} \quad \varepsilon \le r <1.
        \end{cases} 
    \end{align}
From Lemma \ref{contains lip functions next}, it follows that $\xi_{\varepsilon} \in W^{1,N}_{0}\left(\mathbb{B}_N, \Tilde{w}_{\beta}\right),$ and by simple computations, it can be checked that $\left \lVert \nabla \xi_{\varepsilon} \right\rVert_{N, \Tilde{w}_{\beta}} =1.$
Now,
 \begin{align*}
         \int_{0}^1 e^{\alpha \lvert \xi_{\varepsilon} \rvert ^{N'} \left(\ln \frac{1}{r}\right)^{\beta}}r^{N-\sigma-1} \, dr
         &\ge \int_{0}^{\varepsilon} \exp \left(\frac{N \alpha }{\alpha_{N,\beta}} \left(\ln \frac{1}{\varepsilon}\right)^{1-\beta} \left(\ln \frac{1}{r}\right)^{\beta}\right) r^{N-\sigma-1} \, dr\\
        &\ge \int_{0}^{\varepsilon} e^{ \frac{N \alpha }{\alpha_{N,\beta}}\ln \frac{1}{\varepsilon} } r^{N-\sigma-1} \, dr= \frac{1}{N-\sigma} \varepsilon^{N- \sigma - \frac{N \alpha }{\alpha_{N,\beta}}} \to \infty,
    \end{align*}
    $\varepsilon \to 0$. Now, consider the weight $w_{\beta}.$ Since $\left \lVert \nabla u \right\rVert_{N, \Tilde{w}_{\beta}} \le \left \lVert \nabla u \right\rVert_{N, w_{\beta}},$ so the boundedness of the integral follows in the sub-critical and critical case. We will check the super-critical case. For $0 < r_1 < 1,$ we consider the family of functions given by,
 \begin{align}
        u_{r_1}(r) \coloneqq \begin{cases}
            \frac{1}{\omega_{N-1}^{1/N}} \frac{\left[ \left(\ln \frac{e}{r_1}\right)^{1-\beta}-1\right]^{1/N'}}{\left(1-\beta\right)^{1/N'}} , \quad 0 \le r < r_1\\
            \frac{1}{\omega_{N-1}^{1/N}} \frac{ \left(\ln \frac{e}{r}\right)^{1-\beta}-1}{\left(1-\beta\right)^{1/N'}\left[\left(\ln \frac{e}{r_1}\right)^{1-\beta}-1\right]^{1/N}} , \quad r_1 \le r < 1.
        \end{cases}
    \end{align}
It can be checked that, $\| u_{r_1} \|_{N,w_{\beta}} = 1$ and from Lemma \ref{contains lip functions}, it follows that $u_{r_1} \in W^{1,N}_0 \left (\mathbb{B}_N, w_{\beta}\right)$.
Therefore we have,
\begin{align*}
    I &\coloneqq \omega_{N-1} \int_{0}^1 e^{\alpha \lvert u_{r_1} \rvert ^{N'} \left(\ln \frac{e}{r}\right)^{\beta}} r^{N-\sigma-1} \, dr\\
    &\ge \int_{0}^{r_1} \exp \left(\frac{\alpha}{\omega_{N-1}^{\frac{1}{N-1}}(1-\beta)} \left[ \left( \ln \frac{e}{r_1}\right)^{1-\beta}-1\right]\left( \ln \frac{e}{r}\right)^{\beta}\right) r^{N-\sigma-1} \, dr\\
    &\ge \int_{0}^{r_1} \exp \left(\frac{N\alpha}{\alpha_{N,\beta}} \left[ \left( \ln \frac{e}{r_1}\right)^{1-\beta}-1\right]\left( \ln \frac{e}{r_1}\right)^{\beta}\right) r^{N-\sigma-1} \, dr\\
    &= \frac{1}{N- \sigma}  e^{\frac{N\alpha}{\alpha_{N,\beta}}\left[ \ln \frac{e}{r_1}- \left(\ln \frac{e}{r_1}\right)^{\beta}\right]} r_1^{N-\sigma} \, dr\\
   &=c_{N,\sigma,\beta} e^{N\ln \frac{e}{r_1} \left(\frac{\alpha}{\alpha_{N,\beta}} -1 + \frac{\sigma}{N} + \frac{\alpha}{\alpha_{N,\beta}}\left(\ln \frac{e}{r_1}\right)^{\beta-1}\right)} \to \infty \quad \text{as} \quad r_1 \to 0.
\end{align*}
This completes the proof.
\end{proof}

\section*{Acknowledgments}
The author thanks \emph{Prof.} Arka Mallick for valuable suggestions and discussions. The author is also thankful to \emph{Prof.} Adimurthi for suggesting the interpolation of Hardy-type and the Leray-Trudinger-type inequalities. The author is supported by the UGC NET fellowship Govt. of India. The Department of Mathematics, Indian Institute of Science, where this research was carried out, received support of DST FIST program-2021 [TPN-700661].

\end{document}